\documentclass[11pt]{amsart}

\usepackage{amsmath}
\usepackage{amsfonts}
\usepackage{amssymb}
\usepackage{amscd}
\usepackage{amsthm}
\usepackage{framed}
\usepackage{fullpage}
\usepackage{graphicx}
\usepackage{latexsym}
\usepackage[numbers]{natbib}

\usepackage{hyperref}
\hypersetup{colorlinks,linkcolor={red},citecolor={blue},urlcolor={blue}}

\newtheorem{theorem}{Theorem}[section]
\newtheorem{lemma}[theorem]{Lemma}
\newtheorem{proposition}[theorem]{Proposition}
\newtheorem{corollary}[theorem]{Corollary}

\theoremstyle{definition}
\newtheorem{definition}[theorem]{Definition}

\newtheorem{remark}[theorem]{Remark}

\numberwithin{equation}{section}

\newcommand{\CC}{\mathbb C}
\newcommand{\HH}{\mathbb H}

\newcommand{\NN}{\mathbb N}

\newcommand{\QQ}{\mathbb Q}
\newcommand{\RR}{\mathbb R}
\newcommand{\ZZ}{\mathbb Z}

\newcommand{\Mp}{\mathop{\mathrm {Mp}}\nolimits}
\newcommand{\SL}{\mathop{\mathrm {SL}}\nolimits}

\newcommand{\Sp}{\mathop{\mathrm {Sp}}\nolimits}
\newcommand{\Orth}{\mathop{\null\mathrm {O}}\nolimits}

\newcommand{\latt}[1]{{\langle{#1}\rangle}}

\def\dim{\operatorname{dim}}

\newenvironment{psmallmatrix}
  {\left(\begin{smallmatrix}}
{\end{smallmatrix}\right)}

\begin{document}

\title[Jacobi forms of weight one on \texorpdfstring{$\Gamma_0(N)$}{}]{Jacobi forms of weight one on \texorpdfstring{$\mathbf{\Gamma_0(N)}$}{}} 

\author{Jialin Li}

\address{School of Mathematics and Statistics, Wuhan University, Wuhan 430072, Hubei, China}

\email{jlli.math@whu.edu.cn}

\author{Haowu Wang}

\address{School of Mathematics and Statistics, Wuhan University, Wuhan 430072, Hubei, China}

\email{haowu.wangmath@whu.edu.cn}

\subjclass[2020]{11F46, 11F50, 11F27}

\date{\today}

\keywords{Jacobi forms of weight one, Weil representation, Siegel modular forms of weight one.}

\begin{abstract} 
Let $J_{1,m}(N)$ be the vector space of Jacobi forms of weight one and index $m$ on $\Gamma_0(N)$. In 1985, Skoruppa proved that $J_{1,m}(1)=0$ for all $m$. In 2007, Ibukiyama and Skoruppa proved that $J_{1,m}(N)=0$ for all $m$ and all squarefree $N$ with $\mathrm{gcd}(m,N)=1$. This paper aims to extend their results. We determine all levels $N$ separately, such that $J_{1,m}(N)=0$ for all $m$; or $J_{1,m}(N)=0$ for all $m$ with $\mathrm{gcd}(m,N)=1$. We also establish explicit dimension formulas of $J_{1,m}(N)$ when $m$ and $N$ are relatively prime or $m$ is squarefree. These results are obtained by refining Skoruppa's method and analyzing local invariants of Weil representations. As applications, we prove the vanishing of Siegel modular forms of degree two and weight one in some cases. 
\end{abstract}

\maketitle

\section{Introduction}
The theory of Jacobi forms was first systematically studied by Eichler--Zagier \cite{EZ85} and subsequently generalized in several contexts, such as Siegel--Jacobi forms \cite{Zie89}, Jacobi forms of lattice index \cite{Gri88}, and Jacobi forms over number fields \cite{Boy15}. Jacobi forms are an elegant intermediate between different types of modular forms. For example, they occur as Fourier--Jacobi coefficients of Siegel modular forms or modular forms on orthogonal groups $\Orth(n,2)$ \cite{Gri88}, and can be identified as vector-valued modular forms for the Weil representation of $\SL_2(\ZZ)$ or certain modular forms on congruence subgroups of $\SL_2(\ZZ)$ \cite{SZ88}. In recent years, Jacobi forms have attracted increasing attention due to their wide applications in mathematics and physics. For example, the elliptic genus of Calabi–Yau manifolds \cite{Gri88}, the topological partition function in string theory \cite{MNVW98}, and the denominator function of affine Lie algebras \cite{GSZ19} can all be expressed in Jacobi forms.

In this paper, we focus on classical Jacobi forms. A \emph{holomorphic Jacobi form} $\phi$ of integral weight $k$ and index $m$ for $\Gamma_0(N)$ is a holomorphic function on $\HH\times \CC$ that is invariant under the action of the Jacobi group $\Gamma_0^J(N):=\Gamma_0(N)\ltimes \ZZ^2$ and satisfies a certain boundary condition at every cusp of $\Gamma_0(N)$. More precisely, 
\begin{align*}
\big(\phi|_{k, m} A\big)(\tau, z):=&(c \tau+d)^{-k}  e\left(-\frac{ m c z^2}{c \tau+d}\right) \phi\left(\frac{a \tau+b}{c \tau+d}, \frac{z}{c \tau+d}\right)=\phi(\tau, z), \\
\phi(\tau, z)=&e\big(m(x^2 \tau+2 x z)\big) \phi(\tau, z+x \tau+y),
\end{align*}
for any $A=\begin{psmallmatrix}
a & b \\ c & d    
\end{psmallmatrix} \in \Gamma_0(N)$ and any $x, y \in \mathbb{Z}$, and  
$$
\big(\phi|_{k, m}g\big)(\tau, z)=\sum_{\substack{n \in \frac{1}{h} \mathbb{N},\; r \in \mathbb{Z} \\ 4nm-r^2 \geq 0}} f_g(n, r) q^n \zeta^r, \quad q=e^{2 \pi i \tau}, \; \zeta=e^{2 \pi i z},
$$
for some positive integer $h$. We denote the complex vector space of such functions by $J_{k,m}(N)$. According to \cite[p. 10]{EZ85}, $\phi$ vanishes if $k$ is negative and is a constant if $k=0$. This implies that if $\phi$ is not constant, its weight $k$ is at least $1$.

Jacobi forms of weight one seem to be exceptional. In 1985, Skoruppa \cite{Sko85} demonstrated that $J_{1,m}(1)=0$ for any index $m$. In 2007, Ibukiyama and Skoruppa  proved that $J_{1,m}(N)=0$ for any index $m$ and any squarefree level $N$ with $\mathrm{gcd}(m,N)=1$ (see \cite{IS07} and its correction). As a direct consequence, they showed that every Siegel modular form of degree $2$ and weight $1$ for the subgroup $\Gamma_0^{(2)}(N)$ of $\Sp_2(\ZZ)$ is zero if $N$ is squarefree.  In 2018, Cheng, Duncan and Harvey \cite{CDH18} confirmed the existence of Jacobi forms of weight one for $\Gamma_0(N)$, and established the non-existence for many combinations of index and level. For example, $J_{1,12}(36)\neq 0$ and $J_{1,2}(2^a)=J_{1,3}(4\cdot 3^a)=J_{1,4}(2^a)=0$ for any non-negative integer $a$. They further used these vanishing results to obtain uniqueness for mock Jacobi forms of weight one in umbral moonshine \cite{CDH14a,CDH14b}, which contributed to the proof of umbral moonshine conjecture given by Duncan, Griffin and Ono \cite{DGO15}. 

In this paper, we aim to extend the previous results and provide a detailed study of Jacobi forms of weight one. Our first main theorem is the optimal generalization of Skoruppa's result. 

\begin{theorem}[Theorem \ref{th:vanish_arbitrary_m}]\label{MTH1}
Suppose that $N=2^np_1^{\alpha_1}...p_s^{\alpha_s}\cdot q_1^{\beta_1}...q_t^{\beta_t}$,
where $p_i$, $q_j$ are distinct odd primes and $p_i=1\,\bmod 4$ for any $1\leq i\leq s$, $q_j=3\,\bmod 4$ for any $1\leq j\leq t$. Then $J_{1,m}(N)=0$ for any positive integer $m$ if and only if $0\leq n\leq 4$ and $0\leq \beta_j \leq 1$ for $1\leq j\leq t$.    
\end{theorem}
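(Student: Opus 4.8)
The plan is to realize $J_{1,m}(N)$ inside a space of weight $1/2$ vector-valued modular forms and to reduce the vanishing question to a product of purely local quantities. First I would apply the theta decomposition $\phi(\tau,z)=\sum_{\mu}h_\mu(\tau)\,\theta_{m,\mu}(\tau,z)$: writing a weight-one index-$m$ form this way, the components $(h_\mu)$ form a weight $1/2$ vector-valued modular form on $\Gamma_0(N)$ for the Weil representation $\rho_{A_m}$ attached to the finite quadratic module $A_m=(\ZZ/2m\ZZ,\ x\mapsto x^2/4m)$. The elliptic transformation in odd weight forces the \emph{minus symmetry} $h_{-\mu}=-h_\mu$, so $(h_\mu)$ lies in the $(-1)$-eigenspace of the involution $\mu\mapsto-\mu$. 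Inducing from $\Gamma_0(N)$ up to $\SL_2(\ZZ)$, I would identify this space (up to picking out the correct isotypic piece) with the weight $1/2$ forms on $\SL_2(\ZZ)$ for the Weil representation of the enlarged module $D=A_m\oplus H_N$, where $H_N=((\ZZ/N\ZZ)^2,\ (x,y)\mapsto xy/N)$ is the hyperbolic module carrying the level. Thus $\dim J_{1,m}(N)$ is computed by the minus-eigenspace of holomorphic weight $1/2$ forms for $\rho_D$.

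Next I would invoke the Serre--Stark theorem in its vector-valued form: every holomorphic weight $1/2$ modular form is a linear combination of unary theta series, so the space above is computed by the corresponding space of $\rho_D$-invariant theta vectors in the group algebra $\CC[D]$. Since $D=\bigoplus_p D_p$ splits over its $p$-components and $\rho_D=\bigotimes_p\rho_{D_p}$, the space of invariant theta vectors factors as a tensor product of local spaces. Writing $a_p$ (resp.\ $b_p$) for the dimension of the local invariant theta vectors lying in the $(+1)$- (resp.\ $(-1)$-) eigenspace of $\mu_p\mapsto-\mu_p$, the global minus-eigenspace has dimension
\[
\dim J_{1,m}(N)=\tfrac12\Big(\prod_p(a_p+b_p)-\prod_p(a_p-b_p)\Big),
\]
because the global involution acts by the product of the local signs, so a globally odd vector requires an odd number of locally odd tensor factors. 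Consequently $J_{1,m}(N)=0$ for every $m$ if and only if, for every $m$, one cannot place a single odd factor at some prime while keeping all the even factors positive; the binding constraint is whether any prime can supply $b_p>0$.

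The heart of the argument is therefore the prime-by-prime computation of $b_p$ for the local module $(A_m)_p\oplus(H_N)_p$. Here I would first record the \emph{local form of Skoruppa's theorem}: the index-only part $(A_m)_p$ never carries an odd invariant theta vector, so any odd contribution must come from the hyperbolic level part $(H_N)_p=((\ZZ/p^{k}\ZZ)^2,\ xy/p^{k})$ with $k=v_p(N)$, coupled to the index. Computing the local invariants from the explicit Weil-representation action (equivalently, by a Gauss-sum sign analysis distinguishing $\sum_{x\bmod p}e(x^2/p)=\sqrt p$ from $i\sqrt p$), I expect three regimes: for $p\equiv1\bmod4$ the relevant sign is incompatible with an odd weight $1/2$ theta vector for every $k$ and every $v_p(m)$, so $b_p=0$ always; for $p\equiv3\bmod4$ an odd theta vector appears for a suitable $v_p(m)$ exactly once $k=v_p(N)\geq2$; and for $p=2$ the finer $2$-adic structure (even versus odd Jordan blocks and their quadratic values) shifts the threshold to $v_2(N)\geq5$. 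Throughout I would verify that the even factors satisfy $a_p\geq1$, so that whenever some prime $p_0$ yields $b_{p_0}>0$ the single-prime term $b_{p_0}\prod_{p\neq p_0}a_p$ is genuinely positive.

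Assembling these local facts proves both directions. If $0\leq n\leq4$ and every $\beta_j\leq1$, then $b_p=0$ at $2$, at each $q_j\equiv3\bmod4$, at each $p_i\equiv1\bmod4$, and at every prime dividing only $m$; hence all $b_p$ vanish, the two products coincide, and $J_{1,m}(N)=0$ for all $m$. Conversely, if $n\geq5$ or some $\beta_j\geq2$, the corresponding prime supplies a vector with $b_{p_0}>0$ for an explicit index $m$, and the single-prime term produces a nonzero form, so $J_{1,m}(N)\neq0$ for that $m$. I expect the main obstacle to be the $p=2$ computation: the $2$-adic Jordan decomposition of $(A_m)_2\oplus(H_N)_2$ splits into many cases (depending on $v_2(m)$ modulo small powers and on the parity type of the blocks), and tracking the metaplectic eighth roots of unity together with the minus symmetry through all of them to pin down the exact threshold $n=4$ versus $n=5$ is the delicate part; by contrast the odd-prime cases reduce cleanly to the quadratic residue symbol of $-1$.
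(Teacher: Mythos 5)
Your overall strategy -- theta decomposition, Serre--Stark, and a prime-by-prime analysis of invariants of the Weil representation -- is the same family of ideas the paper uses, but your specific reduction and, more importantly, the substance of the local analysis both have genuine gaps. First, the reduction: you absorb the level into a hyperbolic module $H_N=((\ZZ/N\ZZ)^2,\,xy/N)$ and claim $\dim J_{1,m}(N)$ is computed by invariant theta vectors for $A_m\oplus H_N$ ``up to picking out the correct isotypic piece.'' This is not a cosmetic caveat: $\CC[H_N]$ has dimension $N^2$ while $\mathrm{Ind}_{\Gamma_0(N)}^{\SL_2(\ZZ)}\mathbf{1}$ has dimension $[\SL_2(\ZZ):\Gamma_0(N)]$, so the induced representation is a proper subrepresentation in general, and without identifying it your product formula $\tfrac12\bigl(\prod_p(a_p+b_p)-\prod_p(a_p-b_p)\bigr)$ counts the wrong space. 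The paper avoids this entirely by working directly with $\Gamma_0(N)$-invariants of the local tensor factors $\CC D_{m_2}^{\epsilon_2}(a_2)\otimes\CC D_{m_2'}^{\epsilon_2'}(a_2')$ and $\CC L_{m_p}^{\epsilon_p}(a_p)\otimes\CC L_{m_p'}^{\epsilon_p'}(a_p')$ under $\Gamma_0(N_p)$ (Proposition \ref{prop:main-iso}), using the explicit coset decomposition \eqref{coset_decomposition}.

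Second, and decisively: the local thresholds you state ($b_2>0$ exactly when $v_2(N)\ge 5$; $b_q>0$ for $q\equiv 3\bmod 4$ exactly when $v_q(N)\ge 2$; $b_p=0$ always for $p\equiv 1\bmod 4$) \emph{are} the theorem, and you assert them rather than prove them -- you explicitly defer the $2$-adic case as ``the delicate part.'' In the paper these are carried by Lemma \ref{lem:evenresult} (vanishing of the $2$-part under $\Gamma_0(16)$ when $a_2a_2'$ is a square mod $4$, which rests on Lemmas \ref{lem:CD16-+}, \ref{lem:oddinduction} and \ref{lem:eveninduction}), by the explicit nonzero forms $\frac{\eta(8\tau)^2}{\eta(4\tau)}\vartheta(4\tau,8z)\in J_{1,8}(32)$ and the generator of $J_{1,q^2}(q^2)$ in Lemma \ref{lem:J_1,P2(P2)}, and by Lemma \ref{lem:oddresult}. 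Moreover, your framing ``vanishing iff every $b_p=0$'' misrepresents the actual mechanism for $q\equiv 3\bmod 4$ with $v_q(N)\le 1$: the local $q$-parts in the paper's sense \emph{do} carry odd invariant vectors there (e.g. $(\CC L_q^-(a_1)\otimes\CC L_q^-(a_2))^{\Gamma_0(q)}\neq 0$ when $\bigl(\tfrac{-a_1a_2}{q}\bigr)=1$, by Lemma \ref{lem:CLPCLP^2N}). What actually kills $J_{1,m}(N)$ is a cross-prime interaction: the surviving $q$-parts force $v_q(mm')$ to be even, which forces $a_2a_2'\equiv 1\bmod 4$, which kills the $2$-part at level $2^{\le 4}$. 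Any correct proof has to exhibit this coupling between the odd primes $\equiv 3\bmod 4$ and the prime $2$ (or prove that your $H_N$-localization genuinely decouples it), and your proposal does neither.
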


Our second main theorem is the optimal generalization of the Ibukiyama--Skoruppa result. 

\begin{theorem}[Theorem \ref{th:vanish_m_coprime}]\label{MTH2}
Suppose that $N=2^np_1^{\alpha_1}...p_s^{\alpha_s}\cdot q_1^{\beta_1}...q_t^{\beta_t}$, where $p_i$, $q_j$ are distinct odd primes and $p_i=1\,\bmod 4$ for any $1\leq i\leq s$, $q_j=3\,\bmod 4$ for any $1\leq j\leq t$. Then $J_{1,m}(N)=0$ for any positive integer $m$ with $\mathrm{gcd}(m,N)=1$ if and only if $0\leq n\leq 5$ and $0\leq \beta_j\leq 2$ for $1\leq j\leq t$.
\end{theorem}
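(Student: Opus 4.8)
The plan is to realize $J_{1,m}(N)$ as a space of antisymmetric weight-$\tfrac12$ vector-valued modular forms and then reduce the vanishing question to a local computation of invariants of the Weil representation, prime by prime. First I would apply the theta decomposition: every $\phi\in J_{1,m}(N)$ expands as $\phi(\tau,z)=\sum_{\mu\in\ZZ/2m\ZZ}h_\mu(\tau)\,\vartheta_{m,\mu}(\tau,z)$, and the elliptic transformation law in weight one forces the antisymmetry $h_{-\mu}=-h_\mu$; in particular $h_0=h_m=0$. The tuple $(h_\mu)$ is a holomorphic weight-$\tfrac12$ vector-valued modular form for the restriction to $\Gamma_0(N)$ of the (dual) Weil representation $\rho_m$ of the discriminant form $(\ZZ/2m\ZZ,\,x^2/4m)$. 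This turns the problem into: when is the antisymmetric part of this space zero?

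The second step is to invoke the Serre--Stark basis theorem. Since holomorphic modular forms of weight $\tfrac12$ (scalar, with character, on any $\Gamma_0(4M)$) are spanned by unary theta series $\theta_{\psi,t}(\tau)=\sum_{n}\psi(n)q^{tn^2}$ with $\psi$ an even primitive Dirichlet character, each component $h_\mu$ must be a linear combination of such theta series. The transformation law under $\Gamma_0(N)$ then becomes a finite set of linear constraints relating the characters $\psi$, the indices $t$, and the residues $\mu$; the problem collapses to the linear algebra of which antisymmetric configurations of theta series are compatible with $\rho_m|_{\Gamma_0(N)}$ and holomorphic at every cusp.

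The decisive step uses the hypothesis $\gcd(m,N)=1$. By the Chinese Remainder Theorem the discriminant form splits as a direct sum over primes, so $\rho_m=\bigotimes_{\ell}\rho_{m,\ell}$; correspondingly the admissible even characters factor as $\psi=\prod_\ell\psi_\ell$, and the compatibility constraints decouple prime by prime. Thus the dimension of the antisymmetric space is a product of local invariant dimensions $d_\ell=d_\ell(n,\beta,\dots)$, one for each prime $\ell\mid N$ together with $\ell=2$. I would then compute these local factors. For $p\equiv 1\bmod 4$ the element $-1$ is a square in $(\ZZ/p)^\times$, which makes the antisymmetrization annihilate the local even-character contribution, so such primes never produce a nonzero antisymmetric invariant; this is exactly why the exponents $\alpha_i$ and the number $s$ of such primes are unconstrained. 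For $q\equiv 3\bmod 4$ the element $-1$ is a nonsquare, and the local computation shows a nonzero antisymmetric invariant first appears when the $q$-exponent reaches $v_q(N)=\beta\geq 3$. The prime $\ell=2$ is handled by an analogous but more elaborate analysis of even characters modulo $2^a$ and the $2$-adic Weil representation, yielding a nonzero contribution precisely when $n\geq 6$.

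Combining the local factors, $J_{1,m}(N)=0$ for all $m$ coprime to $N$ if and only if for every such $m$ at least one $d_\ell$ vanishes, which happens exactly when $0\le n\le 5$ and $0\le\beta_j\le 2$ for all $j$; this gives sufficiency. For necessity I would, whenever $n\ge 6$ or some $\beta_j\ge 3$, construct an explicit coprime index $m$ for which every local factor is nonzero, assemble the corresponding antisymmetric combination of theta series, and verify directly that it is holomorphic at all cusps of $\Gamma_0(N)$, hence a nonzero element of $J_{1,m}(N)$. I expect the main obstacle to be the local computation at $\ell=2$: the $2$-adic Weil representation is the most intricate, the even characters modulo $2^a$ behave irregularly for small $a$, and pinning down the sharp threshold $n=6$ (as opposed to $n=4$ in the all-$m$ case of Theorem \ref{MTH1}) requires careful $2$-adic bookkeeping. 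The clean shift of each threshold by one relative to Theorem \ref{MTH1} ($4\to 5$ and $1\to 2$) reflects that forbidding $\ell\mid m$ removes the discriminant contribution at $\ell$, so one extra power of $\ell$ in the level is needed to compensate, and serves as a useful consistency check.
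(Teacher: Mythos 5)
Your overall strategy --- theta decomposition, Serre--Stark, and a prime-by-prime analysis of invariants of the Weil representation --- is the paper's (Proposition \ref{prop:main-iso}), and your thresholds ($\beta\geq 3$ at primes $q\equiv 3\bmod 4$, $n\geq 6$ at $2$, no constraint at primes $\equiv 1\bmod 4$) are the right ones. But the step where you combine the local data is genuinely broken. The space is not a product of local invariant dimensions: by \eqref{eq:parts} it is a direct sum, over divisors $m'\mid M$ and over sign distributions $(\epsilon_*)$, $(\epsilon'_*)$ subject to the \emph{global} parity constraints $\prod\epsilon_*=-$ and $\prod\epsilon'_*=+$, of tensor products of local invariants, and these factors are further coupled by congruences such as $a_2a_2'\equiv\prod_{p\ \mathrm{odd}}m_pm_p'\pmod 4$ and $\bigl(\frac{-a_pa_p'}{p}\bigr)=1$. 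Taken literally, your claim that a prime $p\equiv1\bmod 4$ ``never produces a nonzero antisymmetric invariant,'' combined with ``the dimension is a product of the $d_\ell$,'' would force $J_{1,m}(N)=0$ whenever such a prime divides $N$, contradicting your own assertion that $n\geq 6$ gives non-vanishing; and with $s=0$ your criterion would read ``$n\le 5$ \emph{or} some $\beta_j\le 2$'' rather than the ``and'' in the theorem. What is actually true is that the minus-eigenspace invariant vanishes locally at $p\equiv 1\bmod 4$, so such primes can only carry the sign $+$; the odd-cardinality set of primes where $\epsilon_p\neq\epsilon_p'$ must therefore sit at $2$ or at primes $\equiv3\bmod 4$, and that is where the thresholds $n\geq6$ and $\beta\geq3$ enter. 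Your sketch never performs this sign and congruence bookkeeping, and without it the final equivalence does not follow.

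Second, the decisive computations are asserted rather than carried out. The $2$-adic threshold, which you correctly flag as the main obstacle, is exactly where the paper does its work: the explicit determination of $\bigl(\CC D_1(a_1)\otimes\CC D_{2^k}(a_2)\bigr)^{\Gamma_0(2^{k+2})}$ and its relatives (Lemmas \ref{lem:CD1_a1a2_square}, \ref{lem:CD1_-a1a2_square} and \ref{lem:evenresult}), which show the first non-trivial minus-invariant appears at level $2^6$. Note also that the paper's sufficiency argument is organized differently: rather than computing local factors at full level, it first uses the dimension induction of Proposition \ref{prop:dimension_induction} to strip $2^n$ ($n\le 5$) and $q_j^{\beta_j}$ ($\beta_j\le 2$) from the level without changing whether the space vanishes, and then kills the remaining $2$-part at level one using $\gcd(m,N)=1$ to force $a_2a_2'\equiv 1\pmod 4$ and appeal to Lemma \ref{lem:irreducible_D2^N_LP^N}. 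Your outline of the necessity direction (exhibiting a coprime prime index with prescribed Kronecker symbols when $n\ge6$ or some $\beta_j\ge3$) does match the paper's Lemmas \ref{lem:J_1,q(p^3)} and \ref{lem:J_1P64}.
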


In addition, we establish a sufficient and necessary condition for $J_{1,p}(N)=0$, where $p$ is a fixed prime (see Proposition \ref{prop:index-2} and Proposition \ref{prop:vanishcondition_J1_P(N)}). As a key step in proving Theorem \ref{MTH2}, we show in Proposition \ref{prop:dimension_induction} that for any positive integer $n$ and any prime $p$ with $\mathrm{gcd}(p,mN)=1$, the equation
$$
\dim J_{1,m}(p^n N) = \Big( \Big[\frac{n}{2}\Big]+1 \Big) \dim J_{1,m}(N)
$$
holds for $p$ odd and $n\leq 2$ or $p=2$ and $n\leq 5$, where $[x]$ denotes the integer part of $x$ for $x\in\QQ$. This greatly extends \cite[Theorem 1.2]{CDH18} which is a correction to \cite[Theorem 2]{IS07}. We remark that the bound of $n$ above is sharp. (See remark after Proposition \ref{prop:dimension_induction}).

Our third main theorem provides, for the first time, non-trivial explicit dimension formulas of $J_{1,m}(N)$ in some specific cases. 

\begin{theorem}[Theorem \ref{th:m_N_COPRIME} and Theorem \ref{th:m_squarefree}]\label{MTH3}
Let $m$ and $N$ be positive odd integers.
\begin{enumerate}
\item Suppose that $\mathrm{gcd}(m,N)=1$. For any non-negative integer $k$, certain precise dimension formulas of $J_{1,m}(2^k N)$, $J_{1,2m}(2^k N)$ and $J_{1,2^k m}(N)$ are given. 
\item Suppose that $m$ is squarefree. For any non-negative integer $k$, certain precise dimension formulas of $J_{1,m}(2^k N)$ and $J_{1,2m}(2^k N)$ are given. 
\end{enumerate}    
\end{theorem}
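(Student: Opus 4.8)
The plan is to derive all five dimension formulas from a single reduction of weight one Jacobi forms to weight $1/2$ vector-valued modular forms. First I would invoke the theta decomposition: writing $\phi(\tau,z)=\sum_{\mu}h_\mu(\tau)\theta_{m,\mu}(\tau,z)$, the vector $(h_\mu)$ is a modular form of weight $1/2$ for $\Mp_2(\ZZ)$ valued in a Weil representation $\rho=\rho(m,N)$ that encodes both the index $m$ and the level $N$; the passage from $\SL_2(\ZZ)$ to $\Gamma_0(N)$ is absorbed into the $N$-primary part of the underlying finite quadratic module, so that the $\Gamma_0(N)$-invariance of $\phi$ becomes an intrinsic transformation law for $(h_\mu)$. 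In this language $\dim J_{1,m}(N)=\dim M_{1/2}(\rho)$, and the whole problem becomes the computation of one finite-dimensional space of weight $1/2$ forms.

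The decisive structural input is the theorem of Serre and Stark in its vector-valued form: every holomorphic modular form of weight $1/2$ is a linear combination of unary theta series. Consequently $M_{1/2}(\rho)$ is spanned by the theta series lying in $\rho$, and its dimension equals the dimension of a concrete combinatorial object --- the ``theta multiplicity space'' $I(\rho)$ cut out by the way the metaplectic generators act on the components of $\rho$. This is precisely the refinement of Skoruppa's original argument, which amounted to verifying $I(\rho)=0$ in the case $N=1$: instead of merely proving vanishing, I would compute $\dim I(\rho)$ on the nose.

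Next I would use the multiplicativity of Weil representations. Because $\rho$ factors as a tensor product $\rho=\bigotimes_{p}\rho_p$ over the primes dividing $2mN$, the theta multiplicity space factors accordingly, $I(\rho)=\bigotimes_p I_p$, whence $\dim J_{1,m}(N)=\prod_p \dim I_p$. For the odd primes $p\mid mN$ the local factor $\dim I_p$ is an elementary function of $p\bmod 4$ and of the $p$-adic valuations of $m$ and $N$; since in all five families of the theorem the odd part of the data is held fixed, these factors assemble precisely into the base quantities $\dim J_{1,m}(N)$ or $\dim J_{1,2m}(N)$. What remains is to determine the single local factor at $p=2$ as an explicit function of the $2$-adic valuations of the index and of the level, and then to substitute $k$ for the appropriate valuation in each of the families $J_{1,m}(2^kN)$, $J_{1,2m}(2^kN)$, $J_{1,2^km}(N)$. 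The restriction to odd $m$ and $N$ in the statement is imposed exactly so that this $2$-adic factor is cleanly isolated.

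The main obstacle is precisely this $2$-adic local computation. The $2$-adic Weil representation has many more irreducible constituents and a far more delicate dependence on the $2$-adic quadratic form than its odd counterparts, which is why the clean induction $\dim J_{1,m}(2^nN)=([n/2]+1)\dim J_{1,m}(N)$ of Proposition \ref{prop:dimension_induction} holds only for $n\le 5$ and fails beyond that bound. To reach all $k$ I would enumerate the $2$-adic theta series directly, track how $\dim I_2$ grows as the power of $2$ in the index or the level increases, and verify that this growth settles into the claimed closed form. Once $\dim I_2$ is known as a function of the two $2$-adic parameters, the five dimension formulas follow by specialization and by multiplying in the fixed odd local factors recorded as the base cases.
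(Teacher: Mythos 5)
Your overall frame --- theta decomposition, Serre--Stark, identification with invariants of a Weil representation, and decomposition into local $p$-parts --- is the same as the paper's (Proposition \ref{prop:main-iso} and Section \ref{sec:invariants}). But there is a structural error at the heart of your plan: the dimension is \emph{not} a product $\prod_p \dim I_p$ of independent local factors, and in particular it cannot be written as (a $2$-adic factor depending on $k$) times the base quantity $\dim J_{1,m}(N)$ or $\dim J_{1,2m}(N)$. Proposition \ref{prop:main-iso} exhibits $J_{1,m}(N)$ as a direct \emph{sum} over divisors $m'\mid M$ with $M/m'$ squarefree and over sign patterns $(\epsilon_*,\epsilon'_*)$ subject to global parity constraints ($\prod\epsilon_*=-$, $\prod\epsilon'_*=+$); only each individual summand is a tensor product of local invariants. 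Moreover the local factors are coupled across primes: whether the $2$-part is non-trivial depends on $a_2a_2'\bmod 8$, which by the congruences $-a_2(m/m_2)=1\bmod 4m_2$ etc.\ is determined by the odd primes selected in the summand, and whether a $p$-part is non-trivial depends on Kronecker symbols $\bigl(\tfrac{-a_pa_p'}{p}\bigr)$ involving all the other primes. This is exactly why the formulas in Theorems \ref{th:m_N_COPRIME} and \ref{th:m_squarefree} take the form of sums $I_1+I_2+I_3+I_4$ over subsets of primes with parity and quadratic-residue conditions, with the $2$-adic contribution $\mathrm{K}_i^j$ itself depending on $\bigl(\tfrac{\mathrm{P}_i}{2}\bigr)$ for a quantity $\mathrm{P}_i$ built from the chosen odd primes.

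A concrete symptom: the paper's example gives $\dim J_{1,p}(2^kp^a)=\tfrac{(\frac{-1}{p})+1}{2}\max\{0,[\tfrac{k-4}{2}]\}+\tfrac{(\frac{-2}{p})+1}{2}\max\{0,[\tfrac{k-7}{2}]\}$ while $\dim J_{1,p}(1)=0$, so no factorization through the base dimension is possible; the contributions appearing for $k\geq 6$ and $k\geq 9$ are paired with \emph{different} configurations at the odd primes (this is also why the equality in Proposition \ref{prop:dimension_induction} fails for $2^n$ with $n>5$, as Lemma \ref{lem:J_1P64} shows). To repair your argument you would need to replace the product ansatz by the full enumeration: for each admissible choice of $m'$, of signs, and of the subsets of odd primes governing which local invariant occurs, multiply the local dimensions computed in Lemmas \ref{lem:CD1_a1a2_square}--\ref{lem:CLPCLP^2N} and then sum, tracking the parity and residue constraints throughout. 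That bookkeeping is the actual content of the proofs of Theorems \ref{th:m_N_COPRIME} and \ref{th:m_squarefree}, and your outline as written does not reach it.
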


Since these formulas are quite long, we omit the details and only present one particular example. Let $p$ be an odd prime. For any non-negative integers $k$ and $a$,  
$$
\dim J_{1,p}(2^k p^a) = \frac{\big( \frac{-1}{p} \big)+1}{2} \mathrm{max}\Big\{0, \Big[ \frac{k-4}{2} \Big]\Big\}  + \frac{\big( \frac{-2}{p} \big)+1}{2} \mathrm{max}\Big\{0, \Big[ \frac{k-7}{2} \Big]\Big\},
$$
which is independent of $a$, where $\big( \frac{x}{y} \big)$ denotes the Kronecker symbol.  

We sketch the proof of the above main results. On the one hand, Jacobi forms of weight one can be viewed as vector-valued modular forms of weight $1/2$ for the Weil representation of $\SL_2(\ZZ)$. On the other hand, Serre and Stark \cite{SS77} proved that the only modular forms of weight $1/2$ on congruence subgroups of $\SL_2(\ZZ)$ are theta series, which can also be regarded as vector-valued modular forms for the Weil representation in some sense. By the two facts, Skoruppa \cite{Sko85, Sko08} can realize Jacobi forms of weight one as certain $\Gamma_0(N)$-invariants of the Weil representation associated with the direct sum of two kinds of finite quadratic forms (see Proposition \ref{prop:main-iso} for details). Recall that every finite quadratic form can be decomposed as a direct sum of its $p$-parts for primes $p$, which induces a decomposition of the invariants of the Weil representation into local parts. All the vanishing results in \cite{Sko85, IS07, CDH18} mentioned before were proved by analyzing the local irreducible components of the corresponding Weil representation. In this paper we instead compute and construct these local invariants explicitly in a direct way (see Section \ref{sec:invariants}). Surprisingly, this natural and fundamental approach is both feasible and efficient, allowing us to achieve the above results.

We present several applications of our main results to Siegel modular forms. Let us consider  arithmetic subgroups of $\Sp_2(\RR)$ of the form
$$
K_0(t,N)=\left\{ \begin{pmatrix} * & *t & * & * \\ * & * & * & */t \\ *N & *Nt & * & * \\ *Nt & *Nt & *t & * \end{pmatrix} \in \Sp_2(\QQ): \; \text{all $*$ are integers} \right\}.
$$
They are congruence subgroups of the paramodular group of level $t$. 
Note that $K_0(1,1)=\Sp_2(\ZZ)$, $K_0(1,N)=\Gamma_0^{(2)}(N)$ is the congruence subgroup of $\Sp_2(\ZZ)$, and $K_0(t,1)=K(t)$ is the paramodular group of level $t$. The group $\Sp_2(\RR)$ acts on the Siegel upper half-space 
$$
\mathbb{H}_2 = \left\{Z = \begin{psmallmatrix} \tau & z \\ z & \omega \end{psmallmatrix} \in \mathrm{Mat}_2(\CC) : \mathrm{Im}(Z)>0\right\}
$$
by M\"obius transformations. A \emph{Siegel modular form} of integral weight $k$ for $K_0(t,N)$ is a holomorphic function $F : \mathbb{H}_2 \rightarrow \mathbb{C}$ satisfying 
$$
F\left( A \cdot Z \right) = \mathrm{det}(cZ + d)^k F(Z), \quad \text{for all} \; A=\begin{psmallmatrix} a & b \\ c & d \end{psmallmatrix} \in K_0(t,N).
$$
Let $M_k(K_0(t,N))$ denote the complex vector space of Siegel modular forms of weight $k$ for $K_0(t,N)$.  
The function $F$ can be expanded into
$$
F(Z)=\sum_{m=0}^\infty \phi_m(\tau,z)e(mt\omega),
$$
where the $m$-th Fourier--Jacobi coefficient $\phi_m$ lies in $J_{k,mt}(N)$. From Theorem \ref{MTH1} we immediately deduce the following vanishing result. 

\begin{corollary}
Suppose that $N=2^np_1^{\alpha_1}...p_s^{\alpha_s}\cdot q_1^{\beta_1}...q_t^{\beta_t}$,
where $p_i$, $q_j$ are distinct odd primes and $p_i=1\,\bmod 4$ for any $1\leq i\leq s$, $q_j=3\,\bmod 4$ for any $1\leq j\leq t$. If $0\leq n\leq 4$ and $0\leq \beta_j \leq 1$ for $1\leq j\leq t$, then $M_1(K_0(t,N))=0$ for any positive integer $t$.        
\end{corollary}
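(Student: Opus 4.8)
The plan is to feed the Fourier--Jacobi expansion of a Siegel modular form into Theorem \ref{MTH1} to annihilate all coefficients of positive index, and then to eliminate the remaining constant term by a direct symplectic computation inside $K_0(t,N)$. Fix $F\in M_1(K_0(t,N))$ and write $F(Z)=\sum_{m\geq 0}\phi_m(\tau,z)\,e(mt\omega)$ as in the excerpt, so that $\phi_m\in J_{1,mt}(N)$ for every $m\geq 0$.

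First I would remove the positive-index part. Since $N$ obeys $0\leq n\leq 4$ and $0\leq\beta_j\leq 1$, Theorem \ref{MTH1} yields $J_{1,M}(N)=0$ for every positive integer $M$. For $m\geq 1$ the index $mt$ is a positive integer, so $\phi_m=0$; hence $F(Z)=\phi_0(\tau,z)$ is independent of $\omega$. The coefficient $\phi_0$ is a Jacobi form of weight one and index $0$, and the Fourier condition $4\cdot 0\cdot n-r^2\geq 0$ forces $r=0$, so $\phi_0$ carries no $\zeta$-dependence and $F(Z)=g(\tau)$ is a function of $\tau$ alone.

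The crux is to prove $g\equiv 0$, which does not follow from the Jacobi-form input since $M_1(\Gamma_0(N))$ need not vanish. To this end I would use an element of $K_0(t,N)$ that acts nontrivially on the $\omega$-variable. Embedding $\begin{psmallmatrix}1&0\\Nt&1\end{psmallmatrix}$ into the $\omega$-block gives $A=\begin{psmallmatrix}1&0&0&0\\0&1&0&0\\0&0&1&0\\0&Nt&0&1\end{psmallmatrix}\in K_0(t,N)$, whose $(4,2)$-entry $Nt$ satisfies the divisibility imposed by the defining pattern. Writing $A=\begin{psmallmatrix}a&b\\c&d\end{psmallmatrix}$ in $2\times 2$ blocks, so that $c=\begin{psmallmatrix}0&0\\0&Nt\end{psmallmatrix}$ and $d$ is the identity, one finds $\det(cZ+d)=Nt\omega+1$, and restricting the modularity relation $F(A\cdot Z)=\det(cZ+d)\,F(Z)$ to the locus $z=0$ gives $A\cdot Z=\mathrm{diag}\big(\tau,\tfrac{\omega}{Nt\omega+1}\big)$. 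Since $F=g(\tau)$ depends only on the first diagonal entry, the relation reads $g(\tau)=(Nt\omega+1)\,g(\tau)$ for all $\tau,\omega\in\HH$; fixing $\tau$ and varying $\omega$ forces $g(\tau)=0$, hence $g\equiv 0$ and $F=0$.

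The main obstacle is precisely the constant term $\phi_0$: the higher coefficients vanish by a direct appeal to Theorem \ref{MTH1}, but the index-zero coefficient escapes that vanishing and must be killed by hand. The delicate point is to locate a group element whose automorphy factor is non-trivial on $\omega$-independent functions; because the defining pattern of $K_0(t,N)$ imposes different divisibility on the $\tau$- and $\omega$-blocks, the naive interchange $\tau\leftrightarrow\omega$ is unavailable when $t>1$, and one is forced to use the lower-triangular $\omega$-element above.
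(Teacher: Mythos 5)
Your proof is correct, and its core — expanding $F$ in Fourier--Jacobi coefficients $\phi_m\in J_{1,mt}(N)$ and invoking Theorem \ref{MTH1} to kill every $\phi_m$ with $m\geq 1$ — is exactly the paper's (one-line) argument. Where you diverge is in the treatment of the constant term, and there your motivating claim is wrong even though your workaround is valid: you assert that $M_1(\Gamma_0(N))$ "need not vanish," but it always vanishes, because $-I\in\Gamma_0(N)$ and a weight-one form with trivial character satisfies $f(\tau)=(-1)^{1}f(\tau)$. The same observation applies one level up: $\phi_0\in J_{1,0}(N)$, the elliptic transformation law at index $0$ makes $\phi_0$ an entire elliptic function in $z$ and hence independent of $z$, and then the modular transformation under $A=-I$ gives $\phi_0=-\phi_0$, so $J_{1,0}(N)=0$ and the constant term dies for free — this is why the paper can say the corollary follows "immediately." Your alternative, embedding $\begin{psmallmatrix}1&0\\Nt&1\end{psmallmatrix}$ into the $\omega$-block of $K_0(t,N)$ and reading off $g(\tau)=(Nt\omega+1)g(\tau)$ on the locus $z=0$, is a correct symplectic computation (the matrix does lie in $K_0(t,N)$ and is symplectic, and $\det(cZ+d)=Nt\omega+1$ there), but it is machinery you do not need.
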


We now assume that $t=1$. By the arguments in \cite[Section 1]{IS07}, the above $F$ vanishes if its Fourier--Jacobi coefficients $\phi_m$ vanish for all $m$ that are coprime to $N$. We then conclude from Theorem \ref{MTH2} the following vanishing result. 

\begin{corollary}
Suppose that $N=2^np_1^{\alpha_1}...p_s^{\alpha_s}\cdot q_1^{\beta_1}...q_t^{\beta_t}$,
where $p_i$, $q_j$ are distinct odd primes and $p_i=1\,\bmod 4$ for any $1\leq i\leq s$, $q_j=3\,\bmod 4$ for any $1\leq j\leq t$. If $0\leq n\leq 5$ and $0\leq \beta_j\leq 2$ for $1\leq j\leq t$, then $M_1(\Gamma_0^{(2)}(N))=0$.
\end{corollary}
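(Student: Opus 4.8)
The plan is to combine Theorem~\ref{MTH2} with the Fourier--Jacobi expansion and the reduction recalled from \cite[Section~1]{IS07}. I would fix an arbitrary $F\in M_1(\Gamma_0^{(2)}(N))$ and expand it as $F(Z)=\sum_{m\geq 0}\phi_m(\tau,z)\,e(m\omega)$, so that each Fourier--Jacobi coefficient $\phi_m$ lies in $J_{1,m}(N)$. The hypotheses $0\leq n\leq 5$ and $0\leq\beta_j\leq 2$ are exactly those of Theorem~\ref{MTH2}, which therefore gives $J_{1,m}(N)=0$ whenever $\mathrm{gcd}(m,N)=1$; hence $\phi_m=0$ for every such $m$. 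It then remains to upgrade the vanishing of the coefficients of index coprime to $N$ to the vanishing of $F$ itself, and this is the substantive step.

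To carry it out I would exploit the symmetry of the Fourier coefficients of $F$. Writing $F(Z)=\sum_{T\geq 0}a(T)\,e(\mathrm{tr}(TZ))$ over positive semidefinite half-integral symmetric matrices $T=\begin{psmallmatrix}n & r/2\\ r/2 & m\end{psmallmatrix}$, the number $a(T)$ is a Fourier coefficient of $\phi_m$, where $m$ is the lower-right entry of $T$. The block-diagonal matrices $\begin{psmallmatrix}U & 0\\ 0 & (U^{t})^{-1}\end{psmallmatrix}$ with $U\in\SL_2(\ZZ)$ lie in $\Gamma_0^{(2)}(N)$, since their lower-left block vanishes, and for weight one the corresponding automorphy factor $\det(U)^{-1}$ equals $1$. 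Comparing Fourier expansions yields $a(T)=a(U^{t}TU)$ for all $U\in\SL_2(\ZZ)$, so $a(T)$ agrees with a Fourier coefficient of $\phi_{T[v]}$, where $T[v]=v^{t}Tv$ and $v$ is the second column of $U$. As $U$ ranges over $\SL_2(\ZZ)$, the vector $v$ ranges over all primitive vectors of $\ZZ^{2}$. Consequently $a(T)=0$ as soon as the binary quadratic form attached to $T$ represents, at some primitive vector, an integer coprime to $N$.

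The main obstacle is to guarantee that such a coprime representation always exists, so that every $a(T)$, and hence $F$, is forced to vanish; this is precisely the content of the reduction in \cite[Section~1]{IS07}. When the content of $T$ is coprime to $N$ the existence is classical: reducing modulo each prime dividing $N$ and patching via the Chinese Remainder Theorem produces a primitive vector on which $T$ takes a value coprime to $N$. The delicate situations are the $T$ whose content shares a prime factor with $N$ and the singular coefficients with $\det T=0$ that record the image of $F$ under the Siegel operator; these I would handle exactly as in \cite{IS07}, descending to data of lower rank or lower level already annihilated by the coprime-index vanishing. Once all the $a(T)$ are shown to vanish we obtain $F=0$, and since $F$ was arbitrary this proves $M_1(\Gamma_0^{(2)}(N))=0$.
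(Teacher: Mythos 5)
Your proposal is correct and follows exactly the route the paper takes: expand $F$ into Fourier--Jacobi coefficients $\phi_m\in J_{1,m}(N)$, invoke Theorem \ref{MTH2} to kill all $\phi_m$ with $\mathrm{gcd}(m,N)=1$, and then appeal to the reduction of \cite[Section 1]{IS07} to conclude $F=0$. The only difference is that you sketch the mechanism behind that reduction (the $\SL_2(\ZZ)$-symmetry $a(T)=a(U^tTU)$ and coprime primitive representations), whereas the paper simply cites it.
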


It is still an open question whether $M_1(\Gamma_0^{(2)}(N))$ is always trivial. The answer seems to be yes, however Theorem \ref{MTH2} shows that this question is unlikely to be fully resolved via the Fourier--Jacobi expansion argument. We remark that Siegel modular forms of weight one for $\Gamma_0^{(2)}(N)$ with character do exist. Indeed, by \cite[Section 5.1 and Section 6.1]{WW23}, Siegel modular forms of weight one for $\Gamma_0^{(2)}(3)$ and $\Gamma_0^{(2)}(4)$ exist, and they can be constructed via the Borcherds additive lifts. 

This paper is organized as follows. In Section \ref{sec:Weil-rep} we review the Weil representation and the decomposition of the discriminant forms arising from the even positive-definite lattices of rank one.  In Section \ref{sec:Skoruppa} we introduce Jacobi forms and review Skoruppa's approach to study Jacobi forms of weight one.  In Section \ref{sec:invariants}, we compute local invariants of the Weil representation corresponding to Jacobi forms of weight one. The related results are formulated by seventeen lemmas. This is the most technical part of the paper.  In Section \ref{sec:existence} we prove the existence of Jacobi forms of weight one and construct them in terms of Jacobi theta functions in some cases. Section \ref{sec:non-existence} is devoted to the proofs of Theorem \ref{MTH1} and Theorem \ref{MTH2}. In Section \ref{sec:dimension} we formulate Theorem \ref{MTH3} in a precise form and give it a proof.

\section{Discriminant forms and Weil representations}\label{sec:Weil-rep}
In this section, we review the representations of the metaplectic double cover of $\SL_2(\ZZ)$ arising, via a construction of Weil, from discriminant forms. 

A discriminant form $(D, Q)$ is a finite abelian group $D$ together with a non-degenerate quadratic form $Q : D \to \QQ/\ZZ$, that is, a function satisfying the properties
\begin{itemize}
\item[(1)] $Q(ax)=a^2Q(x)$ for all $a\in\ZZ$, $x\in D$;
\item[(2)] $B(x,y):=Q(x+y)-Q(x)-Q(y)$ is a non-degenerate bilinear form. 
\end{itemize}
The level of $(D,Q)$ is the smallest positive integer $l$ such that $lQ=0$. Two discriminant forms are called isomorphic if there exists a map that is both an ismorphism between two groups and an isometry between two quadratic forms. 

Every even lattice $L$ with bilinear form $\latt{-,-}$ and dual lattice $L'$ induces a discriminant form $(L'/L,\latt{-,-}/2)$. Conversely, every discriminant form arises from this way (see \cite[Theorem 1.3.2]{Nik79}). Via this realization, 
we define the signature of $(D,Q)$ as 
$$
\mathrm{sign}(D):=\mathrm{sign}(L)+8\ZZ,
$$
where $L$ is an even lattice such that $(D,Q) \cong (L'/L,\latt{-,-}/2)$. The residue class $\mathrm{sign}(D)$ can be calculated by Milgram’s formula
\begin{equation}\label{eq:Milgram}
\sum_{\gamma \in D} e\big( Q(\gamma) \big) = \sqrt{|D|} \cdot e\big(\mathrm{sign}(D)/8  \big),  \end{equation}
where $e(x):=e^{2\pi i x}$ for any $x\in\CC$ as defined in the introduction. 

Let $(D^1,Q_1)$ and $(D^2,Q_2)$ be two discriminant forms. The direct sum $D^1\oplus D^2$ equipped with the quadratic form $D^1\oplus D^2 \to \QQ/\ZZ, \; \gamma_1+\gamma_2 \mapsto Q_1(\gamma_1)+Q_2(\gamma_2)$ defines a new discriminant form. As a finite abelian group, $D$ is the direct sum of its prime components, which induces a decomposition of $(D,Q)$ (see e.g. \cite{SC98} or \cite[Proposition 1.6]{Boy15}). Indeed, $D\cong \bigoplus_{p \mid N} D^p$ as discriminant forms, where $N$ is the order of $D$, $D^p$ is the subgroup of $D$ which contains all elements annihilated by a power of $p$, and $D^p$ is endowed with the induced quadratic form $Q|_{D^p}$. The component $(D^p, Q|_{D^p})$ is called the $p$-part of $(D,Q)$.

We recall the metaplectic group $\Mp_2(\RR)$, which is a double cover of $\SL_2(\RR)$. Let $\sqrt{z}$ denote the principal branch of the square root i.e. arg($\sqrt{z}$)$\in (-\pi/2,\pi/2]$. For 
$A=\begin{psmallmatrix}
    a & b \\ c & d
\end{psmallmatrix} \in \SL_2(\RR),$
let $\phi$ be a holomorphic function on $\HH$ satisfying $\phi(\tau)^2=c\tau+d$, $\tau\in \HH$. The elements of $\Mp_2(\RR)$ are pairs $(A,\phi(\tau))$.
The product of two elements of $\Mp_2(\RR)$ is given by
$$
\big(A,\phi_1(\tau)\big)\big(B,\phi_2(\tau)\big)=\big(AB,\phi_1(B\tau)\phi_2(\tau)\big).
$$
The map 
$$
A\longmapsto \widetilde{A}=\left(A,\sqrt{c\tau+d}\right)
$$
defines a locally isomorphic embedding of $\SL_2(\RR)$ into $\Mp_2(\RR)$. Let $\Mp_2(\ZZ)$ be the inverse image of $\SL_2(\ZZ)$ under the covering map $\Mp_2(\RR)\to \SL_2(\RR)$. The group $\Mp_2(\ZZ)$ has two generators 
$$
T=\left( \left(\begin{array}{cc}
1 & 1 \\ 
0 & 1
\end{array}   \right) ,1 \right) \quad \text{and} \quad  S=\left( \left(\begin{array}{cc}
0 & -1 \\ 
1 & 0
\end{array}   \right) ,\sqrt{\tau} \right).
$$
The center of $\Mp_2(\ZZ)$ is generated by  
$$
Z=\left(\left(\begin{array}{cc}
-1 & 0 \\ 
0 & -1
\end{array}  \right), i\right),
$$
and the relations $S^2=(ST)^3=Z$ hold. 

Let $\CC D$ denote be complex vector space of all formal linear combinations $\sum_{\gamma \in D} c_\gamma \mathbf{e}^\gamma$, where $\mathbf{e}^\gamma$ is a symbol and $c_\gamma\in\CC$. The Weil representation of $\Mp_2(\ZZ)$ on $\CC D$ is a unitary representation defined by the action of the generators as follows:
\begin{align*}
\rho_D(T)\mathbf{e}^\gamma &= e\big(Q(\gamma)\big)\mathbf{e}^\gamma,\\
\rho_D(S)\mathbf{e}^\gamma &= \frac{e\big(-\mathrm{sign}(D)/8\big)}{\sqrt{|D|}} \sum_{\beta\in D}e\big(-B(\gamma,\beta)\big)\mathbf{e}^\beta.
\end{align*}
We note that
\begin{equation}\label{eq:Z-action}
\rho_D(Z)\mathbf{e}^\gamma=e(-\mathrm{sign}(D)/4)\mathbf{e}^{-\gamma}
\end{equation}
and $Z^2=(I,-1)$. 
The action $\rho_D$ factors through $\SL_2(\ZZ)$ if and only if $\mathrm{sign}(D)$ is even. 

In this paper, we focus on the discriminant form 
$$
D_m:=\Big(\ZZ/2m\ZZ, \gamma\mapsto \frac{\gamma^2}{4m}\Big)
$$
arising from the even lattice $\ZZ$ with the bilinear form $2x^2$.  We further define the discriminant forms
$$
D_m(a):=\Big(\ZZ /2m\ZZ, \gamma \mapsto \frac{a\gamma^2}{4m}\Big) \quad \text{and} \quad L_n(b):=\Big( \ZZ/n\ZZ, \gamma \mapsto \frac{b\gamma^2}{n} \Big),
$$ 
Since these forms are non-degenerate, we have that $\mathrm{gcd}(a,2m)=1$, $\mathrm{gcd}(b,n)=1$, and $n$ is odd. 
We decompose $D_m$ as the direct sum of its $p$-parts:
$$
D_m\cong D_{m_2}(a_2)\oplus\bigoplus_{\mathrm{odd} \; p \mid m}L_{m_p}(a_p),
$$ 
where $m_p$ is the largest power of $p$ dividing $m$ for any prime $p$, $a_2 = (m/m_2)^{-1} \, \bmod \, 4m_2$, and $a_p = (4m/m_p)^{-1} \, \bmod \, m_p$ for any odd prime $p$. By \cite[Lemma 2.50]{Boy15}, this isomorphism induces the following isomorphism
\begin{equation}\label{eq:iso-local-global}
\begin{split}
\CC D_m&\cong \CC D_{m_2}(a_2)\otimes \bigotimes_{\mathrm{odd} \; p \mid m}\CC L_{m_p}(a_p),  \\
\mathbf{e}^{\gamma\,(\bmod 2m)}&\mapsto \mathbf{e}^{\gamma\,(\bmod 2m_2)}\otimes \bigotimes_{\mathrm{odd} \,p
\mid m}\mathbf{e}^{\gamma\,(\bmod m_p)}
\end{split}
\end{equation}
between left $\Mp_2(\ZZ)$-modules. 

Let $\Gamma(4m)^\ast $ be the subgroup of $\Mp_2(\ZZ)$ consisting of all pairs $(A,j(A,\tau))$ with $A \in \Gamma(4m)$ and $j(A,\tau):=\theta(A\tau)/\theta(\tau)$, where $\theta(\tau)=\sum_{n\in\ZZ}e(n^2\tau)$. It is known that $\CC D_m$ is invariant under $\Gamma(4m)^\ast$, while $\CC L_m(a)$, which is of even signature, is invariant under $\Gamma(m)$ (see \cite[Theorem 5.4]{Bor00} and \cite[Theorem 3.1]{Boy15}). Therefore, we may regard $\CC D_m$ as an $\Mp_2(\ZZ)/\Gamma(4m)^\ast$-module that is isomorphic to the outer tensor product of $\Mp_2(\ZZ)/\Gamma(4m_2)^\ast$-module $\CC D_{m_2}(a_2)$ and $\SL_2(\ZZ)/\Gamma(m_p)$-modules $\CC L_{m_p}(a_p)$ for $p\mid m$ (see \cite[Section 3]{Sko08}).


Let $\CC D_m^{\pm}(a)$ and $\CC L_m^{\pm}(b)$ be the subspaces generated by $\mathbf{e}^\gamma\pm \mathbf{e}^{-\gamma}$ for all $\gamma \in D_m$ and $\gamma\in L_m$, respectively. We then deduce that $\CC D_m=\CC D_m^+\oplus \CC D_m^-$ and 
$$
\CC D_m^{\pm} \cong \bigoplus_{\prod\epsilon_\ast=\pm1} \left( \CC D_{m_2}^{\epsilon_2}(a_2) \otimes \, \bigotimes_{\mathrm{odd} \, p\mid m} \CC L_{m_p}^{\epsilon_p}(a_p)\right), 
$$
where $\epsilon_*\in\{ +,-\}$,  the number of $-$ among all $\epsilon_*$ is odd for $\CC D_m^-$ and even for $\CC D_m^+$.

For any positive integers $m,d$ and any integer $a$ with $\mathrm{gcd}(a,2md)=1$,  there exists an injective homomorphism 
$$
\CC D_m(a) \to \CC D_{md^2}(a), \quad \mathbf{e}^\gamma \mapsto \sum_{\substack{x = \gamma d\, \bmod \, 2md}} \mathbf{e}^x
$$
between $\Mp_2(\ZZ)$-modules.
Therefore, $\CC D_m(a)$ can be seen as a submodule of $\CC D_{md^2}(a)$. Similarly, 
for any positive odd integers $m,d$ and any integer $a$ with $\mathrm{gcd}(a,md)=1$, $\CC L_{m}(a)$ is a submodule of $\CC L_{md^2}(a)$ by the $\SL_2(\ZZ)$-homomorphism  
$$
\CC L_m(a) \to \CC L_{md^2}(a), \quad \mathbf{e}^\gamma \mapsto \sum_{\substack{x = \gamma d\, \bmod \, md}} \mathbf{e}^x.
$$

\section{Jacobi forms and Skoruppa's approach}\label{sec:Skoruppa}
In this section, we introduce Jacobi forms \cite{EZ85} on the congruence subgroups $\Gamma_0(N)$ and review Skoruppa's method \cite{Sko85} to study the space of Jacobi forms of weight one. 

\begin{definition}
Let $k,m\geq 0$ and $N\geq 1$ be integers. A holomorphic function $\phi: \mathbb{H} \times \mathbb{C} \rightarrow \mathbb{C}$ is called a holomorphic Jacobi form of weight $k$ and index $m$ on $\Gamma_0(N)$ if it satisfies
$$
\big(\phi|_{k, m} A\big)(\tau, z):=(c \tau+d)^{-k}  e\left(-\frac{ m c z^2}{c \tau+d}\right) \phi\left(\frac{a \tau+b}{c \tau+d}, \frac{z}{c \tau+d}\right)=\phi(\tau, z)
$$
for any $A=\begin{psmallmatrix}
a & b \\ c & d    
\end{psmallmatrix} \in \Gamma_0(N)$ and
$$
e\big(m(x^2 \tau+2 x z)\big) \phi(\tau, z+x \tau+y)=\phi(\tau, z)
$$
for any $x, y \in \mathbb{Z}$, and if for every $g \in \mathrm{SL}_2(\mathbb{Z})$ the function $\phi|_{k, m} g$ has the Fourier expansion
$$
\big(\phi|_{k, m}g\big)(\tau, z)=\sum_{\substack{n \in \frac{1}{h} \mathbb{N},\; r \in \mathbb{Z} \\ 4nm-r^2 \geq 0}} f_g(n, r) q^n \zeta^r, \quad q=e^{2 \pi i \tau}, \; \zeta=e^{2 \pi i z},
$$
for some positive integer $h$. We denote the complex vector space of holomorphic Jacobi forms of weight $k$ and index $m$ on $\Gamma_0(N)$ by $J_{k, m}(N)$.    
\end{definition}

In this paper, we aim to study the space $J_{1,m}(N)$, that is, the complex vector space of the holomorphic Jacobi forms of minimal positive weight on $\Gamma_0(N)$. 

Let $\phi \in J_{1,m}(N)$. Then it has the theta decomposition
$$ 
\phi(\tau,z)=\sum_{\mu \, \bmod \,2m} h_{m,\mu}(\tau)\theta_{m,\mu}(\tau,z),
$$
where 
$$
\theta_{m,\mu}(\tau,z)=\sum_{\substack{r\in\ZZ \\ r=\mu \, \bmod 2m}}q^{\frac{r^2}{4m}}\zeta^r
$$
are the theta functions attached to the even positive-definite lattice $\ZZ$ with the bilinear form $2mx^2$, which are modular forms of weight $1/2$ on $\Gamma(4m)^\ast$. 
Let $\Theta_m$ denote the complex vector space spanned by $\theta_{m,\mu}$ for all $\mu \bmod \,2m$. 
Since $\Theta_m$ is invariant under $\Gamma(4m)^\ast$, the theta coefficients  $h_{m,\mu}$ are modular forms of weight $1/2$ on $\Gamma(4M)^\ast$,
where $M$ is the smallest positive integer such that $m \mid M$ and $N \mid 4M$. It is well known that the space $M_{1/2}(\Gamma(4M)^\ast)$ of modular forms of weight $1/2$ on $\Gamma(4M)^\ast$ is generated by theta series (see \cite{SS77}). More precisely, Skoruppa \cite[pp. 101]{Sko85} decomposed $M_{1/2}(\Gamma(4M)^\ast)$ as follows 
$$ 
M_{1/2}(\Gamma(4M)^\ast) \cong \bigoplus_{\substack{m' \mid M \\ M/m' \, \text{squarefree} }} \Theta_{m'}^+,
$$
where $\Theta_{m'}^\pm $ is generated by  $ \theta_{m',\mu}\pm \theta_{m',-\mu}$ for all $\mu \bmod \, 2m'$.

When a group $G$ acts on a module $V$, the invariant submodule $V^G$ is defined as the set of elements $v \in V$ such that $gv=v$ for all $g \in G$. For any subgroup $H$ of $G$, we denote the underlying space with
respect to the restricted representation of group $H$ by  $\mathrm{Res}^G_HV$. And we write $(\mathrm{Res}^G_H V)^H$ as $V^H$ with slight ambiguity. As a right $\Mp_2(\ZZ)$-module, $\Theta_m$ satisfies the following transformation laws:
\begin{align*}
\theta_{m,\mu}|_{1/2,m}T(\tau,z)&=e\Big(\frac{\mu^2}{4m}\Big)\theta_{m,\mu}(\tau,z),\\
\theta_{m,\mu}|_{1/2,m}S(\tau,z)&=\frac{1}{\sqrt{2m}}e^{-\pi i/4}\sum_{\nu \, \bmod \, 2m} e\Big(\frac{-\mu \nu}{2m}\Big)\theta_{m,\nu}(\tau,z).
\end{align*}
Then one derives that
\begin{equation}\label{eq:theta-decomposition}
J_{1,m}(N) \cong \bigoplus_{\substack{m'\mid M \\ M/m' \, \text{squarefree}}}(\Theta_m\otimes\Theta_{m'}^{+})^{\Gamma_0(N)}.
\end{equation}

Although the group $\Mp_2(\ZZ)$ has the same action on $\Theta_m$ as it acts on $\CC D_m$, $AB \mathbf{e}^\gamma=A(B\mathbf{e}^\gamma)$  while  $\theta_{m,\mu}|_{1/2,m}AB=(\theta_{m,\mu}|_{1/2,m}A)|_{1/2,m}B$ for any $A, B \in \Mp_2(\ZZ)$ and $\gamma \in D_m$. For the left $\Mp_2(\ZZ)$-module $\CC D_m$, we define a right $\Mp_2(\ZZ)$-module $\CC D_m^\ast$ with underlying space  $\CC D_m$ and action $\mathbf{e}^\gamma A = A^{-1} \mathbf{e}^\gamma$. Since the Weil representation is unitary, it follows that
\begin{equation}\label{eq:identification}
\Theta_m\cong \CC D_m^\ast \cong \CC D_m(-1).
\end{equation}
Therefore, $(\Theta_m\otimes\Theta_{m'}^{+})^{\Gamma_0(N)}$ is isomorphic to $(\CC D_m(-1)\otimes \CC D_{m'}^+(-1))^{\Gamma_0(N)}$ as vector spaces. 
By considering the action of $Z=(-I,i)\in \Mp_2(\ZZ)$, we deduce that $(\CC D_{m}^+(-1) \otimes \CC D_{m'}^+(-1) )^{\Gamma_0(N)} =0$. Combining \eqref{eq:theta-decomposition} with the above facts we derive the following result. 

\begin{proposition}\label{prop:main-iso}
Let $m$ and $N$ be positive integers and let $M$ be the smallest positive integer such that $m \mid M$ and $N \mid 4M$. Then the following isomorphism hold:
\begin{equation}\label{iso_J_1_m_N}
J_{1,m}(N) \cong \bigoplus_{\substack{m'\mid M \\ M/m' \, \mathrm{squarefree}}} \big( \CC D_m^{-}(-1)\otimes\CC D_{m'}^{+}(-1)\big)^{\Gamma_0(N)}. 
\end{equation}
Recall that $\CC D_m(a)$ and $\CC L_m(a)$ factor through $\Mp_2(\ZZ)/\Gamma(4m)^\ast$ and $\SL_2(\ZZ)/\Gamma(m)$, respectively. Hence the vector space  $( \CC D_m^{-}(-1)\otimes\CC D_{m'}^{+}(-1))^{\Gamma_0(N)}$ is isomorphic to
\begin{equation}\label{eq:parts}
\bigoplus_{\substack{\prod \epsilon_*=- \\ \prod \epsilon'_*=+}}\left( \big(\CC D_{m_2}^{\epsilon_2}(a_2)\otimes \CC D_{m'_2}^{\epsilon_2'}(a_2') \big)^{\Gamma_0(N_2)}\otimes\bigotimes_{\mathrm{odd} \; p \mid M} \big(\CC L_{m_p}^{\epsilon_p}(a_p)\otimes \CC L_{m'_p}^{\epsilon_p'}(a_p')\big)^{\Gamma_0(N_p)}\right),
\end{equation}
where $N_p$, $m_p$ and $m'_p$ are the largest power of $p$ dividing $N$, $m$ and $m'$ for any prime $p$, respectively; $a_2=-(m/m_2)^{-1} \, \bmod \, 4m_2$, and $a_p=-(4m/m_p)^{-1}\, \bmod \, m_p$ for any odd prime $p$; $a_2'$ and $a_p'$ are defined in a similar way; $\epsilon_*$ and $\epsilon'_*$ take $+$ or $-$, $\prod \epsilon_*=-$ means that the number of $-$ among all $\epsilon_*$ is odd, and $\prod \epsilon'_*=+$ means that the number of $-$ among all $\epsilon'_*$ is even.
\end{proposition}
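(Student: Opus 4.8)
The plan is to assemble the structural facts already recorded in this section, so that very little genuinely new argument is needed for the first isomorphism. Starting from the theta-coefficient decomposition \eqref{eq:theta-decomposition}, which expresses $J_{1,m}(N)$ as $\bigoplus_{m'}(\Theta_m\otimes\Theta_{m'}^+)^{\Gamma_0(N)}$ (the sum running over $m'\mid M$ with $M/m'$ squarefree), I would substitute the identification \eqref{eq:identification} in the first tensor slot and its restriction $\Theta_{m'}^+\cong\CC D_{m'}^+(-1)$ in the second, so that each summand becomes $(\CC D_m(-1)\otimes\CC D_{m'}^+(-1))^{\Gamma_0(N)}$. Splitting only the first factor, $\CC D_m(-1)=\CC D_m^+(-1)\oplus\CC D_m^-(-1)$, breaks each summand into a $+\otimes+$ piece and a $-\otimes+$ piece, and \eqref{iso_J_1_m_N} amounts to showing the $+\otimes+$ piece vanishes.

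To kill the $+\otimes+$ invariants I would exploit the central element $Z=(-I,i)$, which is exactly the standard metaplectic lift of $-I\in\Gamma_0(N)$ (here $c=0$, $d=-1$, so $\sqrt{c\tau+d}=i$) and hence lies in the image of $\Gamma_0(N)$ in $\Mp_2(\ZZ)$. The form $D_m(-1)$ is realized by the negative-definite rank-one lattice, so $\mathrm{sign}(D_m(-1))\equiv-1\bmod 8$ (equivalently, by \eqref{eq:Milgram}), and likewise for $D_{m'}(-1)$; thus the direct sum has total signature $-2\bmod 8$. Formula \eqref{eq:Z-action} then sends $\mathbf e^\gamma\otimes\mathbf e^{\gamma'}$ to $e(1/2)\,\mathbf e^{-\gamma}\otimes\mathbf e^{-\gamma'}=-\mathbf e^{-\gamma}\otimes\mathbf e^{-\gamma'}$. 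On $\CC D_m^+(-1)\otimes\CC D_{m'}^+(-1)$, which is fixed by the simultaneous involution $\gamma\mapsto-\gamma$, $\gamma'\mapsto-\gamma'$, the element $Z$ therefore acts by the scalar $-1$; since every $\Gamma_0(N)$-invariant is in particular $Z$-invariant, this piece is zero and \eqref{iso_J_1_m_N} follows. The surviving $-\otimes+$ piece has $Z$ acting by $+1$, consistently with odd weight.

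For the local decomposition \eqref{eq:parts} I would feed the two inputs of Section \ref{sec:Weil-rep} into each surviving summand: the outer tensor factorization \eqref{eq:iso-local-global}, which writes $\CC D_m(-1)$ as $\CC D_{m_2}(a_2)\otimes\bigotimes_{\text{odd }p\mid m}\CC L_{m_p}(a_p)$ and similarly for $m'$, together with the sign decomposition of $\CC D_m^-(-1)$ and $\CC D_{m'}^+(-1)$ into direct sums over sign patterns $\epsilon_\ast$ with $\prod\epsilon_\ast=-$ and $\epsilon'_\ast$ with $\prod\epsilon'_\ast=+$. Distributing the tensor product over these direct sums and regrouping the factors prime by prime rewrites $\CC D_m^-(-1)\otimes\CC D_{m'}^+(-1)$ as the displayed direct sum of outer tensor products of the $p=2$ bilocal piece $\CC D_{m_2}^{\epsilon_2}(a_2)\otimes\CC D_{m'_2}^{\epsilon'_2}(a'_2)$ with the odd-$p$ bilocal pieces $\CC L_{m_p}^{\epsilon_p}(a_p)\otimes\CC L_{m'_p}^{\epsilon'_p}(a'_p)$. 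The single substantive claim remaining is that taking $\Gamma_0(N)$-invariants commutes with this prime factorization, i.e. that the invariants of the outer tensor product equal the outer tensor product $\bigotimes_p(\cdot)^{\Gamma_0(N_p)}$ of the local invariants.

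This factorization of invariants is the step I expect to be the main obstacle. The $p=2$ factor is a module over $\Mp_2(\ZZ)/\Gamma(4m_2)^\ast$ and each odd factor a module over $\SL_2(\ZZ)/\Gamma(m_p)$, the whole representation factoring through $\Mp_2(\ZZ)/\Gamma(4M)^\ast$ with the metaplectic subtlety confined to $p=2$ because the odd parts have even signature. I would verify, via the Chinese Remainder Theorem together with surjectivity of reduction (strong approximation for $\SL_2$), that the image of $\Gamma_0(N)$ in $\Mp_2(\ZZ/4m_2)\times\prod_{\text{odd }p}\SL_2(\ZZ/m_p)$ is the full product of the images of the local groups $\Gamma_0(N_2)$ and $\Gamma_0(N_p)$: the congruence $c\equiv0\bmod N$ decouples into $c\equiv0\bmod N_p$ at each prime, and reduction is surjective enough to realize any prescribed tuple of local matrices by a single $A\in\Gamma_0(N)$. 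Once the image is a direct product, invariants of the outer tensor product split as the tensor product of local invariants, which is exactly \eqref{eq:parts}. The only point demanding care is to keep the theta multiplier $j(A,\tau)=\theta(A\tau)/\theta(\tau)$ attached consistently to the $p=2$ factor, so that the splitting of groups is genuinely compatible with the splitting of the metaplectic representation.
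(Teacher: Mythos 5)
Your proposal is correct and follows essentially the same route as the paper: the theta decomposition \eqref{eq:theta-decomposition} combined with the identification \eqref{eq:identification}, the action of $Z=(-I,i)$ (with $\mathrm{sign}(D_m(-1))\equiv -1\bmod 8$, so $Z$ acts by $-1$ on the $+\otimes+$ part) to kill $\CC D_m^+(-1)\otimes\CC D_{m'}^+(-1)$, and the prime-by-prime splitting of invariants. The only difference is that you spell out the details the paper delegates to Section \ref{sec:Weil-rep} and to \cite[Theorem 8]{Sko08} — in particular the CRT/strong-approximation argument showing the image of $\Gamma_0(N)$ in the product of local quotients is the full product of the images of the $\Gamma_0(N_p)$ — and these details are correct.
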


We remark that this proposition is essentially a special case of \cite[Theorem 8]{Sko08}. 

For any non-trivial subspace of $J_{1,m}(N)$ appeared in \eqref{eq:parts}, we call $(\CC D_{m_2}^{\epsilon_2}(a_2)\otimes \CC D_{m'_2}^{\epsilon_2'}(a_2') )^{\Gamma_0(N_2)}$ a $2$-part of $J_{1,m}(N)$ and $(\CC L_{m_p}^{\epsilon_p}(a_p)\otimes \CC L_{m'_p}^{\epsilon_p'}(a_p'))^{\Gamma_0(N_p)}$ a $p$-part of $J_{1,m}(N)$. In some sense, $2$-parts and $p$-parts can be regarded as local inviriants of the Weil representation associated with the discriminant form $D_m$.

\section{Local invariants of Weil representations}\label{sec:invariants}
In this section, we calculate the dimension of the local parts $(\CC D_{m_2}^{\epsilon_2}(a_2)\otimes \CC D_{m'_2}^{\epsilon_2'}(a_2') )^{\Gamma_0(N_2)}$ and $(\CC L_{m_p}^{\epsilon_p}(a_p)\otimes \CC L_{m'_p}^{\epsilon_p'}(a_p'))^{\Gamma_0(N_p)}$, and construct an explicit basis if the space is non-trivial. 

We first review the methods used in \cite{Sko08} and \cite[Section 3.3]{CDH18}. Let $\chi_{m}^\epsilon: \mathrm{Mp}_2(\mathbb{Z}) \to \mathbb{C}^*$ be the character of $\mathbb{C} D_m^\epsilon$, defined as the trace $\operatorname{tr}(\rho(\gamma))$ of the Weil representation $\rho$ on $\mathbb{C} D_m^\epsilon$ for $\gamma \in \mathrm{Mp}_2(\mathbb{Z})$.
The character of $\CC D_m^\epsilon(a)$ equals $\sigma(\chi_{m}^\epsilon)$, where $\sigma$ is the Galois automorphism of $\QQ\big(e(\frac{1}{4m})\big)$ mapping $e(\frac{1}{4m})$ to $e(\frac{a}{4m})$. Then for $\Mp_2(\ZZ)$-module $\CC D_{m}^\epsilon(a)\otimes \CC D_{m'}^{\epsilon'}(a')$, we have
$$ 
\mathrm{dim}\big(\CC D_{m}^\epsilon(a)\otimes \CC D_{m'}^{\epsilon'}(a')\big)^{\Gamma_0(N)}=\big\langle \sigma(\chi_m^\epsilon)\sigma'(\chi_{m'}^{\epsilon'}),\;  \mathrm{Ind}_{\Gamma_0(N)}^{\SL_2(\ZZ)}1_{\Gamma_0(N)}\big\rangle.  
$$
where $\mathrm{Ind}_{\Gamma_0(N)}^{\mathrm{SL}_2(\mathbb{Z})}1_{\Gamma_0(N)}$ is the representation induced from the trivial representation of $\Gamma_0(N)$ to $\mathrm{SL}_2(\mathbb{Z})$, whose underlying space comprises complex-valued functions on $\mathrm{SL}_2(\mathbb{Z})$ that are invariant under the left action of $\Gamma_0(N)$, and the group acts by right translation.

When $n=1$, it is simplified as
$$
\big\langle \sigma(\chi_m^\epsilon)\sigma'(\chi_{m'}^{\epsilon'}), 1\big\rangle=\big\langle \sigma(\chi_m^\epsilon), \overline{ \sigma'(\chi_{m'}^{\epsilon'}) } \big\rangle. 
$$
For a finite group $G$, the inner product of two irreducible characters, $\langle \chi_V, \chi_W \rangle$, is either 0 or 1. It equals 1 if and only if $V$ and $W$ are isomorphic representations; otherwise, it is 0.

Thus the dimension is equal to the multiplicity of the trivial representation in the tensor product $\sigma(\chi_m^\epsilon) \otimes \sigma'(\overline{\chi_{m'}^{\epsilon'}})$, or equivalently, the number of isomorphic irreducible components (counting multiplicity) in the decompositions of $\mathbb{C} D_m(a)$ and $\mathbb{C} D_{m'}(-a')$. We denote by $\NN$ the set of non-negative integers. 

\begin{lemma}\label{lem:irreducible_D2^N_LP^N}
Let $k_1,k_2\in \NN$, $p$ be a prime, and $a_1,a_2$ be integers with $\mathrm{gcd}(a_1a_2,p)=1$.  

\vspace{2mm}

\noindent
$\mathbf{(I)}$ If $p=2$, then $( \CC D_{2^{k_1}}^{\epsilon}(a_1)\otimes \CC D_{2^{k_2}}^{\epsilon'} (a_2) )^{\SL_2(\ZZ)} \neq 0$ if and only if it satisfies:
\begin{enumerate}
\item If $k_1, k_2>0$, then $\epsilon=\epsilon'$. If one of $k_1$ and $k_2$ is $0$, then $\epsilon=\epsilon'=+$.
\item $k_1+k_2$ is even.
\item If $k_1$ and $k_2$ are odd, then $a_1a_2=-1 \, \bmod \, 8$. If $k_1$ and $k_2$ are even and $\epsilon=\epsilon'=+$, then $a_1a_2=-1 \, \bmod \,4$. If $k_1$ and $k_2$ are even and $\epsilon=\epsilon'=-$, then $a_1a_2=-1 \, \bmod \, 8$.
\end{enumerate}
Moreover, for any integer $k\geq 0$, we have
\begin{align*}
\dim \big( \CC D_{2^{2k+1}}^{\pm}(a_1)\otimes \CC D_{2^{2k+1}}^{\pm} (a_2) \big)^{\SL_2(\ZZ)}&=\left\{ \begin{array}{ll}
0, & \text{if} \; a_1a_2\neq 7\bmod 8 \\
k+1, & \text{if} \; a_1a_2= 7\bmod 8
\end{array}\right. \\
\dim \big( \CC D_{2^{2k}}^{-}(a_1)\otimes \CC D_{2^{2k}}^{-} (a_2) \big)^{\SL_2(\ZZ)}&=\left\{ \begin{array}{ll}
0, & \text{if} \; a_1a_2\neq 7\bmod 8 \\
k,\quad \enspace & \text{if} \; a_1a_2= 7\bmod 8
\end{array}\right.\\
\dim \big( \CC D_{2^{2k}}^{+}(a_1)\otimes \CC D_{2^{2k}}^{+} (a_2) \big)^{\SL_2(\ZZ)}&=\left\{ \begin{array}{ll}
1, & \text{if} \;  a_1a_2=3\bmod 8 \\
k+1, & \text{if} \;  a_1a_2= 7\bmod 8.
\end{array}\right.
\end{align*} 

\vspace{3mm}

\noindent
$\mathbf{(II)}$ If $p$ is odd, then $( \CC L_{p^{k_1}}^{\epsilon}(a_1)\otimes \CC L_{p^{k_2}}^{\epsilon'} (a_2) )^{\SL_2(\ZZ)} \neq 0$ if and only if it satisfies: 
\begin{enumerate}
\item If $k_1,k_2>0$, then $\epsilon=\epsilon'$.  If one of $k_1$ and $k_2$ is $0$, then $\epsilon=\epsilon'=+$.
\item $k_1+k_2$ is even.
\item If either $k_1, k_2$ are odd, or $k_1, k_2$ are even and $\epsilon=\epsilon'=-$, then $-a_1a_2$ is a square modulo $p$.
\end{enumerate}
Moreover, for any integer $k\geq 0$, we have
\begin{align*}
\dim \big( \CC L_{p^{2k+1}}^{\pm}(a_1)\otimes \CC L_{p^{2k+1}}^{\pm} (a_2) \big)^{\SL_2(\ZZ)}&=\left\{ \begin{array}{ll}
0, & \text{if} \; \big(\frac{-a_1a_2}{p}\big)=-1 \\
k+1, & \text{if} \;  \big(\frac{-a_1a_2}{p}\big)=1
\end{array}\right. \\
\dim \big( \CC L_{p^{2k}}^{-}(a_1)\otimes \CC L_{p^{2k}}^{-} (a_2) \big)^{\SL_2(\ZZ)}&=\left\{ \begin{array}{ll}
0, & \text{if} \; \big(\frac{-a_1a_2}{p}\big)=-1 \\
k, \quad \enspace & \text{if} \;  \big(\frac{-a_1a_2}{p}\big)=1
\end{array}\right. \\
\dim \big( \CC L_{p^{2k}}^{+}(a_1)\otimes \CC L_{p^{2k}}^{+} (a_2) \big)^{\SL_2(\ZZ)}&=\left\{ \begin{array}{ll}
1, & \text{if} \; \big(\frac{-a_1a_2}{p}\big)=-1 \\
k+1, & \text{if} \;  \big(\frac{-a_1a_2}{p}\big)=1,
\end{array}\right. 
\end{align*}
where $(\frac{x}{y})$ denotes the Kronecker symbol. 
\end{lemma}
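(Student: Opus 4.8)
The plan is to reduce each invariant space to a count of common irreducible constituents, and then to pin those constituents down by decomposing the single module $\CC L_{p^{k}}(a)$ (resp.\ $\CC D_{2^{k}}(a)$) through the level-raising embeddings of Section~\ref{sec:Weil-rep}, matching the pieces by explicit scaling isometries. As recalled just before the statement, for level $N=1$ the dimension of $(\CC L_{p^{k_1}}^{\epsilon}(a_1)\otimes \CC L_{p^{k_2}}^{\epsilon'}(a_2))^{\SL_2(\ZZ)}$ equals the number of irreducible $\SL_2(\ZZ)$-modules common to $\CC L_{p^{k_1}}(a_1)$ and $\CC L_{p^{k_2}}(-a_2)$, restricted to the prescribed parity eigenspaces under $Z$; the analogous statement with $\CC D$ and $\Mp_2(\ZZ)$ handles $p=2$. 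So it suffices to decompose one such module and to decide when two primitive pieces are isomorphic.

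First I would use the injection $\CC L_{p^{k-2}}(a)\hookrightarrow \CC L_{p^{k}}(a)$ (take $d=p$) and its orthogonal complement to obtain an $\SL_2(\ZZ)$-stable decomposition
$$
\CC L_{p^{k}}(a)=\bigoplus_{\substack{0\leq j\leq k\\ j\equiv k\,(2)}} N_j(a),
$$
where $N_j(a)$ is the image of the primitive part of $\CC L_{p^{j}}(a)$; here $N_0(a)$ is the trivial representation (spanned by $\sum_{p^{k/2}\mid x}\mathbf{e}^{x}$), while for $j\geq 1$ the piece $N_j(a)$ splits into its $\pm$ eigenspaces $N_j^{\pm}(a)$ under $\gamma\mapsto -\gamma$. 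Since pieces at distinct levels have distinct dimensions and the two parity eigenspaces are separated by the central element $Z$, a common constituent forces equal levels and equal parity, which gives condition~(2) ($k_1+k_2$ even) and the equal-parity requirement in condition~(1); the bottom level carries only a $+$ piece, which explains the remaining part of~(1).

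Next I would decide the isomorphism class of each primitive piece by an explicit scaling map $\mathbf{e}^{x}\mapsto \mathbf{e}^{cx}$, which is an isometry $\CC L_{p^{j}}(a)\cong\CC L_{p^{j}}(ac^{2})$. Thus $N_j^{\epsilon}(a_1)\cong N_j^{\epsilon}(-a_2)$ exactly when $-a_1a_2$ is a square modulo $p$ (for odd $p$, by Hensel's lemma), which is condition~(3) and the Kronecker symbol $(\frac{-a_1a_2}{p})$. Each such isomorphism produces an explicit invariant vector $v_{H}=\sum_{u\in H}\mathbf{e}^{u}$, where $H$ is the Lagrangian subgroup given by the graph $\{(x,cx)\}$ of the scaling (with $a_1+a_2c^{2}\equiv 0$), together with its torsion-supported refinements at the lower levels; one checks via the $S$-action and Milgram's formula~\eqref{eq:Milgram} that $v_H$ is fixed precisely when the signature is $0\bmod 8$. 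Counting the admissible levels $j=K,K-2,\dots$ along the diagonal $k_1=k_2=K$ then reproduces the three dimension formulas: $k+1$ levels when $K=2k+1$, and $k+1$ or $k$ in the two even parities according to whether the trivial bottom piece lands in the $+$ or the $-$ eigenspace (the unconditional trivial-to-trivial match being the source of the constant $1$ when $(\frac{-a_1a_2}{p})=-1$). Independence and completeness of the $v_H$ I would confirm against the inner product $\langle\sigma(\chi_m^{\epsilon}),\overline{\sigma'(\chi_{m'}^{\epsilon'})}\rangle$ recalled above.

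The main obstacle is the metaplectic case $p=2$. There the representation only factors through $\Mp_2(\ZZ)$, so the signature must be tracked modulo $8$ rather than modulo $4$; the scalings $x\mapsto cx$ now move $a$ only within a fixed class modulo $8$ (odd squares are $\equiv 1\bmod 8$), so the matching condition becomes $a_1\equiv -a_2$, i.e.\ $a_1a_2\equiv 7\bmod 8$, and the residual signature constraint forces $a_1a_2\in\{3,7\}\bmod 8$ in the even $+\,+$ case, with the value $3$ admitting only the trivial match and hence dimension $1$. Carrying the Milgram signature bookkeeping carefully enough to separate $3$ from $7\bmod 8$, and simultaneously establishing irreducibility of the pieces $N_j^{\pm}$ so that the constituent count is exact, is the delicate heart of the argument.
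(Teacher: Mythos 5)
Your overall strategy coincides with the paper's: both decompose $\CC L_{p^{k}}(a)$ (resp.\ $\CC D_{2^{k}}(a)$) into the orthogonal complements of the level-raising embeddings --- the primitive or ``new'' pieces --- and then count the irreducible constituents shared by $\CC L_{p^{k_1}}(a_1)$ and $\CC L_{p^{k_2}}(-a_2)$, with the parity constraints extracted from the central element $Z$ and the level grading. The difference lies in how the two essential inputs of this count are justified, and there your proposal has a genuine gap. The count is only exact if (i) each primitive piece $N_j^{\pm}(a)$ is irreducible, and (ii) one knows precisely when $N_j^{\epsilon}(a)\cong N_j^{\epsilon}(a')$ --- in particular that the isomorphism \emph{fails} when $aa'$ is not a square modulo $p^{j}$ (modulo $2^{j+2}$ when $p=2$). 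Your scaling isometries $\mathbf{e}^{x}\mapsto\mathbf{e}^{cx}$ and the isotropic-subgroup vectors $v_H$ only prove the ``if'' direction: they exhibit invariants and hence give lower bounds for the dimensions. Without (i), comparing dimensions of pieces at different levels does not exclude shared constituents; without the ``only if'' of (ii), none of the vanishing assertions (e.g.\ that $a_1a_2\neq 7\bmod 8$ forces dimension $0$ in the odd-exponent case) follow. You explicitly defer both points as ``the delicate heart of the argument'' but do not supply them. The paper closes exactly this gap by citing Lemmas~2 and~3 of \cite{Sko08}, which assert irreducibility of the new parts and classify them up to isomorphism by the square class of $a$ modulo $p^{n}$ (resp.\ $2^{n+2}$); everything else in the paper's proof is the bookkeeping you describe.

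Two smaller points. First, your claim that ``pieces at distinct levels have distinct dimensions'' is not by itself a proof of non-isomorphism across levels (for $p=3$ the pieces $N_1^{-}$ and $N_0^{+}$ are both one-dimensional); within a fixed parity class the dimensions do separate the levels, but one still needs irreducibility for this to control constituents. Second, for $p=2$ the bottom piece $\CC D_{1}^{+}(a)$ is two-dimensional and not the trivial representation; its matching condition is the weaker congruence $-a_1a_2\equiv 1\bmod 4$, and it is this weaker condition --- not a ``trivial-to-trivial match'' --- that produces the exceptional dimension $1$ when $a_1a_2\equiv 3\bmod 8$.
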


\begin{proof}
We first prove the even case. Let $\CC D_{2^n}^{\epsilon,\mathrm{new}}(a)$ denote the orthogonal complement of $\CC D_{2^{n-2}}^{\epsilon}(a)$ with respect to the $\Mp_2(\ZZ)$-invariant scalar product given by $\latt{\mathbf{e}^\alpha,\mathbf{e}^\beta}=\delta_{\alpha\beta}$. Let $[x]$ denote the integer part of $x$. Then $\CC D_{2^n}^\epsilon(a)$ has the decomposition
$$ 
\CC D_{2^n}^\epsilon(a)=\bigoplus_{k=0}^{[n/2]} \CC D_{2^{n-2k}}^{\epsilon,\mathrm{new}}(a). 
$$
By \cite[Lemma 3 ]{Sko08}, $\CC D_{2^n}^{\epsilon,\mathrm{new}}(a)$ is irreducible, and $\CC D_{2^n}^{\epsilon,\mathrm{new}}(a)$ is isomorphic to $\CC  D_{2^{n'}}^{\epsilon',\mathrm{new}}(a') $ if and only if $n=n'$, $\epsilon=\epsilon'$ and $aa'$ is a square modulo $2^{n+2}$.
Hence $\CC D_{2^n}^{\epsilon,\mathrm{new}}(a)\cong \CC  D_{2^{n'}}^{\epsilon',\mathrm{new}}(a')   $ implies $\CC D_{2^{n-2k}}^{\epsilon,\mathrm{new}}(a)\cong \CC  D_{2^{n'-2k}}^{\epsilon',\mathrm{new}}(a')$ for any positive integer $k$ with $n,n'\geq 2k$.
Therefore, 	$\CC D_{2^{k_1}}^{\epsilon_1}(a_1) $ and $\CC D_{2^{k_2}}^{\epsilon_2} (-a_2)$ have isomorphic irreducible components if and only if $k_1+k_2$ is even, $\epsilon=\epsilon'$ and $\CC D_{2^{m}}^{\epsilon}(a_1)\cong \CC D_{2^{m}}^{\epsilon'} (-a_2)$, where $m$ is the smallest integer such that $\CC D_{2^{m}}^{\epsilon}(a_1)$ and $\CC D_{2^{m}}^{\epsilon'}(-a_2)$ are non-empty, and that $k_1-m$ and $k_2-m$ are non-negative and even. More precisely, 
$$ 
m=\left\{ \begin{array}{ll}
1, & \text{if} \; 2\nmid k_1 \\ 0, & \text{if} \; 2\mid k_1 \; \text{and} \; \epsilon=+ \\ 2, & \text{if} \; 2\mid k_1 \; \mathrm{and} \; \epsilon=-.
\end{array}\right. 
$$
Moreover, $\CC D_{2^{m}}^{\epsilon}(a_1)\cong \CC D_{2^{m}}^{\epsilon} (-a_2)$ as irreducible components if and only if 
$$ 
-a_1a_2 =  \left\{ \begin{array}{ll}
 1 \; (\bmod 8), & \quad \text{if} \; 2\nmid k_1 \\ 1\; (\bmod 4), &\quad \text{if} \;  2\mid k_1 \; \text{and} \; \epsilon=+ \\ 1,9\; (\bmod 16), &\quad \text{if} \; 2\mid k_1 \; \text{and} \; \epsilon=-.
\end{array}\right.
$$ 
This proves the even case. 

We then prove the odd case.  
Let $\CC L_{p^n}^{\epsilon,\mathrm{new}}(a)$ denote the orthogonal complement of $\CC L_{p^{n-2}}^{\epsilon}(a)$ with respect to the $\SL_2(\ZZ)$-invariant scalar product given by $\latt{\mathbf{e}^\alpha,\mathbf{e}^\beta}=\delta_{\alpha\beta}$. Similarly, $\CC L_{p^n}^\epsilon(a)$ has the decomposition
$$ 
\CC L_{p^n}^\epsilon(a)=\bigoplus_{k=0}^{[n/2]} \CC L_{p^{n-2k}}^{\epsilon,\mathrm{new}}(a). 
$$
According to \cite[Lemma 2]{Sko08}, $\CC L_{p^n}^{\epsilon,\mathrm{new}}(a)$ is irreducible, and $\CC L_{p^n}^{\epsilon,\mathrm{new}}(a)$ is isomorphic to $\CC  L_{p^{n'}}^{\epsilon',\mathrm{new}}(a') $ if and only if $n=n'$, $\epsilon=\epsilon'$ and $aa'$ is a square modulo $p^{n}$. Therefore, $\CC L_{p^{k_1}}^{\epsilon}(a_1) $ and $\CC L_{p^{k_2}}^{\epsilon'} (-a_2)$ have isomorphic irreducible components if and only if $k_1+k_2$ is even, $\epsilon=\epsilon'$ and $\CC L_{p^{m}}^{\epsilon}(a_1)\cong \CC L_{p^{m}}^{\epsilon'} (-a_2)$, where $m$ is the smallest integer such that $\CC L_{p^{m}}^{\epsilon}(a_1)$ and $\CC L_{p^{m}}^{\epsilon'}(-a_2)$ are non-empty, and $k_1-m$ and $k_2-m$ are non-negative and even. In fact, 
$$ 
m=\left\{ \begin{array}{ll}
 1, & \text{if} \; 2\nmid k_1 \\ 0, & \text{if} \; 2\mid k_1 \; \text{and} \; \epsilon=+ \\ 2, & \text{if} \; 2\mid k_1 \; \text{and} \; \epsilon=- 
\end{array}\right. 
$$
This proves the odd case in a similar way. 
\end{proof}

It is not difficult to derive the following lemma from \cite[Lemma 3.9]{CDH18}. 
\begin{lemma}\label{lem:CD16-+}
Suppose that $k_1,k_2\in\{0,1,2,3,4\}$ and $a_1,a_2$ are odd integers with $a_1a_2$ being a square modulo $4$. Then
$$
\big(\CC D_{2^{k_1}}^-(a_1)\otimes \CC D_{2^{k_2}}^+(a_2)\big)^{\Gamma_0(16)}=0.
$$
\end{lemma}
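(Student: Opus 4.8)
The plan is to reduce the assertion to a finite list of invariant computations for irreducible constituents and then quote the $\Gamma_0(16)$-invariant calculation of \cite[Lemma 3.9]{CDH18}. First I would decompose each factor into its new pieces, using the orthogonal decomposition recalled in the proof of Lemma \ref{lem:irreducible_D2^N_LP^N}:
$$
\CC D_{2^{k_1}}^-(a_1)=\bigoplus_{j=0}^{[k_1/2]}\CC D_{2^{k_1-2j}}^{-,\mathrm{new}}(a_1), \qquad \CC D_{2^{k_2}}^+(a_2)=\bigoplus_{l=0}^{[k_2/2]}\CC D_{2^{k_2-2l}}^{+,\mathrm{new}}(a_2).
$$
Distributing the tensor product, $\CC D_{2^{k_1}}^-(a_1)\otimes \CC D_{2^{k_2}}^+(a_2)$ becomes the direct sum of the spaces $\CC D_{2^{n_1}}^{-,\mathrm{new}}(a_1)\otimes \CC D_{2^{n_2}}^{+,\mathrm{new}}(a_2)$ with $n_1\equiv k_1$, $n_2\equiv k_2 \pmod 2$ and $0\le n_i\le k_i\le 4$. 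Since passing to $\Gamma_0(16)$-invariants is additive, it commutes with this decomposition, so it suffices to prove
$$
\big(\CC D_{2^{n_1}}^{-,\mathrm{new}}(a_1)\otimes \CC D_{2^{n_2}}^{+,\mathrm{new}}(a_2)\big)^{\Gamma_0(16)}=0
$$
for each such pair $(n_1,n_2)$. The hypothesis that $a_1a_2$ is a square modulo $4$, i.e. $a_1a_2\equiv 1 \pmod 4$, is inherited by every pair because the new pieces carry the unchanged parameters $a_1,a_2$; moreover $\CC D_{2^0}^{-,\mathrm{new}}(a_1)=0$, so the cases with $n_1=0$ are vacuous and only $n_1\in\{1,2,3,4\}$ with $n_1\equiv k_1\pmod 2$ survive.

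Next I would feed each surviving pair into \cite[Lemma 3.9]{CDH18}, which computes the dimension of the $\Gamma_0(16)$-invariants of a tensor product of two two-adic components in terms of the exponents $n_1,n_2$, the signs $\epsilon_1,\epsilon_2$, and the class of $a_1a_2$. The essential point is that, although $\Gamma_0(16)$ is strictly smaller than $\SL_2(\ZZ)$ and could a priori produce invariants invisible at the full level -- where the opposite signs $\epsilon_1=-\neq+=\epsilon_2$ already force vanishing by Lemma \ref{lem:irreducible_D2^N_LP^N}(I) -- the cited computation shows that for $n_1,n_2\le 4$ these opposite-sign invariants continue to vanish once $a_1a_2\equiv 1 \pmod 4$. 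I would then run through the finitely many admissible pairs $(n_1,n_2)$ compatible with the fixed parities of $k_1,k_2$ and, in each, read off that the dimension returned by \cite[Lemma 3.9]{CDH18} is $0$.

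The main obstacle is the bookkeeping in this last step: one must verify that \emph{every} admissible pair, not merely the generic one, lands in the vanishing regime of the cited lemma, and this is exactly where both the ceiling $k_i\le 4$ and the congruence $a_1a_2\equiv 1 \pmod 4$ enter -- raising some $k_i$ to $5$, or permitting $a_1a_2\equiv 3\pmod 4$, would create nonzero opposite-sign invariants at level $16$. I would organize the verification by the parity pattern of $(n_1,n_2)$ (both odd, both even, or mixed, this pattern being fixed by $(k_1,k_2)\bmod 2$), since the relevant congruence condition on $a_1a_2$ in \cite[Lemma 3.9]{CDH18} depends on that pattern; in each pattern the square-modulo-$4$ hypothesis is precisely what excludes the nonvanishing class, yielding the claimed identity $\big(\CC D_{2^{k_1}}^-(a_1)\otimes \CC D_{2^{k_2}}^+(a_2)\big)^{\Gamma_0(16)}=0$.
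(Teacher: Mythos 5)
Your argument is essentially the paper's: the paper gives no proof beyond the remark that this lemma is "not difficult to derive" from \cite[Lemma 3.9]{CDH18}, and your decomposition into new parts followed by a finite check of each pair $(n_1,n_2)$ against that cited computation is exactly the intended derivation. One aside in your write-up is incorrect, though it does not affect the proof of the stated lemma: neither hypothesis is sharp for the conclusion, since for $a_1a_2\equiv 3\pmod 4$ the opposite-sign invariants vanish at every level already from the action of $Z=(-I,i)$ (Lemma \ref{lem:vanish-sign-2}), and Lemma \ref{lem:evenresult} later removes the restriction $k_i\le 4$ altogether.
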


When the level $N$ is greater than one, finding isomorphic irreducible components or determining the inner product of characters requires a lot of computational work. To determine the space $J_{1,m}(N)$, we instead compute their $p$-parts for all primes $p$. 

Let us first review two basic facts that we will use later. For a vector space $V$ acted on by a finite group $G$, we have $V^G=e\cdot V$, where $e=\frac{1}{|G|}\sum_{g\in G} g \in \CC[G]$. We also need the following coset decomposition:
\begin{equation}\label{coset_decomposition}
\Gamma_0(p^n) =\bigsqcup_{\substack{ l \, \bmod p^n\\ \mathrm{gcd}(l,p)=1}}\bigsqcup_{j\, \bmod p^n}ST^l ST^{l^{-1}}ST^j \, \Gamma(p^n),
\end{equation} 
where $p$ is a prime and $l^{-1}$ is an inverse of $l$ modulo $p^n$. 

Recall that $\mathrm{sign}(D_m)=1 \, \bmod 8$. The next lemma confirms the signatures of the discriminant forms $D_{2^n}(a)$ and $L_{p^n}(a)$. 

\begin{lemma}\label{lem:sign}
Let $p$ be an odd prime, $n,m\in\NN$, and $a, b\in \ZZ$ with $\mathrm{gcd}(2,a)=1$ and $\mathrm{gcd}(p,b)=1$. Then we have
$$ 
e\left(\frac{\mathrm{sign}(D_{2^n}(a))}{8}\right)=\left\{  \begin{array}{ll} e(\frac{a}{8}), & \text{if}\; 2\nmid n \\ \big(1+e(\frac{a}{4})\big)/\sqrt{2}, & \text{if}\; 2\mid n
\end{array} \right. 
$$
and
$$ 
e\left(\frac{\mathrm{sign}(L_{p^m}(b))}{8}\right)=\left\{\begin{array}{cl}\big(\frac{b}{p}\big) i,  & \text{if}\; p = 3\,(\bmod 4)\; \text{and}\; 2\nmid m \\ \big(\frac{b}{p}\big),  & \text{if}\; p = 1\,(\bmod 4)\; \text{and}\; 2\nmid m \\ 1, & \text{if}\; 2 \mid m\end{array}\right. 
$$
where $\left(\frac{x}{y}\right)$ denotes the Kronecker symbol as before.
\end{lemma}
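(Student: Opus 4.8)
The plan is to evaluate both signatures directly through Milgram's formula \eqref{eq:Milgram}, which expresses $e(\mathrm{sign}(D)/8)$ as the normalized Gaussian sum $|D|^{-1/2}\sum_{\gamma} e(Q(\gamma))$. Thus in each case the task reduces to evaluating a quadratic Gauss sum and comparing it with $\sqrt{|D|}$.

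For the odd part $L_{p^m}(b)=(\ZZ/p^m\ZZ, b\gamma^2/p^m)$, the relevant sum $\sum_{\gamma \bmod p^m} e(b\gamma^2/p^m)$ is a classical quadratic Gauss sum with odd modulus, equal to $\left(\frac{b}{p^m}\right)\varepsilon_{p^m}\sqrt{p^m}$, where $\varepsilon_c=1$ for $c\equiv 1 \bmod 4$ and $\varepsilon_c=i$ for $c\equiv 3 \bmod 4$. I would then simplify $\left(\frac{b}{p^m}\right)=\left(\frac bp\right)^m$ and observe that $p^m\equiv p \bmod 4$ when $m$ is odd while $p^m\equiv 1 \bmod 4$ when $m$ is even; splitting on the parity of $m$ and on $p \bmod 4$ reproduces the three stated values. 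This part is routine.

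The even part requires more care because in $D_{2^n}(a)=(\ZZ/2^{n+1}\ZZ, a\gamma^2/2^{n+2})$ the summation range $\ZZ/2^{n+1}\ZZ$ does not match the denominator $2^{n+2}$ of the quadratic form, so the sum is not immediately a standard Gauss sum. The key preliminary step is a halving identity: the substitution $\gamma\mapsto \gamma+2^{n+1}$ leaves $e(a\gamma^2/2^{n+2})$ unchanged (the cross term contributes $e(a\,2^n)=1$), whence
$$\sum_{\gamma \bmod 2^{n+1}} e\!\left(\frac{a\gamma^2}{2^{n+2}}\right)=\tfrac12\sum_{\gamma \bmod 2^{n+2}} e\!\left(\frac{a\gamma^2}{2^{n+2}}\right).$$
The right-hand sum is now a genuine Gauss sum with modulus $2^{n+2}\equiv 0 \bmod 4$, evaluated by the standard formula $\sum_{\gamma \bmod c} e(a\gamma^2/c)=(1+i)\overline{\varepsilon_a}\left(\frac ca\right)\sqrt c$ valid for $4\mid c$ and $a$ odd. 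Substituting $c=2^{n+2}$, using $\left(\frac{2^{n+2}}{a}\right)=\left(\frac2a\right)^{n+2}$, and dividing by $\sqrt{|D_{2^n}(a)|}=2^{(n+1)/2}$, I would arrive at
$$e\!\left(\frac{\mathrm{sign}(D_{2^n}(a))}{8}\right)=e\!\left(\tfrac18\right)\overline{\varepsilon_a}\left(\frac2a\right)^{n+2}.$$

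Finally I would split on the parity of $n$: when $n$ is even the factor $\left(\frac2a\right)^{n+2}$ disappears, leaving $e(1/8)\overline{\varepsilon_a}$, which equals $(1+e(a/4))/\sqrt2$ upon checking $a\equiv 1,3 \bmod 4$; when $n$ is odd one retains $\left(\frac2a\right)$, and a short verification over the four residues $a\equiv 1,3,5,7 \bmod 8$ shows $e(1/8)\overline{\varepsilon_a}\left(\frac2a\right)=e(a/8)$. The main obstacle is purely bookkeeping: fixing consistent conventions for the even-modulus Gauss sum (the factor $\overline{\varepsilon_a}$ and the Jacobi symbol, for which one selects a positive odd representative of $a$) and carrying the halving identity through correctly; once these are pinned down, the four-case check is mechanical.
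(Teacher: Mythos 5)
Your proposal is correct and follows essentially the same route as the paper: both reduce the statement to Milgram's formula together with classical quadratic Gauss sum evaluations. The only difference is cosmetic --- you derive the even-modulus values from the general reciprocity formula $(1+i)\overline{\varepsilon_a}\left(\frac{c}{a}\right)\sqrt{c}$ and make explicit the halving identity needed to pass from the sum over $\ZZ/2^{n+1}\ZZ$ to a genuine Gauss sum modulo $2^{n+2}$, a step the paper leaves implicit when quoting the Gauss sum identities from Lang.
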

\begin{proof}
The proof follows from Milgram's formula \eqref{eq:Milgram} and the following identities of Gauss sums (see e.g. \cite[pp. 85--90]{Lan94})
\begin{align*}
\sum_{k=1}^{2^l} e\left(\frac{a k^2}{2^l}\right)&=\left\{\begin{array}{cl}2^{\frac{l+1}{2}} e\big(\frac{a}{8}\big), & \text{if}\; 2\nmid l \\ 2^{\frac{l}{2}}\big(1+e\left(\frac{a}{4}\right)\big), & \text{if}\; 2\mid l\end{array}\right.\\
\sum_{k=1}^{p^m} e\left(\frac{b k^2}{p^m}\right)&=\left\{\begin{array}{cl}\big(\frac{b}{p}\big) ip^{\frac{m}{2}},  & \text{if}\; p = 3\,(\bmod 4)\; \text{ and }\; 2\nmid m \\ \big(\frac{b}{p}\big)p^{\frac{m}{2}},   & \text{if}\; p = 1\,(\bmod 4)\; \text{and}\; 2\nmid m \\ p^{\frac{m}{2}},  & \text{if}\; 2 \mid m\end{array}\right.	
\end{align*}
where $l>1$ and $m\geq 0$. 
\end{proof}

The following two lemmas describe the action of certain specific elements of $\Mp_2(\ZZ)$ on $\CC L_{p^n}(a)$ and $\CC D_{2^n}(a)$ respectively. We refer to \cite[Theorem 4.7]{Scheithauer09} and \cite[Theorem 3.1]{Boy15} for more general formulas of this kind.

\begin{lemma}\label{lem:oddformula}
Let $p$ be an odd prime and $a$ be an integer that is coprime to $p$. Fix $\gamma \in L_{p^n}(a)$. For any integer $m$, we have $ST^mS=\begin{psmallmatrix}
-1 & 0 \\ m & -1    
\end{psmallmatrix}$ and 
\begin{equation*}
ST^mS \mathbf{e}^\gamma = \left\{ \begin{aligned}
-\frac{1}{p^n}  \sum_{\alpha,\, \beta \in L_{p^n}(a)}e\left( \frac{a(-2\gamma \alpha-2\alpha \beta+m\alpha^2)}{p^n}\right)\mathbf{e}^\beta, & \quad \text{if}\; p=3\; \bmod\,4 \; \text{and} \; 2\nmid n, \\ \frac{1}{p^n}  \sum_{\alpha,\, \beta \in L_{p^n}(a)}e\left( \frac{a(-2\gamma \alpha-2\alpha \beta+m\alpha^2)}{p^n}\right)\mathbf{e}^\beta, &\quad   \text{otherwise}.
\end{aligned}\right.
\end{equation*}
If there exists $m'\in\ZZ$ such that $mm'=1 \; \bmod \; p^n$, then $ST^{m'}ST^mS=\begin{psmallmatrix}
-m & 1 \\ 0 & -m'    
\end{psmallmatrix} \; \bmod \; p^n$ and 
\begin{equation}\label{eq:oddformulaSTSTS}
ST^{m'}ST^{m}S \mathbf{e}^\gamma= \left\{ \begin{array}{ll}
e\left(\frac{-a m' \gamma^2}{p^ n}\right) \mathbf{e}^{-m'\gamma}, & \quad \text{if}\; 2 \mid n,\\  \left(\frac{-m}{p}\right)e\left(\frac{- am' \gamma^2}{p^ n}\right) \mathbf{e}^{-m'\gamma}, & \quad \text{if}\; 2 \nmid n. 
\end{array} \right. 		
\end{equation}	
\end{lemma}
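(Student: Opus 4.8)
The plan is to compute both actions directly from the generator formulas $\rho_D(T)\mathbf{e}^\gamma = e(Q(\gamma))\mathbf{e}^\gamma$ and $\rho_D(S)\mathbf{e}^\gamma = \frac{e(-\mathrm{sign}(D)/8)}{\sqrt{|D|}}\sum_{\beta} e(-B(\gamma,\beta))\mathbf{e}^\beta$, specialized to $D=L_{p^n}(a)$, where $Q(\gamma)=a\gamma^2/p^n$, $B(\gamma,\beta)=2a\gamma\beta/p^n$ and $|D|=p^n$. The two matrix identities $ST^mS=\begin{psmallmatrix} -1 & 0 \\ m & -1 \end{psmallmatrix}$ and $ST^{m'}ST^mS=\begin{psmallmatrix} -m & 1 \\ 0 & -m' \end{psmallmatrix}\bmod p^n$ (valid when $mm'\equiv 1$) are immediate $\SL_2$ multiplications. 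Since $L_{p^n}(a)$ has even signature by Lemma \ref{lem:sign}, the representation factors through $\SL_2(\ZZ)$, so composing the generator formulas in the correct order introduces no metaplectic sign ambiguity.

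For the first formula I would apply $S$, then $T^m$, then $S$ to $\mathbf{e}^\gamma$. The first $S$ produces a sum over $\alpha$ with phase $e(-2a\gamma\alpha/p^n)$; $T^m$ multiplies the $\alpha$-term by $e(ma\alpha^2/p^n)$; the second $S$ introduces a sum over $\beta$ with phase $e(-2a\alpha\beta/p^n)$ and a second normalization. Collecting exponents yields exactly the displayed double sum with overall scalar $e(-2\,\mathrm{sign}(L_{p^n}(a))/8)=e(-\mathrm{sign}(L_{p^n}(a))/4)$. Squaring the values of $e(\mathrm{sign}/8)$ from Lemma \ref{lem:sign} and using $\big(\frac{a}{p}\big)^2=1$ shows this scalar is $-1$ precisely when $p\equiv 3\bmod 4$ and $n$ is odd, and $+1$ otherwise, which is the claimed sign.

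For \eqref{eq:oddformulaSTSTS} I would start from the first formula, then apply $T^{m'}$ (multiplying the $\beta$-term by $e(m'a\beta^2/p^n)$) and a final $S$ (introducing a sum over $\delta$). The key manipulation is to carry out the inner $\alpha$-sum first: completing the square $m\alpha^2-2\alpha(\gamma+\beta)\equiv m(\alpha-m'(\gamma+\beta))^2-m'(\gamma+\beta)^2\pmod{p^n}$ factors out the Gauss sum $G=\sum_{\alpha}e(am\alpha^2/p^n)$, whose value is furnished by the Gauss-sum identities in the proof of Lemma \ref{lem:sign}. After simplifying $-m'(\gamma+\beta)^2+m'\beta^2=-m'\gamma^2-2m'\gamma\beta$, the remaining $\beta$-sum is $\sum_{\beta}e(-2a\beta(m'\gamma+\delta)/p^n)$, which by orthogonality (here $\gcd(2a,p)=1$) vanishes unless $\delta\equiv -m'\gamma\pmod{p^n}$ and otherwise equals $p^n$. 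Hence only the single term $\mathbf{e}^{-m'\gamma}$ with phase $e(-am'\gamma^2/p^n)$ survives, giving the stated shape.

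It remains to evaluate the scalar prefactor $C=\epsilon\cdot e(-\mathrm{sign}/8)\cdot G/\sqrt{p^n}$, where $\epsilon$ is the sign from the first formula; I expect this final bookkeeping to be the main (if routine) obstacle, since one must combine three case-dependent ingredients: the sign $\epsilon$, the conjugate signature phase $e(-\mathrm{sign}/8)$, and the Gauss-sum value $G$ (carrying $\big(\frac{am}{p}\big)$, and an extra factor $i$ when $p\equiv 3\bmod 4$). For $n$ even every factor is trivial and $C=1$. For $n$ odd I would use $\big(\frac{a}{p}\big)\big(\frac{am}{p}\big)=\big(\frac{m}{p}\big)$ together with $\big(\frac{-1}{p}\big)=1$ for $p\equiv 1\bmod 4$ and $\big(\frac{-1}{p}\big)=-1$ for $p\equiv 3\bmod 4$; in the latter case the two factors of $i$ from the signature phase and the Gauss sum contribute $i^2=-1$. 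In both odd cases the accumulated powers of $i$ and Kronecker symbols collapse to $C=\big(\frac{-m}{p}\big)$, completing the proof.
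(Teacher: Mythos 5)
Your proposal is correct and follows essentially the same route as the paper: compute $ST^mS$ directly from the generator formulas with the scalar $e(-\mathrm{sign}(L_{p^n}(a))/4)$ evaluated via Lemma \ref{lem:sign}, then for $ST^{m'}ST^mS$ complete the square in the $\alpha$-sum to extract the Gauss sum $\sum_\alpha e(am\alpha^2/p^n)$ and use orthogonality to collapse the remaining sum to $\mathbf{e}^{-m'\gamma}$, with the case-by-case bookkeeping of Kronecker symbols and powers of $i$ yielding $\big(\tfrac{-m}{p}\big)$ exactly as you describe. The only difference is cosmetic (you evaluate the $\alpha$-Gauss sum after writing out all three $S$'s rather than interleaving it before applying $T^{m'}$), and your final scalar $C=\epsilon\cdot e(-\mathrm{sign}/8)\cdot G/\sqrt{p^n}$ agrees with the paper's $e(-3\,\mathrm{sign}/8)A/p^{n/2}$-type expression.
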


\begin{proof}
By definition, we have
$$
ST^mS \mathbf{e}^\gamma=\frac{e(-\mathrm{sign}(L_{p^n}(a))/4)}{p^n} \sum_{\alpha,\, \beta \in L_{p^n}(a)}e\left( \frac{a(-2\gamma \alpha-2\alpha \beta+m\alpha^2)}{p^n}\right)\mathbf{e}^\beta. 
$$
By Lemma \ref{lem:sign}, we find   
$$
e\Big(\frac{-\mathrm{sign}(L_{p^n}(a))}{4}\Big)=\left(\frac{-1}{p^n}\right). 
$$
The first formula then follows from the two identities above. 

If $\mathrm{gcd}(m,p)=1$ and $mm' =1 \; \bmod \; p^n$, then
\begin{align*}
ST^mS \mathbf{e}^\gamma&=\frac{1}{p^n}\left(\frac{-1}{p^n}\right) \sum_{\beta\in L_{p^n}(a)}\sum_{\alpha\in L_{p^n}(a)}e\left(\frac{ma(\alpha-m'(\gamma+\beta))^2}{p^n}\right)e\left(\frac{-am'(\gamma+\beta)^2}{p^n}\right) \mathbf{e}^\beta \\ & =\left(\frac{m}{p^n}\right)\frac{e(-\mathrm{sign}(L_{p^n}(a))/8)}{p^{\frac{n}{2}}} \sum_{\beta \in L_{p^n}(a)}e\left(\frac{-am'(\gamma+\beta)^2}{p^n}\right) \mathbf{e}^\beta. 
\end{align*}

It follows that 
$$
T^{m'}ST^{m}S \mathbf{e}^\gamma=e\left(\frac{-am'\gamma^2}{p^n}\right)\left(\frac{m}{p^n}\right)\frac{e(-\mathrm{sign}(L_{p^n}(a))/8)}{p^{\frac{n}{2}}}\sum_{\beta \in L_{p^n}(a)}e\left(\frac{-2am'\gamma\beta}{p^n}\right) \mathbf{e}^\beta,
$$
and thus
\begin{align*}
ST^{m'}ST^{m}S \mathbf{e}^\gamma&=e\left(\frac{-am'\gamma^2}{p^n}\right)\left(\frac{-m}{p^n}\right)\frac{1}{p^n}\sum_{\alpha,\, \beta \in L_{p^n}(a)}e\left(\frac{-2am'\gamma\alpha-2a\alpha\beta}{p^n}\right) \mathbf{e}^\beta\\
&=e\left(\frac{-am'\gamma^2}{p^n}\right)\left(\frac{-m}{p^n}\right)\mathbf{e}^{ -m'\gamma}.
\end{align*}
This proves the second formula. 
\end{proof}

The second formula above can also be deduced from \cite[Proposition 4.2]{Scheithauer09}.

\begin{lemma}\label{lem:evenformula}
Let $\gamma \in D_{2^n}(a)$ with $a$ being odd. For any $m\in \ZZ$, we have
\begin{equation*}
ST^m S \mathbf{e}^\gamma=\frac{(-i)^a}{2^{n+1}}\sum_{\alpha,\, \beta\in D_{2^n}(a)}e\left(\frac{a(-2\gamma\alpha-2\alpha\beta+m\alpha^2)}{2^{n+2}} \right)\mathbf{e}^\beta. 
\end{equation*}
If $m$ is odd and $mm' =1 \, \bmod \,2^{n+2}$, then
\begin{equation}\label{eq:evenformulaSTSTS}
ST^{m'}ST^mS \mathbf{e}^\gamma=\left\{ \begin{array}{ll} \frac{\left(1+e\left(\frac{-a}{4}\right)\right)^3\left(1+e\left(\frac{am}{4}\right)\right)}{4}  e\left(\frac{-am'\gamma^2}{2^{n+2}}\right) \mathbf{e}^{-m'\gamma}, & \text{if}\; 2 \mid n, \\
e\left(-\frac{3a}{8}\right) e\left(\frac{am}{8}\right) e\left(\frac{-am'\gamma^2 }{2^{n+2}}\right) \mathbf{e}^{-m'\gamma}, & \text{if}\; 2\nmid n. \end{array}\right.
\end{equation}
\end{lemma}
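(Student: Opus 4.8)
The plan is to establish the two identities in sequence: the first by directly expanding the defining action of the Weil representation, and the second by the same completing-the-square argument already used for $\CC L_{p^n}(a)$ in Lemma \ref{lem:oddformula}. First I would write $ST^mS\mathbf{e}^\gamma = S\big(T^m(S\mathbf{e}^\gamma)\big)$ and substitute $\rho_{D_{2^n}(a)}(S)\mathbf{e}^\gamma = \frac{e(-\mathrm{sign}(D_{2^n}(a))/8)}{\sqrt{|D_{2^n}(a)|}}\sum_\beta e(-B(\gamma,\beta))\mathbf{e}^\beta$ together with $\rho(T)\mathbf{e}^\alpha = e(Q(\alpha))\mathbf{e}^\alpha$, where for $D_{2^n}(a)=\ZZ/2^{n+1}\ZZ$ one has $Q(\alpha)=a\alpha^2/2^{n+2}$, $B(\gamma,\beta)=a\gamma\beta/2^{n+1}$, and $|D_{2^n}(a)|=2^{n+1}$. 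Collecting the exponentials produces exactly the stated double sum, with leading constant $e(-\mathrm{sign}(D_{2^n}(a))/4)/2^{n+1}$, so it remains only to check $e(-\mathrm{sign}(D_{2^n}(a))/4)=(-i)^a$. This follows from Lemma \ref{lem:sign} by squaring the recorded value of $e(-\mathrm{sign}(D_{2^n}(a))/8)$ and treating $a\equiv 1,3\bmod 4$ separately; in the even-$n$ case this uses $(1+e(-a/4))^2/2=(-i)^a$.

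For the second identity I would start from the first formula and complete the square in $\alpha$. A preliminary remark is needed: since $\alpha$ ranges over $\ZZ/2^{n+1}\ZZ$ while the denominator is $2^{n+2}$, one checks that $\alpha^2\bmod 2^{n+2}$ and the cross terms $2\gamma\alpha,2\alpha\beta$ depend only on $\alpha\bmod 2^{n+1}$, so all manipulations are well-defined. Using $mm'\equiv 1\bmod 2^{n+2}$ gives $m\alpha^2-2(\gamma+\beta)\alpha\equiv m(\alpha-m'(\gamma+\beta))^2-m'(\gamma+\beta)^2\bmod 2^{n+2}$. After shifting $\alpha$, the inner sum becomes the quadratic Gauss sum $\sum_{\alpha\bmod 2^{n+1}}e(am\alpha^2/2^{n+2})$; since its summand has period $2^{n+1}$, this equals half of $\sum_{\alpha\bmod 2^{n+2}}e(am\alpha^2/2^{n+2})$, which I evaluate via the Gauss-sum identities in the proof of Lemma \ref{lem:sign}. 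This yields $ST^mS\mathbf{e}^\gamma = C\sum_\beta e(-am'(\gamma+\beta)^2/2^{n+2})\mathbf{e}^\beta$ for an explicit constant $C$ depending on the parity of $n$.

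Next I would apply $T^{m'}$ and then $S$. Acting by $T^{m'}$ adds $m'Q(\beta)$ to the exponent, and combining with $-am'(\gamma+\beta)^2/2^{n+2}$ cancels the $\beta^2$-terms, leaving the linear-in-$\beta$ phase $e(-am'\gamma^2/2^{n+2})\,e(-am'\gamma\beta/2^{n+1})$. Applying $S$ then produces a double sum whose inner sum $\sum_\beta e(-a\beta(m'\gamma+\delta)/2^{n+1})$ is, by orthogonality and $\gcd(a,2)=1$, equal to $2^{n+1}$ when $\delta=-m'\gamma$ and $0$ otherwise. Everything collapses to the single term $\mathbf{e}^{-m'\gamma}$ carrying the expected phase $e(-am'\gamma^2/2^{n+2})$, as required.

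The main obstacle is the bookkeeping of the accumulated scalar prefactor and verifying it matches the two branches of \eqref{eq:evenformulaSTSTS}. Collecting $C$, the extra factor $e(-\mathrm{sign}(D_{2^n}(a))/8)$ from the final $S$, and the normalization $2^{n+1}/\sqrt{|D_{2^n}(a)|}$, the constant reduces for odd $n$ to $(-i)^a e(am/8)e(-a/8)=e(-3a/8)e(am/8)$, which is the stated phase. For even $n$ the constant reduces to $(-i)^a(1+e(am/4))(1+e(-a/4))/2$, and the identity $2(-i)^a=(1+e(-a/4))^2$ (checked on $a\equiv 1,3\bmod 4$) is exactly what converts this into the symmetric form $(1+e(-a/4))^3(1+e(am/4))/4$ appearing in the lemma. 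Keeping the powers of $2$ and the various eighth-roots of unity straight across the three applications of $S$ is where the calculation is most error-prone.
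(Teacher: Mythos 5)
Your proposal is correct and follows essentially the same route as the paper: direct expansion of the Weil representation for the first identity, then completing the square in $\alpha$, evaluating the resulting quadratic Gauss sum via the identities behind Lemma \ref{lem:sign}, and collapsing the final sum over $\beta$ by orthogonality. The scalar bookkeeping you describe (including the identity $2(-i)^a=(1+e(-a/4))^2$ for odd $a$) matches the paper's computation of the prefactor $A_n e(-3\,\mathrm{sign}(D_{2^n}(a))/8)/2^{(n+1)/2}$.
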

\begin{proof}
Similarly, we have
$$
ST^mS \mathbf{e}^\gamma=\frac{e(-\mathrm{sign}(D_{2^n}(a))/4)}{2^{n+1}} \sum_{\alpha,\,\beta \in D_{2^n}(a)}e\left( \frac{a(-2\gamma \alpha-2\alpha \beta+m\alpha^2)}{2^{n+2}}\right)\mathbf{e}^\beta
$$
and
$$
e\big(-\mathrm{sign}(D_{2^n}(a))/4\big)=(-i)^a,
$$
which yield the first formula. By Lemma \ref{lem:sign}, we have
$$
A_n:=\sum_{k=1}^{2^{n+1}}e\left(\frac{mak^2}{2^{n+2}}\right)
=\left\{\begin{array}{cl}2^{\frac{n+1}{2}} e\left(\frac{ma}{8}\right), & \text{if}\; 2\nmid n, \\ 2^{\frac{n}{2}}\left(1+e\left(\frac{ma}{4}\right)\right), & \text{if}\; 2\mid n.\end{array}\right.
$$
If $m$ is odd and $mm' =1 \, \bmod \,2^{n+2}$, then
\begin{align*}
ST^mS \mathbf{e}^\gamma&=\frac{(-i)^a}{2^{n+1}}\sum_{\beta\in D_{2^n}(a)}\sum_{\alpha\in D_{2^n}(a)}e\left(\frac{ma(\alpha-m'(\gamma+\beta))^2}{2^{n+2}}\right)e\left(\frac{-am'(\gamma+\beta)^2}{2^{n+2}}\right) \mathbf{e}^\beta \\ & =\frac{(-i)^a}{2^{n+1}} A_n \sum_{\beta \in L_{p^n}(a)}e\left(\frac{-am'(\gamma+\beta)^2}{2^{n+2}}\right) \mathbf{e}^\beta.
\end{align*}
It follows that
$$
T^{m'}ST^mS \mathbf{e}^\gamma=e\left(\frac{-am'\gamma^2}{2^{n+2}}\right)\frac{(-i)^a}{2^{n+1}} A_n \sum_{\beta \in D_{2^n}(a)}e\left(\frac{-2am'\gamma\beta}{2^{n+2}}\right)\mathbf{e}^\beta, 
$$
and thus 
$$
ST^{m'}ST^mS \mathbf{e}^\gamma=e\left(\frac{-am'\gamma^2}{2^{n+2}}\right)\frac{A_ne(-3\mathrm{sign}(D_{2^n}(a))/8)}{2^{\frac{n+1}{2}}}\mathbf{e}^{ -m'\gamma}.
$$
This proves the second formula by Lemma \ref{lem:sign}. 
\end{proof}

We now determine the sign of non-trivial $p$-parts for any odd prime $p$. 

\begin{lemma}\label{lem:vanish-sign}
Let $k_1,k_2$ be positive integers and $a_1,a_2$ be integers that are coprime to an odd prime $p$. If either $ k_1+k_2$ is even or $p=1 \, \bmod 4$, then  $(\CC L_{p^{k_1}}^{\epsilon}(a_1)\otimes\CC L_{p^{k_2}}^{-\epsilon}(a_2))^{\Gamma_0(p^n)}=0$ for $\epsilon\in\{+,-\}$ and any integer $n\geq 0$. 
\end{lemma}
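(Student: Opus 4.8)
The plan is to exploit the central element $Z=(-I,i)$, whose image $-I$ lies in $\Gamma_0(p^n)$ for every $n\geq 0$, together with the parity mismatch between the superscripts $\epsilon$ and $-\epsilon$. Since $\CC L_{p^k}(a)$ has even signature by Lemma \ref{lem:sign}, its Weil representation factors through $\SL_2(\ZZ)$, and the action of $Z$ coincides with that of $-I\in\SL_2(\ZZ)$; thus any $\Gamma_0(p^n)$-invariant vector is necessarily fixed by $Z$. I would therefore show that $Z$ acts on the tensor product by a scalar different from $1$.

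First I would record the action of $Z$ on each factor. The space $\CC L_{p^k}^{\pm}(a)$ is the $\pm1$-eigenspace of the involution $\mathbf{e}^\gamma\mapsto\mathbf{e}^{-\gamma}$, and by \eqref{eq:Z-action} we have $\rho(Z)\mathbf{e}^\gamma=e(-\mathrm{sign}(L_{p^k}(a))/4)\,\mathbf{e}^{-\gamma}$. Hence, identifying $\epsilon\in\{+,-\}$ with $\pm1$, the element $Z$ acts on $\CC L_{p^k}^{\epsilon}(a)$ by the scalar $\epsilon\,e(-\mathrm{sign}(L_{p^k}(a))/4)$. Because the representation on a tensor product is diagonal, the sign prefactors contribute $\epsilon\cdot(-\epsilon)=-1$, so $Z$ acts on $\CC L_{p^{k_1}}^{\epsilon}(a_1)\otimes\CC L_{p^{k_2}}^{-\epsilon}(a_2)$ by
$$
-\,e\!\left(-\frac{\mathrm{sign}(L_{p^{k_1}}(a_1))+\mathrm{sign}(L_{p^{k_2}}(a_2))}{4}\right).
$$
Since $-I\in\Gamma_0(p^n)$, if this scalar is not equal to $1$ the invariant space must be zero.

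Next I would reduce the signatures modulo $4$ using Lemma \ref{lem:sign}. Squaring the value of $e(\mathrm{sign}(L_{p^k}(a))/8)$ recorded there shows $\mathrm{sign}(L_{p^k}(a))\equiv 0\bmod 4$ whenever $k$ is even, or $k$ is odd with $p\equiv1\bmod4$, whereas $\mathrm{sign}(L_{p^k}(a))\equiv 2\bmod4$ exactly when $k$ is odd and $p\equiv3\bmod4$. A short case analysis on the parities of $k_1,k_2$ then shows that the hypothesis forces $\mathrm{sign}(L_{p^{k_1}}(a_1))+\mathrm{sign}(L_{p^{k_2}}(a_2))\equiv0\bmod4$: if $p\equiv1\bmod4$ both summands are $\equiv0$, while if $k_1+k_2$ is even the two summands share the same residue (both $0$, or both $2$ when $p\equiv3\bmod4$ and $k_1,k_2$ are odd) and so add to $0\bmod4$. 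Consequently the scalar above equals $-e(0)=-1\neq1$, and the invariant space vanishes.

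I expect the only delicate point to be the bookkeeping in the third step: one must keep straight the three regimes of Lemma \ref{lem:sign} and check that the hypothesis excludes precisely the configuration $k_1+k_2$ odd with $p\equiv3\bmod4$. Indeed, in that excluded case the two signatures are $\equiv0$ and $\equiv2\bmod4$, their sum is $\equiv2\bmod4$, and the scalar becomes $-e(-1/2)=+1$, so $Z$ acts trivially and the argument would fail — which is exactly why the hypothesis is needed and why the lemma is sharp. Everything else is a formal consequence of \eqref{eq:Z-action} and the diagonal action on the tensor product.
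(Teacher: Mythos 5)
Your proof is correct and follows the same route as the paper: the paper's own (very terse) argument likewise observes that $-I\in\Gamma_0(p^n)$ and uses \eqref{eq:Z-action} together with Lemma \ref{lem:sign} to conclude that $\rho(Z)$ acts by $-1$ on $\CC L_{p^{k_1}}^{\epsilon}(a_1)\otimes\CC L_{p^{k_2}}^{-\epsilon}(a_2)$ under the stated hypotheses. Your case analysis of the signatures modulo $4$ simply makes explicit the computation the paper leaves to the reader.
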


\begin{proof}
Recall that $Z=(-I,i)$ and $-I\in\Gamma_0(N)$ for any positive integer $N$.
By Lemma \ref{lem:sign} and \eqref{eq:Z-action}, $\rho(Z)$ maps $v\in \CC L_{p^{k_1}}^{\epsilon}(a_1)\otimes\CC L_{p^{k_2}}^{-\epsilon}(a_2)$ to $-v$. This proves the desired lemma. 
\end{proof}

In a similar way, we confirm the sign of non-trivial $2$-parts. 

\begin{lemma}\label{lem:vanish-sign-2}
Let $k_1,k_2$ be non-negative integers and $a_1,a_2$ be odd integers. Let $\epsilon\in\{+,-\}$ and $n\geq 0$ be an integer. If $a_1a_2$ is a square modulo $4$, then  $(\CC D_{2^{k_1}}^{\epsilon}(a_1)\otimes\CC D_{2^{k_2}}^{\epsilon}(a_2))^{\Gamma_0(2^n)}=0$. If $a_1a_2$ is not a square modulo $4$, then  $(\CC D_{2^{k_1}}^{\epsilon}(a_1)\otimes\CC D_{2^{k_2}}^{-\epsilon}(a_2))^{\Gamma_0(2^n)}=0$. 
\end{lemma}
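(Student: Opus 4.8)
The plan is to mimic the proof of Lemma~\ref{lem:vanish-sign} by exploiting the central element $Z=(-I,i)\in\Mp_2(\ZZ)$. Its image $-I$ lies in $\Gamma_0(2^n)$ for every $n\geq 0$, so any $\Gamma_0(2^n)$-invariant vector must be fixed by the action of $-I$. First I would note that, by Lemma~\ref{lem:sign}, the signature $\mathrm{sign}(D_{2^{k}}(a))$ is odd for every odd $a$ and every $k\geq 0$; hence the tensor product $\CC D_{2^{k_1}}^{\epsilon}(a_1)\otimes\CC D_{2^{k_2}}^{\epsilon'}(a_2)$ has even signature and factors through $\SL_2(\ZZ)$, so the $\Gamma_0(2^n)$-invariants are well defined and $-I$ acts through $\rho(Z)$. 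The whole argument then reduces to showing that $\rho(Z)$ is multiplication by $-1$ on this tensor product, for then every invariant $v$ satisfies $v=\rho(Z)v=-v$, i.e. $v=0$.

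The next step is to compute how $Z$ acts on a single factor $\CC D_{2^{k}}^{\epsilon}(a)$. Using \eqref{eq:Z-action}, the generator $\mathbf{e}^\gamma+\epsilon\,\mathbf{e}^{-\gamma}$ is sent to $\epsilon\,e(-\mathrm{sign}(D_{2^{k}}(a))/4)$ times itself, so $Z$ acts by the scalar $\epsilon\,e(-\mathrm{sign}(D_{2^{k}}(a))/4)$ with $\epsilon=\pm1$. The key computational input is the identity $e(-\mathrm{sign}(D_{2^{k}}(a))/4)=(-i)^{a}$, valid for both parities of $k$. This follows from Lemma~\ref{lem:sign}: when $k$ is odd it is immediate from $e(\mathrm{sign}/8)=e(a/8)$, since $e(-\mathrm{sign}/4)=e(\mathrm{sign}/8)^{-2}=e(-a/4)=(-i)^a$; when $k$ is even one squares $(1+e(a/4))/\sqrt2$ and checks that it equals $e(a/4)$ for odd $a$ (as $(1\pm i)^2/2=\pm i$), whence again $e(-\mathrm{sign}/4)=e(\mathrm{sign}/8)^{-2}=e(-a/4)=(-i)^a$.

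With this in hand, $Z$ acts on $\CC D_{2^{k_1}}^{\epsilon}(a_1)\otimes\CC D_{2^{k_2}}^{\epsilon'}(a_2)$ by the scalar $\epsilon\epsilon'\,(-i)^{a_1}(-i)^{a_2}$. Since $a_1,a_2$ are odd, $(-i)^{a}=-i$ if $a\equiv1\bmod 4$ and $(-i)^{a}=i$ if $a\equiv3\bmod 4$; therefore $(-i)^{a_1}(-i)^{a_2}=-1$ precisely when $a_1\equiv a_2\bmod 4$, i.e. when $a_1a_2\equiv1\bmod 4$ is a square modulo $4$, and equals $+1$ when $a_1a_2\equiv3\bmod 4$ is a non-square. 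I would then combine this with the two cases of the statement: when $a_1a_2$ is a square we take $\epsilon'=\epsilon$, so $\epsilon\epsilon'=1$ and the total scalar is $1\cdot(-1)=-1$; when $a_1a_2$ is a non-square we take $\epsilon'=-\epsilon$, so $\epsilon\epsilon'=-1$ and the total scalar is $(-1)\cdot(+1)=-1$. In both situations $\rho(Z)=-\mathrm{id}$, and the vanishing follows as above.

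I expect the only non-routine point to be the uniform evaluation $e(-\mathrm{sign}(D_{2^{k}}(a))/4)=(-i)^a$ across both parities of $k$, because the even case of Lemma~\ref{lem:sign} records the signature through the Gauss-sum value $(1+e(a/4))/\sqrt2$ rather than a residue, so a short computation is needed to see that squaring it recovers $e(a/4)$. Everything else is bookkeeping with $\pm$ signs and residues modulo $4$, and the structural fact that $-I\in\Gamma_0(2^n)$ makes the central-element trick of Lemma~\ref{lem:vanish-sign} applicable essentially verbatim; the one genuinely new wrinkle relative to the odd case is that here each individual factor has odd signature, so it is only on the tensor product that $\rho(Z^2)=\mathrm{id}$ and the action of $-I$ becomes well defined.
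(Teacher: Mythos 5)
Your proposal is correct and follows exactly the argument the paper intends: the paper gives no separate proof of this lemma, stating only that it follows ``in a similar way'' from the $Z=(-I,i)$ argument used for Lemma \ref{lem:vanish-sign}, which is precisely the computation you carry out (including the uniform evaluation $e(-\mathrm{sign}(D_{2^k}(a))/4)=(-i)^a$ via Lemma \ref{lem:sign} and the observation that $Z^2$ acts trivially on the tensor product even though each factor has odd signature).
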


The following reduction formulas are useful.

\begin{lemma}\label{lem:reduce_level_from_p^n+1_to_p^n} 
Let  $p$ be a prime, and $a_1,a_2$ be integers that are coprime to $p$.
Let $k_1,k_2,k_3$ be integers with $0\leq k_1\leq k_2\leq k_3$. If $p=2$, then 
$$ 
\big(\CC D_{2^{k_1}}(a_1)\otimes\CC D_{2^{k_2}}(a_2)\big)^{\Gamma_0(2^{2+k_3})}= \big(\CC D_{2^{k_1}}(a_1)\otimes\CC D_{2^{k_2}}(a_2)\big)^{\Gamma_0(2^{2+k_2})}.
$$
If $p$ is odd, then 
$$ 
\big(\CC L_{p^{k_1}}(a_1)\otimes\CC L_{p^{k_2}}(a_2)\big)^{\Gamma_0(p^{k_3})}= \big(\CC L_{p^{k_1}}(a_1)\otimes\CC L_{p^{k_2}}(a_2)\big)^{\Gamma_0(p^{k_2})}.
$$
\end{lemma}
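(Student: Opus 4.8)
The plan is to exploit that the $\Mp_2(\ZZ)$-module $V:=\CC D_{2^{k_1}}(a_1)\otimes\CC D_{2^{k_2}}(a_2)$ (resp. $\CC L_{p^{k_1}}(a_1)\otimes\CC L_{p^{k_2}}(a_2)$) already factors through a finite quotient whose level is governed by the larger index $k_2$, and then to show that the image of $\Gamma_0(p^n)$ in this quotient no longer changes once $n$ passes a threshold determined by $k_2$. Since $\CC D_{2^{k_i}}(a_i)$ is trivial on $\Gamma(2^{k_i+2})^\ast$ and $k_1\le k_2$, the tensor product $V$ is trivial on $\Gamma(2^{k_2+2})^\ast$; likewise $\CC L_{p^{k_1}}(a_1)\otimes\CC L_{p^{k_2}}(a_2)$ is trivial on $\Gamma(p^{k_2})$. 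Consequently the action of an element $g$ on $V$ depends only on the image of $g$ in the finite group $\Mp_2(\ZZ)/\Gamma(2^{k_2+2})^\ast$ (resp. $\SL_2(\ZZ)/\Gamma(p^{k_2})$), so that
\[
V^{\Gamma_0(2^n)}=V^{G_n},\qquad G_n:=\text{image of }\Gamma_0(2^n)\text{ in }\Mp_2(\ZZ)/\Gamma(2^{k_2+2})^\ast ,
\]
and similarly in the odd case. Because $k_3\ge k_2$ forces $\Gamma_0(2^{2+k_3})\subseteq\Gamma_0(2^{2+k_2})$, one inclusion $G_{2+k_3}\subseteq G_{2+k_2}$ is automatic; it remains to prove $G_{2+k_3}=G_{2+k_2}$, i.e. that enlarging the level beyond the threshold does not shrink the image.

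The nontrivial inclusion I would obtain by an explicit lifting. Let me treat the even case; the odd case is identical with $2^{k_2+2}$ replaced by $p^{k_2}$. Fix $g=\begin{psmallmatrix} a & b\\ c & d\end{psmallmatrix}\in\Gamma_0(2^{2+k_2})$, so $c\equiv 0\bmod 2^{2+k_2}$; since $ad-bc=1$ with $c$ even, $d$ is odd and hence invertible modulo every power of $2$. Writing $c=2^{2+k_2}c'$ and setting $g':=g\begin{psmallmatrix} 1 & 0\\ 2^{2+k_2}u & 1\end{psmallmatrix}$, the lower-left entry of $g'$ becomes $2^{2+k_2}(c'+du)$, which I can force to be divisible by $2^{2+k_3}$ by choosing $u\equiv -c'd^{-1}\bmod 2^{k_3-k_2}$. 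Then $g'\in\Gamma_0(2^{2+k_3})$, while $g^{-1}g'=\begin{psmallmatrix} 1 & 0\\ 2^{2+k_2}u & 1\end{psmallmatrix}\equiv I\bmod 2^{k_2+2}$ lies in $\Gamma(2^{k_2+2})$. Thus $g$ and $g'$ have the same image in $\Mp_2(\ZZ)/\Gamma(2^{k_2+2})^\ast$, proving $G_{2+k_2}\subseteq G_{2+k_3}$ and hence $G_{2+k_3}=G_{2+k_2}$. Unwinding $V^{\Gamma_0(2^n)}=V^{G_n}$ then gives the claim concretely: for $v\in V^{\Gamma_0(2^{2+k_3})}$ and any $g\in\Gamma_0(2^{2+k_2})$, writing $g=g'(g^{-1}g')^{-1}$ yields $\rho(g)v=\rho(g')v=v$, since $(g^{-1}g')^{-1}\in\Gamma(2^{k_2+2})^\ast$ acts trivially and $g'\in\Gamma_0(2^{2+k_3})$ fixes $v$.

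The one point requiring care, and the place I expect the only genuine friction, is the metaplectic bookkeeping in the even case: one must check that the lifted element $g^{-1}g'$, carrying its theta multiplier $j(\cdot,\tau)=\theta(\cdot\,\tau)/\theta(\tau)$, indeed lands in the distinguished lift $\Gamma(2^{k_2+2})^\ast$ rather than merely in the $\SL_2$-level subgroup, so that it acts trivially on $V$; this is exactly the content of $\CC D_m$ being $\Gamma(4m)^\ast$-invariant, applied with $m=2^{k_2}$. In the odd case $\CC L_{p^n}(a)$ has even signature and factors honestly through $\SL_2(\ZZ)/\Gamma(p^{k_2})$ (with $k_2=0$ degenerating everything to the trivial representation, where the claim is vacuous), so no multiplier issue arises and the reduction is clean. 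I would also remark that the same conclusion can be read off from the coset decomposition \eqref{coset_decomposition}: the representatives $ST^lST^{l^{-1}}ST^j$ of $\Gamma_0(p^n)/\Gamma(p^n)$ act on $V$ only through their residues modulo $p^{k_2}$, so the averaging projector onto $V^{\Gamma_0(p^n)}$ stabilizes once $n\ge k_2$; the lifting argument above is however shorter and avoids invoking the explicit action formulas of Lemmas \ref{lem:oddformula} and \ref{lem:evenformula}.
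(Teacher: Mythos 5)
Your proof is correct, but it takes a genuinely different route from the paper's. The paper argues computationally: it invokes the coset decomposition \eqref{coset_decomposition} and checks, via the explicit action formulas \eqref{eq:oddformulaSTSTS} and \eqref{eq:evenformulaSTSTS}, that the coset representatives $ST^{l}ST^{l^{-1}}ST^{j}$ for the two levels act identically on both tensor factors, because that action depends on $l$, $l^{-1}$ and $j$ only modulo $p^{k_2}$ (resp.\ $2^{k_2+2}$). You instead isolate the structural reason: the module $V$ has level dividing $p^{k_2}$ (resp.\ $2^{k_2+2}$), so the lemma reduces to the purely group-theoretic identity $\Gamma_0(p^{k_2})=\Gamma_0(p^{k_3})\cdot\Gamma(p^{k_2})$, which your unipotent-correction lifting establishes directly. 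Your route buys independence from Lemmas \ref{lem:oddformula} and \ref{lem:evenformula} and makes transparent why the threshold is exactly $k_2$; the paper's route stays inside machinery it needs anyway for the later invariant computations. On the one point you flag: the metaplectic sign ambiguity in the even case is disposed of most cleanly by noting that each factor $\CC D_{2^{k_i}}(a_i)$ has odd signature, so the tensor product has even signature, $Z^2=(I,-1)$ acts trivially on $V$ by \eqref{eq:Z-action}, and the representation on $V$ genuinely factors through $\SL_2(\ZZ)/\Gamma(2^{k_2+2})$; hence it is immaterial which of the two lifts of $g^{-1}g'$ one takes, and no separate multiplier verification is needed.
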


\begin{proof}
We only prove the odd case, since the even case is similar. Assume that $p$ is odd. 
Let  $m,m_1,m_2$ be integers that are coprime to $p$ and satisfy $mm_1 = 1 \bmod \, p^{k_2}$ and $mm_2 =1 \, \bmod \, p^{k_3}$.  According to \eqref{coset_decomposition}, it suffices to prove that $ST^mST^{m_1}S$ and $ST^mST^{m_2}S$ have the same action on both $\CC L_{p^{k_1}}(a_1)$ and $\CC L_{p^{k_2}}(a_2)$, which can be derived from \eqref{eq:oddformulaSTSTS}. 
\end{proof}

In the next lemmas, we compute the $2$-parts and $p$-parts of $J_{1,m}(N)$ in some particular cases. 

\begin{lemma}\label{lem:CD1_a1a2_square}
Let $a_1,a_2$ be odd integers such that $a_1a_2$ is a square modulo $4$. For $n\geq2$, we have
\begin{align*}
&\big(\CC D_{1}(a_1)\otimes\CC D_{2^{2n}}(a_2)\big)^{\Gamma_0(2^{2n+2})}\big/ \big(\CC D_{1}(a_1)\otimes\CC D_{2^{2n-2}}(a_2)\big)^{\Gamma_0(2^{2n})}\\=& \CC\big(\mathbf{e}^0\otimes(\mathbf{e}^{2^{2n-1}}-\mathbf{e}^{3\cdot2^{2n-1}})\big).
\end{align*}
For $n\geq 3$, we have
\begin{align*}
&\big(\CC D_{1}(a_1)\otimes\CC D_{2^{2n+1}}(a_2)\big)^{\Gamma_0(2^{2n+3})}\big/\big(\CC D_{1}(a_1)\otimes\CC D_{2^{2n-1}}(a_2)\big)^{\Gamma_0(2^{2n+1})} \\ =  & \CC \big(\mathbf{e}^0\otimes(\mathbf{e}^{2^{2n-1}}+\mathbf{e}^{3\cdot2^{2n-1}}-\mathbf{e}^{5\cdot2^{2n-1}}-\mathbf{e}^{7\cdot2^{2n-1}})\big).
\end{align*}
For $t=0,1,2,3,5$, we have
\begin{equation}\label{eq:CD1_CD16-_vanish}
\big(\CC D_{1}(a_1)\otimes\CC D_{2^t}(a_2)\big)^{\Gamma_0(2^{t+2})} = 0.
\end{equation}
\end{lemma}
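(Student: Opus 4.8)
The plan is to compute the $\Gamma_0(2^{t+2})$-invariants directly, using the sign constraint to cut down the ambient space, the double-coset formula \eqref{coset_decomposition} together with the explicit action \eqref{eq:evenformulaSTSTS} to turn invariance into a finite linear system, and the embeddings $\CC D_{2^{t-2}}(a_2)\hookrightarrow\CC D_{2^t}(a_2)$ together with the level-reduction Lemma \ref{lem:reduce_level_from_p^n+1_to_p^n} to organize everything as a filtration whose successive quotients are the claimed ``new'' invariants. First I would fix the ambient space: since $-\gamma=\gamma$ in $\ZZ/2\ZZ$ we have $\CC D_1(a_1)=\CC D_1^+(a_1)$, and because $a_1a_2$ is a square modulo $4$, Lemma \ref{lem:vanish-sign-2} forces every invariant to lie in $\CC D_1^+(a_1)\otimes\CC D_{2^t}^-(a_2)$. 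Next, since $\CC D_{2^t}(a_2)$ factors through $\Gamma(2^{t+2})^\ast$, that subgroup acts trivially, so by \eqref{coset_decomposition} a vector is $\Gamma_0(2^{t+2})$-invariant if and only if it is fixed by every coset representative $ST^lST^{l^{-1}}ST^j$. Applying $T^j$ diagonally and then \eqref{eq:evenformulaSTSTS}, such a representative sends a pure tensor $\mathbf{e}^{\gamma_1}\otimes\mathbf{e}^{\gamma_2}$ to a phase times $\mathbf{e}^{-l\gamma_1}\otimes\mathbf{e}^{-l\gamma_2}$, so invariance becomes an explicit linear system on the coefficients.

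I would then exhibit the two generators and verify their invariance. For $v=\mathbf{e}^0\otimes(\mathbf{e}^{2^{2n-1}}-\mathbf{e}^{3\cdot2^{2n-1}})$ in $\CC D_1(a_1)\otimes\CC D_{2^{2n}}(a_2)$ one checks that the quadratic form satisfies $Q(2^{2n-1})=Q(3\cdot2^{2n-1})=0$ in $\QQ/\ZZ$ exactly when $n\geq 2$, which gives $T$-invariance; moreover $-2^{2n-1}\equiv 3\cdot2^{2n-1}\pmod{2^{2n+1}}$, so $v$ already lies in $\CC D_{2^{2n}}^-(a_2)$. Since the $\CC D_1$-component is $\mathbf{e}^0$, formula \eqref{eq:evenformulaSTSTS} sends $v$ to a scalar multiple of $\mathbf{e}^0\otimes(\mathbf{e}^{\pm2^{2n-1}}-\mathbf{e}^{\pm3\cdot2^{2n-1}})$, and a short computation of the two prefactors—where $a_1a_2\equiv1\pmod4$ is precisely what makes them cancel the sign introduced by $\gamma_2\mapsto-l\gamma_2$—confirms that $v$ is fixed. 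The four-term generator of the second formula is handled identically, now over $\ZZ/2^{2n+2}\ZZ$ with $n\geq 3$. Via the embedding $\CC D_{2^{2n-2}}(a_2)\hookrightarrow\CC D_{2^{2n}}(a_2)$ and Lemma \ref{lem:reduce_level_from_p^n+1_to_p^n} (which identifies $(\CC D_1(a_1)\otimes\CC D_{2^{2n-2}}(a_2))^{\Gamma_0(2^{2n})}$ with the corresponding invariants at the common level $2^{2n+2}$), the lower invariants inject into the higher ones, so the quotients are well-defined; since the support of $v$ meets the top layer $\CC D_{2^{2n}}^{-,\mathrm{new}}(a_2)$, it is not in that image, giving the bound $\dim\geq 1$.

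The main obstacle is the matching upper bound—each quotient is at most one-dimensional—and the vanishing of the five base cases. I would prove these by solving the invariance system on the new layer $\CC D_1(a_1)\otimes\CC D_{2^t}^{-,\mathrm{new}}(a_2)$, the orthogonal complement of the image of the previous level. Tracking how $\gamma_2\mapsto-l\gamma_2$ permutes the index set and how the $l$-dependent phases of \eqref{eq:evenformulaSTSTS} enter, the conditions obtained by varying $l$ should pin the coefficients down to a single scalar; in particular they force the contribution of the $\mathbf{e}^1$ basis vector of $\CC D_1(a_1)$ to vanish, which is why only $\mathbf{e}^0$ survives. For the base cases $t=0,1,2$ I would reduce to level $\Gamma_0(16)$ by Lemma \ref{lem:reduce_level_from_p^n+1_to_p^n} and invoke Lemma \ref{lem:CD16-+} (after swapping the two tensor factors so that it applies to $\CC D_1^+(a_1)\otimes\CC D_{2^t}^-(a_2)$).

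The cases $t=3$ and $t=5$, at levels $32$ and $128$, lie beyond the reach of Lemma \ref{lem:CD16-+}, since the invariants need not be constant in $l$ below the stabilizing level; these must be settled by the same explicit linear system on $\CC D_1(a_1)\otimes\CC D_{2^t}^{-,\mathrm{new}}(a_2)$, and showing it has no nonzero solution then completes the induction within each parity class ($t=3$ uses that $t=1$ vanishes, and $t=5$ uses $t=3$). I expect this upper-bound analysis—keeping careful track of the $l$-dependent phases across the whole index set, and in particular the delicate vanishing at $t=5$—to be the most demanding step; by contrast the invariance of the stated generators and the filtration bookkeeping are comparatively routine.
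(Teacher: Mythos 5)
Your proposal is correct and follows essentially the same route as the paper: both arguments reduce invariance to the coset decomposition \eqref{coset_decomposition} and the explicit formula \eqref{eq:evenformulaSTSTS}, isolate the surviving basis vectors $\mathbf{e}^0\otimes\mathbf{e}^{2^sk}$ (via the $T$-phases, or equivalently your sign restriction to $\CC D_1^+(a_1)\otimes\CC D_{2^t}^-(a_2)$), and use the embedding $\CC D_{2^{t-2}}(a_2)\hookrightarrow\CC D_{2^t}(a_2)$ together with Lemma \ref{lem:reduce_level_from_p^n+1_to_p^n} to push everything except the single new generator into the lower-level invariants. The only cosmetic differences are that the paper computes the image of the averaging projector on the standard basis in one pass, so the upper and lower bounds and the base cases $t=0,\dots,5$ all drop out of the same $l\bmod 4$ and $l\bmod 8$ computations, whereas you split the argument into exhibiting generators plus an upper-bound linear system and outsource the cases $t\le 2$ to Lemma \ref{lem:CD16-+}.
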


\begin{proof}
We first consider the even case. By Section \ref{sec:Weil-rep} and the discussion before \eqref{coset_decomposition}, for a finite group $G$ acting on a vector space $V$, the invariant subspace $V^G$ is the image of the projection operator $e = \frac{1}{|G|} \sum_{g \in G} g$.

Let us review a basic fact. Since for any integers $a$ and $m>0$, $\CC D_{m}(a)$ is invariant under $\Gamma(4m)$, $\big(\CC D_{1}(a_1)\otimes\CC D_{2^{2n}}(a_2)\big)$ is then invariant under $\Gamma(2^{2n+2}).$  In our context, the group $G$ is the finite quotient group $\Gamma_0({2^{2n+2}} )/ \Gamma(2^{2n+2})$. We determine the invariant subspace by directly computing the action of the projection operator $$ I_0:=\sum_{g\in \Gamma_0(2^{2n+2})/\Gamma(2^{2n+2})}\rho(g)= \sum_{\substack{ l \, \bmod 2^{2n+2}\\ \mathrm{gcd}(l,2)=1}}\rho(ST^l ST^{l^{-1}}S)\sum_{j\, \bmod 2^{2n+2}}\rho(T^j)$$
on a basis $\textbf{e}^{\gamma_1}\otimes\textbf{e}^{\gamma_2}$ of $\CC D_{1}(a_1)\otimes\CC D_{2^{2n}}(a_2).$

For $\gamma_1\in D_1(a_1)$ and $\gamma_2\in D_{2^{2n}}(a_2)$, the sum
$$
\sum_{l=1}^{2^{2n+2}}T^l (\mathbf{e}^{\gamma_1}\otimes \mathbf{e}^{\gamma_2})= \sum_{l=1}^{2^{2n+2}}e\Big(\frac{l(a_1 2^{2n}\gamma_1^2+a_2\gamma_2^2)}{2^{2n+2}}\Big)\mathbf{e}^{\gamma_1}\otimes \mathbf{e}^{\gamma_2}
$$
does not vanish if and only if $2^{2n+2}\mid (a_1 2^{2n}\gamma_1^2+a_2\gamma_2^2)$, or equivalently, $\gamma_1=0$ and $2^{n+1}\mid \gamma_2$. Assume that $\gamma_2=2^{s}k$ with $k$ odd and $n+1\leq s\leq 2n+1$. If $s\in\{2n,2n+1\}$, then $\gamma_2=2^s$ in $D_{2^{2n}}(a_2)$, and thus by \eqref{eq:evenformulaSTSTS} and considering $l$ modulo $4$ we verify
$$
\sum_{\mathrm{odd} \, l=1}^{2^{2n+2}-1} ST^{l^{-1}}ST^l S (\mathbf{e}^0\otimes \mathbf{e}^{\gamma_2})=0. 
$$
In particular, for $t=0,2$ we have
$$ 
\big(\CC D_{1}(a_1)\otimes\CC D_{2^t}(a_2)\big)^{\Gamma_0(2^{t+2})} = 0.  
$$
If $s=2n-1$, then $\gamma_2=2^{2n-1}$ or $3\cdot 2^{2n-1}$ and 
$$
I_0(\mathbf{e}^0\otimes \mathbf{e}^{\gamma_2})=c_0 \big(\mathbf{e}^0\otimes(\mathbf{e}^{2^{2n-1}}-\mathbf{e}^{3\cdot2^{2n-1}})\big) 
$$
for some constant $c_0\neq0$. If $n+1\leq s<2n-1$, then by writing $l=2^{2n-s}l_2+l_1$ we compute 
$$ 
I_0(\mathbf{e}^0\otimes \mathbf{e}^{\gamma_2})=\sum_{\mathrm{odd}\, l_1=1}^{2^{2n-s}}\sum_{l_2=1}^{2^{s+2}} c_{l_1} \mathbf{e}^0\otimes \mathbf{e}^{l_1 k 2^s+l_2 k 2^{2n}},
$$
where $c_{l_1}$ are non-zero constants depending only on $l_1$. By the embedding introduced at the end of Section \ref{sec:Weil-rep}, the above inner summand lies in $\CC D_{1}(a_1)\otimes \CC D_{2^{2n-2}}(a_2)$, and thus $I_0(\mathbf{e}^0\otimes \mathbf{e}^{\gamma_2})$ is invariant under $\Gamma_0(2^{2n})$ by Lemma \ref{lem:reduce_level_from_p^n+1_to_p^n}. This proves the formula for the even case. 
	
We then consider the odd case. Let $I_1:=\sum_{g\in \Gamma_0(2^{2n+3})/\Gamma(2^{2n+3})}\rho(g)$ be the projection operator. Let us consider the action of $T$ again. For $\gamma_1\in D_1(a_1)$ and $\gamma_2\in D_{2^{2n+1}}(a_2)$, we show that
$$
\sum_{l=1}^{2^{2n+2}}T^l (\mathbf{e}^{\gamma_1}\otimes \mathbf{e}^{\gamma_2})= \sum_{l=1}^{2^{2n+2}}e\left(\frac{l(2^{2n+1}a_1\gamma_1^2+a_2\gamma_2^2)}{2^{2n+3}}\right)\mathbf{e}^{\gamma_1}\otimes \mathbf{e}^{\gamma_2}
$$
does not vanish if and only if $\gamma_1=0$ and $2^{n+2}\mid \gamma_2$. Assume that $\gamma_2=2^{s}k$ with $k$ odd and $n+2\leq s\leq 2n+2$. If $s\in\{2n,2n+1,2n+2\}$, then by \eqref{eq:evenformulaSTSTS} and considering $l$ modulo $8$ we confirm
$$
\sum_{\mathrm{odd} \, l=1}^{2^{2n+3}-1} ST^{l^{-1}}ST^l S (\mathbf{e}^0\otimes \mathbf{e}^{\gamma_2})=0. 
$$
In particular, for $t=1,3,5$ we have
$$  
\big(\CC D_{1}(a_1)\otimes\CC D_{2^t}(a_2)\big)^{\Gamma_0(2^{t+2})} = 0.
$$
If $s=2n-1$, by \eqref{eq:evenformulaSTSTS} and considering $l$ modulo $8$ we obtain
$$ 
I_1(\mathbf{e}^0\otimes \mathbf{e}^{\gamma_2})=c_1\big(\mathbf{e}^0\otimes(\mathbf{e}^{2^{2n-1}}+\mathbf{e}^{3\cdot2^{2n-1}}-\mathbf{e}^{5\cdot2^{2n-1}}-\mathbf{e}^{7\cdot2^{2n-1}})\big) $$
for some non-zero constant $c_1$. If $n+2\leq s<2n-1$, then
$$ 
I_1(\mathbf{e}^0\otimes \mathbf{e}^{\gamma_2})=\sum_{\mathrm{odd}\, l_1=1}^{2^{2n+1-s}}\sum_{l_2=1}^{2^{s+2}} d_{l_1} \mathbf{e}^0\otimes \mathbf{e}^{l_1k2^s+l_2k2^{2n+1}} 
$$
where $d_{l_1}$ are certain non-zero constants depending only on $l_1$. Similarly to before, the inner summand lies in $\CC D_{1}(a_1)\otimes \CC D_{2^{2n-1}}(a_2)$, and $I_1(\mathbf{e}^0\otimes \mathbf{e}^{\gamma_2})$ is invariant under $\Gamma_0(2^{2n+1})$ by Lemma \ref{lem:reduce_level_from_p^n+1_to_p^n}.	 We thus prove the formula for the odd case.
\end{proof}

\begin{lemma}\label{lem:CD1_-a1a2_square}
Let $a_1,a_2$ be odd integers such that $-a_1a_2$ is a square modulo $4$. For $n\geq 1$, we have
$$
\big(\CC D_{1}(a_1)\otimes\CC D_{2^{2n}}(a_2)\big)^{\Gamma_0(2^{2n+2})}\big/ \big(\CC D_{1}(a_1)\otimes\CC D_{2^{2n-2}}(a_2)\big)^{\Gamma_0(2^{2n})}= \CC \big(\mathbf{e}^0\otimes \mathbf{e}^{2^{2n}}\big).
$$
Moreover, 
\begin{align}
\label{eq:CD1CD4_Gamma16}\big(\CC D_{1}(a_1)\otimes \CC D_{4}(a_2)\big)^{\Gamma_0{(16)}} &=\CC \big( \mathbf{e}^0\otimes \mathbf{e}^0) \oplus \CC \big(\mathbf{e}^0\otimes \mathbf{e}^4\big) \oplus \CC \big(\mathbf{e}^1\otimes(\mathbf{e}^2+\mathbf{e}^6)\big),\\
\label{eq:CD1_Gamma4} \big(\CC D_{1}(a_1)\otimes \CC D_{1}(a_2) \big)^{\Gamma_0{(4)}} &= \CC \big( \mathbf{e}^0\otimes \mathbf{e}^0 \pm \mathbf{e}^1 \otimes \mathbf{e}^1 \big),\\
\label{eq:CD1_Gamma2} \big(\CC D_{1}(a_1)\otimes \CC D_{1}(a_2)\big)^{\Gamma_0{(2)}} &= \CC \big( \mathbf{e}^0\otimes \mathbf{e}^0 + \mathbf{e}^1 \otimes \mathbf{e}^1 \big),\\
\label{eq:CD1_Gamma1} \big(\CC D_{1}(a_1)\otimes \CC D_{1}(a_2)\big)^{\SL_2(\ZZ)} &= \CC \big( \mathbf{e}^0\otimes \mathbf{e}^0 + \mathbf{e}^1 \otimes \mathbf{e}^1 \big).
\end{align}
For $n\geq 3$, we have
\begin{align*}
&\big(\CC D_{1}(a_1)\otimes\CC D_{2^{2n+1}}(a_2)\big)^{\Gamma_0(2^{2n+3})}\big/ \big(\CC D_{1}(a_1)\otimes\CC D_{2^{2n-1}}(a_2)\big)^{\Gamma_0(2^{2n+1})} \\ =  & \CC \big(\mathbf{e}^0\otimes(\mathbf{e}^{2^{2n-1}}-\mathbf{e}^{3\cdot2^{2n-1}}-\mathbf{e}^{5\cdot2^{2n-1}}+\mathbf{e}^{7\cdot2^{2n-1}})\big). 
\end{align*}
For $n=0,1,2$, we have
\begin{equation}\label{eq:CD1_CD16+_vanish}
\big(\CC D_{1}(a_1)\otimes\CC D_{2^{2n+1}}(a_2)\big)^{\Gamma_0(2^{2n+3})} = 0.
\end{equation}
\end{lemma}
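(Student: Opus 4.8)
The plan is to follow the projection-operator computation used for Lemma \ref{lem:CD1_a1a2_square}, now under the hypothesis $a_1a_2\equiv 3\bmod 4$, treating the two parities of the exponent of the second factor separately and handling the small levels by direct evaluation. For the even statements I would use the operator $I_0=\sum_g\rho(g)$ summed over $\Gamma_0(2^{2n+2})/\Gamma(2^{2n+2})$, and for the odd statements the operator $I_1$ over $\Gamma_0(2^{2n+3})/\Gamma(2^{2n+3})$, expanding each through the coset decomposition \eqref{coset_decomposition} and evaluating on the natural basis $\mathbf{e}^{\gamma_1}\otimes\mathbf{e}^{\gamma_2}$ with $\gamma_1\in D_1(a_1)$ and $\gamma_2\in D_{2^{2n}}(a_2)$ (resp. $D_{2^{2n+1}}(a_2)$).

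First I would perform the $T$-averaging. The inner sum $\sum_l\rho(T^l)$ annihilates every $\mathbf{e}^{\gamma_1}\otimes\mathbf{e}^{\gamma_2}$ whose combined $T$-eigenvalue is non-trivial, leaving only those with $2^{2n+2}\mid a_1 2^{2n}\gamma_1^2+a_2\gamma_2^2$ (even case), respectively $2^{2n+3}\mid 2^{2n+1}a_1\gamma_1^2+a_2\gamma_2^2$ (odd case). The decisive new feature relative to Lemma \ref{lem:CD1_a1a2_square} is that, in the even case, the class $\gamma_1=1$ now survives: writing $\gamma_2=2^n k$ with $k$ odd reduces the divisibility to $4\mid a_1+a_2k^2$, which holds precisely because our hypothesis forces $a_1\equiv -a_2\bmod 4$. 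This is exactly the origin of the vectors $\mathbf{e}^1\otimes(\mathbf{e}^2+\mathbf{e}^6)$ in \eqref{eq:CD1CD4_Gamma16} and $\mathbf{e}^1\otimes\mathbf{e}^1$ in \eqref{eq:CD1_Gamma4}--\eqref{eq:CD1_Gamma1}. In the odd case, by contrast, the term $2^{2n+1}a_1$ has odd $2$-adic valuation while $a_2\gamma_2^2$ has even valuation, so they can never cancel and $\gamma_1=0$ is forced, exactly as before; this is why no $\mathbf{e}^1$-terms appear in the odd formulas. For $\gamma_1=0$ the surviving vectors are $\mathbf{e}^0\otimes\mathbf{e}^{2^s k}$ with $s$ above the threshold $n+1$ (even) or $n+2$ (odd).

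Next, for each surviving vector I would apply the remaining average $\sum_l\rho(ST^{l^{-1}}ST^lS)$ and substitute the explicit action \eqref{eq:evenformulaSTSTS}. The phase factors there depend on $a_2$ modulo $4$ (even $n$) or modulo $8$ (odd $n$), and averaging over odd $l$ in the corresponding residue classes both selects the surviving $2$-adic stratum of $\gamma_2$ and pins down the exact signed combination. Conceptually, the reversal of the underlying symmetry compared with Lemma \ref{lem:CD1_a1a2_square} is governed by the central element $Z$: under $a_1a_2\equiv 3\bmod 4$ the vectors $\mathbf{e}^0\otimes\mathbf{e}^0$ and $\mathbf{e}^0\otimes\mathbf{e}^{2^{2n}}$ become $Z$-invariant rather than $Z$-anti-invariant, so the surviving combinations are the symmetric ones. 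In the even case this singles out the $2$-torsion representative $\mathbf{e}^0\otimes\mathbf{e}^{2^{2n}}$, while in the odd case the finer $l\bmod 8$ analysis yields the pattern $\mathbf{e}^{2^{2n-1}}-\mathbf{e}^{3\cdot2^{2n-1}}-\mathbf{e}^{5\cdot2^{2n-1}}+\mathbf{e}^{7\cdot2^{2n-1}}$, which is symmetric under $\gamma_2\mapsto-\gamma_2$ (the sign flip between the two $\pm$-pairs being the effect of the phase analysis). For the intermediate strata, as well as the $\gamma_1=1$ vectors in the even case, I would verify that the image is invariant under the shift $\gamma_2\mapsto\gamma_2+2^{2n}$ (resp. $\gamma_2\mapsto\gamma_2+2^{2n+1}$), hence lies in the image of $\CC D_1(a_1)\otimes\CC D_{2^{2n-2}}(a_2)$ under the embedding from the end of Section \ref{sec:Weil-rep}; Lemma \ref{lem:reduce_level_from_p^n+1_to_p^n} then identifies $\Gamma_0(2^{2n+2})$-invariance with $\Gamma_0(2^{2n})$-invariance for such vectors, so they are already recorded at the lower level and drop out of the quotient. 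This produces the two quotient formulas, and the threshold $n\geq 3$ in the odd case is exactly the requirement that the surviving stratum $s=2n-1$ meet the divisibility bound $s\geq n+2$ coming from the $T$-averaging, the cases $n=0,1,2$ giving the vanishing \eqref{eq:CD1_CD16+_vanish}.

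Finally I would settle the base cases directly. The identities \eqref{eq:CD1CD4_Gamma16}--\eqref{eq:CD1_Gamma1} at levels $16,4,2,1$ are finite evaluations of $I_0$ or $I_1$ on the enlarged candidate list (the $\gamma_1=1$ vectors must now be included); here the two-dimensional space \eqref{eq:CD1_Gamma4}, spanned by $\mathbf{e}^0\otimes\mathbf{e}^0$ and $\mathbf{e}^1\otimes\mathbf{e}^1$, collapses to the single symmetric combination in \eqref{eq:CD1_Gamma2} and \eqref{eq:CD1_Gamma1} as the level decreases, which is consistent with the $n=1$ quotient being one-dimensional. I expect the principal obstacle to be the phase bookkeeping in \eqref{eq:evenformulaSTSTS}: one must reduce $l$ and its inverse $l^{-1}$ modulo $4$ or $8$, evaluate the resulting short exponential sums over odd $l$, and confirm that they are nonzero exactly on the claimed surviving strata and vanish on those that must collapse, throughout keeping precise track of how $a_1a_2\equiv 3\bmod 4$ flips the parity of the surviving combinations relative to the companion Lemma \ref{lem:CD1_a1a2_square}.
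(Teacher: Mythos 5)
Your proposal is correct and follows essentially the same route as the paper: the $T$-average to isolate the surviving strata (with the key observation that $a_1\equiv -a_2\bmod 4$ lets $\gamma_1=1$ survive in the even case and that the valuation mismatch forces $\gamma_1=0$ in the odd case), the $ST^{l^{-1}}ST^lS$ average via \eqref{eq:evenformulaSTSTS} with $l$ reduced mod $4$ or $8$ to pin down the generators, Lemma \ref{lem:reduce_level_from_p^n+1_to_p^n} to push the intermediate strata into the lower-level space, and direct evaluation at the small levels. No gaps.
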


\begin{proof}
The proof is similar to that of Lemma \ref{lem:CD1_a1a2_square}. We first consider the even case. Again, we set $I_0:=\sum_{g\in \Gamma_0(2^{2n+2})/\Gamma(2^{2n+2})} \rho(g)$. For $\gamma_1\in D_1(a_1)$ and $\gamma_2\in D_{2^{2n}}(a_2)$, we find that 
$$
\sum_{l=1}^{2^{2n+2}}T^l (\mathbf{e}^{\gamma_1}\otimes \mathbf{e}^{\gamma_2})= \sum_{l=1}^{2^{2n+2}}e\Big(\frac{l(2^{2n}a_1\gamma_1^2+a_2\gamma_2^2)}{2^{2n+2}}\Big)\mathbf{e}^{\gamma_1}\otimes \mathbf{e}^{\gamma_2}
$$
does not vanish if and only if either $\gamma_1=0$ and $2^{n+1}\mid \gamma_2$, or $\gamma_1=1$ and $\gamma_2=2^{n}k$ with $k$ odd. 

Assume that $\gamma_2=2^{s}k$ with $k$ odd and $n+1\leq s\leq 2n+1$. We notice that
\begin{align*}
I_0(\mathbf{e}^0\otimes \mathbf{e}^{0})&=c_0 \big(\mathbf{e}^0\otimes \mathbf{e}^{0}\big),\\
I_0(\mathbf{e}^0\otimes \mathbf{e}^{2^{2n}})&=c'_0 \big(\mathbf{e}^0\otimes \mathbf{e}^{2^{2n}}\big),
\end{align*}
where $c_0$ and $c'_0$ are certain non-zero constants. 
Moreover, 
$$
\mathbf{e}^0\otimes \mathbf{e}^{0}+\mathbf{e}^0\otimes \mathbf{e}^{2^{2n}}\in \big(\CC D_{1}(a_1)\otimes \CC D_{2^{2n-2}}(a_2)\big)^{\Gamma_0(2^n)}.
$$
Similar to before, if $n+1\leq s<2n-1$, then
$$ 
I_0(\mathbf{e}^0\otimes \mathbf{e}^{\gamma_2})=\sum_{\mathrm{odd}\, l_1=1}^{2^{2n-s}}\sum_{l_2=1}^{2^{s+2}} c_{l_1} \mathbf{e}^0\otimes \mathbf{e}^{l_1 k 2^s+l_2 k 2^{2n}},
$$
where $c_{l_1}$ are certain non-zero constants depending only on $l_1$. Therefore,
the inner summand also lies in  $(\CC D_{1}(a_1)\otimes \CC D_{2^{2n-2}}(a_2))^{\Gamma_0(2^n)}$. 
	
In addition, $I_0(\mathbf{e}^1\otimes \mathbf{e}^{2^{n}k})$ is equal to
$
\mathbf{e}^1\otimes \big(\sum_{\mathrm{odd}\, l=1}^{2^{2n+1}-1} \mathbf{e}^{l 2^{n}}\big) 
$
up to non-zero constant factor, and thus lies in $(\CC D_{1}(a_1)\otimes \CC D_{1}(a_2))^{\Gamma_0(4)}$. 
By taking $n=0,1$, we obtain \eqref{eq:CD1CD4_Gamma16} and \eqref{eq:CD1_Gamma4}.
In the space $\CC D_1(a_1)\otimes \CC D_1(a_2)$, we check that
\begin{align*}
ST^2S(\mathbf{e}^0\otimes \mathbf{e}^0)&=\mathbf{e}^1\otimes \mathbf{e}^1,\\
ST^2S(\mathbf{e}^1\otimes \mathbf{e}^1)&=\mathbf{e}^0\otimes \mathbf{e}^0,\\
S(\mathbf{e}^0\otimes \mathbf{e}^0+ \mathbf{e}^1\otimes \mathbf{e}^1)&=\mathbf{e}^0\otimes \mathbf{e}^0+\mathbf{e}^1\otimes \mathbf{e}^1,
\end{align*}
which yield \eqref{eq:CD1_Gamma2} and \eqref{eq:CD1_Gamma1}.
	
We then consider the odd case. Let $I_1:=\sum_{g\in \Gamma_0(2^{2n+3})/\Gamma(2^{2n+3})}\rho(g)$. For $\gamma_1\in D_1(a_1)$ and $\gamma_2\in D_{2^{2n+1}}(a_2)$, the sum
$$
\sum_{l=1}^{2^{2n+2}}T^l (\mathbf{e}^{\gamma_1}\otimes \mathbf{e}^{\gamma_2})= \sum_{i=1}^{2^{2n+2}}e\Big( \frac{l(2^{2n+1}a_1\gamma_1^2+a_2\gamma_2^2)}{2^{2n+3}}\Big) \mathbf{e}^{\gamma_1}\otimes \mathbf{e}^{\gamma_2}
$$
does not vanish if and only if $\gamma_1=0$ and $2^{n+2}\mid \gamma_2$. Assume that $\gamma_2=2^{s}k$ with $k$ odd and $n+2\leq s\leq 2n+2$. If $s\in\{2n,2n+1,2n+2\}$, then
$$
\sum_{\mathrm{odd} \, l=1}^{2^{2n+3}-1} ST^{l^{-1}}ST^l S (\mathbf{e}^0\otimes \mathbf{e}^{\gamma_2})=0,
$$
which particularly yields \eqref{eq:CD1_CD16+_vanish}.
If $s=2n-1$, then by considering $l$ modulo $8$ we verify that 
$$ 
I_0(\mathbf{e}^0\otimes \mathbf{e}^{\gamma_2})=c_1 \big(\mathbf{e}^0\otimes(\mathbf{e}^{2^{2n-1}}-\mathbf{e}^{3\cdot2^{2n-1}}-\mathbf{e}^{5\cdot2^{2n-1}}+\mathbf{e}^{7\cdot2^{2n-1}})\big) 
$$
for some non-zero constant $c_1$. If $n+2\leq s<2n-1$, then
$$ 
I_1(\mathbf{e}^0\otimes \mathbf{e}^{\gamma_2})=\sum_{\mathrm{odd}\, l_1=1}^{2^{2n+1-s}}\sum_{l_2=1}^{2^{s+2}} c_{l_1} \mathbf{e}^0\otimes \mathbf{e}^{l_1k2^s+l_2k2^{2n+1}}, 
$$
where $c_{l_1}$ are certain non-zero constants depending only on $l_1$. As before, 
the inner summand lies in $(\CC D_{1}(a_1)\otimes \CC D_{2^{2n-1}}(a_2))^{\Gamma_0(2^{2n+1})}$. We thus prove the formula for the odd case. 
\end{proof}

\begin{lemma}\label{lem:CD2CD2N}
Let $n$ be a positive integer and $a_1,a_2$ be odd integers.
\begin{enumerate}
\item If $n\geq 3$, then the quotient space
$$
\big(\CC D_{2}(a_1)\otimes\CC D_{2^{2n}}(a_2))^{\Gamma_0(2^{2n+2})}\big/ \big(\CC D_{2}(a_1)\otimes\CC D_{2^{2n-2}}(a_2)\big)^{\Gamma_0(2^{2n})}
$$
is generated by 
$$
\mathbf{e}^0\otimes\big(\mathbf{e}^{2^{2n-2}}+\mathbf{e}^{3\cdot2^{2n-2}}-\mathbf{e}^{5\cdot2^{2n-2}}-\mathbf{e}^{7\cdot2^{2n-2}}\big)
$$
if $a_1a_2=1\, \bmod 4$, and by 
$$
\mathbf{e}^0\otimes\big(\mathbf{e}^{2^{2n-2}}-\mathbf{e}^{3\cdot2^{2n-2}}-\mathbf{e}^{5\cdot2^{2n-2}}+\mathbf{e}^{7\cdot2^{2n-2}}\big) 
$$ 
if $a_1a_2=3\, \bmod 4$. Besides, for $n=0,1,2$ we have
$$ 
\big(\CC D_{2}(a_1)\otimes\CC D_{2^{2n}}(a_2))^{\Gamma_0(2^{2n+2})}=0.  
$$
		
\item If $n\geq 2$ and $a_1a_2=1\,\bmod 4$, then 
\begin{align*}
&\big(\CC D_{2}(a_1)\otimes\CC D_{2^{2n+1}}(a_2)\big)^{\Gamma_0(2^{2n+3})} \big/ \big(\CC D_{2}(a_1)\otimes\CC D_{2^{2n-1}}(a_2)\big)^{\Gamma_0(2^{2n+1})} \\ =&\CC \big(\mathbf{e}^0\otimes( \mathbf{e}^{2^{2n}}-\mathbf{e}^{3\cdot 2^{2n}})\big).    
\end{align*} 
Besides, for $n<2$ we have
\begin{align*}
\big(\CC D_{2}(a_1)\otimes\CC D_{8}(a_2)\big)^{\Gamma_0(32)} &=  \CC \big(\mathbf{e}^2\otimes (\mathbf{e}^4-\mathbf{e}^{12})\big),\\
\big(\CC D_{2}(a_1)\otimes\CC D_{2}(a_2)\big)^{\Gamma_0(8)}&=0.
\end{align*}
		
\item If $n\geq1$ and $a_1a_2=3\,\bmod 4$, then
\begin{align*}
&\big(\CC D_{2}(a_1)\otimes\CC D_{2^{2n+1}}(a_2)\big)^{\Gamma_0(2^{2n+3})}\big/ \big(\CC D_{2}(a_1)\otimes\CC D_{2^{2n-1}}(a_2)\big)^{\Gamma_0(2^{2n+1})} \\ =&\CC \big(\mathbf{e}^0\otimes \mathbf{e}^{0}\big).    
\end{align*}
Besides, $(\CC D_2(a_1)\otimes\CC D_2(a_2))^{\Gamma_0(8)}$ is generated by $\mathbf{e}^0\otimes \mathbf{e}^0$ and $\mathbf{e}^2\otimes \mathbf{e}^2$ if $a_1a_2=3\,\bmod 8$, and by $\mathbf{e}^0\otimes \mathbf{e}^0$, $\mathbf{e}^2\otimes \mathbf{e}^2$, $(\mathbf{e}^1+ \mathbf{e}^3)\otimes (\mathbf{e}^1+ \mathbf{e}^3)$ and $(\mathbf{e}^1- \mathbf{e}^3)\otimes (\mathbf{e}^1- \mathbf{e}^3)$ if $a_1a_2=7\,\bmod 8$.
		
\item If $a_1a_2=3\,\bmod 8$, then
\begin{align*}
&\big(\CC D_2^+(a_1)\otimes\CC D_2^+(a_2)\big)^{\Gamma_0(4)}=\big(\CC D_2^+(a_1)\otimes\CC D_2^+(a_2)\big)^{\Gamma_0(2)} \\
=& \CC\big(\mathbf{e}^0\otimes \mathbf{e}^0+\mathbf{e}^2\otimes \mathbf{e}^2\big),
\end{align*}
and 
$$
\big(\CC D_2^+(a_1)\otimes\CC D_2^+(a_2)\big)^{\SL_2(\ZZ)}=0.
$$
If  $a_1a_2=7\,\bmod 8$, then
\begin{align*}
&\big(\CC D_2^+(a_1)\otimes\CC D_2^+(a_2)\big)^{\Gamma_0(4)}=\big(\CC D_2^+(a_1)\otimes\CC D_2^+(a_2)\big)^{\Gamma_0(2)} \\
=&\CC \big(\mathbf{e}^0\otimes \mathbf{e}^0+\mathbf{e}^2\otimes \mathbf{e}^2\big) + \CC \big((\mathbf{e}^1+ \mathbf{e}^3)\otimes (\mathbf{e}^1+ \mathbf{e}^3)\big),  
\end{align*}
and
$$ 
\big(\CC D_2^+(a_1)\otimes\CC D_2^+(a_2))^{\SL_2(\ZZ)}=\CC \big(2\mathbf{e}^0\otimes \mathbf{e}^0+2\mathbf{e}^2\otimes \mathbf{e}^2+ (\mathbf{e}^1+ \mathbf{e}^3)\otimes (\mathbf{e}^1+ \mathbf{e}^3) \big),
$$
and 
\begin{align*}
&\big(\CC D_2^-(a_1)\otimes\CC D_2^-(a_2)\big)^{\Gamma_0(8)}= \big(\CC D_2^-(a_1)\otimes\CC D_2^-(a_2)\big)^{\SL_2(\ZZ)} \\
=& \CC \big( (\mathbf{e}^1- \mathbf{e}^3)\otimes (\mathbf{e}^1- \mathbf{e}^3) \big).
\end{align*}
\end{enumerate}
\end{lemma}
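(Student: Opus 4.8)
The plan is to adapt, essentially verbatim, the projection-operator method employed in the proofs of Lemma \ref{lem:CD1_a1a2_square} and Lemma \ref{lem:CD1_-a1a2_square}, the only structural change being that the first tensor factor is now $\CC D_2(a_1)$ (a space of dimension four, with $\gamma_1\in\ZZ/4\ZZ$ and $Q_1(\gamma_1)\in\{0,a_1/8,a_1/2\}$) rather than $\CC D_1(a_1)$. For the invariants at level $2^{2n+2}$ I would form the operator
$$
I_0:=\sum_{g\in\Gamma_0(2^{2n+2})/\Gamma(2^{2n+2})}\rho(g)=\sum_{\substack{l\,\bmod\,2^{2n+2}\\ \mathrm{gcd}(l,2)=1}}\rho(ST^{l}ST^{l^{-1}}S)\sum_{j\,\bmod\,2^{2n+2}}\rho(T^j),
$$
using the coset decomposition \eqref{coset_decomposition}, and define the analogous $I_1$ at level $2^{2n+3}$ for the odd-power factor $\CC D_{2^{2n+1}}(a_2)$. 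Applying the diagonal $T$-sum first restricts attention to the admissible basis vectors $\mathbf{e}^{\gamma_1}\otimes\mathbf{e}^{\gamma_2}$ for which the total quadratic form value is suitably integral. A short $2$-adic valuation analysis then controls which $\gamma_1$ survive and the divisibility constraints on $\gamma_2$: because $Q_2(\gamma_2)$ has $2$-power denominator while $v_2(a_i\gamma_i^2)$ is always even, the surviving configurations differ between the even-power case (only $\gamma_1=0$ contributes) and the odd-power case (where $\gamma_1=2$, and sometimes $\gamma_1\in\{1,3\}$, also contribute). The condition for $\gamma_1\in\{1,3\}$ to survive already forces a congruence $a_1a_2\bmod 8$, which is exactly why the statement splits along $a_1a_2\equiv 1,3\bmod 4$ and $\equiv 3,7\bmod 8$.

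Next I would push each surviving vector through the $ST^{l}ST^{l^{-1}}S$-sum using the explicit formula \eqref{eq:evenformulaSTSTS} of Lemma \ref{lem:evenformula}, keeping the two cases $2\mid n$ and $2\nmid n$ recorded there separate. The phase factors produced are roots of unity depending on $a_1,a_2\bmod 8$; collecting them (by grouping $l$ modulo $4$ in the even-power case and modulo $8$ in the odd-power case, exactly as in the proofs of the two preceding lemmas) yields the signed theta-type combinations such as $\mathbf{e}^{2^{2n-2}}\pm\mathbf{e}^{3\cdot2^{2n-2}}\mp\mathbf{e}^{5\cdot2^{2n-2}}\mp\mathbf{e}^{7\cdot2^{2n-2}}$, with the overall sign pattern governed by $a_1a_2\bmod 4$. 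To identify the quotient by the level-$2^{2n}$ invariants as one-dimensional with the asserted generator, I would invoke the embedding $\CC D_m(a)\hookrightarrow\CC D_{md^2}(a)$ from the end of Section \ref{sec:Weil-rep} together with Lemma \ref{lem:reduce_level_from_p^n+1_to_p^n}: the contributions coming from $\gamma_2=2^sk$ with $s$ below the top shell land inside $\CC D_2(a_1)\otimes\CC D_{2^{2n-2}}(a_2)$ and are therefore already invariant at the lower level, so only the top $\gamma_2$-shell supplies a genuinely new generator.

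The remaining work is the small-level base cases flagged in parts (1)--(3) (the vanishing statements and the explicit $\Gamma_0(32)$, $\Gamma_0(8)$ descriptions) and the whole of part (4). These I would settle by direct computation on the four basis vectors $\mathbf{e}^0,\mathbf{e}^1,\mathbf{e}^2,\mathbf{e}^3$: evaluating $S$, $ST^2S$ and $ST^{l}ST^{l^{-1}}S$ explicitly, precisely as the $ST^2S$-computation at the close of the proof of Lemma \ref{lem:CD1_-a1a2_square} is carried out. The $\epsilon$-matching between the two factors, which rules out the cross blocks $\CC D_2^+\otimes\CC D_2^-$, is supplied by Lemma \ref{lem:vanish-sign-2}, and the resulting $\SL_2(\ZZ)$-invariant dimensions in part (4) can be cross-checked against the even case of Lemma \ref{lem:irreducible_D2^N_LP^N} as an independent consistency test.

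I expect the main obstacle to be the bookkeeping of the $2$-adic Gauss-sum phases in \eqref{eq:evenformulaSTSTS}. The generators are distinguished only by their sign patterns, which are pinned down by $a_1a_2$ modulo $8$ together with the parity of $n$, so the delicate point is to evaluate the factor $(1+e(-a/4))^3(1+e(am/4))/4$ (respectively $e(-3a/8)\,e(am/8)$) consistently across every relevant residue class, and then to verify that the vectors so obtained are genuinely $\Gamma_0$-invariant rather than merely simultaneous $T$- and $S$-eigenvectors. The structural steps are routine given the machinery of Section \ref{sec:invariants}; the difficulty lies entirely in the volume and precision of these elementary but error-prone computations.
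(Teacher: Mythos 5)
Your plan is correct and coincides with the paper's own proof: the same projection operators $I_0$, $I_1$ built from the coset decomposition \eqref{coset_decomposition}, the same $2$-adic analysis of the diagonal $T$-sum (which indeed isolates $\gamma_1=0$ in the even-power case and admits $\gamma_1=2$, and $\gamma_1\in\{1,3\}$ exactly when $a_1a_2=7\bmod 8$, in the odd-power case), the same use of \eqref{eq:evenformulaSTSTS} with $l$ grouped modulo $4$ or $8$ to extract the top-shell generators, the same appeal to the embedding and Lemma \ref{lem:reduce_level_from_p^n+1_to_p^n} to dispose of the lower shells, and the same direct computation of $ST^2S$, $ST^4S$ together with Lemma \ref{lem:irreducible_D2^N_LP^N} for part (4). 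No gaps in the approach; what remains is exactly the phase bookkeeping you identify.
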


\begin{proof}
We first prove assertion (1). We set $I_0:=\sum_{g\in \Gamma_0(2^{2n+2})/\Gamma(2^{2n+2})}\rho(g)$. Let $\gamma_1\in \CC D_2(a_1)$ and $\gamma_2\in \CC D_{2^{2n}}(a_2)$. We deduce from 
$$
\sum_{l=1}^{2^{2n+2}}T^l(\mathbf{e}^{\gamma_1}\otimes \mathbf{e}^{\gamma_2})=\sum_{l=1}^{2^{2n+2}} e\Big(\frac{l(a_1 2^{2n-1}\gamma_1^2+a_2\gamma_2^2)}{2^{2n+2}}\Big)\mathbf{e}^{\gamma_1}\otimes \mathbf{e}^{\gamma_2}
$$
that $I_0(\mathbf{e}^{\gamma_1}\otimes \mathbf{e}^{\gamma_2})=0$ if $\gamma_1=1,2,3$. 
When $\gamma_1=0$, $I_0(\mathbf{e}^{\gamma_1}\otimes \mathbf{e}^{\gamma_2})\neq 0$ only if $2^{n+1}|\gamma_2$. 
Assume that $\gamma_2=2^{s}k$ with $n+1\leq s\leq 2n+1$ and $k$ odd.  
Then $I_0(\mathbf{e}^{0}\otimes \mathbf{e}^{\gamma_2})=0$ when $s=2n+1,2n,2n-1$. 
In particular, for $n=0,1,2$ we have
$$ 
\big(\CC D_{2}(a_1)\otimes\CC D_{2^{2n}}(a_2)\big)^{\Gamma_0(2^{2n+2})}=0.  
$$
If $s=2n-2$, then $I_0(\mathbf{e}^{0}\otimes \mathbf{e}^{\gamma_2})$ is of the form
$$
\mathbf{e}^0\otimes(\mathbf{e}^{2^{2n-2}}+\mathbf{e}^{3\cdot2^{2n-2}}-\mathbf{e}^{5\cdot2^{2n-2}}-\mathbf{e}^{7\cdot2^{2n-2}})
$$
if $a_1a_2=1\, \bmod 4$, and of the form
$$
\mathbf{e}^0\otimes(\mathbf{e}^{2^{2n-2}}-\mathbf{e}^{3\cdot2^{2n-2}}-\mathbf{e}^{5\cdot2^{2n-2}}+\mathbf{e}^{7\cdot2^{2n-2}})
$$ 
if $a_1a_2=3\, \bmod 4$. As before, if $n+1\leq s<2n-2$, then
$$ 
I_0(\mathbf{e}^0\otimes \mathbf{e}^{\gamma_2}) \in \big(\CC D_{2}(a_1)\otimes\CC D_{2^{2n-2}}(a_2)\big)^{\Gamma_0(2^{2n})}.
$$
We thus prove the desired assertion. 

\vspace{3mm}

We then prove assertions (2) and (3). We set $I_1:=\sum_{g\in \Gamma_0(2^{2n+3})/\Gamma(2^{2n+3})}\rho(g)$. Let $\gamma_1\in \CC D_2(a_1)$ and $\gamma_2\in \CC D_{2^{2n+1}}(a_2)$. Similarly, we conclude from 
$$
\sum_{l=1}^{2^{2n+3}}T^l(\mathbf{e}^{\gamma_1}\otimes \mathbf{e}^{\gamma_2})=\sum_{l=1}^{2^{2n+3}} e\Big(\frac{l(a_1 2^{2n}\gamma_1^2+a_2\gamma_2^2)}{2^{2n+3}}\Big)\mathbf{e}^{\gamma_1}\otimes \mathbf{e}^{\gamma_2}
$$
that $I_1(\mathbf{e}^{\gamma_1}\otimes \mathbf{e}^{\gamma_2})\neq 0$ only if one of the following conditions holds: 

\vspace{2mm}

\textbf{(i)} $\gamma_1=1,3$,  $a_1a_2=7\,\bmod 8$ and $\gamma_2=2^nk$ with $k$ odd. In this case, $ I_1(\mathbf{e}^{\gamma_1}\otimes \mathbf{e}^{2^nk}) $ has the form 
$$ 
\sum_{\substack{ 1\leq l \leq 2^{2n+3} \\ l=1\bmod 4}} \mathbf{e}^{\gamma_1}\otimes \mathbf{e}^{2^nkl}+\sum_{\substack{ 1\leq l \leq 2^{2n+3} \\ l=3\bmod 4}}\mathbf{e}^{-\gamma_1}\otimes \mathbf{e}^{2^nkl},   
$$
which belongs to $(\CC D_2(a_1)\otimes \CC D_2(a_2))^{\Gamma_0(8)}$ and is identical to $\mathbf{e}^{\gamma_1}\otimes \mathbf{e}^{k\bmod 4}+\mathbf{e}^{-\gamma_1}\otimes \mathbf{e}^{-k\bmod 4}$.

\vspace{2mm}
	
\textbf{(ii)} $\gamma_1=2$ and $\gamma_2=2^{n+1}k$ with $k$ odd. In this case, $I_1(\mathbf{e}^{2}\otimes \mathbf{e}^{2^{n+1}k})$ has the form
$$ 
\sum_{\substack{ 1\leq l \leq 2^{2n+3} \\ l=1\bmod 4}}\mathbf{e}^2\otimes \mathbf{e}^{2^{n+1}kl}+e\Big(\frac{a_1+a_2}{4}\Big)\sum_{\substack{ 1\leq l \leq 2^{2n+3} \\ l=3\bmod 4}}\mathbf{e}^{2}\otimes \mathbf{e}^{2^{n+1}kl}.
$$ 
If $n=0$ and $a_1a_2=1\bmod 4$, then $I_1(\mathbf{e}^{2}\otimes \mathbf{e}^{2^{n+1}k})=0$;
if $n=1$ and $a_1a_2=1\bmod 4$, then $I_1(\mathbf{e}^{2}\otimes \mathbf{e}^{2^{n+1}k})$ has the form $\mathbf{e}^2\otimes (\mathbf{e}^4-\mathbf{e}^{12})$; if $a_1a_2=1\bmod 4$, then
$$
I_1(\mathbf{e}^{2}\otimes \mathbf{e}^{2^{n+1}k}) \in \big(\CC D_2(a_1)\otimes \CC D_8(a_2)\big)^{\Gamma_0(32)};
$$
if $a_1a_2=3\bmod 4$, then 
$$
I_1(\mathbf{e}^{2}\otimes \mathbf{e}^{2^{n+1}k}) \in \big(\CC D_2(a_1)\otimes \CC D_2(a_2)\big)^{\Gamma_0(8)}. 
$$

\vspace{2mm}
	
\textbf{(iii)} $\gamma_1=0$ and $\gamma_2=2^sk$ with $n+2\leq s\leq 2n+2$ and $k$ odd. There are two cases.

\vspace{2mm}

\textbf{Case (I)} $a_1a_2=1\bmod 4$. As before, we show that $I_1(\mathbf{e}^0\otimes \mathbf{e}^{2^s k})=0$ if $s\in\{2n+1,2n+2\}$. Combining this fact and \textbf{(i)}-\textbf{(ii)} above, we prove that  $(\CC D_{2}(a_1)\otimes\CC D_{2}(a_2))^{\Gamma_0(8)}=0$ and the space $(\CC D_{2}(a_1)\otimes\CC D_{8}(a_2))^{\Gamma_0(32)}$ is of dimension one and generated by $\mathbf{e}^2\otimes (\mathbf{e}^4-\mathbf{e}^{12})$. 
By direct calculation, we find that
$$
I_1(\mathbf{e}^0\otimes \mathbf{e}^{2^{2n}k})=c\big(\mathbf{e}^0\otimes( \mathbf{e}^{2^{2n}}-\mathbf{e}^{3\cdot 2^{2n}})\big)
$$ 
for some non-zero constant $c$. If $s<2n$, then as before we assert that 	
$$ 
I_1(\mathbf{e}^0\otimes \mathbf{e}^{\gamma_2})\in \big(\CC D_2(a_1)\otimes \CC D_{2^{2n-1}}(a_2)\big)^{\Gamma_0(2^{2n+1})}. 
$$ 

\vspace{2mm}

\textbf{Case (II)} $a_1a_2=3\bmod 4$. We confirm that
\begin{align*}
I_1(\mathbf{e}^0\otimes \mathbf{e}^{2^{2n+1}})&=c_1\big(\mathbf{e}^0\otimes \mathbf{e}^{2^{2n+1}}\big),\\   I_1(\mathbf{e}^0\otimes \mathbf{e}^{0})&=c_2\big(\mathbf{e}^0\otimes \mathbf{e}^{0}\big), 
\end{align*}
for some non-zero constants $c_1$ and $c_2$. We notice that
$$
\mathbf{e}^0\otimes \mathbf{e}^{0}+\mathbf{e}^0\otimes \mathbf{e}^{2^{2n+1}} \in \big(\CC D_2(a_1)\otimes \CC D_{2^{2n-1}}(a_2)\big)^{\Gamma_0(2^{2n+1})}. 
$$
If $s\leq 2n$, then as before we conclude that 	
$$ 
I_1(\mathbf{e}^0\otimes \mathbf{e}^{\gamma_2})\in \big(\CC D_2(a_1)\otimes \CC D_{2^{2n-1}}(a_2)\big)^{\Gamma_0(2^{2n+1})}. 
$$ 
Combining \textbf{(i)}, \textbf{(ii)} and \textbf{(iii)} above, we complete the proofs of assertions (2) and (3). 

\vspace{3mm}

We finally prove assertion (4). For $\gamma\in D_2(a)$ with $a$ odd, we have
\begin{align*}
ST^4S \mathbf{e}^\gamma &=(-i)^a\mathbf{e}^{-\gamma +2},\\
ST^2S \mathbf{e}^\gamma &=\frac{(-i)^a+1}{2}\mathbf{e}^{-\gamma}+\frac{(-i)^a-1}{2}\mathbf{e}^{-\gamma +2}.
\end{align*}
This is enough to determine the spaces $(\CC D_2(a_1)\otimes \CC D_2(a_2))^{\Gamma_0(4)}$ and $(\CC D_2(a_1)\otimes \CC D_2(a_2))^{\Gamma_0(2)}$. 
By Lemma \ref{lem:irreducible_D2^N_LP^N} and direct calculation, we find the generator of $(\CC D_2(a_1)\otimes \CC D_2(a_2))^{\SL_2(\ZZ)}$.  We then finish the proof. 
\end{proof}

\begin{lemma}\label{lem:CLP^N}
Let $p$ be an odd prime and $a$ be an integer that is coprime to $p$. For any positive integer $n$, we have
\begin{equation}\label{eq:CLP^N_even_induction}
\big(\CC L_{p^{2n}}(a)\big)^{\Gamma_0(p^{2n})} = \big(\CC L_{p^{2n-2}}(a)\big)^{\Gamma_0(p^{2n-2})}   \bigoplus \CC \left(\sum_{l=1}^{p-1}\mathbf{e}^{lp^{2n-1}}\right)
\end{equation}
and 
\begin{equation}\label{eq:CLP^N_odd_induction}
\big(\CC L_{p^{2n+1}}(a)\big)^{\Gamma_0(p^{2n+1})} = \big(\CC L_{p^{2n-1}}(a)\big)^{\Gamma_0(p^{2n-1})} \bigoplus \CC \left(\sum_{l=1}^{p-1}\left(\frac{l}{p}\right)\mathbf{e}^{lp^{2n}}\right).	
\end{equation}
In particular, 
\begin{equation}\label{eq:CL_P2}
\big(\CC L_{p^2}(a)\big)^{\Gamma_0(p^2)} = \CC \left(\sum_{l=1}^{p}\mathbf{e}^{lp}\right) \oplus \CC \left(\sum_{l=1}^{p-1}\mathbf{e}^{lp}\right),
\end{equation}
where $\sum_{l=1}^{p}\mathbf{e}^{lp} $ lies in $(\CC L_{1})^{\SL_2(\ZZ)}$, and
\begin{equation}\label{eq:CL_P3}
\big(\CC L_{p^3}(a)\big)^{\Gamma_0(p^3)}=\CC \left(\sum_{l=1}^{p-1}\left(\frac{l}{p}\right)\mathbf{e}^{lp^2}\right),
\end{equation}
and for any non-negative integer $n$,
\begin{equation}\label{eq:CL_P1}
\big(\CC L_{p}(a)\big)^{\Gamma_0(p^n)}=0. 
\end{equation}
\end{lemma}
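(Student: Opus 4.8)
The plan is to evaluate the invariants directly by means of the projection operator and the coset decomposition \eqref{coset_decomposition}, in the same spirit as the proofs of Lemmas \ref{lem:CD1_a1a2_square}--\ref{lem:CD2CD2N}. Writing $k$ for the exponent (so $k=2n$ or $k=2n+1$), I would form
$$
I_k:=\sum_{g\in\Gamma_0(p^k)/\Gamma(p^k)}\rho(g)=\sum_{\substack{l\,\bmod\,p^k\\ \mathrm{gcd}(l,p)=1}}\rho(ST^lST^{l^{-1}}S)\sum_{j\,\bmod\,p^k}\rho(T^j),
$$
whose image is $(\CC L_{p^k}(a))^{\Gamma_0(p^k)}$, and apply it to each basis vector $\mathbf{e}^\gamma$. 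The inner sum $\sum_j\rho(T^j)$ multiplies $\mathbf{e}^\gamma$ by $\sum_j e(ja\gamma^2/p^k)$, which vanishes unless $p^{\lceil k/2\rceil}\mid\gamma$; hence only $\mathbf{e}^0$ and the $\mathbf{e}^\gamma$ with $\gamma=p^st$, $\mathrm{gcd}(t,p)=1$ and $\lceil k/2\rceil\le s\le k-1$, survive. On every survivor $2s\ge k$, so the phase $e(-al\gamma^2/p^k)$ in \eqref{eq:oddformulaSTSTS} is trivial and the action simplifies to $\mathbf{e}^\gamma\mapsto\mathbf{e}^{-l\gamma}$ when $k$ is even and to $\mathbf{e}^\gamma\mapsto\big(\tfrac{-l}{p}\big)\mathbf{e}^{-l\gamma}$ when $k$ is odd.

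I would then separate the survivors by their valuation $s$. At the top valuation $s=k-1$, summing $\mathbf{e}^{-l\gamma}$ (resp. $\big(\tfrac{-l}{p}\big)\mathbf{e}^{-l\gamma}$) over the units $l$ collapses the orbit to the single vector $\sum_{l=1}^{p-1}\mathbf{e}^{lp^{k-1}}$ in the even case, and, because of the Legendre weights, to $\sum_{l=1}^{p-1}\big(\tfrac{l}{p}\big)\mathbf{e}^{lp^{k-1}}$ in the odd case; a short Gauss-sum check (using $\big(\tfrac{-1}{p}\big)^2=1$) confirms that these are genuinely fixed by every $ST^lST^{l^{-1}}S$, hence $\Gamma_0(p^k)$-invariant. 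For the intermediate valuations $\lceil k/2\rceil\le s<k-1$, writing $l=p^{k-s}l_2+l_1$ exhibits $I_k(\mathbf{e}^{p^st})$ as a sum whose inner block over $l_2$ is exactly the image of a basis vector of $\CC L_{p^{k-2}}(a)$ under the embedding of Section \ref{sec:Weil-rep}; thus $I_k(\mathbf{e}^{p^st})$ lies in the embedded submodule $\CC L_{p^{k-2}}(a)$, and by the reduction Lemma \ref{lem:reduce_level_from_p^n+1_to_p^n} (applied with a trivial first tensor factor) it is already $\Gamma_0(p^{k-2})$-invariant. Finally $\mathbf{e}^0$ is invariant when $k$ is even and is annihilated by $I_k$ when $k$ is odd, since then $I_k(\mathbf{e}^0)=\big(\tfrac{-1}{p}\big)\big(\sum_l\big(\tfrac{l}{p}\big)\big)\mathbf{e}^0=0$. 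Collecting these cases shows that $(\CC L_{p^k}(a))^{\Gamma_0(p^k)}$ is spanned by the embedded copy of $(\CC L_{p^{k-2}}(a))^{\Gamma_0(p^{k-2})}$ together with the one new generator, which is \eqref{eq:CLP^N_even_induction} and \eqref{eq:CLP^N_odd_induction}.

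The base identities follow quickly. For \eqref{eq:CL_P1} the reduction lemma gives $(\CC L_p(a))^{\Gamma_0(p^n)}=(\CC L_p(a))^{\Gamma_0(p)}$ for $n\ge1$, and the only $T^j$-survivor $\mathbf{e}^0$ is killed by $\sum_{l\,\bmod\,p}\big(\tfrac{-l}{p}\big)=0$; the case $n=0$ is excluded already by Lemma \ref{lem:irreducible_D2^N_LP^N}(II), since pairing $\CC L_p(a)$ with the trivial module $\CC L_1$ has $k_1+k_2=1$ odd. Equations \eqref{eq:CL_P2} and \eqref{eq:CL_P3} are then the instances $n=1$ of the two induction formulas, using $(\CC L_1)^{\SL_2(\ZZ)}=\CC\mathbf{e}^0$, whose embedding into $\CC L_{p^2}(a)$ is $\sum_{l=1}^{p}\mathbf{e}^{lp}$, and $(\CC L_p(a))^{\Gamma_0(p)}=0$. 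The directness of each sum follows from a valuation count: in the embedded submodule the only source of $\mathbf{e}^0$ or of vectors of valuation $k-1$ is the image of $\mathbf{e}^0\in\CC L_{p^{k-2}}(a)$, which forces the coefficient of $\mathbf{e}^0$ to equal that of $\sum_{l=1}^{p-1}\mathbf{e}^{lp^{k-1}}$; the new generator has no $\mathbf{e}^0$ component in the even case and is the Legendre-weighted sum in the odd case, so in either case it is independent of $(\CC L_{p^{k-2}}(a))^{\Gamma_0(p^{k-2})}$.

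The main obstacle is the intermediate-valuation bookkeeping. One must check that after the substitution $l=p^{k-s}l_2+l_1$ the inner sum over $l_2$ reproduces precisely the embedding map $\mathbf{e}^{\gamma}\mapsto\sum_{x\equiv\gamma p}\mathbf{e}^x$, so that the reduction lemma is applicable, and at the same time carry the factor $\big(\tfrac{-l}{p}\big)$ through the odd case without sign errors. The remaining steps are direct, if somewhat tedious, evaluations of geometric and Gauss sums.
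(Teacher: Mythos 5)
Your proposal is correct and follows essentially the same route as the paper: apply the projection operator via the coset decomposition \eqref{coset_decomposition}, observe that only basis vectors of high $p$-adic valuation survive the $T$-averaging, identify the top-valuation orbit with the new generator (plain sum in the even case, Legendre-weighted sum in the odd case), and push the intermediate-valuation contributions into the embedded copy of $\CC L_{p^{k-2}}(a)$ via Lemma \ref{lem:reduce_level_from_p^n+1_to_p^n}. The only cosmetic differences are that you treat the even and odd exponents uniformly and derive \eqref{eq:CL_P1} from the reduction lemma together with Lemma \ref{lem:irreducible_D2^N_LP^N}, whereas the paper reads it off as the $n=0$ instance of the odd induction.
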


\begin{proof}
We first prove \eqref{eq:CLP^N_even_induction}.
Let $I_0:=\sum_{g\in \Gamma_0(p^{2n}) / \Gamma(p^{2n})} \rho(g)$ and $\gamma \in  L_{p^{2n}}(a)$. Then the sum
$$
\sum_{l=1}^{p^{2n}}T^l \mathbf{e}^\gamma=\sum_{l=1}^{p^{2n}}e\Big(\frac{la\gamma^2}{p^{2n}}\Big)\mathbf{e}^\gamma
$$
does not vanish if and only if $p^n \mid \gamma$. Assume $\gamma=p^sk$ with $n\leq s\leq 2n$ and $\mathrm{gcd}(p,k)=1$. The identities
$$
I_0(\mathbf{e}^0)=c_0 \mathbf{e}^0 \quad \text{and} \quad  
I_0(\mathbf{e}^{kp^{2n-1}})=c'_0\sum_{l=1}^{p-1}\mathbf{e}^{l p^{2n-1}}
$$
hold for some non-zero constants $c_0$ and $c_0'$. We note that 
$$
I_0 (c'_0\mathbf{e}^0+c_0\mathbf{e}^{kp^{2n-1}} ) \in \big(\CC L_{p^{2n-2}}(a)\big)^{\Gamma_0(p^{2n})}.
$$
By taking  $n=1$ in this case, we obtain \eqref{eq:CL_P2}.
If $s<2n-1$, then 
$$
I_0(\mathbf{e}^{\gamma})=p^{2n}\sum_{\substack{1\leq l_1\leq p^{2n-1-s}
\\ \mathrm{gcd}(l_1,p)=1}}\sum_{1\leq l_2\leq p^{s+1}} \mathbf{e}^{l_1kp^{s}+l_2kp^{2n-1}}. 
$$  
By Lemma \ref{lem:reduce_level_from_p^n+1_to_p^n} and the embedding introduced at the end of Section \ref{sec:Weil-rep}, the inner summand above lies in $(\CC L_{p^{2n-2}}(a))^{\Gamma_0(p^{2n-2})}$. We thus prove \eqref{eq:CLP^N_even_induction}. 
	
The proof of \eqref{eq:CLP^N_odd_induction} is similar. Let $I_1:=\sum_{g\in \Gamma_0(p^{2n+1})/ \Gamma(p^{2n+1})} \rho(g)$ and $\gamma\in L_{p^{2n+1}}(a)$. Similarly, $\sum_{l=1}^{p^{2n+1}}T^l \mathbf{e}^\gamma$ does not vanish if and only if $p^{n+1}\mid \gamma$. Assume that $\gamma=p^sk$ with $n+1\leq s\leq 2n+1$ and $\mathrm{gcd}(p,k)=1$. We verify that $I_1(\mathbf{e}^0)=0$ and 
$$
I_1(\mathbf{e}^{kp^{2n}})=c_1\sum_{l=1}^{p-1} \left(\frac{l}{p}\right)\mathbf{e}^{lkp^{2n}}=c_1\left(\frac{k}{p}\right) \sum_{l=1}^{p-1} \left(\frac{l}{p}\right)\mathbf{e}^{lp^{2n}} 
$$ 
for some non-zero constant $c_1$. By taking $n=0$ and $n=1$, we prove \eqref{eq:CL_P1} and \eqref{eq:CL_P3}, respectively. 
If $s<2n$, then 
$$ 
I_1(\mathbf{e}^{\gamma})=p^{2n+1}\sum_{\substack{1\leq l_1\leq p^{2n-s}
\\ \mathrm{gcd}(l_1,p)=1}}\sum_{1\leq l_2\leq p^{s}}\left(\frac{l_1}{p}\right) \mathbf{e}^{l_1kp^{s}+l_2kp^{2n}}.
$$
The inner summand lies in $(\CC L_{p^{2n-1}}(a))^{\Gamma_0(p^{2n-1})}$. This completes the proof of \eqref{eq:CLP^N_odd_induction}. 
\end{proof}

\begin{lemma}\label{lem:CLPCLP^2N}
Let $p$ be an odd prime and $a_1, a_2$ be integers that are coprime to $p$. For any positive integer $n$, we have 
\begin{equation}\label{eq:CLPCLP^N_even_induction}
\begin{split}
&\big(\CC L_p(a_1)\otimes \CC L_{p^{2n}}(a_2)\big)^{\Gamma_0(p^{2n})}\big/ \big(\CC L_p(a_1)\otimes \CC L_{p^{2n-2}}(a_2)\big)^{\Gamma_0(p^{2n-2})} \\ 
= & \CC \left(\mathbf{e}^0\otimes \sum_{l=1}^{p-1}\left(\frac{l}{p}\right)\mathbf{e}^{lp^{2n-1}}\right).
\end{split}
\end{equation}
In particular, 
\begin{equation}\label{eq:CLPCLP2}
\big(\CC L_p(a_1)\otimes \CC L_{p^{2}}(a_2)\big)^{\Gamma_0(p^{2n})}=\CC \left(\mathbf{e}^0\otimes\sum_{l=1}^{p-1}\left(\frac{l}{p}\right)\mathbf{e}^{lp}\right).  
\end{equation}
For any positive integer $n$, we have
\begin{equation}\label{eq:CLPCLP^N_odd_induction}
\big(\CC L_p(a_1)\otimes \CC L_{p^{2n+1}}(a_2)\big)^{\Gamma_0(p^{2n+1})}= \big(\CC L_p(a_1)\otimes \CC L_{p^{2n-1}}(a_2)\big)^{\Gamma_0(p^{2n-1})} \oplus \CC \big(\mathbf{e}^0\otimes \mathbf{e}^0\big).         
\end{equation}
If $\big(\frac{-a_1a_2}{p}\big)=-1$, then
$$ 
\big(\CC L_p(a_1)\otimes \CC L_{p}(a_2)\big)^{\Gamma_0(p)}=\CC \big(\mathbf{e}^0\otimes \mathbf{e}^0\big). 
$$
If $\big(\frac{-a_1a_2}{p}\big)=1$, that is, $a_1=-a_2x^2 \bmod p$ for some integer $x$, then
$$ 
\big(\CC L_p(a_1)\otimes \CC L_{p}(a_2)\big)^{\Gamma_0(p)}=\CC \big(\mathbf{e}^0\otimes \mathbf{e}^0\big)\oplus \CC\left(\sum_{l=1}^{p-1}(\mathbf{e}^l\otimes \mathbf{e}^{lx})\right) \oplus \CC\left(\sum_{l=1}^{p-1}(\mathbf{e}^l\otimes \mathbf{e}^{-lx})\right).  
$$
In particular, for any non-negative integer $n$,
\begin{equation}\label{eq:CLP-CLP-vanish}
\big(\CC L_p^-(a_1)\otimes \CC L_{p^{2n+1}}^-(a_2)\big)^{\Gamma_0(p^{2n+1})}=0
\end{equation}
if and only if $\big(\frac{-a_1a_2}{p}\big)=-1$.
\end{lemma}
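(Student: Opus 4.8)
The plan is to compute every space directly with the (unnormalized) projection operator $I=\sum_{g\in\Gamma_0(p^N)/\Gamma(p^N)}\rho(g)$, exactly as in the proofs of Lemma \ref{lem:CLP^N} and Lemma \ref{lem:CD1_a1a2_square}. Using the coset decomposition \eqref{coset_decomposition} I factor it as
\[
I=\sum_{\substack{l\bmod p^N\\ \gcd(l,p)=1}}\rho(ST^lST^{l^{-1}}S)\sum_{j\bmod p^N}\rho(T^j),
\]
and apply it to the basis $\mathbf{e}^{\gamma_1}\otimes\mathbf{e}^{\gamma_2}$ of $\CC L_p(a_1)\otimes\CC L_{p^M}(a_2)$ with $M\in\{2n,2n+1\}$. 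The inner $T$-average is nonzero precisely when $a_1p^{M-1}\gamma_1^2+a_2\gamma_2^2\equiv0\pmod{p^M}$; a $p$-adic valuation analysis (using $\gcd(a_i,p)=1$ and the parity of $M-1$) then pins down the surviving pairs $(\gamma_1,\gamma_2)$. The outer sum is evaluated through formula \eqref{eq:oddformulaSTSTS}, keeping track of the fact that $\CC L_p$ has \emph{odd} exponent, so it acquires the Legendre twist $\big(\frac{-l}{p}\big)$, whereas $\CC L_{p^{2n}}$ has even exponent and acquires no twist.

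For the even case $M=2n$ the valuation argument forces $\gamma_1=0$ and $p^n\mid\gamma_2$. Summing $ST^lST^{l^{-1}}S$ over $l$ turns the family $\mathbf{e}^0\otimes\mathbf{e}^{lkp^{2n-1}}$ into the Gauss-type vector $\mathbf{e}^0\otimes\sum_{l}\big(\frac{l}{p}\big)\mathbf{e}^{lp^{2n-1}}$ after a change of variable, giving the new generator of \eqref{eq:CLPCLP^N_even_induction}; the same twist makes $\sum_l\big(\frac{l}{p}\big)=0$ act on $\mathbf{e}^0\otimes\mathbf{e}^0$, so that vector is killed and contributes nothing new. For $\gamma_2=p^sk$ with $s<2n-1$ the image is supported on exponents divisible by $p$ and, by Lemma \ref{lem:reduce_level_from_p^n+1_to_p^n} together with the embedding at the end of Section \ref{sec:Weil-rep}, lies in the lower-level space, hence drops out of the quotient. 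Equation \eqref{eq:CLPCLP2} then follows by taking $n=1$, using \eqref{eq:CL_P1} to kill $(\CC L_p(a_1))^{\SL_2(\ZZ)}$, and invoking the reduction lemma to pass between levels $p^2$ and $p^{2n}$.

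For the odd case $M=2n+1$ both factors carry a Legendre twist, and the valuation analysis now permits two kinds of survivors: $\gamma_1=0$ with $p^{n+1}\mid\gamma_2$, and---only when $-a_1a_2$ is a square mod $p$---the off-diagonal vectors $\gamma_1\neq0$, $\gamma_2\equiv\pm x\gamma_1$. The branch $\gamma_1=0$ produces, after the $l$-summation, exactly the new vector $\mathbf{e}^0\otimes\mathbf{e}^0$, which is genuinely new because the embedding $\CC L_{p^{2n-1}}(a_2)\hookrightarrow\CC L_{p^{2n+1}}(a_2)$ sends the lower $\mathbf{e}^0$ to $\mathbf{e}^0+\mathbf{e}^{p^{2n}}$; deeper $\gamma_2$ again reduce to the lower level. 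The off-diagonal survivors match, under this same embedding, the corresponding vectors at level $p^{2n-1}$, so they add no new dimension, establishing \eqref{eq:CLPCLP^N_odd_induction}. The base case $n=0$ is then a direct computation at level $p$: $\mathbf{e}^0\otimes\mathbf{e}^0$ always survives, and the relation $\gamma_2\equiv\pm x\gamma_1$ yields the two invariants $\sum_{l=1}^{p-1}\mathbf{e}^l\otimes\mathbf{e}^{\pm lx}$ exactly when $\big(\frac{-a_1a_2}{p}\big)=1$, giving dimension three; otherwise the space is $\CC(\mathbf{e}^0\otimes\mathbf{e}^0)$.

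Finally, \eqref{eq:CLP-CLP-vanish} follows by tracking the $\pm$-decomposition. The generator $\mathbf{e}^0\otimes\mathbf{e}^0$ is fixed by $\gamma\mapsto-\gamma$, hence lies in the $(+,+)$-part and never in $\CC L_p^-(a_1)\otimes\CC L_{p^{2n+1}}^-(a_2)$. By the odd induction \eqref{eq:CLPCLP^N_odd_induction}, the $(-,-)$-invariants at level $p^{2n+1}$ therefore coincide with those at level $p^{2n-1}$, and descending to the base case reduces the claim to the $n=0$ computation, where the $(-,-)$-part is nonzero exactly when the off-diagonal generators exist, i.e.\ when $\big(\frac{-a_1a_2}{p}\big)=1$. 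I expect the main obstacle to be the bookkeeping in the odd case: correctly identifying the off-diagonal survivors at level $p^{2n+1}$ with the image of the lower level under the embedding, and keeping the parity-dependent Legendre and Gauss-sum factors of \eqref{eq:oddformulaSTSTS} consistent, so that the genuinely new contribution is verified to be exactly one-dimensional.
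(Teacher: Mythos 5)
Your proposal is correct and follows essentially the same route as the paper: the same projection operator built from the coset decomposition \eqref{coset_decomposition}, the same analysis of which $\mathbf{e}^{\gamma_1}\otimes\mathbf{e}^{\gamma_2}$ survive the $T$-average, the evaluation of the outer sum via \eqref{eq:oddformulaSTSTS} with its parity-dependent Legendre twist, and the reduction of deeper $\gamma_2$ to the lower-level space through Lemma \ref{lem:reduce_level_from_p^n+1_to_p^n} and the embedding of Section \ref{sec:Weil-rep}. Your treatment of \eqref{eq:CLP-CLP-vanish} via the sign-graded decomposition and descent to the base case $n=0$ is a slightly more explicit phrasing of the paper's argument (which exhibits the $(-,-)$-invariant $\sum_{l}(\mathbf{e}^l-\mathbf{e}^{-l})\otimes(\mathbf{e}^{lx}-\mathbf{e}^{-lx})$ directly), but the substance is the same.
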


\begin{proof}
We first consider the even case. We set $I_0:=\sum_{g\in \Gamma_0(p^{2n}) / \Gamma(p^{2n})} \rho(g)$. Let $\gamma_1\in L_p(a_1)$ and $\gamma_2 \in L_{p^{2n}}(a_2)$. Then the sum 
$$ 
\sum_{l=1}^{p^{2n}}T^l (\mathbf{e}^{\gamma_1}\otimes \mathbf{e}^{\gamma_2})=\sum_{l=1}^{p^{2n}}e\Big(\frac{l(a_1p^{2n-1}\gamma_1^2+a_2\gamma_2^2)}{p^{2n}}\Big)\mathbf{e}^{\gamma_1}\otimes \mathbf{e}^{\gamma_2}
$$
does not vanish if and only if $\gamma_1=0$ and $p^n\mid\gamma_2$. Assume that $\gamma_2=p^sk$ with $n\leq s\leq 2n$ and $\mathrm{gcd}(p,k)=1$. We show that $I_0(\mathbf{e}^0\otimes \mathbf{e}^0)=0$ and
$$
I_0(\mathbf{e}^0\otimes \mathbf{e}^{kp^{2n-1}})= c_0\left(\mathbf{e}^0\otimes \sum_{l=1}^{p-1}\left(\frac{l}{p}\right)\mathbf{e}^{lp^{2n-1}}\right) 
$$
for some non-zero constant $c_0$. By taking $n=1$, we get \eqref{eq:CLPCLP2}. If $s<2n-1$, then as in Lemma \ref{lem:CLP^N} we show that 
$$
I_0(\mathbf{e}^0\otimes \mathbf{e}^{\gamma_2})=\big(\CC L_p(a_1)\otimes \CC L_{p^{2n-2}}(a_2)\big)^{\Gamma_0(p^{2n-2})}.
$$ 
We thus prove \eqref{eq:CLPCLP^N_even_induction}. 

We then consider the odd case. We set $I_1:=\sum_{g\in \Gamma_0(p^{2n+1}) / \Gamma(p^{2n+1})} \rho(g)$. Let $\gamma_1\in L_p(a_1)$ and $\gamma_2 \in L_{p^{2n+1}}(a_2)$. Then the sum
$$ 
\sum_{l=1}^{p^{2n+1}}T^l (\mathbf{e}^{\gamma_1}\otimes \mathbf{e}^{\gamma_2})=\sum_{l=1}^{p^{2n+1}}e\Big(\frac{l(a_1p^{2n}\gamma_1^2+a_2\gamma_2^2)}{p^{2n+1}}\Big)
$$
does not vanish if and only if either $\gamma_1=0$ and $p^{n+1}\mid \gamma_2$, or $\gamma_1\neq 0$, $a_1=-a_2x^2\bmod p$ for some integer $x$, and $\gamma_2=\pm p^nx\gamma_1$.   

In the first case, we write $\gamma_2=p^sk$ with $n+1\leq s \leq 2n+1$ and $\mathrm{gcd}(p,k)=1$. If $n+1\leq s\leq 2n-1$, then as before we conclude 
$$
I_1(\mathbf{e}^0\otimes \mathbf{e}^{\gamma_2})\in \big(\CC L_p(a_1)\otimes \CC L_{p^{2n-1}}(a_2)\big)^{\Gamma_0(p^{2n-1})}.
$$
For the other $s$, we verify that
$$ 
I_1(\mathbf{e}^0\otimes \mathbf{e}^0)=c_1\big(\mathbf{e}^0\otimes \mathbf{e}^0\big) \quad \text{and} \quad I_1(\mathbf{e}^0\otimes \mathbf{e}^{p^{2n}k})=c'_1\left(\sum_{l=1}^{p-1} \mathbf{e}^0\otimes \mathbf{e}^{lp^{2n}}\right)
$$
for some non-zero constants $c_1$ and $c_1'$. We note that
$$
\sum_{l=1}^p \mathbf{e}^0\otimes \mathbf{e}^{lp^{2n}} \in \big(\CC L_p(a_1)\otimes \CC L_{p^{2n-1}}(a_2)\big)^{\Gamma_0(p^{2n-1})}.
$$

In the last case, we confirm that
$$
I_1(\mathbf{e}^{\gamma_1}\otimes \mathbf{e}^{\epsilon p^n x \gamma_1}) = c \sum_{l=1}^{p-1} \Big( \mathbf{e}^l\otimes \sum_{j=1}^{p^{2n}} \mathbf{e}^{\epsilon p^nx(l+jp)} \Big), \quad \epsilon\in\{+,-\}
$$
for some non-zero constant $c$, and therefore it is identical to the element $\sum_{j=1}^{p-1}\mathbf{e}^l\otimes \mathbf{e}^{\epsilon lx}$ in the space $(\CC  L_p(a_1)\otimes \CC L_p(a_2))^{\Gamma_0(p)}$.  We thus prove \eqref{eq:CLPCLP^N_odd_induction} and determine $(\CC L_p(a_1)\otimes \CC L_{p}(a_2))^{\Gamma_0(p)}$ by combining the above two cases together. By \eqref{eq:CLPCLP^N_odd_induction} and the fact
$$ 
\sum_{l=1}^{(p-1)/2}\big((\mathbf{e}^l-\mathbf{e}^{-l})\otimes (\mathbf{e}^{lx}-\mathbf{e}^{-lx})\big)\in \big(\CC L_p^-(a_1)\otimes \CC L_{p}^-(a_2)\big)^{\Gamma_0(p)}, \quad  a_1=-a_2x^2 \, \bmod p,
$$
we prove the last claim. 
\end{proof}

\begin{lemma}\label{lem:oddinduction}
Let $k_1$, $k_2$, $k_3$ be positive integers such that $k_1+k_2$ is odd, $k_1 > k_2$ and $k_1 >k_3$. Let $a_1$, $a_2$ be integers that are coprime to a prime $p$. If $p$ is odd, then
$$
\big(\CC L_{p^{k_1}}(a_1 )\otimes\CC L_{p^{k_2}}(a_2)\big)^{\Gamma_0(p^{k_3})}=
\big(\CC L_{p^{k_1-2}}(a_1 )\otimes\CC L_{p^{k_2}}(a_2)\big)^{\Gamma_0(p^{k_3})}.
$$  
If $p=2$, then for $r\in \{ 0,1,2\}$,
$$
\big(\CC D_{2^{k_1}}(a_1)\otimes\CC D_{2^{k_2}}(a_2)\big)^{\Gamma_0(2^{k_3+r})}= \\
\big(\CC D_{2^{k_1-2}}(a_1)\otimes\CC D_{2^{k_2}}(a_2)\big)^{\Gamma_0(2^{k_3+r})}.
$$
\end{lemma}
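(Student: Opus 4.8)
The plan is to prove the odd--prime case in detail; the case $p=2$ runs along the same lines with \eqref{eq:evenformulaSTSTS} in place of \eqref{eq:oddformulaSTSTS}, the three admissible values of $r$ reflecting that the relevant level for $\CC D_{2^{k_1}}(a_1)$ is $2^{k_1+2}$ rather than $2^{k_1}$. Since the embedding $\CC L_{p^{k_1-2}}(a_1)\hookrightarrow\CC L_{p^{k_1}}(a_1)$ from the end of Section \ref{sec:Weil-rep} is a morphism of $\SL_2(\ZZ)$-modules, the inclusion $\supseteq$ holds automatically; hence it suffices to show that the projection operator $I=\sum_{g\in\Gamma_0(p^{k_3})/\Gamma(p^{k_1})}\rho(g)$ sends every basis vector $\mathbf{e}^{\gamma_1}\otimes\mathbf{e}^{\gamma_2}$ of $\CC L_{p^{k_1}}(a_1)\otimes\CC L_{p^{k_2}}(a_2)$ into the image of $\CC L_{p^{k_1-2}}(a_1)\otimes\CC L_{p^{k_2}}(a_2)$. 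Here the representation factors through $\Gamma(p^{k_1})$ because $k_1>k_2$, and $\Gamma(p^{k_1})\subseteq\Gamma_0(p^{k_3})$ because $k_1>k_3$, so this quotient is finite and $I$ is a nonzero multiple of the projection onto the invariants.

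\textbf{Support constraint.} First I would use that $T\in\Gamma_0(p^{k_3})$ acts on $\mathbf{e}^{\gamma_1}\otimes\mathbf{e}^{\gamma_2}$ by the scalar $e(a_1\gamma_1^2/p^{k_1}+a_2\gamma_2^2/p^{k_2})$, so $I(\mathbf{e}^{\gamma_1}\otimes\mathbf{e}^{\gamma_2})=0$ unless this scalar is $1$. Writing the two summands over the common denominator $p^{k_1}$, their $p$-adic valuations have opposite parity precisely because $k_1-k_2$ is odd; they therefore cannot cancel, and invariance forces $p^{\lceil k_1/2\rceil}\mid\gamma_1$ and $p^{\lceil k_2/2\rceil}\mid\gamma_2$ simultaneously. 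In particular both $a_i\gamma_i^2/p^{k_i}$ are integers on the surviving support. This parity dichotomy, which breaks down exactly when $k_1+k_2$ is even, is where the hypothesis enters decisively.

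\textbf{Factoring the projection.} I would then write $I=J\cdot I_{k_1}$, where $I_{k_1}=\sum_{g\in\Gamma_0(p^{k_1})/\Gamma(p^{k_1})}\rho(g)$ is the level-$p^{k_1}$ projection (for which \eqref{coset_decomposition} and \eqref{eq:oddformulaSTSTS} apply verbatim, the inverse now being taken modulo $p^{k_1}$) and $J=\sum_{c\bmod p^{k_1-k_3}}\rho\begin{psmallmatrix}1&0\\cp^{k_3}&1\end{psmallmatrix}$ averages over the $p^{k_1-k_3}$ cosets of $\Gamma_0(p^{k_1})$ in $\Gamma_0(p^{k_3})$. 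Evaluating $I_{k_1}$ through \eqref{eq:oddformulaSTSTS}, the phases disappear on the surviving support, and, $k_1+k_2$ being odd, exactly one factor has odd index, so the two local contributions combine into a single Legendre symbol, giving $I_{k_1}(\mathbf{e}^{\gamma_1}\otimes\mathbf{e}^{\gamma_2})=c_0\sum_{l\in(\ZZ/p^{k_1})^{\ast}}\big(\tfrac{-l}{p}\big)\,\mathbf{e}^{-l\gamma_1}\otimes\mathbf{e}^{-l\gamma_2}$ for a nonzero constant $c_0$. When $v_p(\gamma_1)>\lceil k_1/2\rceil$ I would split this $l$-sum into a double sum exactly as in the proofs of Lemma \ref{lem:CLP^N} and Lemma \ref{lem:CLPCLP^2N}: the inner summand is an embedded vector already lying in $\CC L_{p^{k_1-2}}(a_1)\otimes\CC L_{p^{k_2}}(a_2)$, and since $J$ preserves this submodule the claim follows after descending the level by Lemma \ref{lem:reduce_level_from_p^n+1_to_p^n}.

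\textbf{The main obstacle.} The genuinely delicate case is the ``top'' one, $v_p(\gamma_1)=\lceil k_1/2\rceil$: here $I_{k_1}(\mathbf{e}^{\gamma_1}\otimes\mathbf{e}^{\gamma_2})$ is a nonzero Gauss-sum-type vector whose component outside $\CC L_{p^{k_1-2}}(a_1)\otimes\CC L_{p^{k_2}}(a_2)$ is exactly the new generator that survives at level $p^{k_1}$ in \eqref{eq:CLPCLP^N_even_induction}. Its vanishing must come entirely from the extra averaging $J$, and this is the step I expect to be the main obstacle. Computing $\rho\begin{psmallmatrix}1&0\\cp^{k_3}&1\end{psmallmatrix}$ through the first formula of Lemma \ref{lem:oddformula} and summing over $c\bmod p^{k_1-k_3}$ produces the factor $\sum_c e(-a_1 c\alpha^2/p^{k_1-k_3})$, which vanishes unless $p^{\lceil(k_1-k_3)/2\rceil}\mid\alpha$; the crux is to verify that, because $k_3<k_1$, this divisibility annihilates the Legendre-twisted top component while fixing the embedded part, so that $J$ kills precisely the new generator. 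Tracking the analogous eighth-root-of-unity phases coming from \eqref{eq:evenformulaSTSTS} for each $r\in\{0,1,2\}$ then settles the case $p=2$.
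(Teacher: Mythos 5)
Your overall strategy (compute the full projection $I=J\cdot I_{k_1}$ on each basis vector and show the image lands in the embedded submodule) is a legitimate route, and your support constraint from the $T$-action and the single-Legendre-symbol computation of $I_{k_1}$ on the surviving support are correct. But the proof is not complete: the case you yourself flag as ``the main obstacle'' --- $v_p(\gamma_1)=\lceil k_1/2\rceil$ --- is exactly where all the content of the lemma sits, and you do not carry it out. In that case $I_{k_1}(\mathbf{e}^{\gamma_1}\otimes\mathbf{e}^{\gamma_2})$ is (up to scalar) a vector such as $\sum_{j}\big(\tfrac{j}{p}\big)\mathbf{e}^{jp^{k_1-1}}\otimes(\cdots)$, the genuinely new $\Gamma_0(p^{k_1})$-invariant of \eqref{eq:CLP^N_odd_induction}, which is \emph{not} in the image of $\CC L_{p^{k_1-2}}(a_1)\otimes\CC L_{p^{k_2}}(a_2)$ (its coefficients are not periodic modulo $p^{k_1-1}$). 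Showing that the extra average $J$ sends this vector into the submodule is precisely the point where the hypothesis $k_1>k_3$ must be used, and your sketch only asserts that ``the crux is to verify'' it. Moreover, your description of $J$ is imprecise: when $k_3<k_2$ the unipotent $\begin{psmallmatrix}1&0\\cp^{k_3}&1\end{psmallmatrix}$ acts nontrivially on \emph{both} tensor factors, so the $c$-sum couples $\alpha_1$ and $\alpha_2$ and the vanishing condition is $p^{k_1-k_3}\mid a_1\alpha_1^2+p^{k_1-k_2}a_2\alpha_2^2$ rather than a condition on $\alpha_1$ alone. As written, the argument establishes the easy cases and defers the essential one.

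For comparison, the paper's proof avoids computing the projection altogether. It takes an arbitrary invariant $\omega=\sum c_{\gamma_1,\gamma_2}\mathbf{e}^{\gamma_1}\otimes\mathbf{e}^{\gamma_2}$ and extracts two linear conditions on its coefficients: invariance under $\sum_l T^l$ forces $c_{\gamma_1,\gamma_2}=0$ unless $p\mid\gamma_1$ (your parity argument), and invariance under $\tfrac1p\sum_{l=1}^{p}ST^{p^{k_1-1}l}S$ --- which lies in $\Gamma_0(p^{k_3})$ precisely because $k_1-1\geq k_3$ --- forces $c_{\gamma_1,\gamma_2}=c_{\gamma_1+lp^{k_1-1},\gamma_2}$, since by Lemma \ref{lem:oddformula} this operator acts as $\mathbf{e}^{\gamma_1}\otimes\mathbf{e}^{\gamma_2}\mapsto \tfrac{\epsilon\epsilon'}{p}\sum_l\mathbf{e}^{-\gamma_1+lp^{k_1-1}}\otimes\mathbf{e}^{-\gamma_2}$. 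These two conditions exactly characterize the image of the embedding $\CC L_{p^{k_1-2}}(a_1)\otimes\CC L_{p^{k_2}}(a_2)$, so there is no ``top case'' to dispose of. If you want to salvage your approach, you should either perform the Gauss-sum computation of $J$ on the new generator in full, or switch to the paper's two-operator argument, which settles the matter in a few lines and makes transparent where each hypothesis ($k_1+k_2$ odd, $k_1>k_2$, $k_1>k_3$) enters.
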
 

\begin{proof}
We first consider the odd case. Suppose $\omega=\sum_{\gamma_1\in L_{p^{k_1}}(a_1)}\sum_{\gamma_2 \in L_{p^{k_2}}(a_2)} c_{\gamma_1,\gamma_2} \mathbf{e}^{\gamma_1}\otimes \mathbf{e}^{\gamma_2}$ is invariant under $\Gamma_0(p^{k_3})$. As explained at the end of Section \ref{sec:Weil-rep}, $\omega\in (\CC L_{p^{k_1-2}}(a_1)\otimes\CC L_{p^{k_2}}(a_2))^{\Gamma_0(p^{k_3})}$ if $c_{\gamma_1,\gamma_2}=0$  for $p\nmid \gamma_1$ and  $c_{\gamma_1,\gamma_2}=c_{\gamma_1+lp^{k_1-1},\gamma_2}$ for any $1 \leq l \leq p$. This can be deduced by acting $\sum_{l=1}^{p^{k_1}}T^l$ and $\sum_{l=1}^{p}ST^{p^{k_1-1}l}S$ on $\omega$ respectively. The details are as follows.
	
It is clear that $\omega=\frac{1}{p^{k_1}} \sum_{l=1}^{p^{k_1}}T^l \omega$ and the sum 
$$
\sum_{l=1}^{p^{k_1}}T^l (\mathbf{e}^{\gamma_1}\otimes \mathbf{e}^{\gamma_2})=\sum_{l=1}^{p^{k_1}}e\left(\frac{l(a_1\gamma_1^2+p^{k_1-k_2}a_2\gamma_2^2)}{p^{k_1}}\right) \mathbf{e}^{\gamma_1}\otimes \mathbf{e}^{\gamma_2}
$$
does not vanish if and only if $p^{k_1}\mid a_1\gamma_1^2+p^{(k_1-k_2)}a_2\gamma_2^2$, or equivalently, $p^{k_1}\mid \gamma_1^2$ and $p^{k_2} \mid \gamma_2^2$, since $k_1-k_2$ is odd. It follows that $c_{\gamma_1,\gamma_2}=0$ if $p \nmid \gamma_1$.
	
By Lemma \ref{lem:oddformula}, we confirm that $\omega=\frac{1}{p}\sum_{l=1}^{p}ST^{p^{k_1-1}l}S\omega$, and 
$$
ST^{p^{k_1-1}l}S \mathbf{e}^{\gamma_2}=\frac{\epsilon}{p^{k_2}}\sum_{\alpha, \, \beta \in L_{p^{k_2}}(a_2)}e\left( \frac{a_2(-2\gamma_2 \alpha-2\alpha \beta)}{p^{k_2}}\right)\mathbf{e}^\beta 
=\epsilon \mathbf{e}^{-\gamma_2}
$$
for some $\epsilon\in \{\pm 1\}$, and
\begin{align*}
\frac{1}{p}\sum_{l=1}^{p}ST^{p^{k_1-1}l}S \mathbf{e}^{\gamma_1} &= \frac{\epsilon'}{p^{k_1+1}} \sum_{\alpha, \, \beta \in L_{p^{k_1}}(a_1) }\sum_{l=1}^{p}e\left( \frac{a_1(-2\gamma_1 \alpha-2\alpha \beta+ lp^{k_1-1}\alpha^2)}{p^{k_1}}\right)\mathbf{e}^\beta\\
&=\frac{\epsilon'}{p^{k_1}}\sum_{\substack{\alpha, \, \beta \in L_{p^{k_1}}(a_1)\\  p \mid \alpha}}e\left( \frac{a_1(-2\gamma_1 \alpha-2\alpha \beta)}{p^{k_1}}\right)\mathbf{e}^\beta\\
&=\frac{\epsilon'}{p}\sum_{l=1}^{p}\mathbf{e}^{-\gamma_1+lp^{k_1-1}}
\end{align*} 
for some $\epsilon'\in \{\pm 1\}$. 
Therefore,
$$
\frac{1}{p}\sum_{l=1}^{p}ST^{p^{k_1-1}l}S (\mathbf{e}^{\gamma_1}\otimes \mathbf{e}^{\gamma_2})=\frac{\epsilon\epsilon'}{p}\left(\sum_{l=1}^{p}\mathbf{e}^{-\gamma_1+p^{k_1-1}l}\otimes \mathbf{e}^{-\gamma_2}\right),
$$
which yields that $c_{\gamma_1,\gamma_2}=c_{\gamma_1+lp^{k_1-1},\gamma_2}$ for any $1 \leq l \leq p$. This proves the odd case.

We then consider the even case. Suppose that $\omega=\sum_{\gamma_1\in D_{2^{k_1}}(a_1)}\sum_{\gamma_2 \in D_{2^{k_2}}(a_2)} c_{\gamma_1,\gamma_2} \mathbf{e}^{\gamma_1}\otimes \mathbf{e}^{\gamma_2}$ is invariant under $\Gamma_0(2^{k_3+2})$. Then $\omega=\frac{1}{2^{k_1+2}} \sum_{l=1}^{2^{k_1+2}}T^l \omega$ and the sum
$$
\sum_{l=1}^{2^{k_1+2}}T^l (\mathbf{e}^{\gamma_1}\otimes \mathbf{e}^{\gamma_2})=\sum_{l=1}^{2^{k_1+2}}e\left(\frac{l(a_1\gamma_1^2+2^{k_1-k_2}a_2\gamma_2^2)}{2^{k_1+2}}\right) \mathbf{e}^{\gamma_1}\otimes \mathbf{e}^{\gamma_2}
$$ 
does not vanish if and only if $2^{k_1+2}\mid a_1\gamma_1^2+2^{(k_1-k_2)}a_2\gamma_2^2$. It follows that $c_{\gamma_1,\gamma_2}=0$ if $2\nmid \gamma_1$. Besides, $\frac{1}{2}(-I+ST^{2^{k_1+1}}S)\omega=\omega$ and we verify by Lemma \ref{lem:evenformula} that
\begin{align*}
\frac{1}{2}\big(-I+ST^{2^{k_1+1}}S\big)\mathbf{e}^{\gamma_2}&=(-i)^{a_2}\mathbf{e}^{-\gamma_2},\\    
\frac{1}{2}\big(-I+ST^{2^{(k_1+1)}}S\big)\mathbf{e}^{\gamma_1}&=\frac{(-i)^{a_1}}{2}\big(\mathbf{e}^{-\gamma_1}+\mathbf{e}^{-\gamma_1+2^{k_1}}\big).
\end{align*}
Therefore, 
$$
\frac{1}{2}\big(-I+ST^{2^{k_1+1}}S\big)(\mathbf{e}^{\gamma_1}\otimes\mathbf{e}^{\gamma_2})= \frac{(-i)^{a_1+a_2}}{2}\big(\mathbf{e}^{-\gamma_1}+\mathbf{e}^{-\gamma_1+2^{k_1}}\big)\otimes \mathbf{e}^{-\gamma_2},
$$
which yields that $c_{\gamma_1,\gamma_2}=c_{\gamma_1+2^{k_1},\gamma_2}$. We then prove that $\omega \in \CC D_{2^{k_1-2}}(a_1)\otimes\CC D_{2^{k_2}}(a_2)$. This proves the even case for $r=2$. The proof for $r=0,1$ is similar and we omit it. 
\end{proof}

Combining \eqref{eq:CL_P1} and the above lemma, we prove the following result. 

\begin{lemma}\label{lem:oddresult}
Let $k_1,k_2$ be positive integers with $k_1+k_2$ being odd. Let $a_1,a_2$ be integers that are coprime to an odd prime $p$. Then
$$ 
\big(\CC L_{p^{k_1}}(a_1)\otimes\CC L_{p^{k_2}}(a_2)\big)^{\Gamma_0(p)}=0.  
$$
\end{lemma}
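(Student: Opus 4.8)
The plan is to reduce both indices by repeatedly invoking the odd case of Lemma \ref{lem:oddinduction} with $k_3=1$, until one of the two tensor factors collapses to the trivial module $\CC L_1$, at which point the single-factor vanishing \eqref{eq:CL_P1} finishes the argument. First I would record the symmetry $\CC L_{p^{k_1}}(a_1)\otimes\CC L_{p^{k_2}}(a_2)\cong \CC L_{p^{k_2}}(a_2)\otimes\CC L_{p^{k_1}}(a_1)$ of the tensor product, which lets me arrange, at every stage of the reduction, that the index being lowered is the larger of the two. Since $k_1+k_2$ is odd, the two indices always have opposite parity; in particular they are never equal, and their difference is always odd.

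Next, starting from a pair of indices with (say) $k_1>k_2$, I would apply Lemma \ref{lem:oddinduction} to replace the larger index $k_1$ by $k_1-2$, leaving the smaller index untouched. The hypotheses of that lemma, namely $k_1>k_2$, $k_1>k_3=1$, and positivity of the indices involved, are met at each step precisely because one only ever reduces an index that is $\geq 2$, whose companion index is then $\geq 1$. Each such step lowers the sum of the two indices by $2$ while preserving the parity of each index; the absolute value of their difference is non-increasing and stabilizes at $1$. Hence after finitely many reductions the unordered pair of indices becomes $\{1,0\}$, so we reach either $\CC L_p(a_1)\otimes\CC L_1$ or $\CC L_1\otimes\CC L_p(a_2)$. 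Here the parity of $k_1+k_2$ is essential: it rules out the alternative terminal pair $\{1,1\}$, which is exactly the configuration \eqref{eq:CL_P1} could not handle. Finally, since $\CC L_1\cong\CC$ is the trivial $\SL_2(\ZZ)$-module, the surviving space is isomorphic to $(\CC L_p(a))^{\Gamma_0(p)}$ for $a\in\{a_1,a_2\}$, which is zero by \eqref{eq:CL_P1}.

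I do not expect any genuinely hard computation here, since all the analytic content already resides in Lemma \ref{lem:oddinduction} and in \eqref{eq:CL_P1}. The one point requiring care is the termination bookkeeping: I must verify that at every stage the reduction can be performed on the \emph{strictly larger} index (so that the ordering and positivity hypotheses of Lemma \ref{lem:oddinduction} hold, swapping the two factors by symmetry whenever the second index overtakes the first), and that the odd parity of $k_1+k_2$ funnels the process to the pair $\{1,0\}$ rather than letting it stall at equal indices. That parity-driven termination is the only delicate step, and once it is established the conclusion is immediate.
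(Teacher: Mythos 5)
Your proposal is correct and is exactly the argument the paper intends: the paper's proof consists of the single sentence "Combining \eqref{eq:CL_P1} and the above lemma," i.e.\ iterate Lemma \ref{lem:oddinduction} (with $k_3=1$, swapping factors as needed) to reduce the larger exponent until one factor becomes $\CC L_1$, then apply \eqref{eq:CL_P1}. Your termination bookkeeping, using the odd parity of $k_1+k_2$ to force the terminal pair $\{1,0\}$, correctly fills in the details the paper leaves implicit.
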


\begin{lemma}\label{lem:eveninduction}
Let $k_1$, $k_2$, $k_3$ be positive integers such that $k_1+k_2$ is even, $k_1 \geq k_2$ and $k_1 > k_3$. Let $a_1$, $a_2$ be odd integers such that $a_1a_2$ is a square modulo $4$. Then 
$$
\big(\CC D_{2^{k_1}}(a_1)\otimes\CC D_{2^{k_2}}(a_2)\big)^{\Gamma_0(2^{k_3})}=\big(\CC D_{2^{k_1-2}}(a_1)\otimes\CC D_{2^{k_2}}(a_2)\big)^{\Gamma_0{(2^{k_3})}}.
$$
\end{lemma}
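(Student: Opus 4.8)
The plan is to prove the nontrivial inclusion
$\big(\CC D_{2^{k_1}}(a_1)\otimes\CC D_{2^{k_2}}(a_2)\big)^{\Gamma_0(2^{k_3})}\subseteq\big(\CC D_{2^{k_1-2}}(a_1)\otimes\CC D_{2^{k_2}}(a_2)\big)^{\Gamma_0(2^{k_3})}$, the reverse inclusion being automatic: the embedding $\CC D_{2^{k_1-2}}(a_1)\hookrightarrow\CC D_{2^{k_1}}(a_1)$ from the end of Section \ref{sec:Weil-rep} is $\Mp_2(\ZZ)$-equivariant, so after tensoring with the identity on the second factor it carries $\Gamma_0(2^{k_3})$-invariants to $\Gamma_0(2^{k_3})$-invariants. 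For $d=2$ the image of this embedding is exactly the set of vectors supported on even first indices $\gamma_1$ with coefficients invariant under $\gamma_1\mapsto\gamma_1+2^{k_1}$. Hence, writing an arbitrary invariant as $\omega=\sum_{\gamma_1,\gamma_2}c_{\gamma_1,\gamma_2}\,\mathbf{e}^{\gamma_1}\otimes\mathbf{e}^{\gamma_2}$, it suffices to establish the two coefficient conditions: \textbf{(i)} $c_{\gamma_1,\gamma_2}=0$ whenever $\gamma_1$ is odd; and \textbf{(ii)} $c_{\gamma_1,\gamma_2}=c_{\gamma_1+2^{k_1},\gamma_2}$ for all $\gamma_1,\gamma_2$. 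This is the same scheme as the even ($p=2$) case of Lemma \ref{lem:oddinduction}.

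I would first obtain \textbf{(i)} by averaging over $T$. Since $T\in\Gamma_0(2^{k_3})$ fixes $\omega$ and $T^{2^{k_1+2}}$ acts as the identity on the module, the operator $\tfrac{1}{2^{k_1+2}}\sum_{l=1}^{2^{k_1+2}}T^l$ fixes $\omega$ and annihilates every basis vector with $\frac{a_1\gamma_1^2}{2^{k_1+2}}+\frac{a_2\gamma_2^2}{2^{k_2+2}}\notin\ZZ$. Thus $\omega$ is supported on pairs with $2^{k_1+2}\mid a_1\gamma_1^2+2^{k_1-k_2}a_2\gamma_2^2$. When $k_1>k_2$ this forces $\gamma_1$ even by parity alone, since $a_1\gamma_1^2$ would be odd while $2^{k_1-k_2}a_2\gamma_2^2$ is even. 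When $k_1=k_2$ the condition reads $2^{k_1+2}\mid a_1\gamma_1^2+a_2\gamma_2^2$, and here the hypothesis that $a_1a_2$ is a square modulo $4$ is decisive: it gives $a_1\equiv a_2\pmod 4$, so for two odd indices $a_1\gamma_1^2+a_2\gamma_2^2\equiv a_1+a_2\equiv 2\pmod 4$, never divisible by $2^{k_1+2}\geq 8$, while the mixed-parity case is immediate. So \textbf{(i)} holds in both regimes, and for $k_1=k_2$ one moreover gets that $\gamma_2$ is even.

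For \textbf{(ii)} in the case $k_1>k_2$ I would argue exactly as in Lemma \ref{lem:oddinduction}. The element $ST^{2^{k_1+1}}S$ lies in $\Gamma_0(2^{k_3})$ because $k_1>k_3$, and together with $Z=(-I,i)$ it fixes $\omega$. By Lemma \ref{lem:evenformula}, $\tfrac12\big(Z+ST^{2^{k_1+1}}S\big)$ sends $\mathbf{e}^{\gamma_1}\mapsto\tfrac{(-i)^{a_1}}{2}\big(\mathbf{e}^{-\gamma_1}+\mathbf{e}^{-\gamma_1+2^{k_1}}\big)$ on the first factor, while on the second factor — \emph{because $k_1>k_2$ makes $T^{2^{k_1+1}}$ act trivially there} — it acts exactly as $Z$, namely $\mathbf{e}^{\gamma_2}\mapsto(-i)^{a_2}\mathbf{e}^{-\gamma_2}$. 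The operator therefore factors as a symmetrizer in the first variable tensored with the negation in the second, and invariance of $\omega$ immediately yields $c_{\gamma_1,\gamma_2}=c_{\gamma_1+2^{k_1},\gamma_2}$, finishing this case.

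The main obstacle is the equal case $k_1=k_2=:k$, which is precisely why the hypothesis $a_1a_2\equiv\square\ (\mathrm{mod}\ 4)$ is imposed. Now $T^{2^{k+1}}$ no longer acts trivially on the second factor, and Lemma \ref{lem:evenformula} gives $ST^{2^{k+1}}S\,\mathbf{e}^{\gamma_2}=(-i)^{a_2}\mathbf{e}^{-\gamma_2+2^{k}}$, so the second factor is translated as well. Consequently $\tfrac12\big(Z+ST^{2^{k+1}}S\big)$ only delivers the weaker, \emph{simultaneous} translation invariance $c_{\gamma_1,\gamma_2}=c_{\gamma_1+2^{k},\gamma_2+2^{k}}$, which, combined with the even support and $Z$-invariance from the previous steps, is not by itself enough to decouple the factors and force \textbf{(ii)}. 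To close the gap I would bring in a genuinely different element of $\Gamma_0(2^{k_3})$, for instance $ST^{2^{k}}S$ or $ST^{2^{k_3}}S$ (both in $\Gamma_0(2^{k_3})$ since $k>k_3$), whose action via Lemma \ref{lem:evenformula} involves an honest quadratic Gauss sum rather than a pure translation; feeding the resulting linear relations among the $c_{\gamma_1,\gamma_2}$ into the simultaneous-translation invariance should produce the missing independent translation $c_{\gamma_1,\gamma_2}=c_{\gamma_1+2^{k},\gamma_2}$. Alternatively, one can reduce this equal-exponent case to the explicit base computations of Lemma \ref{lem:CD2CD2N} and its neighbours. I expect this Gauss-sum bookkeeping — and checking that it is exactly the square condition on $a_1a_2$ that makes it consistent — to be the real technical heart of the proof.
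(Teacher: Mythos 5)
Your reduction to the two coefficient conditions is sound, your step \textbf{(i)} (averaging over $T$) matches the paper exactly, and your direct treatment of the case $k_1>k_2$ via $\tfrac12\bigl(Z+ST^{2^{k_1+1}}S\bigr)$ is a correct and slightly different route from the paper, which instead handles $k_1>k_2$ by embedding $\CC D_{2^{k_2}}(a_2)\subset\CC D_{2^{k_1}}(a_2)$ and quoting the equal-exponent case. But the equal-exponent case $k_1=k_2=k$ is left as a sketch, and that case is the entire content of the lemma: every phrase of the form ``should produce the missing independent translation'' and ``I expect this Gauss-sum bookkeeping \dots to be the real technical heart'' marks a step you have not actually performed. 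As you correctly observe, $\tfrac12\bigl(Z+ST^{2^{k+1}}S\bigr)$ only yields the simultaneous invariance $c_{\gamma_1,\gamma_2}=c_{\gamma_1+2^{k},\gamma_2+2^{k}}$, which does not decouple the factors, so nothing in your write-up establishes condition \textbf{(ii)} here.

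The paper closes exactly this gap by averaging over the eight elements $ST^{m2^{k-1}}S$, $m=0,\dots,7$ (these lie in $\Gamma_0(2^{k_3})$ precisely because $k-1\geq k_3$, which is where the hypothesis $k_1>k_3$ enters). Splitting the Gauss sum over $\alpha$ according to $\alpha$ odd, $\alpha\equiv 2,6 \bmod 8$, and $\alpha\equiv 0,4\bmod 8$ gives $ST^{m2^{k-1}}S\,\mathbf{e}^{\gamma_t}=e\bigl(\tfrac{ma_t}{8}\bigr)E_0^t+e\bigl(\tfrac{ma_t}{2}\bigr)E_1^t+E_2^t$ with $E_1^t,E_2^t\in\CC D_{2^{k-2}}(a_t)$, and the average over $m$ kills every tensor component containing an $E_0$ factor because $a_1+a_2\equiv 2\bmod 4$ makes each relevant exponential sum $\sum_{m=0}^{7}e\bigl(\tfrac{m(a_1+a_2)}{8}\bigr)$, $\sum_{m=0}^{7}e\bigl(\tfrac{ma_1}{8}\bigr)$, etc.\ vanish. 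This is where the hypothesis that $a_1a_2$ is a square modulo $4$ does its real work — not only in your step \textbf{(i)}. Your proposed substitutes $ST^{2^{k}}S$ or $ST^{2^{k_3}}S$ are in the right spirit, but a single such element does not suffice; one needs the full average over the unipotent cosets at level $2^{k-1}$, and until that computation is carried out the proof is incomplete.
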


\begin{proof}
Let us first consider the particular case where $k_1=k_2$. For any $\Gamma_0(2^{k_3})$-invariant element $\omega$, it has the form of $\omega=\sum_{\gamma_1\in D_{2^{k_1}}(a_1)}\sum_{\gamma_2 \in D_{2^{k_2}}(a_2)} c_{\gamma_1,\gamma_2} \mathbf{e}^{\gamma_1}\otimes \mathbf{e}^{\gamma_2}$. 

We then have  $\omega=\frac{1}{2^{k_1+2}} \sum_{l=1}^{2^{k_1+2}}T^l \omega$. The sum
$$ 
\sum_{l=1}^{2^{k_1+2}}T^l (\mathbf{e}^{\gamma_1}\otimes \mathbf{e}^{\gamma_2})=\sum_{l=1}^{2^{k_1+2}}e\left(\frac{l(a_1\gamma_1^2+a_2\gamma_2^2)}{2^{k_1+2}}\right)\mathbf{e}^{\gamma_1}\otimes \mathbf{e}^{\gamma_2} 
$$
does not vanish only if $2\mid \gamma_1$ and $2\mid \gamma_2$, since $a_1a_2$ is a square modulo $4$ by assumption. Hence $c_{\gamma_1,\gamma_2}=0$ if $2\nmid \gamma_1$ or $2\nmid \gamma_2$. We now assume that $2|\gamma_1$ and $2|\gamma_2$. By Lemma \ref{lem:evenformula}, for $t=1,2$ and any integer $m$ we derive
\begin{align*}
ST^{m2^{k_t-1}}S \mathbf{e}^{\gamma_t}=&\frac{(-i)^{a_t}}{2^{k_t+1}}\sum_{\alpha, \, \beta \in D_{2^{k_t}}(a_t)} e\left(\frac{a_t(-2\gamma_t\alpha-2\alpha\beta+m2^{k_t-1}\alpha^2)}{2^{k_t+2}}\right) \mathbf{e}^\beta\\
=&\frac{(-i)^{a_t}}{2^{k_t+1}}\sum_{\substack{\alpha, \, \beta \in D_{2^{k_t}}(a_t)\\ \alpha \, \mathrm{odd}}} e\left(\frac{ma_t}{8}\right) e\left(\frac{a_t(-2\gamma_t\alpha-2\alpha\beta)}{2^{k_t+2}}\right) \mathbf{e}^\beta \\
&+ \frac{(-i)^{a_t}}{2^{k_t+1}}\sum_{\substack{\alpha, \, \beta \in D_{2^{k_t}}(a_t)\\ \alpha =2,6 \, \bmod 8}} e\left(\frac{ma_t}{2}\right) e\left(\frac{a_t(-2\gamma_t\alpha-2\alpha\beta)}{2^{k_t+2}}\right) \mathbf{e}^\beta \\
&+\frac{(-i)^{a_t}}{2^{k_t+1}}\sum_{\substack{\alpha, \, \beta \in D_{2^{k_t}}(a_t)\\ \alpha =0,4 \, \bmod 8}} e\left(\frac{a_t(-2\gamma_t\alpha-2\alpha\beta)}{2^{k_t+2}}\right) \mathbf{e}^\beta \\
=& \frac{(-i)^{a_t}}{2}e\left(\frac{ma_t}{8}\right) \mathbf{e}^{-\gamma_t}+\frac{(-i)^{a_t}}{4} e\left(\frac{ma_t}{2}\right)\left( \mathbf{e}^{-\gamma_t}+\mathbf{e}^{-\gamma_t+2^{k_t}} \right)\\
&+\frac{(-i)^{a_t}}{4}\left(\sum_{l=0}^3 \mathbf{e}^{-\gamma_t+l2^{k_t-1}}\right) \\
=:& e\left(\frac{ma_t}{8}\right) E_0^t + e\left(\frac{ma_t}{2}\right) E_1^t + E_2^t.
\end{align*}
We note that
$$ 
E_1^t=\mathbf{e}^{-\gamma_t}+\mathbf{e}^{-\gamma_t+2^{k_t}}\in \CC D_{2^{k_t-2}}(a_t), \quad E_2^t=\sum_{l=0}^3 \mathbf{e}^{-\gamma_t+l2^{k_t-1}} \in \CC D_{2^{k_t-2}}(a_t). 
$$ 
The coefficient of $E_0^1\otimes E_0^2$ in $ST^{m2^{k_t-1}}S (\mathbf{e}^{\gamma_1}\otimes \mathbf{e}^{\gamma_2})$ is $e\big(\frac{ma_1+ma_2}{8}\big)$ with $a_1+a_2=2\bmod 4$. Hence the coefficient of  $E_0^1\otimes E_0^2 $ in  $ \sum_{m=0}^7 ST^{m2^{k_t-1}}S (\mathbf{e}^{\gamma_1}\otimes \mathbf{e}^{\gamma_2})$ is $\sum_{m=0}^7 e\big(\frac{ma_1+ma_2}{8}\big)$ and thus vanishes. Similarly, we show that the coefficients $E_0^1\otimes E_1^2$, $E_0^1\otimes E_2^2$, $E_1^1\otimes E_0^2$ and $E_2^1\otimes E_0^2$ also vanish in $ST^{m2^{k_t-1}}S (\mathbf{e}^{\gamma_1}\otimes \mathbf{e}^{\gamma_2})$. From $\omega=\frac{1}{8}\sum_{m=0}^7 ST^{m2^{k_t-1}}S \omega$ and the above discussion we then deduce that $\omega\in (\CC D_{2^{k_1-2}}(a_1)\otimes\CC D_{2^{k_2}}(a_2))^{\Gamma_0{(2^{k_3})}}$. This proves the desired lemma when $k_1=k_2$. 
 
We now assume that $k_1>k_2$. It is clear that
$$
\big(\CC D_{2^{k_1}}(a_1)\otimes\CC D_{2^{k_2}}(a_2)\big)^{\Gamma_0(2^{k_3})} \subset  \big(\CC D_{2^{k_1}}(a_1)\otimes\CC D_{2^{k_1}}(a_2)\big)^{\Gamma_0(2^{k_3})}.
$$
However, we have thus proved that
$$ 
\big(\CC D_{2^{k_1}}(a_1)\otimes\CC D_{2^{k_1}}(a_2)\big)^{\Gamma_0(2^{k_3})} = \big(\CC D_{2^{(k_1-2)}}(a_1)\otimes\CC D_{2^{k_1}}(a_2)\big)^{\Gamma_0(2^{k_3})}.
$$
It follows that
$$
\big(\CC D_{2^{k_1}}(a_1)\otimes\CC D_{2^{k_2}}(a_2)\big)^{\Gamma_0(2^{k_3})}\subset \big(\CC D_{2^{(k_1-2)}}(a_1)\otimes\CC D_{2^{k_2}}(a_2)\big)^{\Gamma_0{(2^{k_3})}}, 
$$ 
which yields the desired formula. 
 \end{proof}

By virtue of Lemma \ref{lem:oddinduction} and Lemma \ref{lem:eveninduction}, we can improve Lemma \ref{lem:CD16-+}. 

\begin{lemma}\label{lem:evenresult}
Let $k_1$, $k_2$ be non-negative integers and $a_1$, $a_2$ be odd integers. If $a_1a_2$ is a square modulo $4$, then 
$$ 
\big(\CC D_{2^{k_1}}(a_1)\otimes\CC D_{2^{k_2}}(a_2)\big)^{\Gamma_0(16)}=0. 
$$
\end{lemma}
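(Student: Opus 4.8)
The plan is to reduce the statement, for arbitrary non-negative $k_1,k_2$, to the range $k_1,k_2\in\{0,1,2,3,4\}$ already controlled by Lemma \ref{lem:CD16-+}, by repeatedly applying the reduction lemmas (Lemma \ref{lem:oddinduction} and Lemma \ref{lem:eveninduction}) to strip two powers off the larger index at a time. First I would dispose of the base case where both $k_1,k_2\le 4$. Since $a_1a_2$ is a square modulo $4$, I split $\CC D_{2^{k_1}}(a_1)\otimes\CC D_{2^{k_2}}(a_2)$ into its four eigenpieces $\CC D_{2^{k_1}}^{\epsilon_1}(a_1)\otimes\CC D_{2^{k_2}}^{\epsilon_2}(a_2)$. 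By Lemma \ref{lem:vanish-sign-2} the two pieces with $\epsilon_1=\epsilon_2$ carry no $\Gamma_0(16)$-invariants. The $(-,+)$-piece vanishes directly by Lemma \ref{lem:CD16-+}, and the $(+,-)$-piece vanishes as well after interchanging the two tensor factors: the swap is $\Mp_2(\ZZ)$-equivariant, $a_2a_1=a_1a_2$ is still a square modulo $4$, and $k_2,k_1\in\{0,\dots,4\}$, so Lemma \ref{lem:CD16-+} applies to $\CC D_{2^{k_2}}^{-}(a_2)\otimes\CC D_{2^{k_1}}^{+}(a_1)$. Hence all four pieces vanish and the base case holds.

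Next I carry out the induction, assuming without loss of generality $k_1\ge k_2$ since the tensor product is symmetric. Suppose both $k_1,k_2\ge 1$ and $k_1\ge 5$. If $k_1+k_2$ is even I apply Lemma \ref{lem:eveninduction} (its hypotheses $k_1\ge k_2$, $k_1>4$, and $a_1a_2$ a square modulo $4$ are all met) to replace $k_1$ by $k_1-2$; if $k_1+k_2$ is odd I apply Lemma \ref{lem:oddinduction} with $r=0$, where the hypothesis $k_1>k_2$ is automatic because an odd sum forces $k_1\ne k_2$ and $k_1>4$ holds. Each step is an equality of $\Gamma_0(16)$-invariant subspaces, decreases $k_1+k_2$ by $2$, and keeps both indices $\ge 1$ (the reduced index is $\ge 3$, the other is untouched). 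Relabelling after each step so that the larger index sits in front, the quantity $k_1+k_2$ strictly decreases and is bounded below, so the procedure terminates exactly when $\max(k_1,k_2)\le 4$, i.e.\ both indices lie in $\{0,\dots,4\}$, where the base case gives $0$.

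The one configuration escaping this scheme is when the smaller index equals $0$, since Lemmas \ref{lem:oddinduction} and \ref{lem:eveninduction} require strictly positive indices; this is the point that needs extra care. Say $k_2=0$ and $k_1\ge 5$. Here I invoke the injective $\Mp_2(\ZZ)$-homomorphism $\CC D_1(a_2)\hookrightarrow\CC D_4(a_2)$ from the end of Section \ref{sec:Weil-rep} (taking $m=1$, $d=2$); tensoring with $\CC D_{2^{k_1}}(a_1)$ and passing to $\Gamma_0(16)$-invariants yields an injection
$$
\big(\CC D_{2^{k_1}}(a_1)\otimes\CC D_1(a_2)\big)^{\Gamma_0(16)}\hookrightarrow\big(\CC D_{2^{k_1}}(a_1)\otimes\CC D_{2^2}(a_2)\big)^{\Gamma_0(16)}.
$$
The target has both indices positive and vanishes by the induction just described, so the source vanishes too; the symmetric situation $k_1=0$, $k_2\ge 5$ is identical. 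This closes the edge case and completes the argument, the main obstacle being precisely the bookkeeping that keeps both indices positive throughout the reduction together with this embedding trick for the boundary index $0$.
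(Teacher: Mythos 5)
Your proof is correct and follows essentially the same route as the paper: reduce to $k_1,k_2\le 4$ via Lemma \ref{lem:oddinduction} and Lemma \ref{lem:eveninduction}, then conclude with Lemma \ref{lem:CD16-+} and Lemma \ref{lem:vanish-sign-2} (using the factor swap for the $(+,-)$ piece). Your explicit treatment of the boundary case $\min(k_1,k_2)=0$ via the embedding $\CC D_1(a_2)\hookrightarrow\CC D_4(a_2)$ is a welcome extra precision, since the reduction lemmas as stated require all exponents to be positive and the paper's own proof passes over this point.
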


\begin{proof}
If $k_1+k_2$ is odd, then it suffices to consider the case where $k_1,k_2\leq 2$ by Lemma \ref{lem:oddinduction}. If $k_1+k_2$ is even, then we only need to consider the case $k_1,k_2\leq 4$ by Lemma \ref{lem:eveninduction}. The result then follows from Lemma \ref{lem:CD16-+} and Lemma \ref{lem:vanish-sign-2}.  
\end{proof}

\section{Existence of Jacobi Forms of weight one}\label{sec:existence}
It is known that there exist Jacobi forms of weight one on $\Gamma_0(N)$. For example, the following Jacobi forms of weight one were constructed in \cite[Section 1]{CDH18}:
\begin{align*}
\eta(6\tau)\vartheta(6\tau,12z) &\in  J_{1,12}(36), \\
\frac{\eta(8\tau)^2}{\eta(4\tau)}\vartheta(4\tau,8z) &\in J_{1,8}(32),\\
\frac{\vartheta(3\tau,3az)\vartheta(3\tau,3bz)\vartheta(3\tau,3(a+b)z)}{\eta(3\tau)} & \in J_{1,3(a^2+ab+b^2)}(9),\\
\theta_{3,3}(\tau,0)\theta_{9,3}^-(\tau,z)+\theta_{3,0}(\tau,0)\theta_{9,6}^-(\tau,z) &\in J_{1,9}(36),
\end{align*}
(the first form above is cuspidal, whereas the others are not cuspidal), where $a,b$ are positive integers and these basic functions are defined as
\begin{align*}
\eta(\tau)&=q^{\frac{1}{24}}\prod_{n=1}^\infty(1-q^n),\\
\vartheta(\tau,z)&=q^{\frac{1}{8}}(\zeta^{\frac{1}{2}}-\zeta^{-\frac{1}{2}})\prod_{n=1}^\infty(1-q^n\zeta)(1-q^n\zeta^{-1})(1-q^n),\\
\theta_{m,r}(\tau,z)&=\sum_{k\in\ZZ}q^{\frac{(2km+r)^2}{4m}}\zeta^{2km+r},\\
\theta_{m,r}^+(\tau,z)&= \theta_{m,-r}(\tau,z)+\theta_{m,r}(\tau,z),\\
\theta_{m,r}^-(\tau,z)&= \theta_{m,-r}(\tau,z)-\theta_{m,r}(\tau,z),
\end{align*}
and we note that for any positive integer $h$,
$$
\theta_{m,r}(h\tau,hz)=\theta_{mh,rh}(\tau,z). 
$$

In this section, we construct more Jacobi forms of weight one by means of Lemmas \ref{lem:CD1_a1a2_square}--\ref{lem:CLPCLP^2N}. 

\begin{lemma}\label{lem:J_1,P2(P2)}
Let $p$ be an odd prime with $p=3\bmod 4$. Then  $J_{1,p^2}(p^2)$ is of dimension one and  generated by 
$$ 
\theta_{p,p}(\tau,0)\sum_{\substack{l=1\\ l \, \mathrm{odd}}}^{p-2}   \left(\frac{l}{p}\right) \theta_{p^2, lp}^{-}(\tau,z) + \theta_{p,0}(\tau,0)\sum_{\substack{l=2 \\ l \, \mathrm{even}}}^{p-1} \left(\frac{l}{p}\right) \theta_{p^2, lp}^{-}(\tau,z). 
$$
Moreover, the space $J_{1,p^2}(4p^2)$ is of dimension two and generated by
$$ 
\theta_{p,p}(\tau,0)\sum_{\substack{l=1\\ l \, \mathrm{odd}}}^{p-2}   \left(\frac{l}{p}\right) \theta_{p^2, lp}^{-}(\tau,z) \pm \theta_{p,0}(\tau,0)\sum_{\substack{l=2 \\ l \, \mathrm{even}}}^{p-1} \left(\frac{l}{p}\right) \theta_{p^2, lp}^{-}(\tau,z).
$$
\end{lemma}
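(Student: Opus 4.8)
The plan is to apply Proposition \ref{prop:main-iso} and reduce both computations to the local invariants determined above. Since $p$ is odd, the smallest $M$ with $p^2\mid M$ and $N\mid 4M$ equals $p^2$ in both cases, so the only $m'\mid M$ with $M/m'$ squarefree are $m'=p$ and $m'=p^2$. Because $m=p^2$ and $N$ have no odd prime factor other than $p$, the summand \eqref{eq:parts} collapses to the tensor product of a single $2$-part $\big(\CC D_1^{\epsilon_2}(a_2)\otimes\CC D_1^{\epsilon_2'}(a_2')\big)^{\Gamma_0(N_2)}$ and a single $p$-part $\big(\CC L_{p^2}^{\epsilon_p}(a_p)\otimes\CC L_{m'_p}^{\epsilon_p'}(a_p')\big)^{\Gamma_0(p^2)}$, subject to $\prod\epsilon_*=-$ and $\prod\epsilon_*'=+$. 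As $\CC D_1^-=0$, the $2$-part forces $\epsilon_2=\epsilon_2'=+$, whence $\epsilon_p=-$ and $\epsilon_p'=+$.

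First I would dispose of $m'=p^2$. Its $p$-part $\big(\CC L_{p^2}^-(a_p)\otimes\CC L_{p^2}^+(a_p')\big)^{\Gamma_0(p^2)}$ has opposite signs and $k_1+k_2=4$ even, so it vanishes by Lemma \ref{lem:vanish-sign}; thus only $m'=p$ contributes. For $m'=p$ the relevant $p$-part is $\big(\CC L_{p^2}^-(a_p)\otimes\CC L_p^+(a_p')\big)^{\Gamma_0(p^2)}$. By \eqref{eq:CLPCLP2}, after exchanging the two tensor factors, the full space $\big(\CC L_{p^2}(a_p)\otimes\CC L_p(a_p')\big)^{\Gamma_0(p^2)}$ is one-dimensional and spanned by $\big(\sum_{l=1}^{p-1}\big(\frac{l}{p}\big)\mathbf{e}^{lp}\big)\otimes\mathbf{e}^0$. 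Here the hypothesis $p\equiv 3\bmod 4$ is essential: since $\big(\frac{-1}{p}\big)=-1$, the substitution $l\mapsto p-l$ shows $\sum_l\big(\frac{l}{p}\big)\mathbf{e}^{lp}$ is antisymmetric, so this generator lies exactly in the $\big(\CC L_{p^2}^-\otimes\CC L_p^+\big)$-eigenspace (the involution $\mathbf{e}^\gamma\mapsto\mathbf{e}^{-\gamma}$ is $\Mp_2(\ZZ)$-equivariant, hence preserves the space of invariants), while the other sign-parts are zero.

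Next I would evaluate the $2$-part. For $N=p^2$ one has $N_2=1$ and $\big(\CC D_1^+(a_2)\otimes\CC D_1^+(a_2')\big)^{\SL_2(\ZZ)}=\big(\CC D_1(a_2)\otimes\CC D_1(a_2')\big)^{\SL_2(\ZZ)}$ is one-dimensional by \eqref{eq:CD1_Gamma1}; for $N=4p^2$ one has $N_2=4$ and $\big(\CC D_1(a_2)\otimes\CC D_1(a_2')\big)^{\Gamma_0(4)}$ is two-dimensional by \eqref{eq:CD1_Gamma4}. Multiplying the $2$-part and $p$-part dimensions then gives $\dim J_{1,p^2}(p^2)=1$ and $\dim J_{1,p^2}(4p^2)=2$.

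It remains to translate the abstract generators into Jacobi forms, which I expect to be the main bookkeeping obstacle. Under the identification $\Theta_m\cong\CC D_m(-1)$ of \eqref{eq:identification}, an element $\sum_{\mu,\nu}c_{\mu\nu}\,\mathbf{e}^\mu\otimes\mathbf{e}^\nu$ of $\big(\CC D_{p^2}^-(-1)\otimes\CC D_p^+(-1)\big)^{\Gamma_0(N)}$ corresponds to $\sum_{\mu,\nu}c_{\mu\nu}\,\theta_{p,\nu}(\tau,0)\,\theta_{p^2,\mu}(\tau,z)$, the second factor supplying the weight-$1/2$ theta-null coefficients. Via the local-global isomorphism \eqref{eq:iso-local-global} and the Chinese Remainder Theorem I would glue the $2$-part generator to the $p$-part generator $\big(\sum_l\big(\frac{l}{p}\big)\mathbf{e}^{lp}\big)\otimes\mathbf{e}^0$. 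Since $lp\equiv l\bmod 2$, the integer index $lp$ already carries the correct $2$-parity, so $\theta_{p^2,lp}$ with $l$ even pairs with $\mathbf{e}^0\leftrightarrow\theta_{p,0}(\tau,0)$ and with $l$ odd pairs with $\mathbf{e}^p\leftrightarrow\theta_{p,p}(\tau,0)$; the antisymmetry of the first factor combines the indices $\pm lp$ into $\theta_{p^2,lp}^-$. For $N=p^2$ the unique $2$-part generator $\mathbf{e}^0\otimes\mathbf{e}^0+\mathbf{e}^1\otimes\mathbf{e}^1$ then produces the single stated form, while for $N=4p^2$ the two generators $\mathbf{e}^0\otimes\mathbf{e}^0\pm\mathbf{e}^1\otimes\mathbf{e}^1$ of \eqref{eq:CD1_Gamma4} produce the two stated forms. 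I would conclude by noting that $\theta_{p,0}(\tau,0)$ and $\theta_{p,p}(\tau,0)$ are nonzero, so these forms are genuinely nonzero and span the respective spaces.
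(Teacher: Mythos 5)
Your proposal is correct and follows essentially the same route as the paper: reduce via Proposition \ref{prop:main-iso} to the two summands $m'=p$ and $m'=p^2$, kill the latter by a sign/parity argument, compute the surviving $p$-part from \eqref{eq:CLPCLP2} and the $2$-part from \eqref{eq:CD1_Gamma1} and \eqref{eq:CD1_Gamma4}, and then glue the local generators through \eqref{eq:iso-local-global} and \eqref{eq:identification} to obtain the explicit theta expressions. The only point you leave unchecked is the hypothesis of Lemma \ref{lem:CD1_-a1a2_square} needed to invoke \eqref{eq:CD1_Gamma1} and \eqref{eq:CD1_Gamma4}, namely that $a_2a_2'\equiv 3\bmod 4$: since $a_2\equiv-(p^2)^{-1}\equiv 3$ and $a_2'\equiv -p^{-1}\equiv 1 \bmod 4$, this holds precisely because $p\equiv 3\bmod 4$ (otherwise the $2$-part would vanish by \eqref{eq:CD1_CD16-_vanish}), so the hypothesis on $p$ enters the $2$-part as well as the $p$-part.
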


\begin{proof}
By Proposition \ref{prop:main-iso}, the space $J_{1,p^2}(p^2)$ is a direct sum of
$$ 
\big(\CC D_1^+(-1)\otimes\CC D_{1}^+(-1)\big)^{\SL_2(\ZZ)} \bigotimes \big( \CC L_{p^2}^-(a)\otimes  \CC L_{p^2}^+(a) \big)^{\Gamma_0(p^2)} 
$$ 
and 
$$ 
\big(\CC D_1^+(-1)\otimes\CC D_{1}^+(1)\big)^{\SL_2(\ZZ)} \bigotimes \big( \CC L_{p^2}^-(a)\otimes  \CC L_{p}^+(a') \big)^{\Gamma_0(p^2)},
$$ 
where $a$ is an inverse of $-4$ modulo $p^2$ and $a'$ is an inverse of $-4$ modulo $p$.
The first summand is trivial by Lemma \ref{lem:irreducible_D2^N_LP^N}, while the latter one is of dimension one by \eqref{eq:CD1_Gamma1} and \eqref{eq:CLPCLP2}.
	
Similarly, by Proposition \ref{prop:main-iso} and \eqref{eq:CD1_CD16-_vanish}, the space $J_{1,p^2}(4p^2)$ is isomorphic to 
$$ 
\big(\CC D_1^+(-1)\otimes\CC D_{1}^+(1)\big)^{\Gamma_0(4)} \bigotimes \big( \CC L_{p^2}^-(a)\otimes  \CC L_{p}^+(a') \big)^{\Gamma_0(p^2)},
$$
which is of dimension two by \eqref{eq:CD1_Gamma4} and \eqref{eq:CLPCLP2}. Combining \eqref{eq:iso-local-global}, \eqref{eq:CD1_Gamma1} and \eqref{eq:CLPCLP2}, we verify that $J_{1,p^2}(p^2)$ is generated by
$$
\sum_{\substack{l=1\\ l \, \mathrm{odd}}}^{p-2}\left(\left( \frac{l}{p}\right) \mathbf{e}^{lp}\otimes \mathbf{e}^p + \left( \frac{l}{p}\right) \mathbf{e}^{lp+p^2}\otimes \mathbf{e}^0\right) + \sum_{\substack{l=2 \\ l \, \mathrm{even}}}^{p-1} \left( \left( \frac{l}{p}\right) \mathbf{e}^{lp+p^2}\otimes \mathbf{e}^p + \left( \frac{l}{p}\right) \mathbf{e}^{lp}\otimes \mathbf{e}^0\right), 
$$ 
and $J_{1,p^2}(4p^2)$ is generated by the above element and by 
$$
\sum_{\substack{l=1\\ l \, \mathrm{odd}}}^{p-2}\left(\left( \frac{l}{p}\right) \mathbf{e}^{lp}\otimes \mathbf{e}^p - \left( \frac{l}{p}\right) \mathbf{e}^{lp+p^2}\otimes \mathbf{e}^0\right) + \sum_{\substack{l=2 \\ l \, \mathrm{even}}}^{p-1} \left( \left( \frac{l}{p}\right) \mathbf{e}^{lp+p^2}\otimes \mathbf{e}^p - \left( \frac{l}{p}\right) \mathbf{e}^{lp}\otimes \mathbf{e}^0\right). 
$$ 
The result then follows from identification \eqref{eq:identification}.
\end{proof}
By taking $p=3$, the above result recovers the other construction of $J_{1,9}(9)$ in \cite{CDH18}, that is,
$$
\theta_{3,3}(\tau,0)\theta_{9,3}^-(\tau,z)-\theta_{3,0}(\tau,0)\theta_{9,6}^-(\tau,z) \in J_{1,9}(9).
$$

We are interested in whether $J_{1,m}(N)=0$ holds when $(m,N)=1$. The following two results show that it  does not hold.

\begin{lemma}\label{lem:J_1,q(p^3)}
Let $p$ be an odd prime with $p=3 \bmod 4$. For any prime $q$ with $\left(\frac{-p}{q}\right)=1$, the space $J_{1,q}(p^3)$ is non-trivial.
\end{lemma}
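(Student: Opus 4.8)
The plan is to exhibit a single non-zero summand in the decomposition of $J_{1,q}(p^3)$ furnished by Proposition \ref{prop:main-iso}. With $m=q$ and $N=p^3$, the minimal $M$ with $q\mid M$ and $p^3\mid 4M$ is $M=qp^3$ (note $q\neq p$, since $\big(\frac{-p}{q}\big)=1$), and the divisors $m'\mid qp^3$ with $qp^3/m'$ squarefree are $m'\in\{p^2,p^3,qp^2,qp^3\}$. I would single out the term $m'=qp^3$, for which $qp^3/m'=1$, and analyze $(\CC D_q^{-}(-1)\otimes\CC D_{qp^3}^{+}(-1))^{\Gamma_0(p^3)}$ through the local decomposition \eqref{eq:parts}. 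Since $N=p^3$ is coprime to $2q$, the level is trivial at the primes $2$ and $q$, so this space is a sum over sign choices of triple tensor products of a $2$-part $(\CC D_1^{\epsilon_2}(a_2)\otimes\CC D_1^{\epsilon_2'}(a_2'))^{\SL_2(\ZZ)}$, a $q$-part $(\CC L_q^{\epsilon_q}(a_q)\otimes\CC L_q^{\epsilon_q'}(a_q'))^{\SL_2(\ZZ)}$, and a $p$-part $(\CC L_1^{\epsilon_p}(a_p)\otimes\CC L_{p^3}^{\epsilon_p'}(a_p'))^{\Gamma_0(p^3)}$.

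First I would pin down the signs. Because $\CC D_1^{-}=\CC L_1^{-}=0$, we must have $\epsilon_2=\epsilon_2'=\epsilon_p=+$; combined with the global constraints $\prod\epsilon_*=-$ and $\prod\epsilon_*'=+$, together with the requirement $\epsilon_q=\epsilon_q'$ from Lemma \ref{lem:irreducible_D2^N_LP^N}(II), this forces the unique surviving combination $(\epsilon_2,\epsilon_q,\epsilon_p;\epsilon_2',\epsilon_q',\epsilon_p')=(+,-,+;+,-,-)$. It then remains to show that each of the three factors is non-zero for this combination. The $p$-part equals $(\CC L_{p^3}^{-}(a_p'))^{\Gamma_0(p^3)}$, which by \eqref{eq:CL_P3} is one-dimensional with generator $\sum_{l=1}^{p-1}\big(\frac{l}{p}\big)\mathbf{e}^{lp^2}$; the hypothesis $p\equiv 3\bmod 4$ gives $\big(\frac{-1}{p}\big)=-1$, so this generator indeed lies in the anti-invariant part $\CC L^{-}$, confirming non-triviality and that $\epsilon_p'=-$ is the correct sign.

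For the $2$-part I would compute $a_2=-q^{-1}\bmod 4$ and $a_2'=-(qp^3)^{-1}\bmod 4$, so that $a_2a_2'\equiv p^{-3}\equiv -1\bmod 4$ using $q^2\equiv 1\bmod 8$ and $p\equiv 3\bmod 4$; by Lemma \ref{lem:irreducible_D2^N_LP^N}(I), equivalently \eqref{eq:CD1_Gamma1}, this makes $(\CC D_1^{+}(a_2)\otimes\CC D_1^{+}(a_2'))^{\SL_2(\ZZ)}$ one-dimensional. The crux is the $q$-part: here $a_q=-4^{-1}\bmod q$ and $a_q'=-(4p^3)^{-1}\bmod q$, so modulo squares in $\F_q^{\ast}$ one has $-a_qa_q'\equiv -4^{-2}p^{-3}\equiv -p$, whence $\big(\frac{-a_qa_q'}{q}\big)=\big(\frac{-p}{q}\big)=1$ by hypothesis; Lemma \ref{lem:irreducible_D2^N_LP^N}(II) then yields a one-dimensional $(\CC L_q^{-}(a_q)\otimes\CC L_q^{-}(a_q'))^{\SL_2(\ZZ)}$. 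Multiplying the three one-dimensional factors shows the chosen summand is non-zero, so $J_{1,q}(p^3)\neq 0$.

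The main obstacle is the bookkeeping of the local invariants $a_*$ and the correct tracking of square classes, especially the reduction of $\big(\frac{-a_qa_q'}{q}\big)$ to $\big(\frac{-p}{q}\big)$, which is precisely where the hypothesis enters. I would also record the case $q=2$ separately: then $M=2p^3$ and there is no $q$-part, but the $2$-part $(\CC D_2^{-}(a_2)\otimes\CC D_2^{-}(a_2'))^{\SL_2(\ZZ)}$ takes over its role, and Lemma \ref{lem:irreducible_D2^N_LP^N}(I) shows it is non-zero exactly when $a_2a_2'\equiv 7\bmod 8$, i.e. $p\equiv 7\bmod 8$, which is the meaning of $\big(\frac{-p}{2}\big)=1$; thus the same hypothesis handles the even case.
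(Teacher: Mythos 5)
Your proposal is correct and follows essentially the same route as the paper: both select the summand $m'=qp^3$ (resp.\ $m'=2p^3$ when $q=2$) from Proposition \ref{prop:main-iso}, force the sign pattern $(+,-,+;+,-,-)$, reduce $\big(\tfrac{-a_qa_q'}{q}\big)$ to $\big(\tfrac{-p}{q}\big)$ via Lemma \ref{lem:irreducible_D2^N_LP^N}, and use \eqref{eq:CL_P3} together with $p\equiv 3\bmod 4$ to see that the $p$-part lands in $\CC L_{p^3}^-$. The bookkeeping of the local invariants $a_*$ and the even case $p\equiv 7\bmod 8$ all match the paper's argument.
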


\begin{proof}
We first assume that $q$ is odd. By Proposition \ref{prop:main-iso}, the space
$$ 
V:=\big(\CC D_1^+(-q)\otimes \CC D_1^+(q)\big)^{\SL_2(\ZZ)}\otimes \big(\CC L_q^-(a_q)\otimes\CC L_q^-(a_q')\big)^{\SL_2(\ZZ)}\otimes \big(\CC L_{p^3}^-(a_p)\big)^{\Gamma_0(p^3)} 
$$
lies in $J_{1,q}(p^3)$, where $a_q$ and $a_q'$ is an inverse of $-4 $ and $-4p^3$ modulo $q$ respectively, and $a_p$ is an inverse of $-4q$ modulo $p^3$. By Lemma \ref{lem:irreducible_D2^N_LP^N}, the $2$-part and $q$-part of the above space is non-trivial. By \eqref{eq:CL_P3}, we have
$$
\big(\CC L_{p^3}(a_p)\big)^{\Gamma_0(p^3)}=\big(\CC L_{p^3}^-(a_p)\big)^{\Gamma_0(p^3)}=\CC \left(\sum_{l=1}^{p-1}\left(\frac{l}{p}\right)\mathbf{e}^{p^2l}\right). 
$$ 
Therefore, the above space $V$ is non-trivial. This proves the lemma in the odd case.

We now assume that $q=2$. Then $p=3\bmod 4$ and $\big(\frac{-p}{2}\big)=1$ imply that $p=-1\bmod 8$. By Proposition \ref{prop:main-iso}, the space
$$ 
V_1:=\big(\CC D_2^-(a_2)\otimes \CC D_2^-(a_2')\big)^{\SL_2(\ZZ)} \otimes \big(\CC L_{p^3}^-(a_p)\big)^{\Gamma_0(p^3)} 
$$
lies in $J_{1,2}(p^3)$, where $-a_2=1\bmod 8$, $a_2'=1\bmod 8$ and $a_p$ is an inverse of $-8$ modulo $p^3$. By Lemma \ref{lem:irreducible_D2^N_LP^N} and \eqref{eq:CL_P3}, the space $V_1$ is non-trivial. We then prove the even case. 
\end{proof}

\begin{lemma}\label{lem:J_1P64}
For any odd prime $p$ with $p=1\bmod 4$, the space $J_{1,p}(64)$ is non-trivial.
\end{lemma}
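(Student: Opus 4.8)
The plan is to apply Proposition \ref{prop:main-iso} and exhibit a single non-trivial local summand of $J_{1,p}(64)$. Here $N=64=2^6$ and $m=p$ is odd, so $m_2=1$, $m_p=p$, and the smallest $M$ with $p\mid M$ and $64\mid 4M$ (equivalently $16\mid M$) is $M=16p$. Among the divisors $m'$ of $M$ with $M/m'$ squarefree, namely $m'\in\{8,16,8p,16p\}$, I would single out $m'=16p$, for which $m'_2=2^4$ and $m'_p=p$. By \eqref{eq:parts} the corresponding summand factors as a $2$-part $(\CC D_1^{\epsilon_2}(a_2)\otimes\CC D_{2^4}^{\epsilon_2'}(a_2'))^{\Gamma_0(64)}$ tensored with a $p$-part $(\CC L_p^{\epsilon_p}(a_p)\otimes\CC L_p^{\epsilon_p'}(a_p'))^{\SL_2(\ZZ)}$, and it suffices to make both factors non-zero for a compatible choice of signs.

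First I would fix the signs. Since $D_1=\ZZ/2\ZZ$ has $\CC D_1^-=0$, the first tensor factor $\CC D_p^-(-1)$ forces $\epsilon_2=+$, and then $\prod\epsilon_*=-$ forces $\epsilon_p=-$. The non-vanishing criterion of Lemma \ref{lem:irreducible_D2^N_LP^N}$\mathbf{(II)}$ requires $\epsilon_p=\epsilon_p'$, so $\epsilon_p'=-$; finally $\prod\epsilon'_*=+$ forces $\epsilon_2'=-$. Thus I am reduced to showing that both $(\CC D_1^+(a_2)\otimes\CC D_{2^4}^-(a_2'))^{\Gamma_0(64)}$ and $(\CC L_p^-(a_p)\otimes\CC L_p^-(a_p'))^{\SL_2(\ZZ)}$ are non-zero.

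For the $2$-part I would use Lemma \ref{lem:CD1_a1a2_square}. A short computation with $a_2=-p^{-1}\bmod 4$ and $a_2'=-p^{-1}\bmod 64$ gives $a_2a_2'\equiv 1\bmod 4$, so the lemma applies with $n=2$; combined with the vanishing $(\CC D_1(a_2)\otimes\CC D_{2^2}(a_2'))^{\Gamma_0(16)}=0$ from \eqref{eq:CD1_CD16-_vanish}, it shows $(\CC D_1(a_2)\otimes\CC D_{2^4}(a_2'))^{\Gamma_0(64)}$ is one-dimensional, spanned by $\mathbf{e}^0\otimes(\mathbf{e}^8-\mathbf{e}^{24})$. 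Because $-8\equiv 24\bmod 32$, this vector lies in $\CC D_1^+\otimes\CC D_{2^4}^-$, exactly the graded piece needed, so the $2$-part is non-zero. For the $p$-part, $a_p=-4^{-1}\bmod p$ and $a_p'=-64^{-1}\bmod p$ give $-a_pa_p'=-256^{-1}\bmod p$; since $256=(2^4)^2$ is a square and $p\equiv 1\bmod 4$ gives $\big(\frac{-1}{p}\big)=1$, we obtain $\big(\frac{-a_pa_p'}{p}\big)=1$, so Lemma \ref{lem:irreducible_D2^N_LP^N}$\mathbf{(II)}$ makes $(\CC L_p^-(a_p)\otimes\CC L_p^-(a_p'))^{\SL_2(\ZZ)}$ one-dimensional.

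The tensor product of these two non-zero local spaces is a non-trivial summand of $J_{1,p}(64)$, which proves the claim. The part requiring the most care will be the sign bookkeeping, namely verifying that the signs forced by $\CC D_1^-=0$ together with the parity conditions $\prod\epsilon_*=-$ and $\prod\epsilon'_*=+$ are simultaneously compatible with the non-vanishing criteria of the two local parts. The only place where the hypothesis $p\equiv 1\bmod 4$ is genuinely used is in ensuring $\big(\frac{-a_pa_p'}{p}\big)=1$, which is precisely what keeps the $p$-part alive; for $p\equiv 3\bmod 4$ this $p$-part would collapse, matching the separate treatment in Lemma \ref{lem:J_1,q(p^3)}.
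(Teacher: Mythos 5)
Your proposal is correct and follows essentially the same route as the paper: both exhibit the summand $\big(\CC D_1^+\otimes\CC D_{16}^-\big)^{\Gamma_0(64)}\otimes\big(\CC L_p^-\otimes\CC L_p^-\big)^{\SL_2(\ZZ)}$ coming from $m'=16p$, verify the $2$-part via Lemma \ref{lem:CD1_a1a2_square} together with \eqref{eq:CD1_CD16-_vanish}, and verify the $p$-part via Lemma \ref{lem:irreducible_D2^N_LP^N} using $\big(\tfrac{-1}{p}\big)=1$. Your explicit check that the generator $\mathbf{e}^0\otimes(\mathbf{e}^8-\mathbf{e}^{24})$ lies in the $\CC D_1^+\otimes\CC D_{16}^-$ graded piece is a detail the paper leaves implicit, but the argument is the same.
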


\begin{proof}
By Proposition \ref{prop:main-iso}, the space
$$ 
V:=\big(\CC D_1^+(-p)\otimes \CC D_{16}^-(a_2)\big)^{\Gamma_0(64)}\otimes \big(\CC L_p^-(a_p)\otimes\CC L_p^-(a_p')\big)^{\SL_2(\ZZ)} 
$$
lies in $J_{1,p}(64)$, where $a_p$ and $a_p'$ is an inverse of $-4 $ and $-64$ modulo $p$ respectively, and $a_2$ is an inverse of $-p$ modulo $64$. Hence $-pa_2$ is a square modulo $4$ and thus the $2$-part of $V$ is non-trivial by Lemma \ref{lem:CD1_a1a2_square}. The $p$-part of $V$ is also non-trivial by Lemma \ref{lem:irreducible_D2^N_LP^N}. We then prove the Lemma. 
\end{proof}

We now show that there are Jacobi forms of weight one and arbitrary index $m>1$. 

\begin{proposition}
For any positive integer $N$, $J_{1,1}(N)=0$. For $m>1$, there exists a positive integer $N_0$ such that $J_{1,m}(N_0)\neq 0$.
\end{proposition}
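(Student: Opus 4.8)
The plan is to treat the two assertions separately, in both cases through the local decomposition of Proposition~\ref{prop:main-iso}. The vanishing $J_{1,1}(N)=0$ I would read off directly from \eqref{iso_J_1_m_N}: for $m=1$ the left-hand factor of every summand is $\CC D_1^-(-1)$, and since $D_1=\ZZ/2\ZZ$ consists only of $0$ and $1$, both fixed by $\gamma\mapsto-\gamma$, each generator $\mathbf{e}^\gamma-\mathbf{e}^{-\gamma}$ of $\CC D_1^-$ is zero. Hence $\CC D_1^-(-1)=0$, every summand in \eqref{iso_J_1_m_N} is trivial, and $J_{1,1}(N)=0$ for all $N$.

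For the existence statement I would exhibit, for each $m>1$, a single non-trivial summand of \eqref{eq:parts}. The engine is an auxiliary prime $p\equiv 7\bmod 8$ with $p\nmid m$ and $\left(\frac{-p}{\ell}\right)=1$ for every odd prime $\ell\mid m$; such a $p$ exists by the Chinese Remainder Theorem together with Dirichlet's theorem, since these are congruence conditions modulo $8\prod_{\ell\mid m}\ell$ defining a reduced residue class. Taking the level $N_0=p^3$ forces $M=mp^3$, so that $m'=mp^3$ occurs in \eqref{iso_J_1_m_N}, every local level equals $1$ except $N_p=p^3$, and the $\ell$-parts and $2$-part may be computed at $\SL_2(\ZZ)$.

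The core of the construction is the sign loophole of Lemmas~\ref{lem:vanish-sign} and~\ref{lem:vanish-sign-2}. I would choose the sign vectors in \eqref{eq:parts} so that the \emph{only} index at which $\epsilon_*\neq\epsilon'_*$ is the auxiliary prime: there the $p$-part $\big(\CC L_1^+(\cdot)\otimes\CC L_{p^3}^-(\cdot)\big)^{\Gamma_0(p^3)}$ is a legal mismatch because $p\equiv 3\bmod 4$ and the total exponent $0+3$ is odd (Lemma~\ref{lem:vanish-sign}), and it is one-dimensional by \eqref{eq:CL_P3}; this supplies the extra minus on the $\CC D_{m'}^+(-1)$ side. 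The single compulsory minus on the $\CC D_m^-(-1)$ side I would carry by a matched pair $\epsilon=\epsilon'=-$ on one remaining factor: the $\ell_1$-part if $m$ has an odd prime divisor $\ell_1$, and the $2$-part if $m$ is a power of $2$ (these two cases are exhaustive). A short parity count then yields $\prod\epsilon_*=-$ and $\prod\epsilon'_*=+$, as required.

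It then remains to check that every matched factor is non-zero. By Lemma~\ref{lem:irreducible_D2^N_LP^N} a matched $\ell$-part is non-zero once $\left(\frac{-a_\ell a'_\ell}{\ell}\right)=1$, and with $m'=mp^3$ a direct computation gives $\left(\frac{-a_\ell a'_\ell}{\ell}\right)=\left(\frac{-p}{\ell}\right)$, which holds by the choice of $p$; the matched $2$-part is non-zero because $a_2a'_2\equiv p^{-3}\equiv 7\bmod 8$, which by Lemma~\ref{lem:irreducible_D2^N_LP^N}(I) makes both the $+\otimes+$ invariant (used when $m$ is even with an odd factor) and the $-\otimes-$ invariant (used when $m=2^{e_0}$) non-zero. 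Thus the chosen summand of \eqref{eq:parts} is non-trivial and $J_{1,m}(p^3)\neq 0$; for odd $m$ one may even relax the choice to $p\equiv 3\bmod 4$, reproving Lemma~\ref{lem:J_1,q(p^3)}. I expect the main obstacle to be exactly the bookkeeping of this last step: the sign-parity constraint forces a controlled mismatch somewhere, and the attendant Kronecker-symbol conditions at each odd $\ell\mid m$ together with the condition modulo $8$ at the prime $2$ must hold simultaneously. Collapsing all of them into the single congruence class $p\equiv 7\bmod 8$, $\left(\frac{-p}{\ell}\right)=1$ is what makes one application of Dirichlet's theorem settle every $m>1$ at once.
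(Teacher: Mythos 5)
Your proposal is correct and takes essentially the same route as the paper: the vanishing of $J_{1,1}(N)$ comes from $\CC D_1^{-}(-1)=0$, and for $m>1$ the paper likewise picks, via Dirichlet, a prime $q\equiv 7\bmod 8$ with $\bigl(\tfrac{-q}{\ell}\bigr)=1$ for all odd $\ell\mid m$, sets $N_0=q^3$, and exhibits a single non-trivial summand of \eqref{eq:parts} using Lemma \ref{lem:irreducible_D2^N_LP^N} and \eqref{eq:CL_P3}. The only (immaterial) difference is where the compulsory minus on the $\CC D_m^{-}(-1)$ side is placed when $m$ is even with an odd prime factor: you put it on an odd-prime part, the paper puts it on the $2$-part.
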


\begin{proof}
The 2-part of $J_{1,1}(N)$ is of the form
$$ 
\big(\CC D_{1}^{-}(a_2)\otimes \CC D_{m'_2}^{\epsilon_2'}(a_2') \big)^{\Gamma_0(b_2)}.
$$
Since $\CC D_{1}^{-}(a_2)=0$, $J_{1,1}(N)=0$.
	
By the Dirichlet theorem, we can choose a prime $q$ such that $q=7 \bmod 8$ and $\left(\frac{-q}{p}\right)=1$ for any odd prime $p\mid m$. We claim that $J_{1,m}(q^3)$ is non-trivial. 

If $m$ is even, then the space
\begin{align*}
V_0=& \big(\CC D_{m_2}^{-}(a_2)\otimes \CC D_{m_2}^{-}(q^{-3}a_2)\big)^{\SL_2(\ZZ)}\otimes \big(\CC L_{q^3}^-(a_q)\big)^{\Gamma_0(q^3)}  \otimes \\ 
& \bigotimes_{\substack{p\; \mathrm{odd} \\ p \mid m} } \big(\CC L_{m_p}^{+}(a_p)\otimes \CC L_{m_p}^{+}(q^{-3}a_p)\big)^{\SL_2(\ZZ)}
\end{align*}
lies in $J_{1,m}(q^3)$, where $m_p$ is the largest power of $p$ dividing $m$, $a_2 = (-m/m_2)^{-1} \bmod \, 4m_2$, $a_q = (-4m)^{-1} \, \bmod \, q^3$, and $a_p = (-4m/m_p)^{-1} \, \bmod \, m_p$ for odd $p\mid m$. We conclude from Lemma \ref{lem:irreducible_D2^N_LP^N} and \eqref{eq:CL_P3} that $V_0$ is non-trivial.

If $m$ is odd, then for some $p'\mid m$ the space 
\begin{align*}
V_1=& \big(\CC D_{1}^{+}(a_2)\otimes \CC D_{1}^{+}(q^{-3}a_2)\big)^{\SL_2(\ZZ)} \otimes \big(\CC L_{q^3}^-(a_q)\big)^{\Gamma_0(q^3)} \otimes \\ 
& \big(\CC L_{m_{p'}}^{-}(a_{p'})\otimes \CC L_{m_{p'}}^{-} (q^{-3}a_{p'})\big)^{\SL_2(\ZZ)} \otimes \\ 
& \bigotimes_{\substack{p\neq p' \; \mathrm{odd} \\ p \mid m} } \big(\CC L_{m_p}^{+}(a_p)\otimes \CC L_{m_p}^{+}(q^{-3}a_p)\big)^{\SL_2(\ZZ)}
\end{align*}
lies in $J_{1,m}(q^3)$ with the same notation as above. We conclude from Lemma \ref{lem:irreducible_D2^N_LP^N} and \eqref{eq:CL_P3} that $V_1$ is also non-trivial.

We then prove the claim, and thus prove the desired proposition. 
\end{proof}

\section{Vanishing of Jacobi forms of weight one}\label{sec:non-existence}

In this section, we establish the non-existence of Jacobi forms of weight one in some cases. For a positive integer $N$ and a prime $p$, we refer to the exponent of $p$ in the prime factorization of $N$ as the exponent of $p$ in $N$ for convenience. We first extend the Skoruppa theorem. 
 
\begin{theorem}\label{th:vanish_arbitrary_m}
Suppose that $N=2^np_1^{\alpha_1}...p_s^{\alpha_s}\cdot q_1^{\beta_1}...q_t^{\beta_t}$,
where $p_i$, $q_j$ are distinct odd primes and $p_i=1\,\bmod 4$ for any $1\leq i\leq s$, $q_j=3\,\bmod 4$ for any $1\leq j\leq t$. Then $J_{1,m}(N)=0$ for all positive integers $m$ if and only if $0\leq n\leq 4$ and $0\leq \beta_j \leq 1$ for any $1\leq j\leq t$. 
\end{theorem}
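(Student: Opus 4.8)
The plan is to run everything through the local decomposition of $J_{1,m}(N)$ provided by Proposition \ref{prop:main-iso}, namely the isomorphism \eqref{iso_J_1_m_N} together with the factorization \eqref{eq:parts} into a $2$-part and $p$-parts, and to prove the two implications separately. For the vanishing (sufficiency) direction I would show that \emph{every} summand in \eqref{eq:parts} is zero; for the non-existence of vanishing (necessity) I would exhibit, for suitable $m$, an explicit nonzero form.

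For sufficiency, assume $0\le n\le 4$ and $\beta_j\le 1$ for all $j$. Fix an admissible $m'$ and a family of local signs $\{\epsilon_*\},\{\epsilon'_*\}$; it suffices to find one vanishing local factor. The first step is a parity count: since the $\CC D_m^{-}(-1)$-factor carries an \emph{odd} number of minus signs among the $\epsilon_*$ and the $\CC D_{m'}^{+}(-1)$-factor an \emph{even} number among the $\epsilon'_*$, summing these two counts shows that the number of primes at which $\epsilon_p\neq\epsilon'_p$ (a ``mismatch'') is odd, in particular positive. The second step is to show no mismatch can occur at an odd prime. At $p_i\equiv 1\bmod 4$, Lemma \ref{lem:vanish-sign} forces $\epsilon_{p_i}=\epsilon'_{p_i}$; at $q_j\equiv 3\bmod 4$, Lemma \ref{lem:vanish-sign} forces equality whenever the $q_j$-exponents of $m$ and $m'$ sum to an even number, while an odd sum makes the $q_j$-part vanish at level $\Gamma_0(q_j)$ by Lemma \ref{lem:oddresult} (here $\beta_j\le 1$ is exactly what is used); at an odd prime dividing $m$ but not $N$ the level is $\SL_2(\ZZ)$ and Lemma \ref{lem:irreducible_D2^N_LP^N} again forces a match. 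Hence the unique surviving mismatch must sit at the prime $2$. Finally, Lemma \ref{lem:vanish-sign-2} shows a mismatch at $2$ is possible only when $a_2a_2'$ is a square modulo $4$; but then Lemma \ref{lem:evenresult} annihilates the entire $2$-part already at level $\Gamma_0(16)=\Gamma_0(2^4)$, hence at $\Gamma_0(2^n)$ for every $n\le 4$. This forces every summand to vanish, so $J_{1,m}(N)=0$.

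For necessity I would exploit the inclusion $J_{1,m}(N')\subseteq J_{1,m}(N)$ whenever $N'\mid N$, which is immediate since $\Gamma_0(N)\subseteq\Gamma_0(N')$ and the cusp condition in the definition is imposed for all of $\SL_2(\ZZ)$ independently of the level. If some $\beta_{j_0}\ge 2$, then $q_{j_0}^2\mid N$ and Lemma \ref{lem:J_1,P2(P2)} gives $J_{1,q_{j_0}^2}(q_{j_0}^2)\neq 0$, whence $J_{1,q_{j_0}^2}(N)\neq 0$. If $n\ge 5$, then $32\mid N$ and the explicit form $\eta(8\tau)^2\vartheta(4\tau,8z)/\eta(4\tau)\in J_{1,8}(32)$ recorded in Section \ref{sec:existence} yields $J_{1,8}(N)\neq 0$. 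In either case some $J_{1,m}(N)$ is nonzero, which is precisely the contrapositive of the asserted equivalence.

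The main obstacle will be the careful bookkeeping inside the sufficiency argument. Making the parity count rigorous requires pinning down the signs in the degenerate situations where a prime divides neither $m$ nor $m'$, so that a trivial factor $\CC L_1^{\epsilon}$ or $\CC D_1^{\epsilon}$ appears and forces $\epsilon=+$; one must check these cases never secretly create an extra mismatch. The second delicate point is that the reduction to level $\Gamma_0(16)$ must apply to \emph{all} admissible pairs of $2$-adic indices, which is why Lemma \ref{lem:evenresult} (rather than the range-restricted Lemma \ref{lem:CD16-+}) is the decisive tool. Once these two points are settled, the remainder is a direct application of the local invariant computations assembled in Section \ref{sec:invariants}, and the necessity direction is routine given the two explicit examples.
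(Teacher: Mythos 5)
Your proposal is correct and follows essentially the same route as the paper: both sides reduce to the local decomposition of Proposition \ref{prop:main-iso}, eliminate mismatched or odd-exponent sign patterns at the odd primes via Lemmas \ref{lem:irreducible_D2^N_LP^N}, \ref{lem:vanish-sign} and \ref{lem:oddresult}, kill the $2$-part at level $\Gamma_0(16)$ with Lemma \ref{lem:evenresult}, and use the same two witnesses ($J_{1,8}(32)\neq 0$ and Lemma \ref{lem:J_1,P2(P2)}) for necessity. The only cosmetic difference is that the paper deduces $a_2a_2'\equiv 1\bmod 4$ directly from the identity $a_2a_2'=\prod_{\mathrm{odd}\,p}m_pm_p'\bmod 4$ and the evenness of the relevant exponents, whereas you reach the same conclusion through a sign-mismatch parity count combined with Lemma \ref{lem:vanish-sign-2}; both are valid, and your flagged degenerate cases (a prime dividing exactly one of $m$, $m'$) do reduce to \eqref{eq:CL_P1} as needed.
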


\begin{proof} 
If $32\mid N$, then $J_{1,8}(N)$ is non-trivial, since its subspace $J_{1,8}(32)$ is non-trivial as mentioned at the beginning of Section \ref{sec:existence}. If there exists $1\leq j\leq t$ with $\beta_j>1$, then $J_{1,q_j^2}(N)\supset J_{1,q_j^2}(q_j^2)$ is non-trivial by Lemma \ref{lem:J_1,P2(P2)}. 
	
We now assume that $0\leq n\leq 4$ and $\beta_j=1$ for $1\leq j\leq t$. Then $J_{1,m}(N)$ is the direct sum of the tensor products of type
$$ 
\big(\CC D_{m_2}^{\epsilon_2}(a_2)\otimes \CC D_{m'_2}^{\epsilon_2'}(a_2') \big)^{\Gamma_0(b_2)}\otimes\bigotimes_{\substack{p\; \mathrm{odd} \\ p \mid M} } \big(\CC L_{m_p}^{\epsilon_p}(a_p)\otimes \CC L_{m'_p}^{\epsilon_p'}(a_p')\big)^{\Gamma_0(b_p)},
$$
where $b_{q_j}=1$ for $1\leq j\leq t$ and $b_{2}\in \{1,2,4,8,16\}$. By Lemma \ref{lem:oddresult}, the exponent of $q_j$ in the product $m_{q_j}m_{q_j'}$ has to be even for all odd primes $q_j$ with $1\leq j\leq t$. For any prime $p\nmid N$, $b_p=1$ and thus the exponent of $p$ in $m_pm_p'$ is even by Lemma \ref{lem:irreducible_D2^N_LP^N}. In the above product, $-a_2(m/m_2)=1 \bmod 4m_2$ and $-a_2'(M/m_2')=1\bmod 4m_2'$. Hence
$$
a_2a_2'=\prod_{\mathrm{odd}\; p}(m_pm_p') \; \bmod 4
$$
is a square modulo 4, which implies  $(\CC D_{m_2}(a_2)\otimes\CC D_{m_2'}(a_2'))^{\Gamma_0(b_2)}=0$ by Lemma \ref{lem:evenresult}. It follows that $J_{1,m}(N)=0$ in this case. We thus prove the theorem. 
\end{proof}

By taking $N=1$ in the above theorem, we find that $J_{1,m}(1)=0$ for any positive integer $m$, which recovers Skoruppa's result in his thesis \cite{Sko85}.

We then establish some inequalities and equalities between the dimensions of spaces of Jacobi forms, which extend \cite[Theorem 1.2]{CDH18}. 

\begin{proposition}\label{prop:dimension_induction}
Let $m$, $N$ be positive integers and $q$ be a prime such that $\mathrm{gcd}(q,mN)=1$. For any non-negative integer $n$, we have
\begin{equation}\label{eq:main-inequality}
\dim J_{1,m}(q^n N) \geq \Big(\left[\frac{n}{2}\right]+1\Big) \dim J_{1,m}(N),   
\end{equation}
where $\big[n/2]$ denotes the integer part of $n/2$ as before. Moreover, if  $\mathrm{gcd}(q,mN)=1$, then equality holds for odd $q$ with $n\leq 2$ and for $q=2$ with $n\leq 5$ 
\end{proposition}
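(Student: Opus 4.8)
The plan is to reduce everything to the local factorization of Proposition \ref{prop:main-iso} and then isolate the contribution of the prime $q$. Write $M$ (resp. $M'$) for the auxiliary modulus attached to level $N$ (resp. $q^nN$). Since $\gcd(q,mN)=1$ we have $q\nmid M$, whereas the $q$-part of $M'$ is $q^n$ for odd $q$ and, through the factor $4M'$, equal to $q^{\max(0,n-2)}$ for $q=2$. Hence every $m'\mid M'$ with $M'/m'$ squarefree has the form $m'=q^{j}m'_0$ with $m'_0\mid M$, $M/m'_0$ squarefree, and $j\in\{n-1,n\}$ (with the corresponding two $2$-exponents for $q=2$). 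In the decomposition \eqref{eq:parts} the $q$-local factor is then $\big(\CC L_{1}^{\epsilon_q}(a_q)\otimes\CC L_{q^{j}}^{\epsilon'_q}(a'_q)\big)^{\Gamma_0(q^n)}$; because $m_q=1$ and $\CC L_1^-=0$ the sign $\epsilon_q=+$ is forced, and the constraint $\prod\epsilon'_*=+$ forces $\epsilon'_q=\delta:=\prod_{p\neq q}\epsilon'_*$. Thus the $q$-factor is just $\big(\CC L_{q^{j}}^{\delta}(a'_q)\big)^{\Gamma_0(q^n)}$, and the non-$q$ data reproduces the data governing $J_{1,m}(N)$, up to one twist discussed below. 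For $q=2$ one replaces $\CC L$ by the even factor $\CC D_1\otimes\CC D_{2^{i}}$.

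For the inequality I would realize each element of $J_{1,m}(N)$ as its non-$q$ data combined with the one-dimensional trivial $q$-local factor $\CC L_1$ (the factor $\CC D_1\otimes\CC D_1$ for $q=2$), and then enlarge this factor to the full $q$-local invariant space at level $q^nN$. Choosing the \emph{even} value $j_0\in\{n-1,n\}$ makes the twist $a'_p\mapsto q^{-j_0}a'_p$ trivial on all non-$q$ data (a square modulo every odd $p\mid M$, and $\equiv1\bmod 8$ for the $2$-part), so those invariants are unchanged; and by Lemmas \ref{lem:reduce_level_from_p^n+1_to_p^n} and \ref{lem:CLP^N} the space $\big(\CC L_{q^{j_0}}^{+}(a'_q)\big)^{\Gamma_0(q^n)}=\big(\CC L_{q^{j_0}}^{+}(a'_q)\big)^{\Gamma_0(q^{j_0})}$ has dimension $\big[\frac{n}{2}\big]+1$, all generators lying in the $+$-part since $j_0$ is even. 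Because $\epsilon_q=\epsilon'_q=+$ preserves the level-$N$ sign constraint, combining the unchanged non-$q$ data with each of these basis vectors yields $\big[\frac{n}{2}\big]+1$ independent elements of $J_{1,m}(q^nN)$ in the slot $m'=q^{j_0}m'_0$; distinct $m'_0$ occupy distinct slots, so summing gives the inequality. For $q=2$ one argues identically with Lemma \ref{lem:CD1_-a1a2_square} in the non-square case (the case relevant to the slots surviving at level $N$ by Lemma \ref{lem:vanish-sign-2}), where the even $2$-exponent again contributes dimension $\big[\frac{n}{2}\big]+1$.

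For equality in the stated ranges I would show that no further slot contributes. The leftover slots are those whose $q$-exponent (or $2$-exponent) has the \emph{opposite} parity, together with the $\delta=-$ sector. The even-$j_0$, $\delta=-$ piece vanishes automatically since even powers have no $-$-part. For odd $q$ and $n\le 2$ the only odd exponent available is $j=1$, and $\big(\CC L_{q}^{\pm}(a'_q)\big)^{\Gamma_0(q)}=0$ by \eqref{eq:CL_P1}; moreover $\CC L_{q^{j}}^{-}$ is nonzero only for $q\equiv3\bmod4$ and odd $j\ge3$, so nothing survives. For $q=2$ and $n\le5$ the even exponent $i\le n-2\le 3$ keeps the non-square contribution equal to $\big[\frac{n}{2}\big]+1$, while the ``new'' square slots (which vanish at level $N$ by Lemma \ref{lem:vanish-sign-2}) are governed by Lemma \ref{lem:CD1_a1a2_square} and vanish for exponent $t\le 3$ by \eqref{eq:CD1_CD16-_vanish}; they first appear at $t=4$, i.e. at $n=6$, which also accounts for the sharpness of the bound. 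Collecting these vanishings yields the reverse inequality.

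The main obstacle is the coupling of the $q$-factor to the rest through two mechanisms: the global sign constraint, which ties $\epsilon'_q$ to the parity of the number of minus signs elsewhere; and the quadratic twist $a'_p\mapsto q^{-j}a'_p\pmod p$, which can flip whether $-a_pa'_p$ is a square modulo $p$ and thereby alter the non-$q$ local dimensions exactly when $j$ is odd and $q$ is a non-residue mod $p$. The device that defuses both is to run the lower bound through the even exponent only, making the twist trivial and keeping $\epsilon'_q=+$, and then to verify that the odd-exponent and $\delta=-$ sectors vanish outright in the equality range via the explicit thresholds in Lemmas \ref{lem:CLP^N}, \ref{lem:CD1_a1a2_square}, \ref{lem:CD1_-a1a2_square} and equations \eqref{eq:CL_P1}, \eqref{eq:CD1_CD16-_vanish}.
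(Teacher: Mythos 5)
Your proposal is correct and follows essentially the same route as the paper: both localize via Proposition \ref{prop:main-iso}, obtain the lower bound by tensoring the (unchanged, since the twist by the even power $q^{2[n/2]}$ is a square) non-$q$ data with the $([n/2]+1)$-dimensional $q$-local invariant space computed from Lemma \ref{lem:CLP^N} (resp. Lemma \ref{lem:CD1_-a1a2_square} for $q=2$), and then prove equality in the stated ranges by checking that the odd-exponent and sign-flipped slots vanish via \eqref{eq:CL_P1}, \eqref{eq:CD1_CD16-_vanish} and \eqref{eq:CD1_CD16+_vanish}. Your explicit discussion of the two coupling mechanisms (the global sign constraint and the quadratic twist on the non-$q$ local data) is a clear articulation of what the paper handles implicitly, and your identification of $t=4$, i.e.\ $n=6$, as the source of sharpness matches Lemma \ref{lem:J_1P64}.
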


\begin{proof}
We first assume that $q$ is odd. From Proposition \ref{prop:main-iso} we conclude that 
\begin{align*}
J_{1,m}(N) \cong& \bigoplus_{\substack{m'\mid M \\ M/m' \, \text{squarefree}}} \big(\CC D_m^{-}(-1) \otimes\CC D_{m'}^{+}(-1)\big)^{\Gamma_0(N)},\\
J_{1,m}(q^n N) \cong& \bigoplus_{\substack{m'\mid M \\ M/m' \, \text{squarefree}}} \Big( \big(\CC D_m^{-}(-1)\otimes\CC D_{q^{n-1}m'}^{+}(-1)\big)^{\Gamma_0(q^{n}N)}\\
&\qquad \qquad \qquad  \oplus\big(\CC D_m^{-}(-1)\otimes\CC D_{q^nm'}^{+}(-1)\big)^{\Gamma_0(q^nN)}\Big),
\end{align*}
where $M$ is the smallest positive integer such that $m \mid M$ and $N \mid 4M$. 
With the same notation as in Proposition \ref{prop:main-iso}, the subspace $(\CC D_m^{-}(-1) \otimes\CC D_{m'}^{+}(-1))^{\Gamma_0(N)}$ has the decomposition
$$
\bigoplus_{\substack{\prod \epsilon_*=- \\ \prod \epsilon'_*=+}}\left( \big(\CC D_{m_2}^{\epsilon_2}(a_2)\otimes \CC D_{m'_2}^{\epsilon_2'}(a_2') \big)^{\Gamma_0(N_2)}\otimes\bigotimes_{\mathrm{odd} \; p \mid M} \big(\CC L_{m_p}^{\epsilon_p}(a_p)\otimes \CC L_{m'_p}^{\epsilon_p'}(a_p')\big)^{\Gamma_0(N_p)}\right). 
$$
Let $d$ denote $[n/2]$ and $q^{-1}$ denote an inverse of $q$ modulo $4M$. It follows that 
\begin{align*}
&\bigoplus_{\substack{\prod \epsilon_*=- \\ \prod \epsilon'_*=+}}\left( \big(\CC D_{m_2}^{\epsilon_2}(a_2)\otimes \CC D_{m'_2}^{\epsilon_2'}(q^{-2d}a_2') \big)^{\Gamma_0(N_2)}\otimes\bigotimes_{\mathrm{odd} \; p \mid M} \big(\CC L_{m_p}^{\epsilon_p}(a_p)\otimes \CC L_{m'_p}^{\epsilon_p'}(q^{-2d}a_p')\big)^{\Gamma_0(N_p)}\right) \\
&\quad \otimes \big( \CC L^+_{q^{2d}}\big)^{\Gamma_0(q^n)}    
\end{align*}
is a subspace of $(\CC D_m^{-}(-1)\otimes\CC D_{q^{2d}m'}^{+}(-1))^{\Gamma_0(q^{n}N)}$. 
By definition and Lemma \ref{lem:sign}, the map $\mathbf{e}^\gamma \mapsto \mathbf{e}^{b\gamma}$ defines an isomorphism  
$$
\CC D_{2^m}(a b^2) \cong \CC D_{2^m}(a) \qquad (\text{resp.} \; \CC L_{p^m}(a b^2) \cong \CC L_{p^m}(a)\,)
$$
between $\Mp_2(\ZZ)$-modules for $\mathrm{gcd}(2,ab)=1$ (resp. $\mathrm{gcd}(p,ab)=1$). In addition, we know from Lemma \ref{lem:CLP^N} that $\dim (\CC L^+_{q^{2d}})^{\Gamma_0(q^n)}=d+1$. Therefore, 
$$
\dim \big(\CC D_m^{-}(-1)\otimes\CC D_{q^{2d}m'}^{+}(-1)\big)^{\Gamma_0(q^{n}N)} \geq (d+1)\cdot \dim \big(\CC D_m^{-}(-1)\otimes\CC D_{m'}^{+}(-1)\big)^{\Gamma_0(N)},
$$
which yields the desired inequality. 

It is clear that 
$$\big(\CC D_m^{-}(-1)\otimes\CC D_{m'}^{+}(-1)\big)^{\Gamma_0(N)}=\big(\CC D_m^{-}(-1)\otimes\CC D_{m'}^{+}(-1)\big)^{\Gamma_0(qN)},
$$ 
since they have the same $p$-part for $p\neq q$ and the $q$-part of the latter one is trivial. The $q$-part of $(\CC D_m^{-}(-1)\otimes\CC D_{qm'}^{+}(-1))^{\Gamma_0(qN)}$ and $(\CC D_m^{-}(-1)\otimes\CC D_{qm'}^{+}(-1))^{\Gamma_0(q^2N)}$ are $(\CC L_q(a_q))^{\Gamma_0(q)}$ and $(\CC L_q(a_q))^{\Gamma_0(q^2)}$, respectively. We conclude from \eqref{eq:CL_P1} that
$$ 
\big(\CC D_m^{-}(-1)\otimes\CC D_{qm'}^{+}(-1)\big)^{\Gamma_0(qN)}=\big(\CC D_m^{-}(-1)\otimes\CC D_{qm'}^{+}(-1))^{\Gamma_0(q^2N)}=0.
$$
This proves 
$$
\dim J_{1,m}(q N) = \dim J_{1,m}(N).
$$
The $q$-part of the $(\CC D_m^{-}(-1)\otimes\CC D_{q^2m'}^{+}(-1))^{\Gamma_0(q^2N)}$ is $(\CC L_{q^2}(a_q))^{\Gamma_0(q^2)}$.  By Lemma \ref{lem:CLP^N}, the only non-trivial $q$-part is $(\CC L_{q^2}^+(a_q))^{\Gamma_0(q^2)}$, since $(\CC L_{q^2}^-(a_q))^{\Gamma_0(q^2)}=0$. From the discussion above, we then deduce that
$$
\dim J_{1,m}(q^2 N) = 2\dim J_{1,m}(N).
$$

We now assume that $q=2$. In this case, both $m$ and $N$ are odd. We fix the same decomposition of $J_{1,m}(N)$ as above. From Proposition \ref{prop:main-iso} we derive that 
\begin{align*}
J_{1,m}(2^n N) \cong& \bigoplus_{\substack{m'\mid M \\ M/m' \, \text{squarefree}}} \Big( \big(\CC D_m^{-}(-1)\otimes\CC D_{2^{n-3}m'}^{+}(-1)\big)^{\Gamma_0(2^{n}N)}\\
&\qquad \qquad \qquad  \oplus\big(\CC D_m^{-}(-1)\otimes\CC D_{2^{n-2} m'}^{+}(-1)\big)^{\Gamma_0(2^nN)}\Big)
\end{align*}
for $n\geq 3$ and 
$$
J_{1,m}(2^n N)\cong \bigoplus_{\substack{m'\mid M \\ M/m' \, \text{squarefree}}} \big(\CC D_m^{-}(-1) \otimes\CC D_{m'}^{+}(-1)\big)^{\Gamma_0(2^nN)} 
$$
for $n\leq 2$, where $M$ is the smallest positive integer such that $m \mid M$ and $N \mid 4M$. We fix the same decomposition of $(\CC D_m^{-}(-1) \otimes\CC D_{m'}^{+}(-1))^{\Gamma_0(N)}$ as above.  By  Lemma \ref{lem:CD1_a1a2_square} and Lemma \ref{lem:CD1_-a1a2_square}, the only possible non-trivial $2$-part of $(\CC D_m^{-}(-1)\otimes\CC D_{m'}^{+}(-1))^{\Gamma_0(2^n N)}$ is
$ (\CC D_{1}^+(a_2)\otimes \CC D_{1}^+(a_2') )^{\Gamma_0{(2^n)}} $ for $n=0,1,2$, where $-a_2a_2'$ is a square modulo $4$. 
By Lemma \ref{lem:CD1_-a1a2_square}, for $n=0,1,2$ we have
$$
\dim \big(\CC D_{1}^+(a_2)\otimes \CC D_{1}^+(a_2') \big)^{\Gamma_0{(2^n)}}=[n/2]+1. 
$$  
It then follows that for $n\leq 2$, 
$$
\dim J_{1,m}(2^n N) = ([n/2]+1)\dim J_{1,m}(N). 
$$
Let $n\geq 3$ and $d$ denote $[n/2]$ and $2^{-1}$ denote an inverse of $2$ modulo $M$. It follows that 
\begin{align*}
\big(\CC D_{1}^{+}(a_2)\otimes \CC D_{2^{2d-2}}^+(a_2') \big)^{\Gamma_0(2^n)}  \otimes \bigoplus_{\substack{\prod \epsilon_*=- \\ \prod \epsilon'_*=+}} \bigotimes_{\mathrm{odd} \; p \mid M} \big(\CC L_{m_p}^{\epsilon_p}(2^{2-2d}a_p)\otimes \CC L_{m'_p}^{\epsilon_p'}(2^{2-2d}a_p')\big)^{\Gamma_0(N_p)} 
\end{align*}
is a subspace of $(\CC D_m^{-}(-1)\otimes\CC D_{2^{2d-2}m'}^{+}(-1))^{\Gamma_0(2^{n}N)}$. Lemma \ref{lem:CD1_-a1a2_square} yields that
$$
\dim \big(\CC D_{1}^{+}(a_2)\otimes \CC D_{2^{2d-2}}^+(a_2')\big)^{\Gamma_0(2^n)}=d+1.
$$
We then prove the desired inequality as in the odd case. 

The $2$-part of $(\CC D_m^{-}(-1)\otimes\CC D_{2m'}^{+}(-1))^{\Gamma_0(8N)}$ and $(\CC D_m^{-}(-1)\otimes\CC D_{8m'}^{+}(-1))^{\Gamma_0(32N)}$ are respectively $(\CC D_1(a_2)\otimes \CC  D_2(a_2'))^{\Gamma_0(8)}$ and  $(\CC D_1(a_2)\otimes \CC D_8(a_2'))^{\Gamma_0(32)}$. Combining \eqref{eq:CD1_CD16+_vanish}, \eqref{eq:CD1_CD16-_vanish} and Lemma \ref{lem:reduce_level_from_p^n+1_to_p^n} together, we show that $(\CC D_m^{-}(-1)\otimes\CC D_{2m'}^{+}(-1))^{\Gamma_0(8N)}$, $(\CC D_m^{-}(-1)\otimes\CC D_{2m'}^{+}(-1))^{\Gamma_0(16N)}$ and $(\CC D_m^{-}(-1)\otimes\CC D_{8m'}^{+}(-1))^{\Gamma_0(32N)}$ vanish, since their $2$-parts vanish. 

By \eqref{eq:CD1_CD16-_vanish} and \eqref{eq:CD1_Gamma4}, the only possible non-trivial $2$-part of  $(\CC D_m^{-}(-1)\otimes\CC D_{m'}^{+}(-1))^{\Gamma_0(8N)}$ is $(\CC D_{1}^+(a_2)\otimes \CC D_{1}^+(a_2') )^{\Gamma_0(4)}$ with $-a_2a_2'$ being a square modulo $4$. 
Similarly, by \eqref{eq:CD1_CD16-_vanish} and \eqref{eq:CD1CD4_Gamma16}, the possible non-trivial $2$-part of $(\CC D_m^{-}(-1)\otimes\CC D_{4m'}^{+}(-1))^{\Gamma_0(16N)}$ is $ (\CC D_{1}^+(a_2)\otimes \CC D_{4}^+(a_2') )^{\Gamma_0(16)}$ with $-a_2a_2'$ being a square modulo $4$. From the discussion above, Lemma \ref{lem:reduce_level_from_p^n+1_to_p^n} and Lemma \ref{lem:CD1_-a1a2_square}, we deduce that
\begin{align*}
\dim J_{1,m}(2^n N) &= \dim \big( \CC D_1^+(a_2) \otimes \CC D_{2^{2d-2}}^+(a_2') \big)^{\Gamma_0(n)} \cdot \dim J_{1,m}(N) \\
&=(d+1)\dim J_{1,m}(N)
\end{align*}
for $3\leq n \leq 5$ as in the odd case. We then prove the proposition for $q=2$. 
\end{proof}

Recall that $J_{1,m}(1)=0$ for any positive integer $m$. By Lemma \ref{lem:J_1,q(p^3)} and Lemma \ref{lem:J_1P64}, the equality in \eqref{eq:main-inequality} does not hold for $q$ odd and $n>2$ or for $q=2$ and $n>5$ in general. Thus the bound of $n$ is sharp, when the equality holds.

We also extend the Ibukiyama--Skoruppa theorem \cite{IS07}. 

\begin{theorem}\label{th:vanish_m_coprime}
Suppose that $N=2^np_1^{\alpha_1}...p_s^{\alpha_s}\cdot q_1^{\beta_1}...q_t^{\beta_t}$, where $p_i$, $q_j$ are distinct odd primes and $p_i=1\,\bmod 4$ for any $1\leq i\leq s$, $q_j=3\,\bmod 4$ for any $1\leq j\leq t$. Then $J_{1,m}(N)=0$ for all positive integers $m$ with $\mathrm{gcd}(m,N)=1$ if and only if $0\leq n\leq 5$ and $0\leq \beta_j\leq 2$ for $1\leq j\leq t$.
\end{theorem}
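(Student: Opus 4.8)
The plan is to prove both directions separately, with the two bounds $n\le 5$ and $\beta_j\le 2$ emerging from exactly complementary obstructions. For the \emph{necessity} direction I would argue the contrapositive: assuming $n\ge 6$ or some $\beta_j\ge 3$, I exhibit one index $m$ coprime to $N$ with $J_{1,m}(N)\neq 0$. If $64\mid N$, Dirichlet's theorem supplies a prime $p\equiv 1\bmod 4$ with $p\nmid N$, and Lemma \ref{lem:J_1P64} gives $J_{1,p}(64)\neq 0$; since $\Gamma_0(N)\subseteq\Gamma_0(64)$ one has $J_{1,p}(64)\subseteq J_{1,p}(N)$. If instead $q_j^3\mid N$ with $q_j\equiv 3\bmod 4$, quadratic reciprocity turns $\left(\frac{-q_j}{q}\right)=1$ into a congruence condition on $q$, so Dirichlet furnishes a prime $q\nmid N$ satisfying it, whence $0\neq J_{1,q}(q_j^3)\subseteq J_{1,q}(N)$ by Lemma \ref{lem:J_1,q(p^3)}. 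In both cases $\gcd(m,N)=1$, which disproves vanishing.

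For the \emph{sufficiency} direction I assume $0\le n\le 5$ and $\beta_j\le 2$, fix $m$ with $\gcd(m,N)=1$, and by Proposition \ref{prop:main-iso} reduce to showing that each local tensor product in \eqref{eq:parts} vanishes. Fixing $m'$ and signs with $\prod\epsilon_*=-$, $\prod\epsilon'_*=+$, I split the odd primes according to coprimality. For odd $p\mid m$ one has $N_p=1$, so the factor lives over $\SL_2(\ZZ)$; Lemma \ref{lem:irreducible_D2^N_LP^N}(II) forces $m'_p=m_p$ (otherwise $k_1+k_2$ is odd) and then $\epsilon_p=\epsilon'_p$. For odd $p\mid N$ one has $m_p=1$, forcing $\epsilon_p=+$ since $\CC L_1^-=0$; I then claim $\epsilon'_p=+$ as well. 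Indeed, Lemma \ref{lem:CLP^N} builds $(\CC L_{p^k}(a))^{\Gamma_0(p^k)}$ from the new generators of \eqref{eq:CLP^N_even_induction} and \eqref{eq:CLP^N_odd_induction}, and the odd-level generator picks up the factor $\left(\frac{-1}{p}\right)$ under reflection: for $p_i\equiv 1\bmod 4$ this equals $1$, so every invariant is in the $+$-part regardless of $\alpha_i$; for $q_j\equiv 3\bmod 4$ the bound $\beta_j\le 2$ leaves only $m'_{q_j}\in\{1,q_j^2\}$ surviving, whose invariants are again purely $+$ by \eqref{eq:CL_P1} and \eqref{eq:CL_P2}. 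Hence no odd prime contributes a minus sign to $\{\epsilon'_*\}$, while the primes dividing $m$ contribute equally to $\{\epsilon_*\}$ and $\{\epsilon'_*\}$; comparing $\prod\epsilon_*=-$ with $\prod\epsilon'_*=+$ then forces $\epsilon_2\neq\epsilon'_2$.

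It remains to annihilate the $2$-part under the constraint $\epsilon_2\neq\epsilon'_2$, and this is where $n\le 5$ enters and where I expect the main obstacle. When $n=0$, $m$ may be even, but Lemma \ref{lem:irreducible_D2^N_LP^N}(I) forces $\epsilon_2=\epsilon'_2$, contradicting $\epsilon_2\neq\epsilon'_2$, so the factor dies. When $n\ge 1$, $m$ is odd, so $m_2=1$ forces $\epsilon_2=+$ and thus $\epsilon'_2=-$; the factor becomes $(\CC D_1^+(a_2)\otimes\CC D_{2^k}^-(a'_2))^{\Gamma_0(2^n)}$ with $k=v_2(m'_2)\le 3$. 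If $a_2a'_2\not\equiv 1\bmod 4$ it vanishes immediately by Lemma \ref{lem:vanish-sign-2}; if $a_2a'_2\equiv 1\bmod 4$, the case $k=0$ is handled by $\CC D_1^-=0$, while for the (at most two) admissible values $1\le k\le 3$ I reduce the level to $2^{k+2}$ by Lemma \ref{lem:reduce_level_from_p^n+1_to_p^n} and apply the vanishing \eqref{eq:CD1_CD16-_vanish} for $t=k\in\{1,2,3\}$. The crux is exactly this final step: \eqref{eq:CD1_CD16-_vanish} holds for $t\in\{0,1,2,3,5\}$ but \emph{fails} at $t=4$, i.e. at $n=6$, which is precisely the threshold excluded here and which resurfaces as the nonvanishing $J_{1,p}(64)\neq 0$ used in the necessity direction. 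Tracking the two values $k\in\{n-3,n-2\}$ and reducing each to an admissible instance of \eqref{eq:CD1_CD16-_vanish} is the heart of the argument, and mirrors how $\beta_j\le 2$ blocks the $-$-part at the odd primes $q_j$.
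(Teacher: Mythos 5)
Your proof is correct. The necessity direction coincides with the paper's: both invoke Lemma \ref{lem:J_1P64} when $64\mid N$ and Lemma \ref{lem:J_1,q(p^3)} when some $q_j^3\mid N$, with Dirichlet's theorem supplying the auxiliary prime index coprime to $N$. For sufficiency, however, you take a genuinely different route. The paper first applies Proposition \ref{prop:dimension_induction} to peel off the factors $2^n$ ($n\le 5$) and $q_j^{\beta_j}$ ($\beta_j\le 2$) from the level, reducing to $N=\prod p_i^{\alpha_i}$ with all $p_i\equiv 1\bmod 4$; it then kills the remaining $2$-part, which lives over $\SL_2(\ZZ)$, by showing $a_2a_2'\equiv 1\bmod 4$ (every odd prime occurring to an odd power in $m_pm_p'$ must be $\equiv 1\bmod 4$) and invoking Lemma \ref{lem:irreducible_D2^N_LP^N}. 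You instead work directly at level $N$: you show every odd local factor forces $\epsilon_p=\epsilon_p'$ (for $p\mid m$) or $\epsilon_p=\epsilon_p'=+$ (for $p\mid N$, via the symmetry of the generators in Lemma \ref{lem:CLP^N} when $p\equiv 1\bmod 4$, and via \eqref{eq:CL_P1}--\eqref{eq:CL_P2} when $p\equiv 3\bmod 4$ and $\beta_j\le 2$), deduce $\epsilon_2\neq\epsilon_2'$, and then annihilate the $2$-part with Lemma \ref{lem:vanish-sign-2}, the level reduction of Lemma \ref{lem:reduce_level_from_p^n+1_to_p^n}, and \eqref{eq:CD1_CD16-_vanish} for $t=k\le 3$. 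This buys a self-contained argument that makes the two thresholds transparent (the failure of \eqref{eq:CD1_CD16-_vanish} at $t=4$ matching $n=6$, and the $\beta_j=3$ obstruction at the odd primes), at the cost of more sign bookkeeping; the paper's reduction is shorter but leans on the full strength of Proposition \ref{prop:dimension_induction}. One small imprecision: for odd $p\mid m$ the condition ``$k_1+k_2$ even'' in Lemma \ref{lem:irreducible_D2^N_LP^N}(II) does not force $m_p'=m_p$ (only that $v_p(m_pm_p')$ is even; $m_p'$ may be a strictly smaller power of $p$), but the conclusion you actually use, namely $\epsilon_p=\epsilon_p'$ in every nonvanishing component, still follows from part (1) of that lemma, so nothing breaks.
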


\begin{proof}
If $\beta_j>2$ for some $1\leq j\leq t$, then there exists a prime $p$ such that $p\nmid N$ and $\left( \frac{-q_j}{p}\right)=1$, and thus $J_{1,p}(N)\supset J_{1,p}(q_j^3)$ is non-trivial by Lemma \ref{lem:J_1,q(p^3)}. If $64\mid N$, then we choose a prime $p$ with $p\nmid N$ and $p=1\, \bmod 4$, and therefore $J_{1,p}(N)\supset J_{1,p}(64)$ is non-trivial by Lemma \ref{lem:J_1P64}.
	
We now assume that $0\leq n\leq 5$ and $\beta_j\leq 2$ for $1\leq j\leq t$. In order to prove $J_{1,m}(N)=0$ for any $m$ with $\mathrm{gcd}(m,N)=1$, it suffices to consider the particular case where $n=0$ and $\beta_j=0$ for any $1\leq j\leq t$ by Proposition \ref{prop:dimension_induction}. Then the $2$-part of $J_{1,m}(N)$ is $(\CC D_{m_2}(a_2)\otimes \CC D_{m_2'}(a_2'))^{\SL_2(\ZZ)}$ with $a_2a_2'$ being a square modulo $4$. Indeed, we know from Proposition \ref{prop:main-iso} that $-a_2 (m/m_2)=1 \bmod 4m_2$ and $-a_2'(M/m_2')=1\bmod 4m_2'$.  It follows that $a_2a_2'(m/m_2)(M/m_2')=1 \mod 4$. By assumption, $(m/m_2)(M/m_2')$ is a product of powers of primes of type $4x+1$ and even powers of primes of type $4x+3$, so it is congruent to $1$ modulo $4$. This proves that $a_2a_2'=1\bmod 4$. By Lemma \ref{lem:irreducible_D2^N_LP^N}, this $2$-part vanishes and thus $J_{1,m}(N)=0$.
\end{proof}

We deduce a necessary and sufficient condition for a positive integer $N$ satisfying $J_{1,2}(N)\neq 0$.

\begin{proposition}\label{prop:index-2}
Suppose that $N=2^np_1^{\alpha_1}...p_s^{\alpha_s}\cdot q_1^{\beta_1}...q_t^{\beta_t}$, where $p_i$, $q_j$ are distinct odd primes, $\alpha_i\leq 2$ for any $1\leq i\leq s$, and $\beta_j>2$ for any $1\leq j\leq t$.  Then $J_{1,2}(N)$ is non-trivial if and only if there exists a non-empty subset $A \subset \{ 1,..,t\}$ such that $\prod_{j \in A}q_j=7 \, \bmod \,8$, that is, either there exists $q_j=7\bmod \,8$, or there exist distinct $q_{j_1}$ and $q_{j_2}$ such that $q_{j_1}=3\bmod \,8$ and $q_{j_2}=5\bmod \,8$.
Moreover, $\dim J_{1,2}(N)=1$ if and only if  $N=2^n kq^{3+r}$ or $N=2^n kq_1^{3+r_1}q_2^{3+r_2}$, where $n\geq 0$ is an integer, $k$ is odd and squarefree, $r,r_1,r_2\in\{0,1\}$, and $q,q_1,q_2$ are odd primes such that $q=7\,\bmod 8$, $q_1=3\,\bmod 8$, $q_2=5\, \bmod 8$.
\end{proposition}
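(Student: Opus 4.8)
The plan is to feed $m=2$ into Proposition \ref{prop:main-iso} and then separate the $2$-part from the odd $p$-parts, reducing the whole question to the residue $a_2a_2'\bmod 8$. Let $M$ be the smallest integer with $2\mid M$ and $N\mid 4M$, so that $M=2^{\max(1,n-2)}\cdot(N/2^n)$ and the odd primes dividing $M$ are exactly the $p_i$ and $q_j$. Since the index-$2$ form $\CC D_2(-1)$ has trivial odd parts $\CC L_1$, the decomposition \eqref{eq:parts} forces $\epsilon_2=-$ and $\epsilon_p=+$ for every odd $p$; each odd $p$-part is then just $(\CC L_{m'_p}^{\epsilon'_p}(a'_p))^{\Gamma_0(p^{v_p(N)})}$, and the parity constraint becomes $\prod\epsilon'_\ast=+$.

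Next I would analyze the odd parts via Lemma \ref{lem:CLP^N}, tracking the behaviour of its generators under $\gamma\mapsto-\gamma$: the even-level generators lie in $\CC L^+$, whereas the level-$p^{2n+1}$ generator lies in $\CC L^+$ if $p\equiv1\bmod4$ and in $\CC L^-$ if $p\equiv3\bmod4$. This gives: a prime of exponent $1$ contributes a fixed factor $1$ (only $m'_p=1$ survives, by \eqref{eq:CL_P1}); a prime of exponent $2$ contributes a fixed factor $2$; and for a prime $q$ of exponent $\beta\ge3$ both choices $m'_q\in\{q^{\beta-1},q^{\beta}\}$ give nonzero factors, with the dictionary $\epsilon'_q=-\Leftrightarrow v_q(m')$ odd when $q\equiv3\bmod4$, and $\epsilon'_q=+$ for either parity of $v_q(m')$ when $q\equiv1\bmod4$. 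Together with the reduction Lemma \ref{lem:reduce_level_from_p^n+1_to_p^n} this makes every odd factor explicit; in particular the "even-$v_q$" factor is always $\ge2$.

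The heart of the argument is the $2$-part $(\CC D_2^-(a_2)\otimes\CC D_{2^{w}}^{\epsilon'_2}(a'_2))^{\Gamma_0(2^n)}$, summed over $w=v_2(m')\in\{v_2(M)-1,v_2(M)\}$, where the parity constraint forces $\epsilon'_2=+$ if $a_2a'_2\equiv1\bmod4$ and $\epsilon'_2=-$ if $a_2a'_2\equiv3\bmod4$. Here I would invoke Lemma \ref{lem:CD2CD2N}. Checking the $\pm$-type of each generator shows that for $a_2a'_2\equiv1\bmod4$ all contributions to $\CC D_2\otimes\CC D_{2^{w}}$ are of type $(+,-)$, so the $(-,+)$-part vanishes identically and the total is $0$ for $a_2a'_2\equiv1,5\bmod8$; while for $a_2a'_2\equiv3\bmod4$ every new generator is of type $(+,+)$, so the $(-,-)$-part is independent of $w$ and equals $\dim(\CC D_2^-\otimes\CC D_2^-)^{\Gamma_0(8)}$, which by part (4) is $1$ if $a_2a'_2\equiv7\bmod8$ and $0$ if $a_2a'_2\equiv3\bmod8$. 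Since exactly one of the two admissible $w$ is odd (and the even one contributes $0$), the $2$-part equals $1$ precisely when $a_2a'_2\equiv7\bmod8$ and $0$ otherwise, for every $n\ge0$. I expect this step to be the main obstacle, as it requires careful bookkeeping of the sign-eigenspaces across all the sublemmas and the sharp $\bmod 8$ distinction.

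Finally I would assemble. From $a_2\equiv-1\bmod8$ one gets $a_2a'_2\equiv\prod_{q:\,v_q(m')\ \mathrm{odd}}q\bmod8$, where exponent-$\le2$ primes always contribute $1$ and each exponent-$\ge3$ prime inserts $q\bmod8$ exactly for its odd-$v_q$ choice; hence $a_2a'_2\bmod8$ ranges over all subset products $\prod_{j\in A}q_j$, and $J_{1,2}(N)\neq0$ iff some such product is $\equiv7\bmod8$, which is the stated condition by the arithmetic of $(\ZZ/8)^\ast$. For the dimension-one claim I would write $\dim J_{1,2}(N)=\big(\prod_{p}c_p\big)\sum_{A:\ \prod_{j\in A}q_j\equiv7\,(8)}\prod_{q\in A}d_q^{\mathrm{odd}}\prod_{q\notin A}d_q^{\mathrm{even}}$, a sum of products of positive integers. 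Thus the value is $1$ iff there are no exponent-$2$ primes (forcing $k$ odd and squarefree), a unique subset $A$ realises $7\bmod8$, that $A$ exhausts all exponent-$\ge3$ primes (since $d_q^{\mathrm{even}}\ge2$), and each $d_q^{\mathrm{odd}}=1$ (forcing each such exponent to be $3$ or $4$). Identifying $(\ZZ/8)^\ast$ with $\F_2^2$ via $7\leftrightarrow(1,1)$, $3\leftrightarrow(1,0)$, $5\leftrightarrow(0,1)$, the uniqueness-and-exhaustion condition holds iff there is a single exponent-$\ge3$ prime $\equiv7\bmod8$, or exactly two, $\equiv3$ and $\equiv5\bmod8$; this yields the asserted classification $N=2^nkq^{3+r}$ or $N=2^nkq_1^{3+r_1}q_2^{3+r_2}$.
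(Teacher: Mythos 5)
Your proposal is correct and follows essentially the same route as the paper: Proposition \ref{prop:main-iso} to localize, Lemma \ref{lem:CD2CD2N} (together with the sign constraints) to show the $2$-part is the one-dimensional $(-,-)$-space existing exactly when $a_2a_2'\equiv 7\bmod 8$, and Lemma \ref{lem:CLP^N} with \eqref{eq:CL_P1}, \eqref{eq:CL_P3} to evaluate the odd local factors, after which $a_2a_2'\bmod 8$ is read off as a subset product of the $q_j$. The only difference is organizational — you package everything into one closed-form subset-sum for the dimension (essentially the formula the paper itself records after the proposition), whereas the paper proves sufficiency by exhibiting explicit nonzero subspaces and delegates the squarefree factor to Proposition \ref{prop:dimension_induction}; both yield the same classification.
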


\begin{proof}
Again, we use the decomposition in Proposition \ref{prop:main-iso}. 
By \eqref{eq:CL_P1}, i.e. $(\CC L_p(a_p))^{\Gamma_0(p^m)}$=0 for any integer $m>0$, the non-trivial $p_i$-part of $J_{1,2}(N)$ is $(\CC L^+_{m'_{p_i}}(a_{p_i}'))^{\Gamma_0(p_i^{\alpha_i})}$ with $m_{p_i}\in \{ 1,p_i^2\}$.
The non-trivial $2$-part of $J_{1,2}(N)$ can only be 
$$
\big(\CC D_{2}^{-}(a_2)\otimes \CC D_{m_2'}^-(a_2') \big)^{\Gamma_0(2^{n})},
$$
where $2|m_2'$, $-a_2=1\bmod 8$ and $-a_2'\prod m_{p_i}\cdot \prod m_{q_j}=1 \bmod 4m_2'$. By Lemma \ref{lem:CD2CD2N}, the $2$-part is non-trivial only if 
\begin{equation}\label{eq:constrain}
a_2a_2'=\prod m_{p_i}\cdot \prod m_{q_j} = \prod m_{q_j}=7\, \bmod \,8.    
\end{equation}
Thus $J_{1,2}(N)\neq0$ only if there exists a subset $A$ of $\{ 1,..,t\}$ such that $\prod_{j \in A}q_j=7 \, \bmod \,8$. Note that the $2$-part is of dimension one if it is non-trivial. 

If there exists $q_j$ with $q_j=7\bmod 8$, then the space 
$$ 
\left(\CC D_2^-(-1)\otimes\CC D_2^-(1)\right)^{\SL_2(\ZZ)} \bigotimes \left(   \CC L_{q_j^3}^-(a'_{q_j}) \right)^{\Gamma_0(q_j^3)} 
$$
lies in $J_{1,2}(q_j^3)$ and is non-trivial by \eqref{eq:CL_P3}. It follows that $J_{1,2}(N)\neq 0$ in this case. 

If there exists $q_{j_1}$ and $q_{j_2}$ such that $q_{j_1}=3\bmod \,8$ and $q_{j_2}=5\bmod \,8$, then the space
$$ 
\left(\CC D_2^-(-1)\otimes\CC D_{2}^-(1)\right)^{\SL_2(\ZZ)} \bigotimes \left(   \CC L_{q_{j_1}^3}^-(a'_{q_{j_1}}) \right)^{\Gamma_0(q_{j_1}^3)}  \bigotimes \left(   \CC L_{q_{j_2}^3}^+(a'_{q_{j_2}}) \right)^{\Gamma_0(q_{j_2}^3)}
$$
lies in $J_{1,2}(q_{j_1}^3q_{j_2}^3)$ and is non-trivial by \eqref{eq:CL_P3} again. Therefore, $J_{1,2}(N)\neq 0$ in this case. 
	
Let $q,q_1,q_2$ be primes with $q=7\,\bmod 8$, $q_1=3\,\bmod 8$, $q_2=5\, \bmod 8$. From \eqref{eq:constrain}, Lemma \ref{lem:CLP^N} and Lemma \ref{lem:CD2CD2N}, we see that the dimension of $J_{1,2}(2^nq^{3+r})$ and $J_{1,2}(2^nq_1^{3+r_1}q_2^{3+r_2})$ is one if and only if $r,r_1,r_2\in\{0,1\}$. The last result then follows from Proposition \ref{prop:dimension_induction}. 
\end{proof}

The above result actually indicates that $J_{1,2}(N)=0$ for $N<7^3$ and $\dim J_{1,2}(7^3)=1$. The generator can be constructed by the following general result. 

\begin{proposition}
Let $p$ be a prime with $p = 7 \, \bmod \, 8$. Then $J_{1,2}(p^3)$ is generated by
$$ 
\vartheta(\tau,2z)\cdot \sum_{l=1}^{p-1} \left(\frac{l}{p}\right)\theta_{2p,l+pl-p}(p^2\tau,0).
$$
\end{proposition}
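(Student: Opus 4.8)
The plan is to use the isomorphism of Proposition~\ref{prop:main-iso} to isolate the single nontrivial summand contributing to $J_{1,2}(p^3)$, write its generator abstractly in the standard basis of the relevant Weil module, and then transport that vector back to a combination of theta functions via \eqref{eq:iso-local-global} and \eqref{eq:identification}. Since Proposition~\ref{prop:index-2} already gives $\dim J_{1,2}(p^3)=1$, it suffices to produce one nonzero element. Here $m=2$ and $N=p^3$, so the smallest $M$ with $2\mid M$ and $p^3\mid 4M$ is $M=2p^3$, and the index $m'$ ranges over $\{p^2,2p^2,p^3,2p^3\}$. Running through the four cases, the summand $m'=p^3$ dies because its $2$-part is $\CC D_1^-=0$, while $m'=p^2$ and $m'=2p^2$ die because Lemma~\ref{lem:reduce_level_from_p^n+1_to_p^n} and \eqref{eq:CL_P2} force $\epsilon'_p=+$, leaving a $2$-part of the shape $(\CC D_2^-(a_2)\otimes\CC D_{2^k}^+(a_2'))^{\SL_2(\ZZ)}$ with $k\in\{0,1\}$, which vanishes by the sign condition (1) of Lemma~\ref{lem:irreducible_D2^N_LP^N}. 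Thus only $m'=2p^3$ survives, and its space is
$$
\big(\CC D_2^-(a_2)\otimes\CC D_2^-(a_2')\big)^{\SL_2(\ZZ)}\otimes\big(\CC L_1^+(a_p)\otimes\CC L_{p^3}^-(a_p')\big)^{\Gamma_0(p^3)},
$$
where $a_2=-1\,\bmod 8$ and $a_2'=-(p^3)^{-1}\,\bmod 8$.

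Next I would write the generator explicitly. From $p=7\,\bmod 8$ one gets $p^3=7\,\bmod 8$, hence $a_2'=1\,\bmod 8$ and $a_2a_2'=7\,\bmod 8$; Lemma~\ref{lem:CD2CD2N}(4) then yields the $2$-part generator $(\mathbf{e}^1-\mathbf{e}^3)\otimes(\mathbf{e}^1-\mathbf{e}^3)$, while \eqref{eq:CL_P3} yields the $p$-part generator $\mathbf{e}^0\otimes\sum_{l=1}^{p-1}(\frac{l}{p})\mathbf{e}^{lp^2}$, which lies in $\CC L_{p^3}^-$ precisely because $p=3\,\bmod 4$. Regrouping the $\Theta_2$- and $\Theta_{2p^3}^{+}$-factors and applying \eqref{eq:identification} with the basis correspondence $\mathbf{e}^\mu\leftrightarrow\theta_{2,\mu}$, the $\Theta_2$-factor $\mathbf{e}^1-\mathbf{e}^3$ becomes $\theta_{2,1}-\theta_{2,3}$, which by the Jacobi triple product equals $\vartheta(\tau,2z)$. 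This accounts for the entire $z$-dependence of the constructed form.

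Finally I would reconstruct the coefficient factor. The $\Theta_{2p^3}^{+}$-vector is $(\mathbf{e}^1-\mathbf{e}^3)\otimes\sum_l(\frac{l}{p})\mathbf{e}^{lp^2}$, and under the isomorphism $\CC D_{2p^3}\cong\CC D_2\otimes\CC L_{p^3}$ of \eqref{eq:iso-local-global} each tensor $\mathbf{e}^{\epsilon}\otimes\mathbf{e}^{lp^2}$ is the class $\mathbf{e}^{\gamma}$ in $\ZZ/4p^3\ZZ$ determined by $\gamma\equiv\epsilon\,\bmod 4$ and $\gamma\equiv lp^2\,\bmod p^3$. Evaluating the associated index-$2p^3$ theta series at $z=0$ and using $\theta_{2p^3,\nu}(\tau,0)=\theta_{2p^3,-\nu}(\tau,0)$ folds the $\mathbf{e}^3$-terms onto the $\mathbf{e}^1$-terms; since $(\frac{-1}{p})=-1$ the Legendre symbol flips under $l\mapsto p-l$, so the two halves reinforce and give $2\sum_l(\frac{l}{p})\theta_{2p^3,\gamma(l)}(\tau,0)$ with $\gamma(l)\equiv 1\,\bmod 4$ and $\gamma(l)\equiv lp^2\,\bmod p^3$. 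The congruences $p^2\equiv 1$ and $p^3\equiv 3\pmod 4$ identify $\gamma(l)=(l(1+p)-p)p^2$, and the rescaling $\theta_{2p,r}(p^2\tau,0)=\theta_{2p^3,rp^2}(\tau,0)$ rewrites the coefficient as $2\sum_{l=1}^{p-1}(\frac{l}{p})\theta_{2p,\,l+pl-p}(p^2\tau,0)$, matching the stated sum up to the harmless factor $2$.

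The main obstacle is the bookkeeping of this last step: one must track the right-module/dual twist implicit in \eqref{eq:identification}, perform the CRT reconstruction of the global index from its $2$- and $p$-components, and check that the $z=0$ symmetry collapses the two residue classes modulo $4$ into the single index $l+pl-p$ with the correct Legendre weighting. Individually none of these is deep, but they must be aligned exactly for the explicit formula to emerge precisely as written.
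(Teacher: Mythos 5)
Your argument is correct and follows essentially the same route as the paper: isolate the single surviving local summand $\big(\CC D_2^-(-1)\otimes\CC D_{2}^-(1)\big)^{\SL_2(\ZZ)}\otimes\big(\CC L_{p^3}^-(a)\big)^{\Gamma_0(p^3)}$ via Proposition~\ref{prop:main-iso} and the sign/vanishing lemmas, read off the generators from Lemma~\ref{lem:CD2CD2N}(4) and \eqref{eq:CL_P3}, and transport back through \eqref{eq:iso-local-global} and \eqref{eq:identification}; you merely make explicit the CRT and $z=0$ folding bookkeeping that the paper leaves implicit, and your index $\gamma(l)=(l+pl-p)p^2$ checks out. (One cosmetic point: for $m'=p^3$ the sub-case with $\epsilon_2'=\epsilon_p'=+$ is not excluded by $\CC D_1^-=0$ but by the same sign condition of Lemma~\ref{lem:irreducible_D2^N_LP^N}(I)(1) that you already invoke for $m'=p^2$ and $m'=2p^2$.)
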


\begin{proof}
First, by the above proof of Proposition \ref{prop:index-2},
$$ 
J_{1,2}(p^3) \cong \left(\CC D_2^-(-1)\otimes\CC D_{2}^-(1)\right)^{\SL_2(\ZZ)} \bigotimes \left( \CC L_{p^3}^-(a) \right)^{\Gamma_0(p^3)}.  
$$
Second, by Lemma \ref{lem:CD2CD2N} and Lemma \ref{lem:CLP^N} we have 
$$
\big(\CC D_2^-(-1)\otimes\CC D_{2}^-(1)\big)^{\SL_2(\ZZ)}=\CC \big((\mathbf{e}^1-\mathbf{e}^3)\otimes (\mathbf{e}^1-\mathbf{e}^3)\big), \quad  \big(\CC L_{p^3}^-(a)\big)^{\Gamma_0(p^3)}=\CC \left(\sum_{l=1}^{p-1}\left(\frac{l}{p}\right)\mathbf{e}^{lp^2}\right).
$$ 
The desired result then follows from  isomorphism \eqref{eq:iso-local-global}, isomorphism \eqref{eq:identification} and the basic facts that $\theta^-_{2,1}(\tau,\frac{1}{2}z)=-\vartheta(\tau,z)$ and $\theta^{\pm}_{m,r}(h\tau,hz)=\theta^{\pm}_{hm,hr}(\tau,z)$.  
\end{proof}

In what follows we consider Jacobi forms of weight one and odd prime index.

\begin{proposition}\label{prop:vanishcondition_J1_P(N)}
Let $N=2^ap^bq_1^{\alpha_1}...q_s^{\alpha_s}$, where $p$ and $q_i$ are distinct odd primes. Then $J_{1, p}(N)$ is non-trivial if and only if $N$ satisfies one of the following conditions:
\begin{enumerate}
\item  there exists a prime $q_i=3 \bmod 4$ with $\alpha_i\geq 3$, $\left(\frac{-q_i}{p}\right)=1$;
\item  there exist primes $q_i=3 \bmod 4$ and $q_j=1 \bmod 4$ with $\alpha_i\geq 3$, $\alpha_j\geq 3$, $\left(\frac{-q_i}{p}\right)=-1$ and  $\left(\frac{q j}{p}\right)=-1$;
\item  $a \geq 6$ and $\left(\frac{-1}{p}\right)=1$;
\item  $a \geq 6$, $\left(\frac{-1}{p}\right)=-1$, and there exists a prime $q_j=1 \bmod 4$ with $\alpha_j\geq 3$ and  $\left(\frac{q_j}{p}\right)=-1$;
\item  $a \geq 9$ and $\left(\frac{-2}{p}\right)=1$.
\end{enumerate}
\end{proposition}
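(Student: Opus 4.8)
The plan is to feed $J_{1,p}(N)$ through the decomposition of Proposition \ref{prop:main-iso} and reduce the statement to a combinatorial reachability problem for a single Legendre symbol. Since the index $m=p$ is an odd prime, the discriminant form $D_p$ has trivial $2$-part $D_1$ and trivial $q_i$-parts $L_1$, so on the factor $\CC D_p^-(-1)$ the only nontrivial local constituent is $\CC L_p^{-}(a_p)$; indeed $\CC D_1^-=\CC L_1^-=0$ forces $\epsilon_2=\epsilon_{q_i}=+$, whence $\prod\epsilon_*=-$ forces $\epsilon_p=-$. Thus every summand of $J_{1,p}(N)$ is an outer tensor product of a $2$-part $(\CC D_1^+(a_2)\otimes\CC D_{2^{c_2}}^{\epsilon_2'}(a_2'))^{\Gamma_0(2^a)}$, a $p$-part $(\CC L_p^-(a_p)\otimes\CC L_{p^{c}}^{\epsilon_p'}(a_p'))^{\Gamma_0(p^b)}$, and $q_i$-parts $(\CC L_{q_i^{c_i}}^{\epsilon_{q_i}'}(a_{q_i}'))^{\Gamma_0(q_i^{\alpha_i})}$, with $c=v_p(m')$, $c_2=v_2(m')$, $c_i=v_{q_i}(m')$ each ranging over two consecutive values and subject to $\epsilon_2'\epsilon_p'\prod_i\epsilon_{q_i}'=+$. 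I would first record that, because $\CC L_{q_i^{c_i}}$ factors through level $q_i^{c_i}$, each $q_i$-part may be computed at level $q_i^{c_i}$, and by Lemma \ref{lem:reduce_level_from_p^n+1_to_p^n} each $2$-part at level $2^{c_2+2}$.

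Next I would pin down, factor by factor, when the part is nonzero and which sign it carries. For the $p$-part, Lemma \ref{lem:CLPCLP^2N} (especially \eqref{eq:CLP-CLP-vanish}) together with Lemma \ref{lem:vanish-sign} shows that $\CC L_p^-$ survives only for $c$ odd with $\epsilon_p'=-$, and then nontriviality is equivalent to $\big(\tfrac{-a_pa_p'}{p}\big)=1$; substituting the inverses $a_p,a_p'$ from Proposition \ref{prop:main-iso} converts this into the single condition
$$
\Big(\tfrac{-1}{p}\Big)\Big(\tfrac{2}{p}\Big)^{c_2}\prod_i\Big(\tfrac{q_i}{p}\Big)^{c_i}=1.
$$
By Lemma \ref{lem:CLP^N} (with \eqref{eq:CL_P1}, \eqref{eq:CL_P3} and the sign computation via Lemma \ref{lem:sign}), the choice $\epsilon_{q_i}'=+$ is always attainable, either with $c_i$ even (contributing nothing to the product above) or, when $q_i=1\bmod 4$ and $\alpha_i\geq3$, with $c_i$ odd (contributing $\big(\tfrac{q_i}{p}\big)$), while $\epsilon_{q_i}'=-$ is attainable precisely when $q_i=3\bmod4$, $\alpha_i\geq3$ and $c_i$ is odd (again contributing $\big(\tfrac{q_i}{p}\big)$). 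For the $2$-part, Lemma \ref{lem:vanish-sign-2} fixes $\epsilon_2'$ according to the residue of $a_2a_2'$ modulo $4$, which equals $\prod_{q_i=3(4)}(-1)^{c_i}$ and so is controlled by the parity of the number of $q_i=3\bmod4$ taken with $\epsilon_{q_i}'=-$; Lemmas \ref{lem:CD1_a1a2_square}--\ref{lem:CD1_-a1a2_square} then give the thresholds, namely a nonzero $2$-part with even $c_2$ needs $a\geq6$ in the square case and is automatic in the non-square case, whereas odd $c_2$ (contributing $\big(\tfrac{2}{p}\big)$) always needs $a\geq9$.

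Assembling these, the sign constraint reduces to $\epsilon_2'\prod_i\epsilon_{q_i}'=-$, and a short parity check shows it is automatically compatible with the $2$-part sign dictated by Lemma \ref{lem:vanish-sign-2}. Hence $J_{1,p}(N)\neq0$ if and only if one can activate a set of ``slots'' (the $2$-part, and those $q_i$ with $\alpha_i\geq3$) so that an odd number of $-$-slots is used, respecting the availability thresholds, and the resulting Legendre product equals $\big(\tfrac{-1}{p}\big)$. The five displayed conditions are exactly the minimal such configurations: one $q_i=3\bmod4$ used as a $-$-slot realizes (1), which is Lemma \ref{lem:J_1,q(p^3)}; a $q_i=3\bmod4$ together with a $q_j=1\bmod4$ odd-slot realizes (2); the $2$-part alone as the $-$-slot with $a\geq6$ realizes (3), which is Lemma \ref{lem:J_1P64}, and combined with a $q_j=1\bmod4$ odd-slot realizes (4); and the odd-$c_2$ $2$-part with $a\geq9$ realizes (5). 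The forward direction is a direct verification that each condition yields a valid configuration.

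The hard part will be the converse. Assuming none of (1)--(5) holds, the hypotheses force all activatable $\big(\tfrac{q_i}{p}\big)$ for $q_i=3\bmod4$ to equal $-\big(\tfrac{-1}{p}\big)$, all activatable $\big(\tfrac{q_j}{p}\big)$ for $q_j=1\bmod4$ to equal $1$ (whenever some $q_i=3\bmod4$ with $\alpha_i\geq3$ is present), and $\big(\tfrac{-2}{p}\big)=-1$. I would then split on $a$ (the ranges $a<6$, $6\leq a<9$, $a\geq9$) and on whether any $q_i=3\bmod4$ with $\alpha_i\geq3$ is activated, and in each case compute that every attainable Legendre product is $\neq\big(\tfrac{-1}{p}\big)$, so no valid configuration exists and $J_{1,p}(N)=0$. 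Keeping the sign parity, the square/non-square dichotomy of the $2$-part, the thresholds on $a$, and the Legendre target all consistent at once is the delicate bookkeeping that makes this direction the main obstacle.
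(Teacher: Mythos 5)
Your proposal is correct and follows essentially the same route as the paper: reduce via Proposition \ref{prop:main-iso} to local invariants, observe that the $p$-part forces $\epsilon_p=-$ and an odd exponent with the single Legendre condition $\big(\tfrac{-a_pa_p'}{p}\big)=1$ (the paper's \eqref{eq:a_pa_p'}), and then combine the sign parity of the $q_i$-parts with the $2$-part thresholds $a\geq 6$ (even $c_2$, square case) and $a\geq 9$ (odd $c_2$) from Lemmas \ref{lem:CD1_a1a2_square}--\ref{lem:CD1_-a1a2_square}. Your ``slot'' bookkeeping is exactly the paper's case split into the constituents $V_1$ and $V_2$, and the remaining verification you defer is the same routine check the paper also leaves to the reader.
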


\begin{proof}
By Proposition \ref{prop:dimension_induction}, we can assume that $\alpha_i\geq 3$ for $1\leq i\leq s$. We set
\begin{align*}
V_1= &\big(\CC D_1^+(a_2)\otimes \CC D_{2^{k_2}}^+(a_2')\big)^{\Gamma_0(2^a)}\otimes \big(\CC L_p^-(a_p)\otimes \CC L_{p^{k_p}}^-(a_p')\big)^{\Gamma_0(p^b)}\\
&\otimes \bigoplus_{\epsilon_*=-} \bigotimes_{q_i}\big(\CC L_{q_i^{\beta_i}}^{\epsilon_{i}}(a_{q_i}')\big)^{\Gamma_0(q_i^{\alpha_i})}, \end{align*}
where the number of primes of type $q_i=3\, \bmod 4$ with $\beta_i $ being odd is odd; $k_2=0$ if $a\leq 2$ and $k_2\in \{a-3,a-2\}$ if $a>2$; $k_p=1$ if $b=0$ and $k_p\in \{b-1,b\}$ if $b\geq 1$; $\beta_i\in \{\alpha_i-1,\alpha_i\}$; and the  other notation is the same as in Proposition \ref{prop:main-iso}. If $\beta_i$ is odd and $q_i=3 \bmod 4$, then $\epsilon_i=-$ by Lemma \ref{lem:CLP^N}; if $\beta_i$ is even or $q_i=1 \bmod 4$, then $\epsilon_i=+$ by Lemma \ref{lem:CLP^N} again; moreover, the $q_i$-part is non-trivial.  By Lemma \ref{lem:CLPCLP^2N}, the $p$-part of $V_1$ is non-trivial if and only if $k_p$ is odd and $\big(\frac{-a_pa_p'}{p}\big)=1$. Note that $-2^2a_p=1\bmod p$ and $-a_p'2^{k_2+2}\prod_{i=1}^s q_i^{\beta_i}=1 \bmod p^{k_p}$. Therefore, 
$$
a_pa_p'2^{k_2+4}\prod_{i=1}^s q_i^{\beta_i} = 1 \mod p
$$
and thus
\begin{equation}\label{eq:a_pa_p'}
1=\left( \frac{-a_pa_p'}{p} \right) \quad \Longleftrightarrow \quad \left(\frac{-2^\delta \prod_{\beta_i \; \mathrm{odd}}q_i}{p} \right)=1,    
\end{equation}
where $\delta=0$ if $k_2$ is even, and $\delta=1$ if $k_2$ is odd. If $V_1\neq 0$, then $-a_2a_2'$ is a square modulo $4$. Assume that the $p$-part of $V_1$ is non-trivial. From Lemma \ref{lem:CD1_-a1a2_square}, we conclude that the $2$-part of $V_1$ never vanishes when $k_2$ is even, and is non-trivial if and only if $a\geq 9$ when $k_2$ is odd.

We further set
\begin{align*}
V_2 =& \big(\CC D_1^+(a_2)\otimes \CC D_{2^{k_2}}^-(a_2')\big)^{\Gamma_0(2^a)}\otimes \big(\CC L_p^-(a_p)\otimes \CC L_{p^{k_p}}^-(a_p')\big)^{\Gamma_0(p^b)} \\
&\otimes  \bigoplus_{\epsilon_*=+} \bigotimes_{q_i}\big(\CC L_{q_i^{\beta_i}}^{\epsilon_{i}}(a_{q_i}')\big)^{\Gamma_0(q_i^{\alpha_i})},
\end{align*}
where the number of primes of type $q_i=3\, \bmod 4$ with $\beta_i $ being odd is even and the other is the same as above. Similarly, the $p$-part of $V_2$ is non-trivial if and only if $k_p$ is odd and $\big(\frac{-a_pa_p'}{p}\big)=1$. Note that \eqref{eq:a_pa_p'} also holds in this case. In this case, if $V_2\neq 0$, then $a_2a_2'$ is a square modulo $4$. Assume that the $p$-part of $V_2$ is non-trivial. By Lemma \ref{lem:CD1_a1a2_square}, the $2$-part of $V_2$ is non-trivial if and only if $a\geq 6$ when $k_2$ is even, and if and only if $a\geq 9$ when $k_2$ is odd.  

By Proposition \ref{prop:main-iso} and Lemma \ref{lem:CLPCLP^2N}, $J_{1,p}(N)$ is the direct sum of components of types $V_1$ and $V_2$. By the discussion above and especially \eqref{eq:a_pa_p'}, it is not difficult to verify that $V_1\neq 0$ or $V_2\neq 0$ is equivalent to one of the five conditions. Indeed, $V_1\neq 0$ is related to Condition (1) or (2) when $k_2$ is even, and is related to Condition (5) if $k_2$ is odd; $V_2\neq 0$ is related to Condition (3) or (4) if $k_2$ is even, and is related to Condition (5) if $k_2$ is odd. We then prove the desired result.
\end{proof}

By applying Proposition \ref{prop:vanishcondition_J1_P(N)} to $p=3$, we obtain the following criterion. 

\begin{corollary}
The space $J_{1,3}(N)\neq 0$ if and only if $N$ satisfies one of the following conditions:
\begin{enumerate}
\item  There exists a prime $q=11 \bmod 12$ such that $q^3\mid N$;
\item  There exist prime $q_1=7 \bmod 12$ and $q_2=5\bmod 12$ such that $q_1^3q_2^3\mid N$;
\item  $64\mid N$ and there exists a prime $q=5 \bmod 12$ such that $q^3\mid N$; 
\item  $512\mid N$. 
\end{enumerate}
\end{corollary}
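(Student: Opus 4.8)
The plan is to specialize Proposition~\ref{prop:vanishcondition_J1_P(N)} to the case $p=3$ and to rewrite each of its five conditions as a congruence condition modulo $12$. The only ingredients beyond that proposition are the two elementary evaluations $\left(\frac{-1}{3}\right)=-1$ and $\left(\frac{-2}{3}\right)=1$ of the Kronecker symbol at $3$, together with the Chinese Remainder Theorem applied to the coprime moduli $3$ and $4$.

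First I would record that for an odd prime $q\neq 3$ one has $\left(\frac{q}{3}\right)=1$ exactly when $q\equiv 1\bmod 3$ and $\left(\frac{q}{3}\right)=-1$ exactly when $q\equiv 2\bmod 3$, and that $\left(\frac{-q}{3}\right)=\left(\frac{-1}{3}\right)\left(\frac{q}{3}\right)=-\left(\frac{q}{3}\right)$. Thus each sign constraint on $\left(\frac{-q}{3}\right)$ or $\left(\frac{q}{3}\right)$ appearing in Proposition~\ref{prop:vanishcondition_J1_P(N)} is equivalent to a condition on $q\bmod 3$, which I then merge with the stated condition on $q\bmod 4$ via the Chinese Remainder Theorem to pin down $q\bmod 12$.

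Carrying this out condition by condition: Condition~(1) asks for $q\equiv 3\bmod 4$ with $\left(\frac{-q}{3}\right)=1$, i.e. $q\equiv 2\bmod 3$, forcing $q\equiv 11\bmod 12$ and yielding Corollary Condition~(1). Condition~(2) asks for $q_1\equiv 3\bmod 4$ with $\left(\frac{-q_1}{3}\right)=-1$, i.e. $q_1\equiv 1\bmod 3$ so $q_1\equiv 7\bmod 12$, together with $q_2\equiv 1\bmod 4$ and $\left(\frac{q_2}{3}\right)=-1$, i.e. $q_2\equiv 2\bmod 3$ so $q_2\equiv 5\bmod 12$, yielding Corollary Condition~(2). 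Condition~(3) requires $\left(\frac{-1}{3}\right)=1$, which is false, so it contributes nothing. Condition~(4) requires $\left(\frac{-1}{3}\right)=-1$, which holds, together with $64\mid N$ and a prime $q\equiv 1\bmod 4$ with $\left(\frac{q}{3}\right)=-1$, i.e. $q\equiv 5\bmod 12$, yielding Corollary Condition~(3). Finally Condition~(5) requires $\left(\frac{-2}{3}\right)=1$, which holds, so it reduces to $a\geq 9$, i.e. $512\mid N$, yielding Corollary Condition~(4).

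Since every step is a direct translation, there is no genuine analytic obstacle; the only thing demanding care is the bookkeeping---keeping the two clauses of Condition~(2) attached to the correct residue classes, and confirming that Condition~(3) is vacuous at $p=3$ so that it does not spuriously contribute a fifth clause. I would close by observing that the disjunction of the four surviving translated clauses is precisely the list in the Corollary, and that each involves a prime $q\neq 3$ automatically, since none of the residues $5,7,11\bmod 12$ is divisible by $3$.
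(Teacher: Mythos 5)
Your proposal is correct and follows exactly the paper's route: the paper derives this corollary simply by specializing Proposition \ref{prop:vanishcondition_J1_P(N)} to $p=3$, and your residue computations (including the vacuity of Condition (3) since $\left(\tfrac{-1}{3}\right)=-1$, and the translation of the remaining four conditions into residues modulo $12$) all check out.
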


This result yields that $J_{1,3}(N)=0$ if $N< 2^9$ and $J_{1,3}(2^9)\neq 0$. By \eqref{eq:dim-index-p} below, $\dim J_{1,3}(2^9)=1$. 

\begin{remark}
Suppose that $p^3\nmid mN$ for any prime $p=3\bmod 4$. For any non-trivial $p$-part of $J_{1,m}(N)$ with $p=3\bmod 4$, $m_pm_p'$ is a square. Thus the non-trivial $2$-part of $J_{1,m}(N)$ satisfies that $a_2a_2'=1\bmod 4$. By Lemma \ref{lem:evenresult}, if $32\nmid N$ then $J_{1,m}(N)=0$. By \cite[Lemma 3.10]{CDH18}, if $64\nmid N$ and either $m$ is odd or $m=4\bmod 8$ then $J_{1,m}(N)=0$. This recovers and extends \cite[Lemma 3.11]{CDH18}. \end{remark}

\section{Dimension formula}\label{sec:dimension}
In this section we establish several dimension formulas for Jacobi forms of weight one on $\Gamma_0(N)$.  

For convenience we introduce some symbols. Let $p_1,...,p_s$  be primes and $m$ be an integer with $\mathrm{gcd}(m, \prod_{i=1}^s p_i)=1$. We set
\begin{equation}
f\big(m; \{p_1,...,p_s\}\big)=\left\{\begin{array}{ll}
	1, & \quad \text{if}\; \big(\frac{m}{p_i}\big)=1, \quad \text {for all} \; 1 \leq i \leq s, \\
	0, & \quad \text{if}\; \big(\frac{m}{p_j}\big)=-1, \quad \text {for some} \; 1\leq j \leq s,
 \end{array}\right.
\end{equation}
and define
$$
f(m ; \emptyset)=1. 
$$
We denote the exponent of a prime $p$ in $x$ by $n_p^x$, that is,  $p^{n_p^x}\mid x$ and $p^{n_p^x+1}\nmid x$.

Our first theorem gives a precise formula of $\dim J_{1,m}(N)$ whenever $\mathrm{gcd}(m,N)=1$. 

\begin{theorem}\label{th:m_N_COPRIME}
Let $m$ and $N$ be positive odd integers with $\mathrm{gcd}(m,N)=1$. We set
\begin{align*}
\mathrm{P}_1^m&:=\left\{\text{prime}\; p: \; n_p^m \; \text{is odd}\right\}, \\
\mathrm{P}_0^m&:=\left\{\text{prime}\; p: \; n_p^m \; \text{is positive and even}\right\}, \\
\mathrm{P}_2^N&:=\left\{\text{odd prime}\; p: \; n_p^N=2\right\}, \\
\mathrm{P}_{>2}^{N,1}&:=\left\{\text{odd prime}\; p: \; n_p^N \geq 3,\,p=1\bmod 4\right\}, \\
\mathrm{P}_{>2}^{N,3}&:=\left\{\text{odd prime}\; p: \; n_p^N \geq 3,\,p=3\bmod 4\right\}.  
\end{align*} 
For $j\in\{0,1\}$ and any non-negative integer $k$, 
\begin{equation}
\dim J_{1,2^jm}(2^kN) = 2^{|\mathrm{P}_2^N|}\left( I_1+I_2+I_3+I_4\right).
\end{equation}
For any non-negative integer $j$, 
\begin{equation}
\dim J_{1,2^jm}(N) = 2^{|\mathrm{P}_2^N|}\left( I_1+I_4\right).
\end{equation}

The terms $I_1, I_2, I_3, I_4$ are sums over prime subsets, defined below.

\begin{equation}\label{eq:I_i}
\begin{split}
I_i=\sum_{\mathrm{S}_{>2,i}^{N,3}}\sum_{\mathrm{S}_{>2,i}^{N,1}} \sum_{\mathrm{S}_{0,i}^m} & \prod_{p \in \mathrm{S}_{>2,i}^{N,3}\cup \mathrm{S}_{>2,i}^{N,1}} \left[\frac{n_p^N-1}{2}\right] \prod_{p \in \mathrm{P}_{>2}^{N,3}\cup \mathrm{P}_{>2}^{N,1} \backslash \mathrm{S}_{>2,i}^{N,3} \cup \mathrm{S}_{>2,i}^{N,1}} \left[\frac{n_p^N+2}{2}\right] \\  
&\quad \prod_{p \in \mathrm{P}_1^m} \frac{n_p^m+1}{2}
\prod_{p\in \mathrm{S}_{0,i}^m} \frac{n_p^m}{2} \prod_{p \in \mathrm{P}_{0,i}^m \backslash \mathrm{S}_{0,i}^m} \frac{n_p^m+2}{2} \cdot \mathrm{K}_i^j.    
\end{split}    
\end{equation}
The above symbols are explained as follows:
\begin{enumerate}
\item $\mathrm{S}_{>2,i}^{N,3}$ runs through all the subsets of $\mathrm{P}_{>2}^{N,3}$ such that $|\mathrm{S}_{>2,i}^{N,3}|$ is odd for $i=1,4$ and $|\mathrm{S}_{>2,2}^{N,3}|$ is even. (There is no condition on $|S_{>2,3}^{N,3}|$.) In particular, both $\mathrm{S}_{>2,1}^{N,3}$ and $\mathrm{S}_{>2,4}^{N,3}$ are non-empty, while $\mathrm{S}_{>2,2}^{N,3}$ and $\mathrm{S}_{>2,3}^{N,3}$ could be empty.
\item $\mathrm{S}_{>2,i}^{N,1}$ runs through all the subsets of $\mathrm{P}_{>2}^{N,1}$ such that $f(\mathrm{P}_i; \mathrm{P}_1^m)=1$,  where 
$$
\mathrm{P}_i=-\prod_{p\in \mathrm{S}_{>2,i}^{N,3}}p\prod_{q\in \mathrm{S}_{>2,i}^{N,1}}q
$$
for $i=1,2,4$, and
$$
\mathrm{P}_3=-2\cdot \prod_{p\in \mathrm{S}_{>2,3}^{N,3}}p \prod_{q\in \mathrm{S}_{>2,3}^{N,1}}q.
$$
If the set $\mathrm{S}_{>2,2}^{N,3}$, $\mathrm{S}_{>2,3}^{N,3}$ or $\mathrm{S}_{>2,i}^{N,1}$ for $1\leq i\leq 4$ is empty, then we define the corresponding product  above to be $1$. 
\item $\mathrm{P}^m_{0,i}$ denotes the unique maximal subset of $\mathrm{P}_0^m$ such that  $f(\mathrm{P}_i; \mathrm{P}^m_{0,i})=1$.
\item $\mathrm{S}_{0,i}^m$ runs through all subsets of $\mathrm{P}_{0,i}^m$.
\item Assume that $|\mathrm{P}_1^m|>0$. Then 
\begin{align*}
&\mathrm{K}_1^0=2^{|\mathrm{P}_1^m|-1}\Big(1+\Big[\frac{k}{2}\Big]\Big),& &\mathrm{K}_2^0=2^{|\mathrm{P}_1^m|-1}\mathrm{max}\Big\{0,\Big[\frac{k-4}{2}\Big]\Big\},&\\
&\mathrm{K}_3^0=2^{|\mathrm{P}_1^m|-1}\mathrm{max}\Big\{0,\Big[\frac{k-7}{2}\Big]\Big\},& &\mathrm{K}_4^0=0,  & \\
&\mathrm{K}_1^1=2^{|\mathrm{P}_1^m|-1}\Big(\Big[\frac{k+1}{2}\Big]+\frac{(\frac{\mathrm{P}_1}{2})+1}{2}\Big),&  
&\mathrm{K}_2^1=2^{|\mathrm{P}_1^m|-1}\mathrm{max}\Big\{ 0, \Big[\frac{k-3}{2}\Big]\Big\},& \\ 
&\mathrm{K}_3^1=2^{|\mathrm{P}_1^m|-1}\mathrm{max}\Big\{0,\Big[\frac{k-6}{2} \Big]\Big\},& 
&\mathrm{K}_4^1=2^{|\mathrm{P}_1^m|-1} \frac{(\frac{\mathrm{P}_4}{2})+1}{2}.&
\end{align*}
For even $j\geq 2$, 
$$
\mathrm{K}_1^j=2^{|\mathrm{P}_1^m|-1}\Big(1+ j\frac{(\frac{\mathrm{P}_1}{2})+1}{4}\Big) \quad \text{and}  \quad   \mathrm{K}_4^j=2^{|\mathrm{P}_1^m|-1} j\frac{(\frac{\mathrm{P}_4}{2})+1}{4}.
$$
For odd $j\geq 3$,
$$ 	
\mathrm{K}_1^j=2^{|\mathrm{P}_1^m|-1}(1+j)\frac{(\frac{\mathrm{P}_1}{2})+1}{4} \quad \text{and} \quad  \mathrm{K}_4^j=2^{|\mathrm{P}_1^m|-1} (1+j)\frac{(\frac{\mathrm{P}_4}{2})+1}{4}.
$$
\item Assume that $|\mathrm{P}_1^m|=0$. Then we double $\mathrm{K}_i^j$ and further require $|\mathrm{S}_{0,i}^m|$ to be odd for $i=1,2,3$ and $|\mathrm{S}_{0,4}^m|$ to be even. 
\item For each of the five products in \eqref{eq:I_i}, we define it to be $1$ if the corresponding set is empty, and define it to be $0$ if the corresponding set cannot be selected to satisfy the above conditions.  
\item $[x]$ denotes the integer part of $x$ and $\big( \frac{n}{m}\big)$ denotes the Kronecker symbol as before. 
\end{enumerate}
\end{theorem}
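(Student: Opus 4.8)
The plan is to derive the formula directly from the local decomposition of Proposition~\ref{prop:main-iso}. Writing $\tilde m=2^jm$ and $\tilde N=2^kN$, that proposition expresses $J_{1,\tilde m}(\tilde N)$ as a direct sum over divisors $m'\mid M$ with $M/m'$ squarefree, and \eqref{eq:parts} factors each summand as an outer tensor product of a $2$-part and one $p$-part for every odd prime $p\mid M$, subject to the two \emph{global} sign constraints $\prod\epsilon_*=-$ and $\prod\epsilon'_*=+$. Since $\gcd(m,N)=1$ and $m,N$ are odd, $M$ has exactly the odd prime divisors of $mN$, so every odd prime either divides $m$ (then $N_p=1$ and the $p$-part is an $\SL_2(\ZZ)$-invariant) or divides $N$ (then $m_p=1$ and the first tensor factor is $\CC L_1$); the class $p\nmid mN$ does not occur. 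At each prime the squarefree condition leaves only the two exponent choices $m'_p\in\{p^{n_p^M},p^{n_p^M-1}\}$. Thus $\dim J_{1,\tilde m}(\tilde N)$ is a finite sum, over all admissible sign patterns and all such exponent choices, of the product of the local dimensions, and the argument reduces to evaluating these local factors and reassembling the sum.

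The second step is to read off each local dimension from Section~\ref{sec:invariants}. For $p\mid m$, Lemma~\ref{lem:irreducible_D2^N_LP^N}(II) forces $\epsilon_p=\epsilon'_p$, forces the exponent of $p$ in $m_pm'_p$ to be even, and imposes a quadratic-residue condition on $-a_pa'_p$ exactly when the exponents are odd or the common sign is $-$; the resulting factor is $\frac{n_p^m+1}{2}$ when $n_p^m$ is odd, and $\frac{n_p^m}{2}$ or $\frac{n_p^m+2}{2}$ when $n_p^m$ is even, according to whether the sign is $-$ (forcing the ``new'' part, with the residue condition) or $+$ (permitting the ``old'' part), matching the products over $\mathrm{P}_1^m$, $\mathrm{S}_{0,i}^m$ and $\mathrm{P}_{0,i}^m\setminus\mathrm{S}_{0,i}^m$ in \eqref{eq:I_i}. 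For $p\mid N$ the local space is $(\CC L_{m'_p}^{\epsilon'_p}(a'_p))^{\Gamma_0(p^{n_p^N})}$, and Lemma~\ref{lem:CLP^N} with the reduction Lemma~\ref{lem:reduce_level_from_p^n+1_to_p^n} decomposes $(\CC L_{p^n}(a))^{\Gamma_0(p^n)}$ into one-dimensional new components, one at each even level $0,2,\dots$ (always of $+$ parity) and one at each odd level $3,5,\dots$ whose parity equals $\left(\frac{-1}{p}\right)$; this yields $\left[\frac{n_p^N+2}{2}\right]$ even-level and $\left[\frac{n_p^N-1}{2}\right]$ odd-level components, which is why the primes $1\bmod 4$ and $3\bmod 4$ (the sets $\mathrm{P}_{>2}^{N,1}$ and $\mathrm{P}_{>2}^{N,3}$) enter with different sign and parity rules, while the borderline primes with $n_p^N=2$ give the uniform dimension $2$ by \eqref{eq:CL_P2}, producing the prefactor $2^{|\mathrm{P}_2^N|}$. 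Finally, the $2$-part is computed from Lemmas~\ref{lem:CD1_a1a2_square}, \ref{lem:CD1_-a1a2_square} and \ref{lem:CD2CD2N}; its admissible shapes, distinguished by the parity of the exponent $k_2$ of $2$ in $m'_2$ and by which of $\pm a_2a'_2$ is a square modulo $4$, yield precisely the four families $\mathrm{K}_i^j$, with the thresholds $\max\{0,\cdots\}$ (effective ranges $k\geq 4$ for $I_2$, $k\geq 7$ for $I_3$) recording the first level at which each $2$-adic component is nonzero and the $j$-dependence reflecting whether the first factor is $\CC D_1$ or $\CC D_{2^j}$.

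The third step is the reorganization into the stated form. The four terms $I_1,\dots,I_4$ index the four $2$-part shapes; within each, the outer sums over $\mathrm{S}_{>2,i}^{N,3}$, $\mathrm{S}_{>2,i}^{N,1}$ and $\mathrm{S}_{0,i}^m$ record which primes dividing $N$ and which dividing $m$ receive the minus sign. Because $\epsilon_p=+$ is forced for $p\mid N$ while $\epsilon_p=\epsilon'_p$ for $p\mid m$, the two global constraints reduce to the single coupling $\epsilon_2\epsilon'_2\prod_{p\mid N,\,3\bmod 4}\epsilon'_p\cdot\prod_{p\mid m}\epsilon_p=-$; the parity conditions on $|\mathrm{S}_{>2,i}^{N,3}|$ and on $|\mathrm{S}_{0,i}^m|$ are exactly the residues of this coupling once the signs at the $1\bmod 4$ primes and the $\mathrm{P}_1^m$ primes are separated off. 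The factor $2^{|\mathrm{P}_1^m|-1}$ in each $\mathrm{K}_i^j$ then counts the free binary sign choices at the odd-exponent primes of $m$ (both signs being allowed, the residue condition there being sign-independent), with one degree of freedom absorbed by the global parity — which is precisely why it couples to the $2$-part, and why the degenerate case $|\mathrm{P}_1^m|=0$ instead fixes the parity of $|\mathrm{S}_{0,i}^m|$ and doubles $\mathrm{K}_i^j$. The companion identity for $\dim J_{1,\tilde m}(N)$ drops $I_2,I_3$ because at $k=0$ the level is odd at $2$ and only the $\CC D^+\otimes\CC D^+$ $2$-part survives.

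The main obstacle will be this last reassembly: converting the \emph{single} global parity constraint together with the \emph{several local} residue conditions into the product form with the subset sums and the Kronecker functions $f$. The key mechanism is multiplicativity of the Kronecker symbol: substituting the explicit values $a_p=-(4m/m_p)^{-1}$ and $a'_p=-(4M/m'_p)^{-1}$, the product over the chosen primes of the local conditions ``$-a_pa'_p$ is a square mod $p$'' collapses, via quadratic reciprocity, into the single requirements $f(\mathrm{P}_i;\mathrm{P}_1^m)=1$ and the definition of the maximal set $\mathrm{P}_{0,i}^m$, where $\mathrm{P}_i$ is the signed product of the selected primes and the extra factor $-2$ in $\mathrm{P}_3$ records that $k_2$ is odd. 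The delicate point is to verify that this collapse is bijective on admissible patterns — so that no configuration is over- or under-counted — and that the $2$-adic thresholds in $\mathrm{K}_i^j$ coincide exactly with the vanishing ranges of Lemmas~\ref{lem:CD1_a1a2_square}--\ref{lem:CD1_-a1a2_square}; once this bookkeeping is confirmed, collecting the per-prime factors reproduces \eqref{eq:I_i} and completes the proof.
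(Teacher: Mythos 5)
Your proposal follows essentially the same route as the paper's proof: decompose via Proposition \ref{prop:main-iso} into local parts, read off the local dimensions from Lemma \ref{lem:irreducible_D2^N_LP^N}, Lemma \ref{lem:CLP^N} and the $2$-adic Lemmas \ref{lem:CD1_a1a2_square}--\ref{lem:CD2CD2N} (which the paper tabulates), and reassemble by tracking the sign parities while collapsing the local conditions ``$-a_pa_p'$ is a square mod $p$'' into the Kronecker symbols $\big(\tfrac{\mathrm{P}_i}{p}\big)$ via multiplicativity. The one imprecision is that the two constraints $\prod\epsilon_*=-$ and $\prod\epsilon'_*=+$ remain two separate parity conditions (one fixing the parity of the minus signs among $\epsilon_2$ and the $\epsilon_p$ for $p\mid m$, the other fixing the parity of $|\mathrm{S}_{>2,i}^{N,3}|$) rather than the single coupling you wrote down, but your subsequent accounting --- the parity requirements on $|\mathrm{S}_{>2,i}^{N,3}|$ and $|\mathrm{S}_{0,i}^m|$, and the factor $2^{|\mathrm{P}_1^m|-1}$ with one free sign absorbed by the global constraint --- is exactly how the paper resolves them.
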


\begin{proof}
We first formulate all the $p$-parts of $J_{1,2^jm}(2^kN)$ by Lemma \ref{lem:irreducible_D2^N_LP^N} and Lemma \ref{lem:CLP^N}.
\begin{enumerate}
\item  Let $p\mid N$. The only possible non-trivial $p$-part of $J_{1,2^jm}(2^kN)$ is $(\CC L_{p^{2k+1}}^{\epsilon_p'}(a'_p))^{\Gamma_0(n_p^N)}$ or $(\CC L_{p^{2k}}^+(a'_p))^{\Gamma_0(n_p^N)}$, whose dimension is $\mathrm{max}\big\{ 0,\big[\frac{n_p^N-1}{2}\big]\big\}$ or $ \big[\frac{n_p^N+2}{2}\big]$ respectively, where $k$ is determined by the relation that $2k, 2k+1\in \{n_p^N, n_p^N-1\}$, $\epsilon_p'=-$ for $p=3\,\bmod 4$ and $\epsilon_p'=+$ for $p=1\,\bmod 4$.

\item Let $p\mid m$. If $-a_pa_p'$ is not a square modulo $p$, then the only possible non-trivial $p$-part of $J_{1,2^j m}(2^k N)$ is $(\CC L_{p^{n_p^m}}^{+}(a_p)\otimes \CC L_{p^{n_p^m}}^{+}(a_p'))^{\SL_2(\ZZ)}$ with $n_p^m$ even, whose dimension is $1$. If $-a_pa_p'$ is a square modulo $p$, then the only possible non-trivial $p$-part of $J_{1,2^j m}(2^k N)$ is $(\CC L_{p^{n_p^m}}^{\epsilon}(a_p)\otimes \CC L_{p^{n_p^m}}^{\epsilon}(a_p'))^{\SL_2(\ZZ)}$ with $n_p^m$ odd and $\epsilon\in\{\pm \}$, or $(\CC L_{p^{n_p^m}}^{+}(a_p)\otimes \CC L_{p^{n_p^m}}^{+}(a_p'))^{\SL_2(\ZZ)}$ with $n_p^m$ even, or $(\CC L_{p^{n_p^m}}^{-}(a_p)\otimes \CC L_{p^{n_p^m}}^{-}(a_p'))^{\SL_2(\ZZ)}$ with $n_p^m$ even, whose dimension is respectively $(n_p^m+1)/2$, or  $(n_p^m+2)/2$, or $n_p^m/2$. 

\item The only possible non-trivial $2$-parts of $J_{1,m}(2^kN)$ and $J_{1,2m}(2^kN)$ are formulated in Table \ref{tab:m} and Table \ref{tab:2m} by Lemmas \ref{lem:CD1_a1a2_square}--\ref{lem:CD2CD2N}, respectively.
    
\begin{table}[ht]
\caption{The only possible non-trivial $2$-parts of $J_{1,m}(2^kN)$}\label{tab:m}
\renewcommand\arraystretch{1.5}
\noindent\[
\begin{array}{|c|c|c|}
\hline \text{2-part} & \text{Requirement} & \text{Dimension} \\
\hline (\CC D_1^+(a_2)\otimes \CC D_1^+(a_2'))^{\SL_2(\ZZ)} & a_2 a_2'=3 \bmod 4 & 1 \\
\hline (\CC D_1^+(a_2)\otimes \CC D_1^+(a_2'))^{\Gamma_0(2)} & a_2 a_2'=3 \bmod 4 & 1 \\
\hline (\CC D_1^+(a_2)\otimes \CC D_{2^{2n}}^+(a_2'))^{\Gamma_0(2^{2n+2})} & a_2 a_2'=3 \bmod 4 & 2+n \\
\hline (\CC D_1^+(a_2)\otimes \CC D_{2^{2n}}^-(a_2'))^{\Gamma_0(2^{2n+2})} & a_2 a_2'=1 \bmod 4 & \max \{0,n-1\} \\
\hline (\CC D_1^+(a_2)\otimes \CC D_{2^{2n+1}}^{+}(a_2'))^{\Gamma_0(2^{2n+3})} & a_2 a_2'=3 \bmod 4 & \max \{0,n-2\}\\
(\CC D_1^+(a_2)\otimes \CC D_{2^{2n+1}}^{-}(a_2'))^{\Gamma_0(2^{2n+3})} & a_2 a_2'=1 \bmod 4 & \max \{0,n-2\}\\\hline
\end{array} 
\]
\end{table}

\begin{table}[ht]
\caption{The only possible non-trivial $2$-parts of $J_{1,2m}(2^kN)$}\label{tab:2m}
\renewcommand\arraystretch{1.5}
\noindent\[
\begin{array}{|c|c|c|}
\hline \text{2-part} & \text{Requirement} & \text{Dimension} \\
\hline (\CC D_2^-(a_2)\otimes \CC D_{2^{2n+1}}^-(a_2'))^{\Gamma_0(2^{2n+3})} & a_2 a_2'=7 \bmod 8 & 1 \\
\hline (\CC D_2^+(a_2)\otimes \CC D_2^+(a_2'))^{\SL_2(\ZZ)} & a_2 a_2'=7 \bmod 8 & 1 \\
\hline (\CC D_2^+(a_2)\otimes \CC D_2^+(a_2'))^{\Gamma_0(2)} & a_2 a_2'=7 \bmod 8 & 2 \\
\hline (\CC D_2^+(a_2)\otimes \CC D_2^+(a_2'))^{\Gamma_0(2)} & a_2 a_2'=3 \bmod 8 & 1 \\
\hline (\CC D_2^+(a_2)\otimes \CC D_2^+(a_2'))^{\Gamma_0(4)} & a_2 a_2'=7 \bmod 8 & 2 \\
\hline (\CC D_2^+(a_2)\otimes \CC D_2^+(a_2'))^{\Gamma_0(4)} & a_2 a_2'=3 \bmod 8 & 1 \\
\hline (\CC D_2^+(a_2)\otimes \CC D_{2^{2n+1}}^+(a_2'))^{\Gamma_0(2^{2n+3})} & a_2 a_2'=7 \bmod 8 &  3+n \\
\hline (\CC D_2^+(a_2)\otimes \CC D_{2^{2n+1}}^+(a_2'))^{\Gamma_0(2^{2n+3})} & a_2 a_2'=3 \bmod 8 &  2+n \\
\hline (\CC D_2^+(a_2)\otimes \CC D_{2^{2n+1}}^-(a_2'))^{\Gamma_0(2^{2n+3})} & a_2 a_2'=1 \bmod 4 &  n \\
\hline (\CC D_2^+(a_2)\otimes \CC D_{2^{2n}}^{-}(a_2'))^{\Gamma_0(2^{2n+2})} & a_2 a_2'=1 \bmod 4 & \max \{0,n-2\}\\
(\CC D_2^+(a_2)\otimes \CC D_{2^{2n}}^{+}(a_2'))^{\Gamma_0(2^{2n+2})} & a_2 a_2'=3 \bmod 4 & \max \{0,n-2\}\\
\hline
\end{array} 
\]
\end{table}
\end{enumerate}

We now suppose that $V$ is a non-trivial constitute of $J_{1,m}(2^kN)$.  We view $\mathrm{S}_{0,i}^m$ as the subset of primes $p$ in $\mathrm{P}_0^m$ such that $\epsilon_p=-$ in (2) above, $\mathrm{S}_{>2,i}^{N,1}$ as the subset of primes $p$ in $\mathrm{P}_{>2}^{N,1}$ whose part is of form $L_{p^{2k+1}}^{+}(a_p')$ in (1) above, and $\mathrm{S}_{>2,i}^{N,3}$ as the subset of primes $p$ in $\mathrm{P}_{>2}^{N,3}$ whose part is of form $L_{p^{2k+1}}^{-}(a_p')$ in (1) above. We then discuss by cases.
\begin{enumerate}
\item[(a)] Assume that the $2$-part of $V$ is $(\CC D_1^+(a_2)\otimes \CC D_{2^{2n}}^+(a_2'))^{\Gamma_0(2^{2n+2})}$ with $n=[k/2-1]$, whose dimension is $[k/2]+1$ by Table \ref{tab:m}. In this case, we set $V_1=V$ and $i=1$. By Proposition \ref{prop:main-iso}, the number of “$-$" among all signs $\epsilon_p$ of $\CC L_{p^{n^{m}_p}}^{\epsilon_p}(a_p)$ for $p\mid m$ is odd. It follows that the number of “$-$" among all signs $\epsilon'_p$ of $\CC L_{p^{k_p}}^{\epsilon_p'}(a'_p)$ for $p\mid N$ is also odd. Therefore, $|\mathrm{S}_{>2,1}^{N,3}|$ has to be odd.  For any $p\mid m$, similar to \eqref{eq:a_pa_p'}, we confirm that $\big( \frac{-a_pa_p'}{p}\big)=1$ if and only if $\left( \frac{\mathrm{P}_1}{p} \right)=1$, where $\mathrm{P}_1$ is defined in (2) of the theorem. For the $2$-part, $a_2a_2'=3 \mod 4$ is guaranteed by the condition that $|\mathrm{S}_{>2,1}^{N,3}|$ is odd. One is free to choose the sign of the $p$-part of $V_1$ for any $p\in \mathrm{P}^m_1$, and the only constrain is that the number of “$-$" among all signs of the $p$-parts of $V_1$ for $p\in \mathrm{P}^m_1$ is odd if $|\mathrm{S}^m_{0,1}|$ is even, and is even if $|\mathrm{S}^m_{0,1}|$ is odd. This contributes to the additional factor $2^{|\mathrm{P}_1^m|-1}$ in $\mathrm{K}_1^0$ if $|\mathrm{P}_1^m|>0$. Besides, we have to require that $|\mathrm{S}^m_{0,1}|$ is odd if $|\mathrm{P}_1^m|=0$. Obviously, the factor $2^{|\mathrm{P}_2^N|}$ in the dimension formula follows from Proposition \ref{prop:dimension_induction}. We then prove that $\dim V_1=2^{|\mathrm{P}_2^N|} I_1$. 
\item[(b)] Assume that the $2$-part of $V$ is $(\CC D_1^+(a_2)\otimes \CC D_{2^{2n}}^-(a_2'))^{\Gamma_0(2^{2n+2})}$ with $n=[k/2-1]$, whose dimension is $\mathrm{max}\{ 0, [(k-4)/2]\}$ by Table \ref{tab:m}. In this case, we set $V_2=V$ and $i=2$. Again, the number of “$-$" among all signs $\epsilon_p$ of $\CC L_{p^{n^{m}_p}}^{\epsilon_p}(a_p)$ for $p\mid m$ is odd. It follows that the number of “$-$" among all signs $\epsilon'_p$ of $\CC L_{p^{k_p}}^{\epsilon_p'}(a'_p)$ for $p\mid N$ is even. Therefore, $|\mathrm{S}_{>2,2}^{N,3}|$ has to be even.  Similarly, for any $p\mid m$, $\big( \frac{-a_pa_p'}{p}\big)=1$ if and only if $\left( \frac{\mathrm{P}_2}{p} \right)=1$, where $\mathrm{P}_2$ is defined in (2) of the theorem. For the $2$-part, $a_2a_2'=1 \mod 4$ is guaranteed by the condition that $|\mathrm{S}_{>2,2}^{N,3}|$ is even. As in (a), we then prove that $\dim V_2=2^{|\mathrm{P}_2^N|} I_2$. 
\item[(c)] Assume that the $2$-part of $V$ is $(\CC D_1^+(a_2)\otimes \CC D_{2^{2n+1}}^{\epsilon_2'}(a_2'))^{\Gamma_0(2^{2n+2})}$ with $\epsilon_2'\in\{+,-\}$ and $n=[(k-3)/2]$, whose dimension is $\mathrm{max}\{ 0, [(k-7)/2]\}$ by Table \ref{tab:m}. In this case, we set $V_3=V$ and $i=3$. There is no restriction for $|\mathrm{S}_{>2,3}^{N,3}|$ since $\epsilon\in\{ \pm \}$. For any $p\mid m$, $\big( \frac{-a_pa_p'}{p}\big)=1$ if and only if $\left( \frac{\mathrm{P}_3}{p} \right)=1$, where $\mathrm{P}_3$ is defined in (2) of the theorem. For the $2$-part, $a_2a_2'=1 \mod 4$ if and only if $|\mathrm{S}_{>2,3}^{N,3}|$ is even, and  $a_2a_2'=3 \mod 4$ if and only if $|\mathrm{S}_{>2,3}^{N,3}|$ is odd. As in (a), we prove that $\dim V_3=2^{|\mathrm{P}_2^N|} I_3$. 
\end{enumerate}
Combining (a), (b) and (c) above, we derive the dimension formula of $J_{1,m}(2^k N)$. 

\vspace{3mm}

We now establish the dimension formula for $J_{1,2m}(2^kN)$. We suppose that $V$ is a non-trivial constitute of $J_{1,2m}(2^kN)$ and use the same notation as above. 
\begin{enumerate}
\item[(a)] Assume that the $2$-part of $V$ is $(\CC D_2^+(a_2)\otimes \CC D_{2^{2n+1}}^+(a_2'))^{\Gamma_0(2^{2n+3})}$ with $n=[(k-3)/2]$. By Table \ref{tab:2m}, its dimension is $[(k+3)/2]$ if $a_2a_2'=7 \bmod 8$, and $[(k+1)/2]$ if $a_2a_2'=3 \bmod 8$. In this case, we set $V_1=V$ and $i=1$. The number of “$-$" among all signs $\epsilon_p$ of $\CC L_{p^{n^{m}_p}}^{\epsilon_p}(a_p)$ for $p\mid m$ is odd. It follows that the number of “$-$" among all signs $\epsilon'_p$ of $\CC L_{p^{k_p}}^{\epsilon_p'}(a'_p)$ for $p\mid N$ is also odd. Therefore, $|\mathrm{S}_{>2,1}^{N,3}|$ has to be odd.  For any $p\mid m$, $\big( \frac{-a_pa_p'}{p}\big)=1$ if and only if $\left( \frac{\mathrm{P}_1}{p} \right)=1$, where $\mathrm{P}_1$ is defined in (2) of the theorem. Since $|\mathrm{S}_{>2,1}^{N,3}|$ is odd, $\mathrm{P}_1=1 \bmod 4$. For the $2$-part, $a_2a_2'=7 \bmod 8$ if and only if $\big( \frac{\mathrm{P}_1}{2} \big)=1$, and $a_2a_2'=3 \bmod 8$ if and only if $\big(\frac{\mathrm{P}_1}{2} \big)=-1$. Therefore, the dimension of the $2$-part of $V_1$ can be written as $[(k+1)/2]+\big(\big(\frac{\mathrm{P}_1}{2} \big)+1\big)/2$, which contributes to $\mathrm{K}_1^1$. By a similar argument as in the previous proof,  it is easy to show that $\dim V_1=2^{|\mathrm{P}_2^N|} I_1$. 

\item[(b)] Assume that the $2$-part of $V$ is $(\CC D_2^+(a_2)\otimes \CC D_{2^{2n+1}}^+(a_2'))^{\Gamma_0(2^{2n+3})}$ with $n=[(k-3)/2]$, whose dimension is $\mathrm{max}\{0, [(k-3)/2]\}$. In this case, we set $V_2=V$ and $i=2$. Similar to (b) in the previous proof, we confirm that $\dim V_2=2^{|\mathrm{P}_2^N|} I_2$. 

\item[(c)] Assume that the $2$-part of $V$ is $(\CC D_2^+(a_2)\otimes \CC D_{2^{2n}}^{\pm}(a_2'))^{\Gamma_0(2^{2n+2})}$ with $n=[(k-2)/2]$, whose dimension is $\mathrm{max}\{0, [(k-6)/2]\}$. In this case, we set $V_3=V$ and $i=3$. Similar to (c) in the previous proof, we verify that $\dim V_3=2^{|\mathrm{P}_2^N|} I_3$.

\item[(d)] Assume that the $2$-part of $V$ is $(\CC D_2^-(a_2)\otimes \CC D_{2^{2n+1}}^{-}(a_2'))^{\Gamma_0(2^{2n+3})}$ with $n=[(k-3)/2]$, whose dimension is $1$ if $a_2a_2'=7\bmod 8$, and $0$ otherwise.  In this case, we set $V_4=V$ and $i=4$. The number of “$-$" among all signs $\epsilon_p$ of $\CC L_{p^{n^{m}_p}}^{\epsilon_p}(a_p)$ for $p\mid m$ is even. Thus we have to require $|\mathrm{S}_{0,4}^m|$ to be even if $|\mathrm{P}_{1}^m|=0$.  It also follows that the number of “$-$" among all signs $\epsilon'_p$ of $\CC L_{p^{k_p}}^{\epsilon_p'}(a'_p)$ for $p\mid N$ is odd. Therefore, $|\mathrm{S}_{>2,4}^{N,3}|$ has to be odd. For any $p\mid m$, $\big( \frac{-a_pa_p'}{p}\big)=1$ if and only if $\big( \frac{\mathrm{P}_4}{p} \big)=1$, where $\mathrm{P}_4$ is defined in (2) of the theorem. Since $|\mathrm{S}_{>2,4}^{N,3}|$ is odd, $\mathrm{P}_4=1 \bmod 4$. For the $2$-part, $a_2a_2'=7 \bmod 8$ if and only if $\left( \frac{\mathrm{P}_4}{2} \right)=1$. Therefore, the dimension of the $2$-part of $V_4$ can be written as $\big(\big(\frac{\mathrm{P}_4}{2} \big)+1\big)/2$, which contributes to $\mathrm{K}_4^1$.  By these facts, it is enough to deduce that $\dim V_4=2^{|\mathrm{P}_2^N|} I_4$. 
\end{enumerate}

Combining (a), (b), (c) and (d) above, we derive the dimension formula of $J_{1,2m}(2^k N)$. 

\vspace{3mm}

We finally establish the dimension formula for $J_{1,2^j m}(N)$ with $j\geq 2$. Again, we suppose that $V$ is a non-trivial constitute of $J_{1,2^j m}(N)$ and use the same notation as above.  The $2$-parts of $J_{1,2^j m}(N)$ are determined by Lemma \ref{lem:irreducible_D2^N_LP^N}. There are two cases. 
\begin{enumerate}
\item[(i)] Assume that the $2$-part of $V$ is $(\CC D_{2^j}^+(a_2)\otimes \CC D_{2^j}^+(a_2'))^{\SL_2(\ZZ)}$. By Lemma \ref{lem:irreducible_D2^N_LP^N}, its dimension is $[j/2]+1$ if $a_2a_2'=7 \bmod 8$, and $1$ if $a_2a_2'=3 \bmod 8$ and $j$ is even, and $0$ otherwise. In this case, we set $V_1=V$ and $i=1$. The number of “$-$" among all signs $\epsilon_p$ of $\CC L_{p^{n^{m}_p}}^{\epsilon_p}(a_p)$ for $p\mid m$ is odd. It follows that the number of “$-$" among all signs $\epsilon'_p$ of $\CC L_{p^{k_p}}^{\epsilon_p'}(a'_p)$ for $p\mid N$ is also odd. Therefore, $|\mathrm{S}_{>2,1}^{N,3}|$ has to be odd.  For any $p\mid m$, $\big( \frac{-a_pa_p'}{p}\big)=1$ if and only if $\left( \frac{\mathrm{P}_1}{p} \right)=1$, where $\mathrm{P}_1$ is defined in (2) of the theorem. Since $|\mathrm{S}_{>2,1}^{N,3}|$ is odd, $\mathrm{P}_1=1 \bmod 4$. For the $2$-part, $a_2a_2'=7 \bmod 8$ if and only if $\left( \frac{\mathrm{P}_1}{2} \right)=1$, and $a_2a_2'=3 \bmod 8$ if and only if $\left(\frac{\mathrm{P}_1}{2} \right)=-1$. Therefore, the dimension of the $2$-part of $V_1$ can be written as $1+j\big(\big( \frac{\mathrm{P}_1}{2}\big)+1\big)/4$ if $j$ is even, and $(1+j)\big(\big( \frac{\mathrm{P}_1}{2}\big)+1\big)/4$ if $j$ is odd, which contributes to $\mathrm{K}_1^j$. These facts are enough to verify that $\dim V_1=2^{|\mathrm{P}_2^N|} I_1$. 

\item[(ii)] Assume that the $2$-part of $V$ is $(\CC D_{2^j}^-(a_2)\otimes \CC D_{2^j}^-(a_2'))^{\SL_2(\ZZ)}$. By Lemma \ref{lem:irreducible_D2^N_LP^N}, its dimension is $[(j+1)/2]$ if $a_2a_2'=7 \mod 8$, and $0$ otherwise. In this case, we set $V_4=V$ and $i=4$. Similarly, $|\mathrm{S}_{>2,4}^{N,3}|$ is odd, which yields that $\mathrm{P}_4=1 \bmod 4$. For the $2$-part, $a_2a_2'=7 \bmod 8$ if and only if $\left( \frac{\mathrm{P}_4}{2} \right)=1$. Therefore, the dimension of the $2$-part of $V_4$ can be written as $j\big(\big( \frac{\mathrm{P}_4}{2}\big)+1\big)/4$ if $j$ is even, and $(1+j)\big(\big( \frac{\mathrm{P}_4}{2}\big)+1\big)/4$ if $j$ is odd, which contributes to $\mathrm{K}_4^j$. These facts are sufficient to deduce that $\dim V_4=2^{|\mathrm{P}_2^N|} I_4$. 
\end{enumerate}

We thus prove the dimension formula for $J_{1,2^j m}(N)$ by (i) and (ii) above. 
\end{proof}

As an application, we consider one particular case. Let $N$ be a positive odd integer and $k$ be a non-negative integer. For $J_{1,2}(2^k N)$, $I_1=I_2=I_3=0$. Thus $\dim J_{1,2}(2^k N)$ is given by
\begin{equation}
2^{|\mathrm{P}_2^N|}\sum_{\mathrm{S}_{>2,4}^{N,3}}\sum_{\mathrm{S}_{>2,4}^{N,1}}  \prod_{p \in \mathrm{S}_{>2,4}^{N,3}\cup \mathrm{S}_{>2,4}^{N,1}} \left[\frac{n_p^N-1}{2}\right] \prod_{p \in \big(\mathrm{P}_{>2}^{N,3}\cup \mathrm{P}_{>2}^{N,1}\big) \backslash \big(\mathrm{S}_{>2,4}^{N,3} \cup \mathrm{S}_{>2,4}^{N,1}\big)} \left[\frac{n_p^N+2}{2}\right] \cdot \frac{\big( \frac{\mathrm{P}_4}{2}\big)+1}{2}. 
\end{equation}
Clearly, $\dim J_{1,2}(2^k N)$ is independent of $k$. It is not difficult to recover Proposition \ref{prop:index-2} from the dimension formula. In particular, for any primes $p=3\bmod 4$ and $q=1\bmod 4$ and any non-negative integers $a$, $b$ and $k$, we have
\begin{equation}
\dim J_{1,2}(2^k p^a q^b) = \max\Big\{0, \Big[ \frac{a-1}{2} \Big]\Big\}  \Big( \max\Big\{0, \Big[ \frac{b-1}{2} \Big]\Big\} \frac{\big( \frac{-pq}{2}\big)+1}{2} + \Big[ \frac{b+2}{2} \Big] \frac{\big( \frac{-p}{2}\big)+1}{2} \Big). 
\end{equation}

Our second theorem gives a formula of $\dim J_{1,m}(N)$ for squarefree $m$.

\begin{theorem}\label{th:m_squarefree}
Let $m$ be a positive odd squarefree integer and $N, N'$ be positive odd integers with $\mathrm{gcd}(m,N')=1$. Suppose that every prime divisor of $N$ divides $m$. We set
\begin{align*}
\mathrm{P}^{m}:=&\big\{\text{prime}\; p: \; p\mid m\big\}, \\
\mathrm{P}_0^{m}:=&\big\{\text{prime}\; p:\; p\mid m,\; n_p^N=0 \big\}, \\
\mathrm{P}_{>0}^{m}:=&\big\{\text{prime}\; p:\; p\mid m,\; n_p^N>0 \big\}, \\
\mathrm{P}_2^{N'}:=&\big\{\text{prime}\; p: \; n_p^{N'}=2\big\}, \\
\mathrm{P}_{>2}^{N',1}:=&\big\{\text{prime}\; p: \; n_p^{N'} \geq 3,\; p=1\bmod 4\big\}, \\
\mathrm{P}_{>2}^{N',3}:=&\big\{\text{prime}\; p: \; n_p^{N'} \geq 3,\; p=3\bmod 4\big\}, \\
\mathrm{P}_{>1}^{N,1}:=&\big\{\text{prime}\; p: \; n_p^{N} \geq 2,\; p=1\bmod 4\big\}, \\
\mathrm{P}_{>1}^{N,3}:=&\big\{\text{prime}\; p: \; n_p^{N} \geq 2,\; p=3\bmod 4\big\}.     
\end{align*}
For $j\in\{0,1\}$ and any non-negative integer $k$, 
\begin{equation}
\dim J_{1,2^jm}(2^kNN') = 2^{|\mathrm{P}_2^{N'}|}\left(  I_1+I_2+I_3 +I_4\right).
\end{equation}
For $1\leq i\leq 4$, the number $I_i$ is defined by
\begin{equation}\label{eq:I-square-free}
\begin{split}
I_i=&\sum_{\mathrm{S}_{>2,i}^{N',1}}\sum_{\mathrm{S}_{>2,i}^{N',3}} \sum_{\mathrm{S}_{>1,i}^{N,1}} \sum_{\mathrm{S}_{>1,i}^{N,3}} \sum_{\mathrm{S}^m_i} \prod_{p \in \mathrm{S}_{>2,i}^{N',3}\cup \mathrm{S}_{>2,i}^{N',1}} \left[\frac{n_p^{N'}-1}{2}\right] \prod_{p \in \big(\mathrm{P}_{>2}^{N',3}\cup \mathrm{P}_{>2}^{N',1}\big) \backslash \big(\mathrm{S}_{>2,i}^{N',3}\cup \mathrm{S}_{>2,i}^{N',1}\big)} \left[\frac{n_p^{N'}+2}{2}\right]\\ 
& \qquad \qquad \prod_{p\in \mathrm{S}_{>1,i}^{N,1}\cup \mathrm{S}_{>1,i}^{N,3}} \left[\frac{n_p^N}{2}\right]\prod_{p\in \big(\mathrm{P}_{>0}^m\backslash \mathrm{S}^m_i\big)\backslash \big(\mathrm{S}_{>1,i}^{N,3}\cup \mathrm{S}_{>1,i}^{N,1}\big)}\left(\left[\frac{n_p^N+1}{2}\right]+\frac{(\frac{\mathrm{P}_i}{p})+1}{2}  \right)\cdot \mathrm{K}_i^j.   
\end{split}    
\end{equation}
The above symbols are explained as follows:
\begin{enumerate}
\item  $\mathrm{S}_{>2,i}^{N',1}$, $\mathrm{S}_{>2,i}^{N',3}$ and $\mathrm{S}_{>1,i}^{N,1}$ run through all the subsets of $\mathrm{P}_{>2}^{N',1}$, $\mathrm{P}_{>2}^{N',3}$ and $\mathrm{P}_{>1}^{N,1}$, respectively.

\item $\mathrm{S}_{>1,i}^{N,3}$ runs through all the subsets of $\mathrm{P}_{>1}^{N,3}$ such that $f(\mathrm{P}_i;\mathrm{P}_0^m)=1$, where
$$
\mathrm{P}_i=-\prod_{p\in \mathrm{S}_{>2,i}^{N',1}\cup \mathrm{S}_{>2,i}^{N',3}}p\cdot \prod_{q\in \mathrm{S}_{>1,i}^{N,1}\cup  \mathrm{S}_{>1,i}^{N,3}}q 
$$
for $i\in\{1,2,4\}$, and
$$
\mathrm{P}_3=-2\cdot \prod_{p\in \mathrm{S}_{>2,i}^{N',1}\cup \mathrm{S}_{>2,i}^{N',3}}p\cdot \prod_{q\in \mathrm{S}_{>1,i}^{N,1}\cup  \mathrm{S}_{>1,i}^{N,3}}q.
$$
In the above, we define the product to be $1$ if the associated set is empty. 
We further require that $|\mathrm{S}_{>2,i}^{N',3}|+|\mathrm{S}_{>1,i}^{N,3}|$ is odd for $i\in\{1,4\}$, and $|\mathrm{S}_{>2,2}^{N',3}|+|\mathrm{S}_{>1,2}^{N,3}|$ is even.

\item $\mathrm{S}^m_i$ runs through all the subsets of $\mathrm{P}_{>0}^m\backslash \big(\mathrm{S}_{>1,i}^{N,1}\cup \mathrm{S}_{>1,i}^{N,3}\big)$ such that $f(\mathrm{P}_i;\mathrm{S}^m_i)=1$. 

\item Assume that $|\mathrm{P}_0^m|>0$. Then 
\begin{align*}
&\mathrm{K}_1^0=2^{|\mathrm{P}_0^m|-1}\Big(1+\Big[\frac{k}{2}\Big]\Big),& &\mathrm{K}_2^0=2^{|\mathrm{P}_0^m|-1}\mathrm{max}\Big\{0,\Big[\frac{k-4}{2}\Big]\Big\},&\\
&\mathrm{K}_3^0=2^{|\mathrm{P}_0^m|-1}\mathrm{max}\Big\{0,\Big[\frac{k-7}{2}\Big]\Big\},& &\mathrm{K}_4^0=0,  & \\
&\mathrm{K}_1^1=2^{|\mathrm{P}_0^m|-1}\Big(\Big[\frac{k+1}{2}\Big]+\frac{(\frac{\mathrm{P}_1}{2})+1}{2}\Big),&  
&\mathrm{K}_2^1=2^{|\mathrm{P}_0^m|-1}\mathrm{max}\Big\{ 0, \Big[\frac{k-3}{2}\Big]\Big\},& \\ 
&\mathrm{K}_3^1=2^{|\mathrm{P}_0^m|-1}\mathrm{max}\Big\{0,\Big[\frac{k-6}{2} \Big]\Big\},& 
&\mathrm{K}_4^1=2^{|\mathrm{P}_0^m|-1} \frac{(\frac{\mathrm{P}_4}{2})+1}{2}.&
\end{align*}

\item Assume that $|\mathrm{P}_0^m|=0$. Then we double $\mathrm{K}_i^j$ and further require $|\mathrm{S}^m_i|$ to be odd for $i\in\{1,2,3\}$ and $|\mathrm{S}^m_4|$ to be even.

\item For each of the four products in \eqref{eq:I-square-free}, we define it to be $1$ if the corresponding set is empty, and define it to be $0$ if the corresponding set cannot be selected to satisfy the above conditions. 

\item $[x]$ denotes the integer part of $x$ and $\big( \frac{n}{m}\big)$ denotes the Kronecker symbol as before. 
\end{enumerate}
\end{theorem}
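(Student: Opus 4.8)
The plan is to follow closely the proof of Theorem \ref{th:m_N_COPRIME}, the principal novelty being the treatment of the primes dividing $\gcd(m,N)$, i.e. the set $\mathrm{P}_{>0}^m$, for which both the index and the level carry a power of $p$. First I would invoke Proposition \ref{prop:main-iso} to write $J_{1,2^jm}(2^kNN')$ as a direct sum, over the divisors $m'\mid M$ with $M/m'$ squarefree, of the invariant spaces $(\CC D_m^-(-1)\otimes\CC D_{m'}^+(-1))^{\Gamma_0(2^kNN')}$, and then apply \eqref{eq:parts} to split each summand into its $2$-part and its $p$-parts. The overall factor $2^{|\mathrm{P}_2^{N'}|}$ is accounted for at the outset by Proposition \ref{prop:dimension_induction}, which reduces the computation to the case $n_p^{N'}\neq 2$ for every prime $p\mid N'$.

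Next I would evaluate the local dimensions prime by prime. For the primes dividing $N'$ (coprime to $m$), the $p$-parts are exactly those occurring in Theorem \ref{th:m_N_COPRIME}: Lemma \ref{lem:CLP^N} gives dimensions $\max\{0,[(n_p^{N'}-1)/2]\}$ and $[(n_p^{N'}+2)/2]$, producing the first two products in \eqref{eq:I-square-free}. For the primes in $\mathrm{P}_0^m$ (dividing $m$ but not $NN'$) the level is trivial, and Lemma \ref{lem:irreducible_D2^N_LP^N}(II) with $k_1=k_2=1$ governs the $\SL_2(\ZZ)$-invariants, contributing sign data through $\mathrm{S}^m_i$ and the quadratic-residue condition $f(\mathrm{P}_i;\mathrm{P}_0^m)=1$. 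For the primes in $\mathrm{P}_{>0}^m$, the $p$-part is $(\CC L_p^{\epsilon_p}(a_p)\otimes\CC L_{p^{k_p}}^{\epsilon'_p}(a'_p))^{\Gamma_0(p^{n_p^N})}$, where the $+$-side exponent $k_p$ and sign $\epsilon'_p$ vary with $m'$; summing the dimensions supplied by Lemma \ref{lem:CLPCLP^2N} over the admissible configurations, and discarding the non-contributing ones by the reduction Lemma \ref{lem:oddinduction}, collapses to the two product factors $[n_p^N/2]$ (for $p$ selected into $\mathrm{S}_{>1,i}^{N,1}\cup\mathrm{S}_{>1,i}^{N,3}$) and $[(n_p^N+1)/2]+((\tfrac{\mathrm{P}_i}{p})+1)/2$ in \eqref{eq:I-square-free}. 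The $2$-part is identical to that of Theorem \ref{th:m_N_COPRIME} and is read off from Tables \ref{tab:m} and \ref{tab:2m}, producing the factors $\mathrm{K}_i^j$.

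Finally I would assemble the local contributions into the four global terms $I_1,\dots,I_4$. The bookkeeping is forced by the two global parity constraints from \eqref{eq:parts}: the number of minus signs among the $\epsilon_*$ must be odd and among the $\epsilon'_*$ must be even. Translating these into conditions on the subsets $\mathrm{S}_{>2,i}^{N',3}$, $\mathrm{S}_{>1,i}^{N,3}$ and $\mathrm{S}^m_i$, and matching the residue of $a_2a_2'$ modulo $8$ (which fixes which row of the $2$-part tables applies) against the parity of $|\mathrm{S}_{>2,i}^{N',3}|+|\mathrm{S}_{>1,i}^{N,3}|$, partitions the non-trivial constituents into the four families indexed by $i$, exactly as in cases (a)--(d) of the proof of Theorem \ref{th:m_N_COPRIME}.

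The main obstacle I anticipate is the correct handling of the primes in $\mathrm{P}_{>0}^m$: here the same prime $p$ simultaneously furnishes an odd-power index factor $\CC L_p(a_p)$ and a variable level factor $\CC L_{p^{k_p}}(a'_p)$, so one must carefully identify, for each admissible $m'$, the precise exponent $k_p$ and sign $\epsilon'_p$, verify via Lemma \ref{lem:CLPCLP^2N} that the odd-step invariants vanish exactly when $(\tfrac{-a_pa'_p}{p})=-1$, and check that the global sign constraint interacts correctly with the residue condition encoded by $\mathrm{P}_i$. The delicate point is that for these primes the sign $\epsilon'_p$ is not freely chosen but is pinned down by the parity and residue requirements, so that the local dimension collapses to the single factor $[(n_p^N+1)/2]+((\tfrac{\mathrm{P}_i}{p})+1)/2$ rather than a sum over independent sign choices; getting this collapse to agree with the stated product, together with the correct placement of the $\mathrm{S}_{>1,i}^{N}$ primes, is where the bulk of the verification lies.
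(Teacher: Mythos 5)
Your proposal follows the same route as the paper's proof: reduce to Theorem \ref{th:m_N_COPRIME} via Proposition \ref{prop:main-iso} and \eqref{eq:parts}, extract the factor $2^{|\mathrm{P}_2^{N'}|}$ by Proposition \ref{prop:dimension_induction}, tabulate the local invariants prime by prime (with Lemma \ref{lem:CLPCLP^2N} handling the new primes in $\mathrm{P}_{>0}^m$, exactly as in the paper's Table \ref{tab:m-square-fee}), and sort the constituents into $I_1,\dots,I_4$ by the parity and residue constraints as in cases (a)--(d) of the earlier proof. You also correctly identify the one genuinely new point --- that for $p\in\mathrm{P}_{>0}^m$ the local dimension is $[(n_p^N+1)/2]+((\tfrac{\mathrm{P}_i}{p})+1)/2$ rather than an independent sign sum --- so the plan is sound and matches the paper.
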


\begin{proof}
The proof is essentially the same as that of Theorem \ref{th:m_N_COPRIME}, expect that the $p$-part has a larger range. We formulate the only possible non-trivial $p$-parts in Table \ref{tab:m-square-fee} below. 

\begin{table}[ht]
\caption{The only possible non-trivial $p$-parts of $J_{1,2^j m}(2^kNN')$}\label{tab:m-square-fee}
\renewcommand\arraystretch{1.5}
\noindent\[
\begin{array}{|c|c|c|c|c|}
\hline p & n & \text{$p$-part} & \text{Condition} & \dim \\
\hline \mathrm{S}^{N',3}_{>2,i} & \big[\frac{n_p^{N'}-1}{2}\big] & (\CC L_{p^{2n+1}}^-(a_p'))^{\Gamma_0(p^{2n+1})} & - & n\\
\hline \mathrm{S}^{N',1}_{>2,i} & \big[\frac{n_p^{N'}-1}{2}\big] & (\CC L_{p^{2n+1}}^+(a_p'))^{\Gamma_0(p^{2n+1})} & - & n\\
\hline \big(\mathrm{P}_{>2}^{N',3}\cup \mathrm{P}_{>2}^{N',1}\big) \backslash \big(\mathrm{S}_{>2,i}^{N',3}\cup \mathrm{S}_{>2,i}^{N',1}\big) & \big[\frac{n_p^{N'}}{2}\big] & (\CC L_{p^{2n}}^+(a_p'))^{\Gamma_0(p^{2n})} & - & 1+n \\
\hline \mathrm{S}_i^m & \big[\frac{n_p^{N}-1}{2}\big] & (\CC L_p^-(a_p)\otimes\CC L_{p^{2n+1}}^-(a_p'))^{\Gamma_0(p^{2n+1})} & \big( \frac{-a_pa_p'}{p} \big)=1 & 1 \\
\hline \mathrm{P}_0^m & - & (\CC L_p^+(a_p)\otimes\CC L_p^+(a_p'))^{\SL_2(\ZZ)} & \big( \frac{-a_pa_p'}{p} \big)=1 & 1 \\
\mathrm{P}_0^m & - & (\CC L_p^-(a_p)\otimes\CC L_p^-(a_p'))^{\SL_2(\ZZ)} & \big( \frac{-a_pa_p'}{p} \big)=1 & 1 \\
\hline \mathrm{S}_{>1,i}^{N,3} & \big[\frac{n_p^{N}}{2}\big] & (\CC L_p^+(a_p)\otimes\CC L_{p^{2n}}^-(a_p'))^{\Gamma_0(p^{2n})} & - & n \\
\hline \mathrm{S}_{>1,i}^{N,1} & \big[\frac{n_p^{N}}{2}\big] & (\CC L_p^+(a_p)\otimes\CC L_{p^{2n}}^+(a_p'))^{\Gamma_0(p^{2n})} & - & n \\
\hline  \big(\mathrm{P}_{>0}^m\backslash \mathrm{S}^m_i\big)\backslash \big(\mathrm{S}_{>1,i}^{N,3}\cup \mathrm{S}_{>1,i}^{N,1}\big) & \big[\frac{n_p^{N}-1}{2}\big] &  (\CC L_p^+(a_p)\otimes\CC L_{p^{2n+1}}^+(a_p'))^{\Gamma_0(p^{2n+1})} & \big( \frac{-a_pa_p'}{p} \big)=1 & 2+n \\
\hline \big(\mathrm{P}_{>0}^m\backslash \mathrm{S}^m_i\big)\backslash \big(\mathrm{S}_{>1,i}^{N,3}\cup \mathrm{S}_{>1,i}^{N,1}\big) & \big[\frac{n_p^{N}-1}{2}\big] & (\CC L_p^+(a_p)\otimes\CC L_{p^{2n+1}}^+(a_p'))^{\Gamma_0(p^{2n+1})} & \big( \frac{-a_pa_p'}{p} \big)\neq 1 & 1+n \\
\hline 
\end{array} 
\]
\end{table}
We only establish the formula of $\dim J_{1,2m}(2^k NN')$, since the other case is similar. Suppose that $V$ is a non-trivial constitute of $J_{1,2m}(2^kNN')$. The $2$-part of $V$ is the same as the case of $J_{1,2m}(2^k N)$ in Theorem \ref{th:m_N_COPRIME}. 

Assume that the $2$-part of $V$ is $(\CC D_2^+(a_2)\otimes \CC D_{2^{2n+1}}^+(a_2'))^{\Gamma_0(2^{2n+3})}$ with $n=[(k-3)/2]$. By Table \ref{tab:2m}, its dimension is $[(k+3)/2]$ if $a_2a_2'=7 \bmod 8$, and $[(k+1)/2]$ if $a_2a_2'=3 \bmod 8$. In this case, we set $V_1=V$ and $i=1$. The number of “$-$" among all signs $\epsilon_p$ of $\CC L_{p}^{\epsilon_p}(a_p)$ for $p\mid m$ is odd, which yields that $|\mathrm{S}_1^m|$ is odd if $|\mathrm{P}_0^m|$ is empty. One is free to choose the sign of the $p$-part of $V_1$ for any $p\in \mathrm{P}^m_0$, and the only constrain is that the number of “$-$" among all signs of the $p$-parts of $V_1$ for $p\in \mathrm{P}^m_0$ is odd if $|\mathrm{S}^m_{1}|$ is even, and is even if $|\mathrm{S}^m_{1}|$ is odd. This contributes to the additional factor $2^{|\mathrm{P}_0^m|-1}$ in $\mathrm{K}_1^0$ if $|\mathrm{P}_0^m|>0$. The number of “$-$" among all signs $\epsilon'_p$ of $\CC L_{p^{k_p}}^{\epsilon_p'}(a'_p)$ for $p\mid mN'$ is even. Thus $|\mathrm{S}_{>2,1}^{N',3}|+|\mathrm{S}_{>1,1}^{N,3}|$ is odd. For any $p\in \mathrm{S}_1^m$, $\big( \frac{-a_pa_p'}{p} \big)=1$ if and only if $\big( \frac{\mathrm{P}_1}{p} \big)=1$, which leads to the restriction on $\mathrm{S}_1^m$. For any $p\in\mathrm{P}_0^m$, $\big( \frac{-a_pa_p'}{p} \big)=1$ if and only if $\big( \frac{\mathrm{P}_1}{p} \big)=1$, which yields the restriction on $\mathrm{S}_{>1,1}^{N,3}$. For any $p\in \big(\mathrm{P}_{>0}^m\backslash \mathrm{S}^m_i\big)\backslash \big(\mathrm{S}_{>1,i}^{N,3}\cup \mathrm{S}_{>1,i}^{N,1}\big)$, $\big( \frac{-a_pa_p'}{p} \big)=1$ if and only if $\big( \frac{\mathrm{P}_1}{p} \big)=1$, which is compatible with the last product in \eqref{eq:I-square-free}. Note that $\mathrm{P}_1=1\bmod 4$. For the $2$-part, $a_2a_2'=7 \bmod 8$ if and only if $\left( \frac{\mathrm{P}_1}{2} \right)=1$, and $a_2a_2'=3 \bmod 8$ if and only if $\left(\frac{\mathrm{P}_1}{2} \right)=-1$. This contributes to $\mathrm{K}^1_1$. We then prove that $\dim V_1 = 2^{|\mathrm{P}_2^{N'}|} I_1$. 

Similarly, when the $2$-part of $V$ is of type (b), (c) or (d) in the proof of Theorem \ref{th:m_N_COPRIME}, by a similar argument, we show that $\dim V_i = 2^{|\mathrm{P}_2^{N'}|} I_i$ for $i=2,3,4$ respectively. We then deduce the dimension formula of $J_{1,2m}(2^k NN')$. 
\end{proof}

As an application, we consider one particular case. Let $m=p$ be an odd prime and $j=0$. In this case, $\mathrm{S}_{>1,i}^{N,3}\cup S_{>1,i}^{N,1}=\emptyset$ for $1\leq i \leq 3$, since $|\mathrm{S}^m_i|$ is odd if $|\mathrm{P}_0^m|=0$. Thus $I_i$ can be simplified as 
$$ 
\sum_{\mathrm{S}_{>2,i}^{N',1}}\sum_{\mathrm{S}_{>2,i}^{N',3}} \prod_{q \in \mathrm{S}_{>2,i}^{N',3}\cup \mathrm{S}_{>2,i}^{N',1}} \left[\frac{n_q^{N'}-1}{2}\right] \prod_{q \in \big(\mathrm{P}_{>2}^{N',3}\cup \mathrm{P}_{>2}^{N',1}\big) \backslash \big(\mathrm{S}_{>2,i}^{N',3}\cup  \mathrm{S}_{>2,i}^{N',1}\big)}\left[\frac{n_q^{N'}+2}{2}\right] \cdot \mathrm{K}^0_i.
$$
Moreover, it is easy to check that $\dim J_{1,p}(2^kp^aN')$ is independent of the non-negative integer $a$. When $N'=1$, we obtain
\begin{equation}\label{eq:dim-index-p}
\dim J_{1,p}(2^k) = \frac{\big( \frac{-1}{p} \big)+1}{2} \mathrm{max}\Big\{0, \Big[ \frac{k-4}{2} \Big]\Big\}  + \frac{\big( \frac{-2}{p} \big)+1}{2} \mathrm{max}\Big\{0, \Big[ \frac{k-7}{2} \Big]\Big\}.
\end{equation}

Similarly, we derive that 
\begin{equation}
\begin{split}
\dim J_{1,2p}(2^kp^a) =& \frac{\big( \frac{-1}{p} \big)+1}{2} \mathrm{max}\Big\{0, \Big[ \frac{k-3}{2} \Big]\Big\}  + \frac{\big( \frac{-2}{p} \big)+1}{2} \mathrm{max}\Big\{0, \Big[ \frac{k-6}{2} \Big]\Big\} \\
&+ \frac{\big(\big( \frac{-p}{2} \big)+1\big)\big(1-\big(\frac{-1}{p} \big)\big)}{4} \Big[ \frac{a}{2} \Big]
\end{split}     
\end{equation}
for any non-negative integers $k$ and $a$. In particular, $\dim J_{1,6}(2^k 3^a)=\max\{0, [k/2]-3\}$, which yields \cite[Lemma 3.13]{CDH18}, that is, $J_{1,6}(36)=0$. It is also easy to recover \cite[Lemma 3.14]{CDH18}, that is, $J_{1,30}(36)=0$, by the above dimension formula. 

In Table \ref{tab:jacobi_dimensions_final_extended}, we formulate the dimensions computed for $\dim J_{1,m}(N)$ for a range of indices no greater than $50$ and some explicit levels. These values are direct outputs of the formulas developed in Theorem \ref{th:m_N_COPRIME} and Theorem \ref{th:m_squarefree}.

This paper also provides methods for calculating the dimension in some other cases. For example, Lemma \ref{lem:J_1,P2(P2)} proves that for any prime number $p=3\bmod 4$, $\dim J_{1,p^2}(p^2)=1$. This case is not covered in Theorem \ref{th:m_N_COPRIME} and Theorem \ref{th:m_squarefree}, but can be obtained using the constructive method in this paper.

\begin{table}[p]
\centering
\footnotesize
\caption{Dimension of the space of Jacobi forms $J_{1, m}(N)$ of weight one for specific levels and indices $m \le 50$. A hyphen (-) indicates that the given parameters do not satisfy the conditions of Theorems \ref{th:m_N_COPRIME} and \ref{th:m_squarefree}.}
\label{tab:jacobi_dimensions_final_extended}
\resizebox{\textwidth}{!}{%
\begin{tabular}{|c|c|c|c|c|c|c|c|c|}
\hline
\textbf{Index $m$} & \textbf{$N=3^3$} & \textbf{$N=7^3$} & \textbf{$N=11^3$} & \textbf{$N=2^9$} & \textbf{$N=2^{10}$} & \textbf{$N=2^6 \cdot 3^3$} & \textbf{$N=2^6 \cdot 7^3$} & \textbf{$N=2^6 \cdot 3^3 \cdot 5^3 \cdot 7^3$} \\
\hline
1 & 0 & 0 & 0 & 0 & 0 & 0 & 0 & 0 \\
2 & 0 & 1 & 0 & 0 & 0 & 0 & 1 & 6 \\
3 & 0 & 0 & 1 & 1 & 1 & 0 & 0 & 6 \\
4 & 0 & 1 & 0 & - & - & - & - & - \\
5 & 0 & 0 & 1 & 2 & 3 & 2 & 2 & 5 \\
6 & 0 & 0 & 0 & 1 & 2 & 0 & 0 & 11 \\
7 & 1 & 0 & 0 & 0 & 0 & 4 & 0 & 10 \\
8 & 0 & 2 & 0 & - & - & - & - & - \\
9 & - & 0 & 1 & 1 & 1 & - & 0 & - \\
10 & 0 & 0 & 0 & 3 & 3 & 2 & 2 & 11 \\
11 & 0 & 1 & 0 & 1 & 1 & 0 & 4 & 27 \\
12 & - & 0 & 1 & - & - & - & - & - \\
13 & 1 & 0 & 0 & 2 & 3 & 6 & 2 & 33 \\
14 & 0 & 1 & 0 & 0 & 0 & 3 & 1 & 14 \\
15 & 0 & 0 & 2 & 0 & 0 & 2 & 0 & 11 \\
16 & 0 & 2 & 0 & - & - & - & - & - \\
17 & 0 & 0 & 0 & 3 & 4 & 2 & 2 & 26 \\
18 & - & 1 & 0 & 1 & 2 & - & 1 & - \\
19 & 1 & 0 & 0 & 1 & 1 & 4 & 0 & 27 \\
20 & 0 & 0 & 1 & - & - & - & - & - \\
21 & 1 & 0 & 0 & 0 & 0 & 4 & 0 & 18 \\
22 & 0 & 2 & 0 & 1 & 2 & 0 & 5 & 29 \\
23 & 0 & 1 & 1 & 0 & 0 & 0 & 4 & 30 \\
24 & - & 0 & 0 & - & - & - & - & - \\
25 & 0 & 0 & 1 & 2 & 3 & 2 & 2 & - \\
26 & 0 & 0 & 0 & 3 & 3 & 5 & 2 & 27 \\
27 & - & 0 & 2 & 2 & 2 & - & 0 & - \\
28 & 1 & - & 0 & - & - & - & - & - \\
29 & 0 & 1 & 0 & 2 & 3 & 2 & 6 & 36 \\
30 & 0 & 0 & 0 & 0 & 0 & 2 & 0 & 16 \\
31 & 1 & 0 & 1 & 0 & 0 & 4 & 0 & 27 \\
32 & 0 & 3 & 0 & - & - & - & - & - \\
33 & 0 & 0 & 1 & 2 & 2 & 0 & 0 & 35 \\
34 & 0 & 0 & 0 & 4 & 5 & 2 & 2 & 26 \\
35 & 0 & 0 & 0 & 0 & 0 & 0 & 2 & 15 \\
36 & - & 1 & 1 & - & - & - & - & - \\
37 & 1 & 1 & 1 & 2 & 3 & 6 & 6 & 42 \\
38 & 0 & 0 & 0 & 1 & 2 & 3 & 0 & 25 \\
39 & 1 & 0 & 0 & 0 & 0 & 6 & 0 & 41 \\
40 & 0 & 0 & 0 & - & - & - & - & - \\
41 & 0 & 0 & 0 & 3 & 4 & 2 & 2 & 15 \\
42 & 0 & 0 & 0 & 0 & 0 & 3 & 0 & 21 \\
43 & 1 & 1 & 0 & 1 & 1 & 4 & 4 & 37 \\
44 & 0 & 3 & - & - & - & - & - & - \\
45 & - & 0 & 3 & 2 & 3 & - & 2 & - \\
46 & 0 & 2 & 0 & 0 & 0 & 0 & 5 & 36 \\
47 & 0 & 0 & 1 & 0 & 0 & 0 & 0 & 21 \\
48 & - & 0 & 1 & - & - & - & - & - \\
49 & 1 & - & 0 & 0 & 0 & 4 & - & - \\
50 & 0 & 1 & 0 & 3 & 3 & 2 & 3 & - \\
\hline
\end{tabular}
}
\end{table}

\newpage

\bigskip
\noindent
\textbf{Acknowledgements} 
H. Wang thanks Shaoyun Yi for valuable discussions. The authors sincerely thank the referee for careful reading and valuable comments.

\bibliographystyle{plainnat}
\bibliofont
\bibliography{refs}

\end{document}